\documentclass[10pt,reqno,a4paper]{amsart}

\usepackage{amssymb,amsrefs,amscd,amsmath, fullpage}
\usepackage{graphicx,nicefrac}
\usepackage[shortlabels]{enumitem}
\usepackage{mathrsfs}
\usepackage{hyperref}
\usepackage{ourbib}
\usepackage[all]{xy}
\usepackage{callouts}
\usepackage{tikz-cd}
\usetikzlibrary{fit}

\setlist{leftmargin=8mm}

\hypersetup{
    colorlinks,
    linkcolor={red!50!black},
    citecolor={blue!50!black},
    urlcolor={blue!80!black}
}

\newcommand{\tr}{\mathrm{tr}}

\newcommand{\dM}{{\partial M}}

\newcommand{\On}{\mathrm{O}}

\newcommand{\scal}{\mathrm{scal}}
\newcommand{\ric}{\mathrm{ric}}

\newcommand{\PIC}{\mathrm{PIC}}

\newcommand{\dist}{\mathrm{dist}}

\newcommand{\myicon}{$\,\,\,\triangleright$}
\newcommand{\KN}{%
  \mathbin{\mathpalette\KNaux{}}
}

\newcommand{\KNaux}[2]{%
  \bigcirc\mkern-15mu\wedge
}

\DeclareMathOperator{\id}{\mathrm{id}}
\DeclareMathOperator{\Diff}{\mathrm{Diff}} 

\DeclareMathOperator{\supp}{\mathrm{supp}}

\newtheorem{theorem}{Theorem}[section]
\newtheorem*{theorem*}{Theorem}
\newtheorem*{theoremA*}{Theorem A}
\newtheorem*{theoremB*}{Theorem B}
\newtheorem*{theoremC*}{Theorem C}
\newtheorem*{perelman1*}{Perelman's gluing theorem}
\newtheorem*{GL*}{Gromov-Lawson's doubling theorem}
\newtheorem{lemma}[theorem]{Lemma}
\newtheorem{proposition}[theorem]{Proposition}
\newtheorem{corollary}[theorem]{Corollary}

\newtheorem{fact}[theorem]{Fact}

\theoremstyle{definition}
\newtheorem{remark}[theorem]{Remark}
\newtheorem{definition}[theorem]{Definition} 
\newtheorem{example}[theorem]{Example}

\begin{document}

\title{Boundary flexibility for curvature conditions} 
\author{Helge Frerichs}
\address{Universit\"at Augsburg, Institut f\"ur Mathematik, 86135 Augsburg, Germany}
\email{\href{mailto:helge.frerichs@math.uni-augsburg.de}{helge.frerichs@math.uni-augsburg.de}}

\begin{abstract} 
We prove a general boundary gluing principle for families of Riemannian metrics with pointwise curvature restrictions on manifolds with possibly non-compact boundary.
This includes, as special cases, the classical gluing theorems by Perelman for metrics with positive Ricci curvature and convex singularities and by Gromov-Lawson and Bär-Hanke for metrics with positive scalar curvature and mean convex singularities.
Using the language of algebraic curvature cones, our theorem implies and unifies many more previous gluing results.

We also prove a boundary deformation principle that enables us to compare spaces of metrics with interior curvature restrictions and different boundary conditions.

Our construction is based on the local flexibility lemma for open partial differential relations, as well as explicit $1$-jet deformations of metrics along the boundary.
\end{abstract}

\keywords{Gluings and deformations of Riemannian metrics, Perelman's gluing theorem, Gromov-Lawson's doubling theorem, algebraic curvature cones, flexible families of curvature conditions, local flexibility, spaces of Riemannian metrics with boundary conditions}

\subjclass[2010]{53C21, 53C23}

\thanks{}

\date{}

\maketitle
\tableofcontents

\section{Introduction}
In order to construct Riemannian manifolds with specific curvature constraints, it is important to identify conditions under which these constraints are preserved under boundary gluing.
Perelman~\cite{Perelman} discovered a geometric boundary condition that allows two glued metrics of positive Ricci curvature to be smoothed near the gluing region while maintaining positivity of the Ricci curvature.
Specifically, the metrics are supposed to form convex singularities along their common boundary.
Prior to Perelman's theorem, Gromov-Lawson~\cite{GL} proved a similar gluing result for metrics of positive scalar curvature with mean-convex singularities in terms of doubling.

The main objective of the present paper is to extract an underlying gluing principle that provides a full dictionary for pointwise interior curvature conditions and associated boundary conditions.
Our results are formulated in the language of algebraic curvature cones, exhibiting certain parallels with Hoelzel's research on surgery stable curvature conditions in~\cite{Hoelzel}.
In particular, the paper at hand generalises and unifies Perelman's gluing theorem, Gromov-Lawson's doubling theorem and many other gluing results that have been established in recent years.

The overall setup and framework for the underlying gluing principle to be developed was introduced by Bär-Hanke in~\cite{BH2023}.
This approach allows us to handle also non-compact boundaries and to compare \textit{spaces of metrics} with interior curvature and boundary conditions, expanding pure existence results.
Bär-Hanke opened up this path through a general deformation principle for metrics with lower scalar curvature bounds in the interior of a manifold and lower mean curvature bounds along the boundary, see~\cite[Thm.~3.7]{BH2023}.
Note that the lower scalar curvature bounds to be considered may also be negative and non-constant.
As the main achievement, Bär-Hanke's deformation principle comes in a relative and family version.
Although the principle was originally formulated only for compact boundaries, the author previously included non-compact boundaries in this framework, see~\cite{Frerichs2025}.
Remarkably, the approach is not limited to lower scalar curvature and mean curvature bounds, but fits into the environment of algebraic curvature cones.
This is the theme of the paper at hand.

To put our results in context, we first recall the theorems by Perelman and Gromov-Lawson.
For a Riemannian metric $g$ on a smooth manifold with non-empty boundary, we denote by $g_0$ the metric induced on the boundary and by $\nu$ the inward-pointing unit normal field.
The scalar-valued second fundamental form $\mathrm{II}_g$ of the boundary is defined by $\mathrm{II}_g(v,w)=-g(\nabla_v\nu,w)$ for all vector $v,w$ tangent to the boundary, where $\nabla$ denotes the ambient Levi-Civita connection.
The mean curvature of the boundary is defined as the trace $H_g=\tr_g(\mathrm{II}_g)$.

\begin{perelman1*}\textup{(Perelman, Section~4 in~\cite{Perelman})}

Let $n\geq 2$.
Let $M_1,M_2$ be compact smooth $n$-dimensional manifolds with non-empty boundary $\partial M_1=\partial M_2=:\partial M$.
Let $g_1\sqcup g_2$ be a Riemannian metric on $M_1\sqcup M_2$ with $(g_1)_0=(g_2)_0$ that has positive Ricci curvature.
Let $\mathscr{U}\subset M_1\sqcup M_2$ be a neighbourhood of $\partial M_1\sqcup\partial M_2$.

Suppose that $\mathrm{II}_{g_1}+\mathrm{II}_{g_2}$ is positive definite at every point in $\partial M$.
Then there exists a smooth Riemannian metric $f$ on $M_1\cup_{\partial M}M_2$ such that
\begin{enumerate}
\item[(a)]{$f$ has positive Ricci curvature;}
\item[(b)]{$f=g_1\sqcup g_2$ on $M_1\sqcup M_2\setminus\mathscr{U}$.}
\end{enumerate}
\end{perelman1*}

\begin{GL*}\textup{(Gromov-Lawson, Theorem~5.7 in~\cite{GL})}

Let $n\geq 2$.
Let $M$ be a compact smooth $n$-dimensional manifold with non-empty boundary $\partial M$.
Let $g$ be a Riemannian metric on $M$ that has positive scalar curvature.

Suppose that $H_g>0$ at every point in $\partial M$.
Then the double $M\cup_{\partial M}M$ carries a metric of positive scalar curvature.
\end{GL*}

Note that both Perelman's gluing theorem and Gromov-Lawson's doubling theorem do not generally hold true without any boundary conditions.
Also the existence of metrics with interior curvature constraints and boundary conditions is closely related to the topologies of the manifolds involved.
For example, if the boundary conditions were ignored, we would be able to produce metrics of positive scalar curvature or even positive Ricci curvature on the torus $T^n$, by applying the doubling or gluing theorems to $T^{n-1}\times[0,1]$, which carries a positive Ricci curvature metric.
Since the fundamental group of $T^n$ is not finite, the existence of positive Ricci curvature on $T^n$ would contradict the theorem of Bonnet-Myers.
Stronger still, the existence of positive scalar curvature is ruled out by the topological property that $T^n$ is enlargeable, see~\cite[Thm.~A]{GL}.

To discover an underlying gluing principle, two schools of thought must be joined.
\begin{enumerate}
\item[$\triangleright$]{Gluing theorems of \textit{Perelman type}:
On the one hand, there exist interior curvature conditions more rigid than positive Ricci curvature such that Perelman's gluing theorem still holds with these curvature conditions and the fixed boundary condition $\mathrm{II}_{g_1}+\mathrm{II}_{g_2}\geq 0$.
In other words, the entire second fundamental form needs to be non-negative in the sense of bilinear forms.

For example, Reiser-Wraith~\cite[Thm.~A]{RW} proved the gluing theorem for positive sectional curvature.
Schlichting~\cite[Thm.~1.1.2]{Schlichting2014} and Chow~\cite[Thm.~1]{Chow} even proved the result for positive curvature operators.}
\item[$\triangleright$]{Gluing theorems of \textit{Gromov-Lawson type}:
On the other hand, there are interior curvature conditions weaker than positive Ricci curvature for which the boundary conditions in a gluing theorem can be relaxed.
Instead of full convexity, only certain mean curvatures need to be non-negative.

In addition to Gromov-Lawson's theorem for positive scalar curvature, we would like to mention the work of Reiser-Wraith~\cite{RW} and Chow-Johne-Wan~\cite{CJW}, who studied curvature conditions with similar flexibility.}
\end{enumerate}

For a systematic point of view, consider the vector space $\mathscr{C}_B(\mathbb{R}^n)$ of algebraic curvature tensors satisfying the first Bianchi identity, see Definition~\ref{Bessedef} and~\cite[Chap.~1.G]{Besse} for a more exhaustive source.
Let $\mathsf{C}\subset\mathscr{C}_B(\mathbb{R}^n)$ be an open, convex and $\On(n)$-invariant cone.
We say that a Riemannian manifold with boundary $(N^n,g)$ \textit{satisfies $\mathsf{C}$} if the pulled-back tensors of the Riemannian curvature tensor $R_g$ meet the condition $\iota^\ast R_g(p)\in\mathsf{C}$ for all $p\in M$ and all linear isometries $\iota\colon\mathbb{R}^n\to (T_pN,g(p))$.

Furthermore, let $\mathscr{R}_{\mathsf{C}}(N)$ denote the space of smooth Riemannian metrics on $N$ that satisfy $\mathsf{C}$.
This space is equipped with the weak $C^\infty$-topology.

First, let us look at the \textit{Perelman type} direction:
For two smooth manifolds $M_1,M_2$ with non-empty boundary $\partial M_1=\partial M_2=:\partial M$, we define
\begin{equation}\nonumber
\mathscr{R}_{\mathsf{C}}^{\mathrm{II}_1+\mathrm{II}_2\geq 0}(M_1\sqcup M_2):=\{g_1\sqcup g_2\in\mathscr{R}_{\mathsf{C}}(M_1\sqcup M_2):(g_1)_0=(g_2)_0,\mathrm{II}_{g_1}+\mathrm{II}_{g_2}\geq 0\}.
\end{equation}
The last condition means that $\mathrm{II}_{g_1}+\mathrm{II}_{g_2}$ is positive semi-definite at every point in $\partial M$.

Recall that a continuous map $f\colon X\to Y$ between topological spaces is called a \textit{weak homotopy equivalence} if it induces a bijection $\pi_0(X)\cong\pi_0(Y)$ and isomorphisms $\pi_m(X,x)\cong\pi_m(Y,f(x))$ for all $x\in X$ and $m\geq 1$.

Our first main theorem concerns algebraic curvature cones that contain the cone $\mathsf{C}_{\mathcal{R}>0}$ of positive curvature operators.
Note that the manifolds involved, and also their boundaries, may be non-compact.

\begin{theoremA*}\label{ThmA} \textup{(General gluing theorem of \textit{Perelman type}, see Theorem~\ref{whe})}

Let $n\geq 2$.
Let $M_1,M_2$ be smooth $n$-dimensional manifolds with non-empty boundary $\partial M_1=\partial M_2=:\partial M$.
Let $\mathsf{C}\subset\mathscr{C}_B(\mathbb{R}^n)$ be an open, convex and $\On(n)$-invariant cone with $\mathsf{C}_{\mathcal{R}>0}\subset\mathsf{C}$.
Then the inclusion
\begin{equation}\nonumber
\mathscr{R}_{\mathsf{C}}(M_1\cup_{\partial M}M_2)\hookrightarrow\mathscr{R}^{\mathrm{II}_1+\mathrm{II}_2\geq 0}_{\mathsf{C}}(M_1\sqcup M_2)
\end{equation}
is a weak homotopy equivalence.
\end{theoremA*}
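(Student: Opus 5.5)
We first interpret the inclusion: a smooth metric $f$ on $M_1\cup_{\partial M}M_2$ restricts to $g_1\sqcup g_2$ with $(g_1)_0=(g_2)_0$. The inward normals are opposite, so $\mathrm{II}_{g_1}=-\mathrm{II}_{g_2}$ and $\mathrm{II}_{g_1}+\mathrm{II}_{g_2}=0\geq 0$. Hence the map is well defined and continuous for the weak $C^\infty$-topologies.

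To prove it is a weak homotopy equivalence, we use the standard compression criterion. Take a compact family $(g_{1,\xi}\sqcup g_{2,\xi})_{\xi\in D^m}$ in $\mathscr{R}^{\mathrm{II}_1+\mathrm{II}_2\geq 0}_{\mathsf{C}}(M_1\sqcup M_2)$ whose restriction to $S^{m-1}$ consists of smooth glued metrics. We deform it, relative to a neighbourhood of the part already lying in the subspace (after a preliminary collar reparametrisation), into a family of smooth glued metrics, through metrics in the larger space. The deformation proceeds in two steps.

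\textbf{Step 1: boundary $1$-jet deformation.} Work in normal collars $\partial M\times[0,\varepsilon)$ on each side. Modify $g_{i,\xi}$ near the boundary so that the $1$-jets match. Concretely, we make $\mathrm{II}_{g_1}$ and $-\mathrm{II}_{g_2}$ both equal to $\tfrac12(\mathrm{II}_{g_1}-\mathrm{II}_{g_2})$. For $g_1$ this replaces $\mathrm{II}_{g_1}$ by $\mathrm{II}_{g_1}-\tfrac12(\mathrm{II}_{g_1}+\mathrm{II}_{g_2})$, which decreases it by a positive semidefinite amount; symmetrically for $g_2$. Deformations that decrease the second fundamental form, applied in a thin layer, change the curvature tensor by adding a large multiple of $h\KN(dt^2)$-type terms with $h\geq 0$. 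These terms lie in the closure of $\mathsf{C}_{\mathcal{R}>0}$, which is contained in $\overline{\mathsf{C}}$, up to lower-order errors controlled by the layer width. Since $\mathsf{C}$ is an open convex cone, adding elements of $\overline{\mathsf{C}}\supset\overline{\mathsf{C}_{\mathcal{R}>0}}$ preserves membership. To make the error terms harmless we first make the added term strictly positive with a small margin, using $\mathcal{R}>0$ slack and the openness of $\mathsf{C}$ together with compactness of $D^m$. We handle non-compact $\partial M$ by choosing the layer width as a positive function on $\partial M$ depending continuously on $\xi$. The main obstacle is this explicit $1$-jet deformation with a curvature estimate uniform in the family. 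I would build it from the Bär--Hanke-type formula $g_t=g+t\,\phi(t/\delta)\,k$ with $k$ the correction form, and compute that the dominant second-derivative term of the curvature is $-\tfrac12\phi''$-weighted $k\KN dt^2$ with the correct sign.

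\textbf{Step 2: smoothing.} After Step 1 the glued metric is $C^1$ across $\partial M$. We then want a $C^\infty$ metric close in $C^1$ with curvature in $\mathsf{C}$. I would invoke the local flexibility lemma for the open, $\Diff$-invariant partial differential relation ``$R_g\in\mathsf{C}$'', applied in the family version. First, in Gaussian coordinates, the $C^1$ metric has bounded second derivatives whose jump is absent after a further matching of $2$-jets up to an open curvature perturbation. Alternatively, mollify in the normal direction, using that the curvature of the mollified metric is approximately a convex average of curvatures, and that convexity of $\mathsf{C}$ keeps it inside. The local flexibility lemma then patches the mollified metric to the original outside $\mathscr{U}$, relative to the family's boundary. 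The result is a family in $\mathscr{R}_{\mathsf{C}}(M_1\cup_{\partial M}M_2)$ homotopic within the larger space to the original, which yields injectivity and surjectivity on all homotopy groups.
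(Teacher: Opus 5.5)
Your Step~1 has a genuine gap: the curvature estimate you sketch is false, and your plan lacks the mechanism that makes the $1$-jet deformation work. Write $a:=\mathrm{II}_{g_1}+\mathrm{II}_{g_2}\geq 0$.

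\emph{The single-scale ansatz has the wrong sign somewhere.} With $u=t/\delta$, the second $t$-derivative of the correction $t\,\phi(t/\delta)\,k$ (with $k$ a positive multiple of $a$) is $\delta^{-1}(u\phi)''(u)\,k$. The function $u\phi(u)$ starts with slope $1$ and must return to $0$, so it cannot be concave, and $(u\phi)''>0$ somewhere. There the $\nu\nu$-block receives $-\tfrac{1}{2\delta}(u\phi)''\,k$, which is a multiple of size $\sim\delta^{-1}$ of $a\KN\mathfrak{b}^\perp$ with the wrong sign. This is why the paper uses the two-scale cutoff $\chi_\delta$ of Lemma~\ref{bumpfunction}: $\ddot\chi_\delta\in[-2/\delta,0]$ on $[0,\delta]$, and $|\ddot\chi_\delta|\le c_0$ on $[\delta,\sqrt\delta]$, where $\chi_\delta$ returns to zero.

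\emph{Even with that cutoff, the remaining errors are $O(1)$, not small in the layer width.} They are:
\begin{itemize}
\item the bounded term $\ddot\chi_\delta$ on $[\delta,\sqrt\delta]$;
\item the Codazzi entries;
\item the terms $\mathrm{II}_t(W_t\,\cdot\,,\cdot)$;
\item the discrepancy between $\gamma_t$- and $\gamma_0$-orthonormal frames (of size $\sim t$) multiplied by $\ddot\chi_\delta\sim\delta^{-1}$, which is where Lemma~\ref{QGS} enters;
\item in the purely tangential block, the Gauss term $-\tfrac12\mathrm{II}_t\KN\mathrm{II}_t$, which changes by an $O(1)$ amount of no definite sign as $\mathrm{II}_t$ moves to its new value.
\end{itemize}
No ``$\mathcal{R}>0$ slack'' is available to absorb these. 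The tensor $a\KN\mathfrak{b}^\perp$ is zero on purely tangential entries, and it vanishes wherever $a$ has kernel, which the closed condition $a\geq 0$ allows. The margin coming from openness of $\mathsf{C}$ is a fixed quantity, not large compared with these terms.

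\emph{How the paper supplies the slack.} It does so before touching the $1$-jet. Proposition~\ref{2jet} uses the local flexibility lemma to deform to $\Lambda$-normal metrics $g_t=g_0-2t\,\mathrm{II}-\Lambda t^2 g_0$ with $\Lambda\gg 1$. Since $\mathsf{C}\supset\mathsf{C}_{\mathcal{R}>0}$, a continuous family extending $\mathsf{C}$ is Perelman flexible. Hence the term $\Lambda\,\mathfrak{b}\KN\mathfrak{b}^\perp$ produces a perpendicular ball $B^\perp_{\rho\sqrt{\Lambda}}$ of room around the curvature. With layer widths $\delta<\Lambda^{-4}$, this ball absorbs every $O(1)$ term that has a normal slot.

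\emph{The tangential block.} The ball $B^\perp$ does not reach this block. Proposition~\ref{1jet} handles it by comparing $R_\gamma$ with the convex combination $(1-\theta)R_{g_1}+\theta R_{g_2}$, where $\theta=s\dot\chi_\delta$. The Gauss remainder is then exactly $\tfrac12\theta(1-\theta)\,a\KN a$, which lies in $\overline{\mathsf{C}}$ because $a\geq 0$. Convexity of the boundary is used here as well, not only through $a\KN\mathfrak{b}^\perp\in\overline{\mathsf{C}}$.

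\emph{Consequence for your Step~2.} After this, both sides are $\Lambda$-normal with the same $\Lambda$ and opposite second fundamental forms. So the glued metric is smooth outright, and no separate smoothing step is needed. Your first variant of Step~2, matching $2$-jets ``up to an open perturbation'', is not a small perturbation at all. It is precisely this large-$\Lambda$ deformation, and it has to come first.
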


For the \textit{Gromov-Lawson} equivalent, we use a little bit more language.
Let $\mathfrak{b}$ denote the standard inner product on $\mathbb{R}^n$, i.e. $\mathfrak{b}(e_i,e_j)=\delta_{ij}$, and let $\mathfrak{b}^\perp$ be the bilinear form on $\mathbb{R}^n$ with $\mathfrak{b}^\perp(e_i,e_j)=\delta_{ij}\delta_{1i}$.
We denote by $\KN$ the Kulkarni-Nomizu product of symmetric bilinear forms.

Unlike in Theorem~A, we will not consider general cones in the Gromov-Lawson setup, but only cones containing the algebraic curvature tensor $\mathfrak{b}\KN\mathfrak{b}^\perp$.
This condition encodes the additional flexibility for the Gromov-Lawson type.
To name the associated boundary conditions, we set

\begin{align}\nonumber
\mathscr{R}_{\mathsf{C}}^{\mathrm{GL}}(M_1\sqcup M_2):=\{g_1\sqcup g_2\in\mathscr{R}_{\mathsf{C}}(M_1&\sqcup M_2):(g_1)_0=(g_2)_0,\\\nonumber
&\iota_{g_0}^\ast(\mathrm{II}_{g_1}(p)+\mathrm{II}_{g_2}(p))\KN\mathfrak{b}^\perp\in\overline{\mathsf{C}}\;\;\text{for all $p\in\partial M$}\},
\end{align}
where $\iota_{g_0}\colon\mathbb{R}^{n-1}\to(T_p(\partial M),(g_1)_0(p)=(g_2)_0(p))$ is a linear isometry and $\overline{\mathsf{C}}$ denotes the closure of $\mathsf{C}$ in $\mathscr{C}_B(\mathbb{R}^n)$.
The bilinear form $\iota_{g_0}^\ast(\mathrm{II}_{g_1}(p)+\mathrm{II}_{g_2}(p))$ on $\mathbb{R}^{n-1}$ is regarded as a bilinear form on $\mathbb{R}^n$ by means of the inclusion $\{0\}\times\mathbb{R}^{n-1}\hookrightarrow\mathbb{R}^n$ and extension with zero in the first coordinate direction.

These boundary conditions are attributed to Gromov-Lawson, indicated by the letters 'GL'.

\begin{theoremB*} \textup{(General gluing theorem of \textit{Gromov-Lawson type}, see Theorem~\ref{whe2})}

Let $n\geq 2$.
Let $M_1,M_2$ be smooth $n$-dimensional manifolds with non-empty boundary $\partial M_1=\partial M_2=:\partial M$.
Let $\mathsf{C}\subset\mathscr{C}_B(\mathbb{R}^n)$ be an open, convex and $\On(n)$-invariant cone with $\mathfrak{b}\KN\mathfrak{b}^\perp\in\mathsf{C}$.
Then the inclusion
\begin{equation}\nonumber
\mathscr{R}_{\mathsf{C}}(M_1\cup_{\partial M}M_2)\hookrightarrow\mathscr{R}_{\mathsf{C}}^{\mathrm{GL}}(M_1\sqcup M_2)
\end{equation}
is a weak homotopy equivalence.
\end{theoremB*}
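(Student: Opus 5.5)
The plan is to follow the Bär--Hanke strategy: reduce the weak homotopy equivalence to a relative, parametrised deformation statement, and then prove that statement by two local constructions near $\partial M$.

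\emph{Formal reduction.} Let $K$ be a compact family, say a disk $D^k$ with boundary sphere $S^{k-1}$. It suffices to show the following. Every continuous family $K\to\mathscr{R}_{\mathsf{C}}^{\mathrm{GL}}(M_1\sqcup M_2)$ that maps $S^{k-1}$ into the subspace $\mathscr{R}_{\mathsf{C}}(M_1\cup_{\partial M}M_2)$ can be deformed, relative to $S^{k-1}$ and inside $\mathscr{R}_{\mathsf{C}}^{\mathrm{GL}}$, into that subspace.

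Here the subspace is understood via the map $f\mapsto f|_{M_1}\sqcup f|_{M_2}$. A smooth metric $f$ on the glued manifold has $\mathrm{II}_1+\mathrm{II}_2=0$ along $\partial M$, so the boundary condition holds trivially because $0\in\overline{\mathsf{C}}$.

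The deformation will proceed in two steps.
\begin{enumerate}
\item[(i)] Make the boundary condition strict, i.e. push $\iota_{g_0}^\ast(\mathrm{II}_1+\mathrm{II}_2)\KN\mathfrak{b}^\perp$ into the open cone $\mathsf{C}$.
\item[(ii)] Smooth the glued family across $\partial M$ while staying in $\mathscr{R}_{\mathsf{C}}$.
\end{enumerate}
Throughout, the family is kept unchanged wherever it is already smooth. Alternatively, I would interpolate with the convex structure of the boundary condition and use a cutoff in the parameter, so that the deformation is relative to $S^{k-1}$.

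\emph{Step (i): bending the boundary.} I would use an explicit $1$-jet deformation in normal collars $\partial M\times[0,\varepsilon)$ of each $M_i$. Change $g_i$ to $g_i+t\,\phi(r)\,h$, where $r$ is the normal distance, $\phi(0)=0$, $\phi'(0)=1$, and $\phi$ is supported in a thin collar. This shifts $\mathrm{II}_{g_i}$ by a multiple of $(g_i)_0$ and leaves $(g_i)_0$ unchanged.

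The curvature of the deformed metric near the boundary is $R_{g_i}$ plus terms of order $t\phi''$ and lower order. The $\phi''$-term is, up to sign, $\phi''(r)\,\mathfrak{b}\KN\mathfrak{b}^\perp$ in an adapted frame: it is the Gauss--Codazzi normal–tangential part coming from the second normal derivative of the metric. This is exactly where the hypothesis $\mathfrak{b}\KN\mathfrak{b}^\perp\in\mathsf{C}$ enters. By choosing $\phi$ with $\phi''$ of the sign that makes this term a positive multiple of $\mathfrak{b}\KN\mathfrak{b}^\perp$, the deformation stays in $\mathsf{C}$. Openness and convexity of $\mathsf{C}$ absorb the bounded error terms if the collar is thin. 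The result increases the mean-type quantity, giving the strict condition.

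\emph{Step (ii): local flexibility.} With strict boundary condition, I would glue using a $C^1$ model. Define $f=g_1\cup g_2$ interpolated in Fermi coordinates across $\partial M$ as $dr^2+g_r$, where $g_r$ is a smooth interpolation of the two $1$-jets. The jump in $\partial_r g_r$ equals $-2(\mathrm{II}_1+\mathrm{II}_2)$, and smoothing this jump over width $\delta$ produces a large curvature term of size $\delta^{-1}(\mathrm{II}_1+\mathrm{II}_2)\KN\mathfrak{b}^\perp$.

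The strict condition, together with the cone property, makes this term dominate inside $\mathsf{C}$ as $\delta\to0$, while the remaining curvature stays bounded. This is the step I expect to be the main obstacle. It requires computing the curvature of $dr^2+g_r$ with $g_r$ only approximately smooth and showing that all $\delta^{-1}$-order terms are exactly of the form $\partial_r^2 g_r\KN\mathfrak{b}^\perp$, with no uncontrolled quadratic terms in $\partial_r g_r$. I would handle it with the Gauss--Codazzi--Riccati formulas for the curvature of a warped-type metric.

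Finally, I would invoke the local flexibility lemma for the open relation $\mathsf{C}$ (from the paper's preliminaries) to blend the smoothed metric back to the original family outside a small neighbourhood $\mathscr{U}$ of $\partial M$, uniformly in $K$. Compactness of $K$, together with a locally finite exhaustion to handle a non-compact $\partial M$, gives uniform choices of $\varepsilon$ and $\delta$. The deformation is performed with a parameter-dependent function that vanishes near $S^{k-1}$, which yields the relative statement.
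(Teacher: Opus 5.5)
Your proposal has a genuine gap: Step~(i) fails as described, and the missing piece is the paper's $\Lambda$-normal preparation.

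\textbf{Step~(i).} With $g'_r=g_r+t\phi(r)g_0$ you get $\mathrm{II}_{g'}=\mathrm{II}_g-\tfrac{t}{2}g_0$. You also get a new term $-\tfrac{1}{2}t\phi''(r)\,g_0$ in $R(X,\nu,\nu,Y)$, i.e.\ $-\tfrac{1}{2}t\phi''\,\mathfrak{b}\KN\mathfrak{b}^\perp$ in an adapted frame. Since $\phi'$ must go from $1$ to $0$, $\int_0^\infty\phi''\,dr=-1$, so the coefficient $-\tfrac{1}{2}t\phi''$ integrates to $\tfrac{t}{2}$. It can therefore be made essentially non-negative only for $t>0$, i.e.\ when $\mathrm{II}$ is \emph{lowered}. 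Raising $\mathrm{II}$ needs $t<0$, and then the new term averages to $-\tfrac{|t|}{2\varepsilon}$ times $\mathfrak{b}\KN\mathfrak{b}^\perp$ over a collar of width $\varepsilon$. A thin collar makes this worse, not better. This is the asymmetry in Theorem~C: GL-flexibility lets you push $\mathrm{II}$ down, not up.

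What does work is a $C^2$-small perturbation using openness. But that gives only a small margin, not uniform along a non-compact $\partial M$. It also has to be switched off on $S^{k-1}$, where $\mathrm{II}_1+\mathrm{II}_2=0$ and the family must stay in the subspace. So the strictness on which Step~(ii) rests degenerates precisely near $S^{k-1}$. Moreover, a cutoff vanishing on a \emph{neighbourhood} of $S^{k-1}$ leaves the nearby non-smooth members undeformed, so the end of your homotopy does not land in $\mathscr{R}_{\mathsf{C}}(M_1\cup_{\partial M}M_2)$.

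\textbf{Step~(ii).} This step only produces an endpoint. Letting $\delta\to 0$ is no path back to $g_1\sqcup g_2$, since restrictions of a smooth glued metric always have $\mathrm{II}_1+\mathrm{II}_2=0$. A path in $\mathscr{R}^{\mathrm{GL}}_{\mathsf{C}}(M_1\sqcup M_2)$ must pass through pairs with intermediate second fundamental forms, e.g.\ $(1-s)\mathrm{II}_{g_1}-s\,\mathrm{II}_{g_2}$ on $M_1$. There the quadratic Gauss and Weingarten terms create non-convex errors at $\partial M$, such as $\tfrac{1}{2}s(1-s)(\mathrm{II}_{g_1}+\mathrm{II}_{g_2})\KN(\mathrm{II}_{g_1}+\mathrm{II}_{g_2})$. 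Being bounded does not put them in $\mathsf{C}$. They must be absorbed either by the $\delta^{-1}$-term, which needs the uniform strict margin you lack, or by an extra margin.

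\textbf{The missing idea.} Supplying that extra margin is what the paper does.
\begin{enumerate}
\item[(1)] It first uses the local flexibility lemma to make the family $\Lambda$-normal: $\ddot g_0=-2\Lambda g_0$ with $\Lambda$ huge and the $1$-jet unchanged (Proposition~\ref{2jet}).
\item[(2)] By Lemma~\ref{bflemma}, the term $\Lambda\,\mathfrak{b}\KN\mathfrak{b}^\perp$ then yields balls $B_{\rho\Lambda}$ around the pulled-back curvature tensors near $\partial M$ inside $\mathsf{C}$. These swallow all $O(1)$ errors.
\item[(3)] Only then is the $1$-jet on $M_1$ moved from $\mathrm{II}_{g_1}$ to $-\mathrm{II}_{g_2}$, using $\chi_\delta$ with $\ddot\chi_\delta\le 0$ on $[0,\delta]$. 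For this the \emph{non-strict} GL condition suffices (Proposition~\ref{1jet2}, Theorem~\ref{theorem2} with $k=-\mathrm{II}_{g_2}\sqcup\mathrm{II}_{g_2}$).
\end{enumerate}
Afterwards both sides have the same $g_0$ and $\ddot g_0$, opposite $\mathrm{II}$, and vanishing higher normal derivatives, so they glue smoothly. Members already smooth across stay so: Proposition~\ref{2jet} treats both sides alike, and nothing moves where $\mathrm{II}_{g_1}+\mathrm{II}_{g_2}=0$.

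You also need the collar uniformisation of Corollary~\ref{uniformgcn}. It ensures that ``smooth across $\partial M$'' refers to one fixed smooth structure, not to the $\xi$-dependent Fermi coordinates of your construction.
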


Perelman's original gluing theorem clearly follows from surjectivity in the $\pi_0$-statement of Theorem~A for the case where $\mathsf{C}=\mathsf{C}_{\ric>0}$ is the cone of positive Ricci curvature.
It is easy to verify that Perelman's gluing theorem is actually implied by both Theorem~A, which is more generic, and Theorem~B, which concerns more specific curvature conditions.
From this perspective, positive Ricci curvature is a hinge for the two settings.
Furthermore, from a different perspective, the curvature conditions considered in Theorem~B form a singularity among the general curvature conditions in Theorem~A.
This statement will be clarified in Section 2, see Remark~\ref{remark}.
Both perspectives serve to explain how the two types of gluing theorems result in unification.

To the best of the authors knowledge, the full homotopical results of Theorems~A and~B were previously only known for metrics with lower scalar curvature bounds and mean convex singularities, see~\cite[Thm.~4.11]{BH2023} and~\cite[Rmk.~3.18]{Frerichs2025}.

Theorems~A and~B cover a wide range of gluing results by various authors, some of which have already been mentioned above.
In Table~\ref{overview} we list several pairs of interior curvature conditions and boundary conditions for which a gluing theorem was established in previous work.
The following abbreviations are used:
$\mathcal{R}_2>0$ means '$2$-positive curvature operator', $\PIC$ means 'positive isotropic curvature', $\ric_k>0$ means 'positive $k^{th}$ intermediate Ricci curvature', $\scal_k>0$ means '$k$-positive Ricci curvature' and $\mathcal{C}_m>0$ means 'positive $m$-intermediate curvature'.
The other terms should therefore be clear.
More detailed explanations for the curvature conditions will be given in Section 2.

At this point, we shall consider the boundary conditions:
A symmetric bilinear form $A\colon V\times V\to\mathbb{R}$ on a finite dimensional inner product space $V$ is called $k$-positive if $\sum_{i=1}^kA(u_i,u_i)>0$ for every orthonormal $k$-frame $(u_1,\dots,u_k)\subset V$.
Equivalently, the sum of the $k$ smallest eigenvalues of $A$ is $>0$, see e.g.~\cite[Lem.~1.1]{Sha}.

In the same way, one can define what it means for $A$ to be $k$-non-negative.

For the second fundamental forms $\mathrm{II}_{g_1}$ and $\mathrm{II}_{g_2}$, we say that $\mathrm{II}_{g_1}+\mathrm{II}_{g_2}$ is $k$-convex if $\mathrm{II}_{g_1}(p)+\mathrm{II}_{g_2}(p)$ is $k$-non-negative for all $p\in\partial M$.
The sum $\mathrm{II}_{g_1}+\mathrm{II}_{g_2}$ is called mean-convex if it is $(n-1)$-convex.

\begin{table}[h]
\begin{tabular}{|c|c|c|}
\hline
$\mathsf{C}$&\begin{tabular}{cc}Boundary condition\\($\mathrm{II}_{g_1}+\mathrm{II}_{g_2}$ is)\end{tabular}&Contributors\\[0.25cm]
\hline
\hline
&&\\[-0.3cm]
$\mathcal{R}>0$&convex&Schlichting~\cite[Thm.~1.1.2]{Schlichting2014}, Chow~\cite[Thm.~1]{Chow}\\[0.075cm]
\hline&&\\[-0.3cm]
$\mathcal{R}_2>0$&convex&Schlichting~\cite[Thm.~1.7.3]{Schlichting2014}\\[0.075cm]
\hline&&\\[-0.3cm]
\begin{tabular}{cc}$\PIC2$\\$\PIC1$\vspace{0.075cm}\end{tabular}&convex&Schlichting~\cite[Thm.~1.7.4]{Schlichting2014}, Chow~\cite[Thm.~1]{Chow}\\[0.075cm]
\hline&&\\[-0.3cm]
$\PIC$&$2$-convex&Schlichting~\cite[Thm.~1.7.4]{Schlichting2014}, Chow~\cite[Thm.~1]{Chow}\\[0.075cm]
\hline&&\\[-0.3cm]
$\sec>0$&convex&Kosovskii~\cite[Thm.~1.1]{Kosovskii}, Reiser-Wraith~\cite[Thm.~A]{RW}\\[0.075cm]
\hline&&\\[-0.3cm]
$\ric_k>0$&convex&Schlichting~\cite[Thm.~7.5]{Schlichting2012}, Reiser-Wraith~\cite[Thm.~A]{RW}\\[0.075cm]
\hline&&\\[-0.3cm]
$\ric>0$&convex&\begin{tabular}{cc}Perelman~\cite[Sec.~4]{Perelman}, Schlichting~\cite[Thm.~1.7.1]{Schlichting2014},\\Botvinnik-Walsh-Wraith~\cite[Thm.~2]{BWW},\\
Reiser-Wraith~\cite[Thm.~A]{RW}\vspace{0.075cm}
\end{tabular}\\
\hline&&\\[-0.3cm]
$\scal_k>0$&\begin{tabular}{cc}\footnotesize{$k$-convex, $1\leq k\leq n-2$}\\\footnotesize{$(k-1)$-convex, $k=n-1,n$}\vspace{0.075cm}\end{tabular}&Reiser-Wraith~\cite[Thm.~A]{RW}\\
\hline&&\\[-0.3cm]
$\scal>0$&mean-convex&\begin{tabular}{cc}Gromov-Lawson~\cite[Thm.~5.7]{GL}, Almeida~\cite[Thm.~1.1]{Almeida},\\Brendle-Marques-Neves~\cite[Thm.~5]{BMN},\\
Bär-Hanke~\cite[Thm.~4.11]{BH2023}, F.~\cite[Thm.~3.17]{Frerichs2025}\vspace{0.075cm}
\end{tabular}\\
\hline&&\\[-0.3cm]
$\mathcal{C}_m>0$&$m$-convex&Chow-Johne-Wan~\cite[Thm.~1.2]{CJW}\\[0.075cm]
\hline
\end{tabular}
\vspace{5pt}
\caption{Pairs of interior curvature conditions and boundary conditions with gluing theorems.}
\label{overview}
\end{table}

Theorem~A implies all gluing results where the boundary condition is 'convex'.
The gluing theorem for $\PIC$ follows from Theorem~\ref{whe}, of which Theorem~A is one part.
Theorem~B covers all remaining results, which will be shown in the main text.

Let us remark that, for $\scal_{n-1}>0$, the boundary condition in Theorem~B is slightly weaker than that in Reiser-Wraith's gluing theorem~\cite[Thm.~A]{RW}:
If $\mathrm{II}_{g_1}+\mathrm{II}_{g_2}$ is $(n-2)$-convex, then the boundary condition in Theorem~B is satisfied and the gluing theorem holds.
However, it can also be met if $\mathrm{II}_{g_1}(p)+\mathrm{II}_{g_2}(p)$ is only $(n-1)$-non-negative for some or all $p\in\partial M$.
Our boundary condition is a precise or continuous translation for this particular circumstance into the language of algebraic curvature cones.

Furthermore, our gluing theorems are applicable not only to individual curvature conditions, as shown in Table~\ref{overview}, but also to a broader class that has been defined in previous work.
To recapitulate the definition of this class, we identify the exterior product $(\Lambda^2\mathbb{R}^n,\mathfrak{b})$ together with its standard euclidean inner product with the Lie algebra $(\mathfrak{so}(n),\langle\cdot,\cdot\rangle)$ of real skew-symmetric matrices, where the inner product is given as $\langle A,B\rangle=-\tfrac{1}{2}\tr(AB)$.
We use the same notation for the complexification $(\mathfrak{so}(n,\mathbb{C}),\langle\cdot,\cdot\rangle)$.
Every curvature tensor $R\in\mathscr{C}_B(\mathbb{R}^n)$ can be regarded as a tensor $R_{\mathbb{C}}\colon\mathfrak{so}(n,\mathbb{C})\to\mathfrak{so}(n,\mathbb{C})$ through complex linear extension.

For an $\mathrm{Ad}_{\mathrm{SO}(n,\mathbb{C})}$-invariant subset $S\subset\mathfrak{so}(n,\mathbb{C})$, the definition
\begin{equation}\nonumber
\mathsf{C}(S):=\{R\in\mathscr{C}_B(\mathbb{R}^n):\langle R_{\mathbb{C}}(v),\bar{v}\rangle>0\;\;\text{for all $v\in S$}\}
\end{equation}
yields an open, convex and $\On(n)$-invariant cone.
This class of curvature cones has been investigated by Wilking in his Lie algebraic approach to Ricci flow invariant curvature conditions in~\cite{Wilking}.

Since $\mathsf{C}(\mathfrak{so}(n,\mathbb{C}))$ is the cone of positive curvature operators, Theorem~\ref{whe} (which includes Theorem~A) applies to all $\mathrm{Ad}_{\mathrm{SO}(n,\mathbb{C})}$-invariant subsets $S\subset\mathfrak{so}(n,\mathbb{C})$.
From this point of view, our results offer a connection to abstract Ricci flow theory.
Note that for specific choices of $S$, the cones $\mathsf{C}(S)$ coincide with some of the curvature conditions in Table~\ref{overview}, see~\cite{Wilking}.
In addition to $\mathcal{R}>0$, these conditions are $\mathcal{R}_2>0,\PIC2,\PIC1$ and $\PIC$.

In both Theorem~A and~B, a crucial step in the gluing process involves deforming the second fundamental form of $g_1\sqcup g_2$.
For cones with $\mathfrak{b}\KN\mathfrak{b}^\perp\notin\mathsf{C}$, this process is more complex
as it requires detailed analysis and control of the curvature contributions coming from the Gauss equation for the boundary.
The special significance of the Gauss equation is also the reason why strong convexity assumptions are needed for the boundary gluing in these cases, see Proposition~\ref{1jet}.

The difficulty just described does not occur with cones that do contain $\mathfrak{b}\KN\mathfrak{b}^\perp$, which has more favorable consequences:
Rather than just gluing two manifolds together, it is also possible to deform families of metrics on a \textit{single} manifold and compare different boundary conditions.
So far, this has only been done for metrics with lower scalar curvature and mean curvature bounds, see~\cite[Chap.~4.1]{BH2023}.
Theorem~\ref{theorem2} in the present paper generalises these results to all cones containing $\mathfrak{b}\KN\mathfrak{b}^\perp$.

To clarify the statement of Theorem~\ref{theorem2}, we will now present a simplified version of it.
The idea is as follows:
If $\mathsf{C}$ is a cone with $\mathfrak{b}\KN\mathfrak{b}^\perp\in\mathsf{C}$ and if $(M,g)$ is a Riemannian manifold with non-empty boundary $\partial M$ that satisfies $\mathsf{C}$, then one can deform the second fundamental form of $\partial M$ into an arbitrary $(0,2)$-tensor field $k$ along $\partial M$, provided that $\mathrm{II}-k$ satisfies a boundary condition of Gromov-Lawson type.

\begin{theoremC*} \textup{(Deformation principle for GL-boundary conditions, simplified version of Theorem~\ref{theorem2})}

Let $n\geq 2$.
Let $\mathsf{C}\subset\mathscr{C}_B(\mathbb{R}^n)$ be an open, convex and $\On(n)$-invariant cone with $\mathfrak{b}\KN\mathfrak{b}^\perp\in\mathsf{C}$.

Let $M$ be a smooth $n$-dimensional manifold with non-empty boundary $\partial M$ and let $g$ be a Riemannian metric on $M$ that satisfies $\mathsf{C}$.
Let $k\in C^{\infty}(\partial M; T^\ast\partial M\otimes T^\ast\partial M)$ be a symmetric $(0,2)$-tensor field along $\partial M$ satisfying
\begin{align}\nonumber
\iota_{g_0}^\ast(\mathrm{II}_{g}(p)-k(p))\KN\mathfrak{b}^\perp\in\overline{\mathsf{C}}
\end{align}
for all $p\in\partial M$ and all linear isometries $\iota_{g_0}\colon\mathbb{R}^{n-1}\to(T_p(\partial M),g_0(p))$.

For each neighbourhood $\mathscr{U}\subset M$ of $\partial M$ there exists a continuous path $(f_s)_{s\in[0,1]}$ of Riemannian metrics on $M$ such that
\begin{enumerate}
\item[(a)]{$f_s$ satisfies $\mathsf{C}$ for all $s\in[0,1]$;}
\item[(b)]{$f_0=g$;}
\item[(c)]{$\mathrm{II}_{f_1}=k$;}
\item[(d)]{$f_s=g$ on $M\setminus\mathscr{U}$ for all $s\in[0,1]$.}
\end{enumerate}
\end{theoremC*}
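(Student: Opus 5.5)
The plan is to follow the Bär–Hanke strategy of \cite[Thm.~3.7]{BH2023}: first an explicit $1$-jet deformation of $g$ along $\partial M$, and then the local flexibility lemma to replace the resulting $1$-jet-controlled metric by a genuine metric satisfying $\mathsf{C}$.

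Fix a collar $\Phi\colon\partial M\times[0,\varepsilon)\to\mathscr{U}$ given by normal geodesics of $g$, with coordinate $t=\dist_g(\cdot,\partial M)$, so that $g=dt^2+g_t$ and $\partial_t g_t|_{t=0}=-2\mathrm{II}_g$. For $s\in[0,1]$ consider the candidate metrics
\begin{equation}\nonumber
h_s=dt^2+g_t+2s\,t\,\chi(t/\delta)\,(\mathrm{II}_g-k),
\end{equation}
where $\chi$ is a cutoff equal to $1$ near $0$ and $\delta>0$ is small (non-constant in $p$ if $\partial M$ is non-compact). Then $h_s$ agrees with $g$ to zeroth order on $\partial M$, and $\mathrm{II}_{h_1}=k$. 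However, the second $t$-derivative of the correction term blows up as $\delta\to0$, so the curvature of $h_s$ is not controlled. The main step is therefore to construct, instead of $h_s$, a metric whose $2$-jet is adapted.

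I would bend the normal derivative from $\mathrm{II}_g$ to $\mathrm{II}_g-s(\mathrm{II}_g-k)$ over a very thin layer $t\in[0,\delta]$ using a convex/concave profile $\psi_\delta(t)$ with $\psi_\delta'(0)=1$, $\psi_\delta=0$ for $t\geq\delta$, and $|\psi_\delta|\leq C\delta$. The dominant curvature contribution is then $-\psi_\delta''(t)$ times the second normal derivative term. By the standard formula for $R$ of $dt^2+g_t$, this contributes $\tfrac12\psi_\delta''\,\iota^\ast(\mathrm{II}_g-k)\KN\mathfrak{b}^\perp$ in the mixed $(t,\cdot,\cdot,t)$ components, while all other terms stay bounded independently of $\delta$.

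Choosing $\psi_\delta''\ge 0$ large, the curvature at each point becomes
\begin{equation}\nonumber
R_g+O(1)+\lambda\,\iota^\ast(\mathrm{II}_g-k)\KN\mathfrak{b}^\perp,\qquad \lambda\geq0 .
\end{equation}
By hypothesis the last term lies in $\overline{\mathsf{C}}$. Since $\mathsf{C}$ is a convex cone with $\mathfrak{b}\KN\mathfrak{b}^\perp\in\mathsf{C}$, I would additionally add a term $\mu\,t^2\chi$ to the $dt^2$- or $g_t$-coefficient, producing $\lambda'\,\mathfrak{b}\KN\mathfrak{b}^\perp$ with $\lambda'\gg0$ from a large second derivative. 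This absorbs the $O(1)$ errors, because $\overline{\mathsf{C}}+\mathsf{C}\subset\mathsf{C}$ and $\mathsf{C}$ is open, so $x+\lambda'\mathfrak{b}\KN\mathfrak{b}^\perp\in\mathsf{C}$ for $\lambda'$ large relative to $|x|$. Arranging the sign of the $\mathfrak{b}\KN\mathfrak{b}^\perp$ contribution correctly (it comes from a negative second $t$-derivative of a warping factor, which must be compensated so that the boundary $1$-jet is unchanged) is the delicate computation.

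Where this explicit construction does not give the bounds directly, I would instead use the paper's $1$-jet deformation together with the local flexibility lemma. The curvature condition is an open partial differential relation of second order, so a path of $1$-jets along $\partial M$, each admitting $\mathsf{C}$-satisfying extensions pointwise, can be realized by metrics equal to $g$ outside a smaller neighbourhood and $C^1$-close to $g$. The hypothesis on $\mathrm{II}_g-k$ is exactly what makes the needed $2$-jet extension lie in $\mathsf{C}$, via the algebraic formula above.

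The remaining steps are routine:
\begin{enumerate}
\item[(i)]{Make all choices, namely $\delta$, $\lambda'$ and the cutoffs, depend continuously on $p\in\partial M$ and $s\in[0,1]$, by choosing positive functions using local compactness; this handles non-compact $\partial M$.}
\item[(ii)]{Check that $f_0=g$, by taking the deformation parameter proportional to $s$ and the additional term to vanish at $s=0$, or by concatenating paths.}
\item[(iii)]{Check that the support lies in $\mathscr{U}$.}
\end{enumerate}

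I expect the main obstacle to be the sign bookkeeping in the curvature formula, namely showing that the blowing-up $2$-jet terms assemble into precisely $\lambda\,\iota^\ast(\mathrm{II}_g-k)\KN\mathfrak{b}^\perp$ plus a positive multiple of $\mathfrak{b}\KN\mathfrak{b}^\perp$, with all other terms bounded.
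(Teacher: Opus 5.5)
Your two ingredients are the paper's: a concave bend of the normal derivative in a thin layer, whose large second derivative contributes a non-negative multiple of $\iota^\ast(\mathrm{II}_g-k)\KN\mathfrak{b}^\perp$, and a large multiple of $\mathfrak{b}\KN\mathfrak{b}^\perp$ coming from a term $-\Lambda t^2g_0$ that absorbs all bounded errors (Proposition~\ref{2jet}, Lemma~\ref{bflemma}, Proposition~\ref{1jet2}). As you set them up, however, both steps fail.

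\textbf{The large $\mathfrak{b}\KN\mathfrak{b}^\perp$ term.} Adding $-\mu t^2g_0$ does not change the $1$-jet, so there is nothing to compensate. The real obstruction is the cutoff. For any rescaled cutoff $\chi(t/\epsilon)$, the function $\tfrac{d^2}{dt^2}\bigl(t^2\chi(t/\epsilon)\bigr)$ equals $2$ near $0$, has vanishing integral over $[0,\infty)$, and has a negative minimum independent of $\epsilon$. You therefore create a term $-c\mu\,\mathfrak{b}\KN\mathfrak{b}^\perp$ of the same order as the gain. Avoiding this would need a cutoff whose shape degenerates with $\mu$, with estimates you do not give.

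The paper uses no explicit cutoff here (Proposition~\ref{2jet}). Near $\partial M$ it replaces $g_t$ by $(1-\Lambda t^2)g_0-2t\,\mathrm{II}_g$. It checks $\mathsf{C}$ on $\partial M$ from the formula for $R(X,\nu,\nu,Y)$ and flexibility, gets $\mathsf{C}$ nearby by openness, and interpolates back to $g$ with the B\"ar--Hanke local flexibility lemma~\cite{BH2022}.

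That lemma requires the $1$-jet along $\partial M$ to stay fixed, so it cannot realise a path of $1$-jets as in your fallback. Your fallback is also self-contradictory: it asks for metrics $C^1$-close to $g$ whose second fundamental form differs from $\mathrm{II}_g$ by the fixed amount $s(\mathrm{II}_g-k)$. Worse, it proves too much. Since $\mathfrak{b}\KN\mathfrak{b}^\perp\in\mathsf{C}$, every $1$-jet admits a $2$-jet extension in $\mathsf{C}$ (take $-\tfrac12\ddot g_0$ a large multiple of $g_0$). The hypothesis on $\mathrm{II}_g-k$ would then be superfluous, contradicting the torus example in the introduction. The hypothesis enters only through the explicit interpolation of the $1$-jet, never through pointwise extendability.

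\textbf{The bending profile.} With $h_s=dt^2+g_t+2s\psi(\mathrm{II}_g-k)$, the new term in $R(X,\nu,\nu,Y)$ is $-s\psi''\,(\mathrm{II}_g-k)(X,Y)$. So the large part of $\psi''$ must be negative, not positive.

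More seriously, your profile cannot have a one-signed large second derivative. Suppose $\psi(0)=0$, $\psi'(0)=1$, $\psi=0$ for $t\geq\delta$, and $m=\max\psi$. Then $\psi'\leq -m/\delta$ somewhere, and $\psi'$ must return to $0$ by $t=\delta$, so $\psi''\geq m/\delta^2$ somewhere. If the bend fills the layer (e.g.\ $\psi''\geq-2/\delta$, which gives $m\geq\delta/4$), this is of order $1/\delta$. It produces an unbounded multiple of $-\iota^\ast(\mathrm{II}_g-k)\KN\mathfrak{b}^\perp$, which the hypothesis does not control, so your claim that all other terms stay bounded fails.

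What is needed is the two-scale profile $\chi_\delta$ of Lemma~\ref{bumpfunction}:
\begin{itemize}
\item $\chi_\delta(t)=t$ near $0$;
\item $-2/\delta\leq\ddot\chi_\delta\leq 0$ on $[0,\delta]$;
\item a slow return on $[\delta,\sqrt\delta]$ with $|\dot\chi_\delta|\leq c_0\sqrt\delta$ and $|\ddot\chi_\delta|\leq c_0$.
\end{itemize}
The remaining bounded errors are then absorbed by the ball $B_{\rho\Lambda(p)}$ of Lemma~\ref{bflemma}. The order of choices matters: first $\Lambda$ large, then $\delta<\Lambda^{-4}$, so that $\Lambda t\ll 1$ on $[0,\sqrt\delta]$ and the error bounds do not depend on $\Lambda$.
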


The paper is organised as follows:
In Section~2 we introduce the concept of flexible families of curvature conditions as a generalisation of algebraic curvature cones.
The aim of this is to encode lower curvature bounds other than zero.
We also prove a quantitative version of Gram-Schmidt's algorithm which will be important in the proof of Proposition~\ref{1jet}.
The latter is a technical key point in this paper.

Section~3 covers the main gluing principle in the setting of Theorem~A.
As a preparatory step, we uniformise geodesic collar neighbourhoods of Riemannian metrics.
This is necessary in order to study spaces of metrics on glued manifolds.

Most of the section is devoted to gluing metrics.
Specifically, we present a two-stage gluing scheme that has been initiated by Bär-Hanke in ~\cite{BH2023}.
It employs the local flexibility lemma for open partial differential relations, see~\cite{BH2022}, and then deforms the $1$-jet of metrics along the boundary.

In Section~4 we show that Gromov-Lawson flexible families of curvature conditions, which are introduced in Section~2, allow for especially general boundary deformations.
A special case of this is Theorem~C.
We will deduce Theorem~B from Theorem~C.\\
\newline
\textit{Acknowledgements.} This work is part of my doctoral dissertation project.
I would like to thank my advisor Bernhard Hanke for his guidance and support and for the inspiring discussions we had during the Lonavala Geometry Festival.

My doctoral studies are funded by a stipend from the Studienstiftung des deutschen Volkes (German Academic Scholarship Foundation).
This work was also supported by the TopMath Program from the Elite Network of Bavaria.

\section{Preliminaries}
\subsection{General definitions and notation}
The first definition addresses one of the central concepts of this paper.
\begin{definition}\label{Bessedef}
Let $V$ be a finite-dimensional vector space.
An algebraic curvature tensor on $V$ is a multilinear form $R\colon V\times V\times V\times V\to\mathbb{R}$ such that
\begin{equation}\nonumber
R(v_1,v_2,v_3,v_4)=-R(v_2,v_1,v_3,v_4)=R(v_3,v_4,v_1,v_2)
\end{equation}
and that $R$ satisfies the first Bianchi identity
\begin{equation}\nonumber
R(v_1,v_2,v_3,v_4)+R(v_2,v_3,v_1,v_4)+R(v_3,v_1,v_2,v_4)=0
\end{equation}
for all vectors $v_1,v_2,v_3,v_4\in V$.

Let $\mathscr{C}_B(V)$ denote the space of algebraic curvature tensors on $V$.
Furthermore, on a Riemannian manifold $(M,g)$ with or without boundary, we denote by $R_g$ the Riemannian curvature tensor.
Then $R_g(p)$ is an algebraic curvature tensor on $T_pM$ for all $p\in M$.
\end{definition}

We fix the following notation on $\mathbb{R}^n$:
Let $(e_1,\dots,e_n)$ denote the standard basis of $\mathbb{R}^n$ and $\mathfrak{b}$ the standard euclidean inner product.
The symbol $\perp$ indicates orthogonality with respect to $\mathfrak{b}$.
The euclidean norm on $\mathbb{R}^n$ and the induced norm on curvature tensors in $\mathscr{C}_B(\mathbb{R}^n)$ are both denoted by $\Vert\cdot\Vert$.
With this norm, $B_r(\mathscr{C}_B(\mathbb{R}^n))$ is the ball of radius $r>0$ in $\mathscr{C}_B(\mathbb{R}^n)$ around the zero curvature tensor and $B_r(R)$ is the corresponding ball around a tensor $R\in\mathscr{C}_B(\mathbb{R}^n)$.

We define $\mathfrak{b}^\perp:=\mathrm{pr}_{\perp}^\ast\mathfrak{b}$ for the orthogonal projection $\mathrm{pr}_{\perp}\colon\mathbb{R}^n\to\mathbb{R}e_1$, so that
\begin{equation}\nonumber
\mathfrak{b}^\perp(e_i,e_j)=\left\{\begin{array}{ll} 1, & \text{if $i=j=1$,}\\
         0, & \text{else.}\end{array}\right.
\end{equation}

A canonical way to construct out of two symmetric $(0,2)$-tensors on a finite-dimensional vector space $V$ an algebraic curvature tensor is the Kulkarni-Nomizu product:
Given $h,k\in V^\ast\otimes V^\ast$, their product is defined as
\begin{align}\nonumber
(h\KN k)(v_1,v_2,v_3,v_4)&:=h(v_1,v_4)k(v_2,v_3)+h(v_2,v_3)k(v_1,v_4)\\\nonumber
&\phantom{:=}-h(v_1,v_3)k(v_2,v_4)-h(v_2,v_4)k(v_1,v_3).
\end{align}
If $h$ and $k$ are symmetric, $h\KN k$ has the symmetries that make it an algebraic curvature tensor.
Note that some authors use a different sign convention in the definition of the Kulkarni-Nomizu product.

In this paper, one important instance of the Kulkarni-Nomizu product is the algebraic curvature tensor $\mathfrak{b}\KN\mathfrak{b}^\perp\in\mathscr{C}_B(\mathbb{R}^n)$ which is characterised by the property that
\begin{align}\nonumber
&(\mathfrak{b}\KN\mathfrak{b}^\perp)(e_i,e_1,e_1,e_i)=1\quad\text{for all $2\leq i\leq n$}\\\nonumber
&\text{and all other independent entries of $\mathfrak{b}\KN\mathfrak{b}^\perp$ are zero.}
\end{align}

\subsection{Flexible curvature conditions}
An open and $\On(n)$-invariant subset $\mathsf{C}\subset\mathscr{C}_B(\mathbb{R}^n)$ is called a \textit{curvature condition}.

An algebraic curvature tensor $R\in\mathscr{C}_B(V)$ on an $n$-dimensional inner product space $V$ is said to \textit{satisfy $\mathsf{C}$} if for every linear isometry $\iota\colon\mathbb{R}^n\to V$, the pullback curvature tensor $\iota^\ast R\in\mathscr{C}_B(\mathbb{R}^n)$ lies in $\mathsf{C}$.
By the $\On(n)$-invariance, it is sufficient to check this property for a single (and arbitrary) linear isometry $\iota\colon\mathbb{R}^n\to V$.
Here and throughout the following, the model space $\mathbb{R}^n$ will always be equipped with the standard inner product $\mathfrak{b}$.

An $n$-dimensional Riemannian manifold $(M,g)$ with or without boundary is said to \textit{satisfy $\mathsf{C}$} if $R_g(p)\in\mathscr{C}_B(T_pM)$ satisfies $\mathsf{C}$ for all $p\in M$.

In this paper, we do not consider individual curvature conditions but families of curvature conditions $\mathsf{C}=(\mathsf{C}(\sigma))_{\sigma\in\mathbb{R}}$ with the following properties:
\begin{enumerate}
\item[(i)]{$\overline{\mathsf{C}(\sigma_1)}\subset\mathsf{C}(\sigma_2)$ for all $\sigma_1>\sigma_2$;}
\item[(ii)]{$\overline{\mathsf{C}(\sigma_1)}+\mathsf{C}(\sigma_2)\subset\mathsf{C}(\sigma_1+\sigma_2)$ for all $\sigma_1,\sigma_2\in\mathbb{R}$;}
\item[(iii)]{$\lambda\mathsf{C}(\sigma)\subset\mathsf{C}(\lambda\sigma)$ for all $\sigma\in\mathbb{R}$ and $\lambda>0$.}
\end{enumerate}

Such a family is called \textit{continuous} if for every $\sigma\in\mathbb{R}$, there exists a continuous function $\varepsilon(\sigma)\colon\mathsf{C}(\sigma)\to(0,\infty)$ such that for every $R\in\mathsf{C}(\sigma)$, it holds $R\in\mathsf{C}(\sigma+\varepsilon(\sigma)(R))$.

\begin{remark}\label{Basicremark}
Property $\mathrm{(iii)}$ is equivalent to requesting $\lambda\mathsf{C}(\sigma)=\mathsf{C}(\lambda\sigma)$ for all $\sigma\in\mathbb{R}$ and $\lambda>0$.
By property $\mathrm{(ii)}$ and $\mathrm{(iii)}$, each curvature condition $\mathsf{C}(\sigma),\sigma\in\mathbb{R}$ is convex.
Furthermore, by property $\mathrm{(iii)}$, $\mathsf{C}(0)$ is a punctured algebraic cone.

If $\mathsf{C}(0)\neq\varnothing$, it is also clear that $\overline{\mathsf{C}(0)}$ is an actual algebraic cone with $0_{\mathscr{C}_B(\mathbb{R}^n)}\in\overline{\mathsf{C}(0)}$.
Hence, by property $\mathrm{(i)}$, for every $\varepsilon>0$ there exists $r>0$ such that $B_r(\mathscr{C}_B(\mathbb{R}^n))\subset\mathsf{C}(-\varepsilon)$.

We also mention that $\mathscr{C}_B(\mathbb{R}^n)=\cup_{\sigma\in\mathbb{R}}\mathsf{C}(\sigma)$ in this case:
Let $0\neq R\in\mathscr{C}_B(\mathbb{R}^n)$.
Let $r>0$ with $B_r(\mathscr{C}_B(\mathbb{R}^n))\subset\mathsf{C}(-1)$.
Then $R\in B_{2\Vert R\Vert}(\mathscr{C}_B(\mathbb{R}^n))=\tfrac{2\Vert R\Vert}{r}B_r(\mathscr{C}_B(\mathbb{R}^n))\subset\mathsf{C}(-2\Vert R\Vert/r)$.
\end{remark}

\begin{definition}\label{PGL}
A continuous family of curvature conditions is called \textit{Perelman flexible} or \textit{P-flexible} if there exists a real number $0<\tau\leq 1$ such that
\begin{equation}\nonumber
\mathfrak{b}\KN\mathfrak{b}^\perp+r\cdot\mathfrak{b}\KN\mathfrak{b}+B^\perp_{\tau\sqrt{r}}\subset\mathsf{C}(0)
\end{equation}
for all $0<r\leq 1$, where $B^\perp_{\tau\sqrt{r}}=B^\perp_{\tau\sqrt{r}}(\mathscr{C}_B(\mathbb{R}^n))$ is the perpendicular ball
\begin{equation}\nonumber
B_{\tau\sqrt{r}}^\perp(\mathscr{C}_B(\mathbb{R}^n)):=B_{\tau\sqrt{r}}(\mathscr{C}_B(\mathbb{R}^n))\cap\{R\in\mathscr{C}_B(\mathbb{R}^n):R(X,Y,Z,W)=0\;\text{for all $X,Y,Z,W\in e_1^\perp$}\}.
\end{equation}
A continuous family of curvature conditions is called \textit{Gromov-Lawson flexible} or \textit{GL-flexible} if
\begin{equation}\nonumber
\mathfrak{b}\KN\mathfrak{b}^\perp\in\mathsf{C}(0).
\end{equation}
\end{definition}
\begin{remark}\label{remark}
A Perelman flexible family of curvature conditions is Gromov-Lawson flexible if and only if the first condition in Definition~\ref{PGL} is also satisfied for $r=0$.
Moreover, it is easy to see that every Gromov-Lawson flexible family is Perelman flexible.
In this sense, Gromov-Lawson flexible families form a singularity among Perelman flexible families.
\end{remark}

Proposition~\ref{familyembedding} is a genericity result for continuous families of curvature conditions.
We will use the following Lemma.
\begin{lemma}\label{C+C}
Let $\mathsf{C}\subset\mathscr{C}_B(\mathbb{R}^n)$ be an open, convex and $\On(n)$-invariant cone. Then $\overline{\mathsf{C}}+\mathsf{C}\subset\mathsf{C}$.
\end{lemma}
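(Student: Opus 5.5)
The plan is to use that $\mathsf{C}$ is open, so every point of $\mathsf{C}$ sits inside a small ball contained in $\mathsf{C}$. We combine this with the fact that an open convex cone absorbs such balls after rescaling. If $\mathsf{C}=\varnothing$ there is nothing to prove, so assume $\mathsf{C}\neq\varnothing$. Fix $A\in\overline{\mathsf{C}}$ and $B\in\mathsf{C}$; we want $A+B\in\mathsf{C}$.

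\textbf{Step 1 (a ball around $B$).} Since $\mathsf{C}$ is open, choose $r>0$ with $B_r(B)\subset\mathsf{C}$. Since $A\in\overline{\mathsf{C}}$, choose $A'\in\mathsf{C}$ with $\Vert A-A'\Vert<r$.

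\textbf{Step 2 (rewrite the sum).} Write
\begin{equation}\nonumber
A+B=A'+\bigl(B+(A-A')\bigr).
\end{equation}
By the choice of $A'$, the point $B+(A-A')$ lies in $B_r(B)$, hence in $\mathsf{C}$. So $A+B$ is a sum of two elements of $\mathsf{C}$.

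\textbf{Step 3 (closure under addition).} It remains to show $\mathsf{C}+\mathsf{C}\subset\mathsf{C}$. For $X,Y\in\mathsf{C}$, convexity gives $\tfrac12X+\tfrac12Y\in\mathsf{C}$. The cone property, scaling by $\lambda=2>0$, then gives $X+Y\in\mathsf{C}$. Applying this to $X=A'$ and $Y=B+(A-A')$ yields $A+B\in\mathsf{C}$.

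There is no real obstacle here. The only care needed is to use openness at $B$, not at $A$, since $A$ may lie on the boundary of $\mathsf{C}$. The $\On(n)$-invariance is not needed at all.
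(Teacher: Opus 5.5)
Your proof is correct and is essentially the paper's argument: use openness at the point of $\mathsf{C}$, approximate the point of $\overline{\mathsf{C}}$ by an element of $\mathsf{C}$, and split the sum into two elements of $\mathsf{C}$. You also spell out why $\mathsf{C}+\mathsf{C}\subset\mathsf{C}$ (convexity plus scaling by $2$), which the paper uses without comment.
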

\begin{proof}
Let $R\in\mathsf{C}$ and $S\in\overline{\mathsf{C}}$.
We choose $\varepsilon>0$ with $B_\varepsilon(R)\subset\mathsf{C}$ and $\tilde{S}\in\mathsf{C}$ with $\Vert S-\tilde{S}\Vert<\varepsilon$.
Then $R+S-\tilde{S}\in B_\varepsilon(R)\subset\mathsf{C}$, so $R+S=(R+S-\tilde{S})+\tilde{S}\in\mathsf{C}$.
\end{proof}
\begin{proposition}\label{familyembedding}
Let $\mathsf{C}\subset\mathscr{C}_B(\mathbb{R}^n)$ be an open, convex and $\On(n)$-invariant cone.
Then there exists a continuous family $(\mathsf{C}(\sigma))_{\sigma\in\mathbb{R}}$ of curvature conditions with $\mathsf{C}=\mathsf{C}(0)$.
\end{proposition}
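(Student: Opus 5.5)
The plan is to obtain the whole family by translating $\mathsf{C}$ along a single $\On(n)$-invariant tensor lying in $\mathsf{C}$. Two trivial cases are set aside first. If $\mathsf{C}=\varnothing$, put $\mathsf{C}(\sigma):=\varnothing$ for all $\sigma$. Then (i)--(iii) hold trivially and continuity is vacuous. If $0\in\mathsf{C}$, openness and the cone property force $\mathsf{C}=\mathscr{C}_B(\mathbb{R}^n)$. In that case the constant family $\mathsf{C}(\sigma):=\mathscr{C}_B(\mathbb{R}^n)$ works, with $\varepsilon\equiv 1$.

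In the remaining case $\mathsf{C}\neq\varnothing$ and $0\notin\mathsf{C}$. The first step is to produce an $\On(n)$-invariant element $R_0\in\mathsf{C}$. Pick any $R\in\mathsf{C}$ and let $R_0:=\int_{\On(n)}A^\ast R\,dA$ be its average with respect to normalised Haar measure. The orbit $\On(n)\cdot R$ is compact and lies in $\mathsf{C}$ by invariance. Its convex hull is compact (Carathéodory) and lies in $\mathsf{C}$ by convexity. The average belongs to this closed convex hull, so $R_0\in\mathsf{C}$. It is $\On(n)$-invariant, and $R_0\neq 0$ because $0\notin\mathsf{C}$. (In fact $R_0$ is a nonzero multiple of $\mathfrak{b}\KN\mathfrak{b}$, but we will not need this.)

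Next define
\[
\mathsf{C}(\sigma):=\mathsf{C}+\sigma R_0 .
\]
Each $\mathsf{C}(\sigma)$ is open and $\On(n)$-invariant, since $R_0$ is invariant, and $\mathsf{C}(0)=\mathsf{C}$. The three properties then follow from the cone property and Lemma~\ref{C+C}.
\begin{itemize}
\item[(iii)] Since $\lambda\mathsf{C}=\mathsf{C}$ for $\lambda>0$, we get $\lambda\mathsf{C}(\sigma)=\mathsf{C}+\lambda\sigma R_0=\mathsf{C}(\lambda\sigma)$.
\item[(ii)] We have $\overline{\mathsf{C}(\sigma_1)}+\mathsf{C}(\sigma_2)=\overline{\mathsf{C}}+\mathsf{C}+(\sigma_1+\sigma_2)R_0$. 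Lemma~\ref{C+C} puts this inside $\mathsf{C}(\sigma_1+\sigma_2)$.
\item[(i)] For $\sigma_1>\sigma_2$, write $\overline{\mathsf{C}(\sigma_1)}=\overline{\mathsf{C}}+(\sigma_1-\sigma_2)R_0+\sigma_2R_0$. Here $(\sigma_1-\sigma_2)R_0\in\mathsf{C}$, so Lemma~\ref{C+C} gives containment in $\mathsf{C}(\sigma_2)$.
\end{itemize}

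For continuity, fix $\sigma$ and define, for $R\in\mathsf{C}(\sigma)$,
\[
\varepsilon(\sigma)(R):=\min\Bigl\{1,\ \frac{\dist\bigl(R-\sigma R_0,\ \mathscr{C}_B(\mathbb{R}^n)\setminus\mathsf{C}\bigr)}{2\Vert R_0\Vert}\Bigr\}.
\]
The complement is nonempty because $0\notin\mathsf{C}$, and the distance is positive because $\mathsf{C}$ is open. The distance function is $1$-Lipschitz, so $\varepsilon(\sigma)$ is continuous and positive. Since $\Vert\varepsilon(\sigma)(R)R_0\Vert$ is less than that distance, $R-(\sigma+\varepsilon(\sigma)(R))R_0\in\mathsf{C}$. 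That is, $R\in\mathsf{C}(\sigma+\varepsilon(\sigma)(R))$.

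The only step that is not purely formal is finding the invariant tensor $R_0$ inside $\mathsf{C}$. This is handled by the averaging argument above, which uses only compactness of the orbit together with openness and convexity of $\mathsf{C}$.
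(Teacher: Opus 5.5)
Your proof is correct, and it follows a genuinely different route from the paper's.

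The paper never chooses a direction. For $\sigma>0$ it sets $\mathsf{C}(-\sigma):=\{R:\dist(R,\mathsf{C})<\sigma\}$ and $\mathsf{C}(\sigma):=\{R\in\mathsf{C}:R+\overline{\mathsf{C}(-\sigma)}\subset\mathsf{C}\}$. In effect these are the super-level sets of the signed distance to $\partial\mathsf{C}$. It then verifies openness, (i)--(iii) and continuity by a rather long case distinction on the signs of $\sigma_1$, $\sigma_2$ and $\sigma_1+\sigma_2$. Its $\varepsilon(\sigma)$ is built from $\dist(\cdot,\mathsf{C})$ and $\dist(\cdot,\mathscr{C}_B(\mathbb{R}^n)\setminus\mathsf{C})$.

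You instead find an $\On(n)$-invariant $R_0\in\mathsf{C}$ by Haar averaging and take translates $\mathsf{C}+\sigma R_0$. The one step you leave implicit, that the average lies in the closed convex hull of the orbit, is a standard separation argument. After that, each axiom is a one-line consequence of the cone property and Lemma~\ref{C+C}, and continuity is a single distance estimate.

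What each approach buys:
\begin{itemize}
\item[$\triangleright$] The paper's family is canonical: it depends only on $\mathsf{C}$ and the norm, and its negative levels have the natural meaning ``within distance $|\sigma|$ of $\mathsf{C}$''.
\item[$\triangleright$] Yours is far shorter and exposes more structure. The $\On(n)$-invariant algebraic curvature tensors are exactly the multiples of $\mathfrak{b}\KN\mathfrak{b}$, so every nonempty proper cone of this kind contains $\mathfrak{b}\KN\mathfrak{b}$ or $-\mathfrak{b}\KN\mathfrak{b}$. Hence a translation recipe like the one in the remark following the proposition (with $\pm\mathfrak{b}\KN\mathfrak{b}$) already covers the general case.
\end{itemize}

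Your sign is also the right one. With $R_0\in\mathsf{C}$, property (i) requires $\mathsf{C}(\sigma)=\mathsf{C}+\sigma R_0$. The paper's remark writes $\mathsf{C}-\sigma\cdot\mathfrak{b}\KN\mathfrak{b}$, which grows with $\sigma$ (for instance for $\mathsf{C}_{\scal>0}$) and so violates (i).
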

\begin{proof}
We define a family of curvature conditions by
\begin{align}\nonumber
&\mathsf{C}(-\sigma):=\{R\in\mathscr{C}_B(\mathbb{R}^n):\dist(R,\mathsf{C})<\sigma\},\\\nonumber
&\mathsf{C}(\sigma):=\{R\in\mathsf{C}:R+\overline{\mathsf{C}(-\sigma)}\subset\mathsf{C}\}
\end{align}
for $\sigma>0$ and $\mathsf{C}(0):=\mathsf{C}$.

Let us first show that these are indeed curvature conditions.
Pick $\sigma>0$ and $R\in\mathsf{C}(-\sigma)$.
There exists $S\in\overline{\mathsf{C}}$ such that $\dist(R,S)=\dist(R,\mathsf{C})$.
The ball of radius $\sigma-\dist(R,S)$ around $R$ is still contained in $\mathsf{C}(-\sigma)$.
Thus, $\mathsf{C}(-\sigma)$ is open.
As $\mathsf{C}$ and the distance function are $\On(n)$-invariant, $\mathsf{C}(-\sigma)$ is also $\On(n)$-invariant.

Now let $R\in\mathsf{C}(\sigma)$.
Since $\mathsf{C}$ is open and $\mathscr{K}:=\overline{B_\sigma(\mathscr{C}_B(\mathbb{R}^n))}\subset\overline{\mathsf{C}(-\sigma)}$ is compact, there exists $\varepsilon>0$ such that $\tilde{R}+\mathscr{K}\subset\mathsf{C}$ for all $\tilde{R}\in B_\varepsilon(R)$.
Let $\tilde{R}\in B_\varepsilon(R)$ and $T\in\overline{\mathsf{C}(-\sigma)}$.
Choose $S\in\overline{C}$ with $\dist(T,S)=\dist(T,\mathsf{C})$.
This means $\Vert T-S\Vert\leq\sigma$ or $T-S\in\mathscr{K}$.
Hence $\tilde{R}+T=\tilde{R}+(T-S)+S\in\mathsf{C}$ by Lemma~\ref{C+C}.
The $\On(n)$-invariance follows from $\On(n)$-invariance of $\mathsf{C}$ and $\mathsf{C}(-\sigma)$.

Next we check properties $\text{(i)-(iii)}$.
Let $\sigma_1,\sigma_2\in\mathbb{R}$ with $\sigma_1>\sigma_2$.
If $\sigma_1\geq 0,\sigma_2<0$ or $\sigma_1\leq 0,\sigma_2<0$, the inclusion $\overline{\mathsf{C}(\sigma_1)}\subset\mathsf{C}(\sigma_2)$ is clear.
For $\sigma_1>0,\sigma_2\geq 0$ and $R\in\overline{\mathsf{C}(\sigma_1)}$ it holds
\begin{equation}\nonumber
R+\overline{\mathsf{C}(-\sigma_2)}\subset R+\mathsf{C}(-\sigma_1)\subset\mathsf{C}
\end{equation}
where the openness of $\mathsf{C}(-\sigma_1)$ is important for the last inclusion.
This shows $\text{(i)}$.

For $\text{(iii)}$ we pick $\sigma\in\mathbb{R}$ and $\lambda>0$.
If $\sigma<0$ and $R\in\mathsf{C}(\sigma)$, let $S\in\overline{\mathsf{C}}$ with $\dist(R,S)=\dist(R,\mathsf{C})$.
Then
\begin{equation}\nonumber
\dist(\lambda R,\mathsf{C})\leq\dist(\lambda R,\lambda S)\leq-\lambda\sigma,
\end{equation}
so $\lambda R\in\mathsf{C}(\lambda\sigma)$.
For $\sigma=0$ the inclusion $\lambda\mathsf{C}\subset\mathsf{C}$ is clear.
For $\sigma>0$ and $R\in\mathsf{C}(\sigma),T\in\overline{\mathsf{C}(-\lambda\sigma)}$, it holds
\begin{equation}\nonumber
\lambda R+T=\lambda\bigl(R+\tfrac{1}{\lambda}T\bigr)\in\lambda\cdot\Bigl(\mathsf{C}(\sigma)+\tfrac{1}{\lambda}\cdot\overline{\mathsf{C}(-\lambda\sigma)}\Bigr)\subset\lambda\cdot\Bigl(\mathsf{C}(\sigma)+\overline{\mathsf{C}(-\sigma)}\Bigr)\subset\lambda\cdot\mathsf{C}\subset\mathsf{C}.
\end{equation}

For $\text{(ii)}$ we pick $\sigma_1,\sigma_2\in\mathbb{R}$.
If $\sigma_1\leq 0,\sigma_2<0$ or $\sigma_1=\sigma_2=0$, the inclusion $\overline{\mathsf{C}(\sigma_1)}+\mathsf{C}(\sigma_2)\subset\mathsf{C}(\sigma_1+\sigma_2)$ is clear.
If $\sigma_1<0,\sigma_2=0$ and $R_1\in\overline{\mathsf{C}(\sigma_1)},R_2\in\mathsf{C}$, we choose $S_1\in\overline{\mathsf{C}}$ with $\dist(R_1,S_1)=\dist(R_1,\mathsf{C})$.
Then $S_1+R_2\in\mathsf{C}$.
Let $0<\varepsilon<1$ be so small that $S_1+R_2+\varepsilon(R_1-S_1)\in\mathsf{C}$.
We obtain
\begin{equation}\nonumber
\dist(R_1+R_2,S_1+R_2+\varepsilon(R_1-S_1))=\dist((1-\varepsilon)\cdot R_1,(1-\varepsilon)\cdot S_1)\leq (1-\varepsilon)\cdot(-\sigma_1)<\sigma_1,
\end{equation}
hence $\dist(R_1+R_2,\mathsf{C})<\sigma_1$.

In the next stage, we consider $\sigma_1\geq 0,\sigma_2\geq 0$.
Let $R_1\in\overline{\mathsf{C}(\sigma_1)},R_2\in\mathsf{C}(\sigma_2)$ and $T\in\overline{\mathsf{C}(-(\sigma_1+\sigma_2))}$.
Then $\tfrac{\sigma_1}{\sigma_1+\sigma_2}T\in\overline{\mathsf{C}(-\sigma_1)}$ and $\tfrac{\sigma_2}{\sigma_1+\sigma_2}T\in\overline{\mathsf{C}(-\sigma_2)}$.
Therefore,
\begin{equation}\nonumber
R_1+R_2+T=R_1+\tfrac{\sigma_1}{\sigma_1+\sigma_2}T+R_2+\tfrac{\sigma_2}{\sigma_1+\sigma_2}T\in\overline{\mathsf{C}}+\mathsf{C}\subset\mathsf{C}.
\end{equation}
Now let $\sigma_1>0,\sigma_2<0$ with $\sigma_1+\sigma_2<0$.
Let $R_1\in\overline{\mathsf{C}(\sigma_1)}$ and $R_2\in\mathsf{C}(\sigma_2)$.
Then $\tfrac{\sigma_1}{-\sigma_2}R_2\in\mathsf{C}(-\sigma_1)$ and $\tfrac{\sigma_1+\sigma_2}{\sigma_2}R_2\in\mathsf{C}(\sigma_1+\sigma_2)$.
This yields
\begin{equation}\nonumber
R_1+R_2=\Bigl(R_1+\tfrac{\sigma_1}{-\sigma_2}R_2\Bigr)+\tfrac{\sigma_1+\sigma_2}{\sigma_2}R_2\in\mathsf{C}+\mathsf{C}(\sigma_1+\sigma_2)\subset\mathsf{C}(\sigma_1+\sigma_2).
\end{equation}
The case $\sigma_1<0,\sigma_2>0$ and $\sigma_1+\sigma_2<0$ is similar.
Finally, let $\sigma_1>0,\sigma_2<0$ with $\sigma_1+\sigma_2\geq 0$.
Let $R_1\in\overline{\mathsf{C}(\sigma_1)},R_2\in\mathsf{C}(\sigma_2)$ and $T\in\overline{\mathsf{C}(-(\sigma_1+\sigma_2))}$.
Then
\begin{equation}\nonumber
R_1+R_2+T\in\overline{\mathsf{C}(\sigma_1)}+\mathsf{C}(\sigma_2)+\overline{\mathsf{C}(-(\sigma_1+\sigma_2))}\subset\overline{\mathsf{C}(\sigma_1)}+\mathsf{C}(-\sigma_1)\subset\mathsf{C}.
\end{equation}
The case $\sigma_1<0,\sigma_2>0$ and $\sigma_1+\sigma_2\geq 0$ is similar.

It remains to consider continuity.
For $\sigma<0$, we define
\begin{equation}\nonumber
\varepsilon(\sigma)\colon\mathsf{C}(\sigma)\to(0,\infty)\,,\,\varepsilon(\sigma)(R):=\tfrac{1}{2}\bigl(-\sigma-\dist(R,\mathsf{C})\bigr).
\end{equation}
This is a continuous function with
\begin{equation}\nonumber
R\in\mathsf{C}(\sigma+\varepsilon(\sigma)(R))=\mathsf{C}\bigl(-\dist(R,\mathsf{C})-\tfrac{1}{2}\bigl(\sigma-\dist(R,\mathsf{C})\bigr)\bigr).
\end{equation}
Let $\sigma\geq 0$ and let $\mathscr{K}:=\overline{B_\sigma(\mathscr{C}_B(\mathbb{R}^n))}$ be as in the proof of openness.
For $\mathsf{C}\neq\mathscr{C}_B(\mathbb{R}^n)$ we observe that
\begin{equation}\nonumber
\mathsf{C}\to(0,\infty)\,,\,R\mapsto\sup\{\varepsilon>0:B_\varepsilon(R)\subset\mathsf{C}\}=\dist(R,\mathscr{C}_B(\mathbb{R}^n)\setminus\mathsf{C})
\end{equation}
is a well-defined and continuous function.
Hence $\mathsf{C}(\sigma)\times\mathscr{K}\to(0,\infty)\,,\,(R,T)\mapsto\dist(R+T,\mathscr{C}_B(\mathbb{R}^n)\setminus\mathsf{C})$ and
\begin{equation}\nonumber
\varepsilon(\sigma)\colon\mathsf{C}(\sigma)\to(0,\infty)\,,\,\varepsilon(\sigma)(R):=\tfrac{1}{2}\min_{T\in\mathscr{K}}\dist(R+T,\mathscr{C}_B(\mathbb{R}^n)\setminus\mathsf{C})
\end{equation}
are continuous.

Let $R\in\mathsf{C}(\sigma)$.
We check that $R\in\mathsf{C}(\sigma+\varepsilon(\sigma)(R))$.
It holds $R+T+\tilde{R}\in\mathsf{C}$ for all $T\in\mathscr{K}$ and $\tilde{R}\in B_{2\varepsilon}(\mathscr{C}_B(\mathbb{R}^n))$ where $\varepsilon:=\varepsilon(\sigma)(R)$.
Let $T\in\overline{\mathsf{C}(-(\sigma+\varepsilon))}$.
Then $\tfrac{\sigma}{\sigma+\varepsilon}T\in\overline{\mathsf{C}(-\sigma)}$ and $\tfrac{\varepsilon}{\sigma+\varepsilon}T\in\overline{\mathsf{C}(-\varepsilon)}$.
We choose $S_1,S_2\in\overline{\mathsf{C}}$ with $\dist(\tfrac{\sigma}{\sigma+\varepsilon}T,S_1)=\dist(\tfrac{\sigma}{\sigma+\varepsilon}T,\mathsf{C})$ and $\dist(\tfrac{\varepsilon}{\sigma+\varepsilon}T,S_2)=\dist(\tfrac{\varepsilon}{\sigma+\varepsilon}T,\mathsf{C})$.

This yields $\Vert\tfrac{\sigma}{\sigma+\varepsilon}T-S_1\Vert\leq\sigma$ and $\Vert\tfrac{\varepsilon}{\sigma+\varepsilon}T-S_1\Vert\leq\varepsilon$, hence $\tfrac{\sigma}{\sigma+\varepsilon}T-S_1\in\mathscr{K}$ and $\tfrac{\varepsilon}{\sigma+\varepsilon}T-S_2\in B_{2\varepsilon}(\mathscr{C}_B(\mathbb{R}^n))$.
We conclude that
\begin{align}\nonumber
R+T=R+\bigl(&\tfrac{\sigma}{\sigma+\varepsilon}T-S_1\bigr)+\bigl(\tfrac{\varepsilon}{\sigma+\varepsilon}T-S_2\bigr)+S_1+S_2\\\nonumber
&\in R+\mathscr{K}+B_{2\varepsilon}(\mathscr{C}_B(\mathbb{R}^n))+\overline{\mathsf{C}}\subset\mathsf{C}+\overline{\mathsf{C}}\subset\mathsf{C}.\tag*{\qedhere}
\end{align}
\end{proof}
\begin{remark}
If $\mathfrak{b}\KN\mathfrak{b}\in\mathsf{C}$, one can simply define $\mathsf{C}(\sigma)=\mathsf{C}-\sigma\cdot\mathfrak{b}\KN\mathfrak{b}$ for all $\sigma\in\mathbb{R}$.
\end{remark}

\subsection{Examples of flexible families}
We want to show that all curvature conditions in Table~\ref{overview} lead to $P$-flexible or $GL$-flexible families.
First, the definitions should be reviewed and compiled.

Let $R\in\mathscr{C}_B(\mathbb{R}^n)$.
For the sectional curvature $\sec(R)(\pi)$ of a plane $\pi\subset\mathbb{R}^n$ which is spanned by an orthonormal pair $(v_1,v_2)\subset\mathbb{R}^n$, we use the convention
\begin{equation}\nonumber
\sec(R)(\pi)=R(v_1,v_2,v_2,v_1).
\end{equation}
The associated self-adjoint curvature operator $\mathcal{R}\colon\Lambda^2\mathbb{R}^n\to\Lambda^2\mathbb{R}^n$ is normalised by the condition that
\begin{equation}\nonumber
\langle\mathcal{R}(v_1\wedge v_2),v_3\wedge v_4\rangle=-R(v_1,v_2,v_3,v_4)
\end{equation}
for all $v_1,v_2,v_3,v_3\in\mathbb{R}^n$.
One can regard $\mathcal{R}$ as a symmetric bilinear form
\begin{equation}\nonumber
A_{\mathcal{R}}\colon\Lambda^2\mathbb{R}^n\times\Lambda^2\mathbb{R}^n\to\mathbb{R}\,,\,A_{\mathcal{R}}(\omega,\eta)=\langle\mathcal{R}(\omega),\eta\rangle.
\end{equation}
The Ricci tensor $\ric(R)$ is defined by $\ric(R)(v,w)=\sum_{i=1}^n R(u_i,v,w,u_i)$ for all $v,w\in\mathbb{R}^n$, where $(u_1,\dots,u_n)$ is an orthonormal basis of $\mathbb{R}^n$.

\begin{itemize}[itemsep=1em]
\item[$\triangleright$]{We say that $R$ has \textit{positive curvature operator} (denoted $\mathcal{R}>0$) if $A_{\mathcal{R}}$ is positive definite.
Our definition for $\mathcal{R}$ is made in such a way that the round sphere has a positive curvature operator.

The gluing theorem for $\mathcal{R}>0$ is due to Schlichting~\cite[Thm.~1.1.2]{Schlichting2014} and Chow~\cite[Thm.~1]{Chow}.}
\item[$\triangleright$]{We say that $R$ has \textit{$2$-positive curvature operator} (denoted $\mathcal{R}_2>0$) if $A_{\mathcal{R}}$ is $2$-positive.

This case has been treated by Schlichting~\cite[Thm.~1.7.3]{Schlichting2014}.}

\item[$\triangleright$]{Micallef-Moore~\cite{MM} introduced the concept of isotropic curvature:
We extend $R$ to a complex multilinear form $R^{\mathbb{C}}$ on $\mathbb{C}^n$ and say that $R$ has \textit{positive isotropic curvature} (denoted $\PIC$) if
\begin{align}\nonumber
R^{\mathbb{C}}(\zeta,\eta,\overline{\eta},\overline{\zeta})>0\quad&\text{for all linearly independent vectors $\zeta,\eta\in\mathbb{C}^n$}\\\nonumber
&\text{with $\langle\zeta,\zeta\rangle_{\mathbb{C}}=\langle\zeta,\eta\rangle_{\mathbb{C}}=\langle\eta,\eta\rangle_{\mathbb{C}}=0$,}
\end{align}
where $\langle\cdot,\cdot\rangle_\mathbb{C}$ is the complexified standard inner product on $\mathbb{R}^n$.
Equivalently,
\begin{align}\nonumber
R&(u_1,u_3,u_3,u_1)+R(u_1,u_4,u_4,u_1)\\\nonumber
&+R(u_2,u_3,u_3,u_2)+R(u_2,u_4,u_4,u_2)\\\nonumber
&+2\hspace{1pt}R(u_1,u_2,u_3,u_4)>0
\end{align}
for all orthonormal four-frames $(u_1,u_2,u_3,u_4)\subset\mathbb{R}^n$.

Schlichting~\cite[Thm.~1.7.4]{Schlichting2014} proved the gluing result for convex boundaries.
Chow~\cite[Thm.~1]{Chow} improved this to $2$-convex boundaries.}
\item[$\triangleright$]{We say that $R$ has $\PIC1$ if
\begin{align}\nonumber
R^{\mathbb{C}}(\zeta,\eta,\overline{\eta},\overline{\zeta})>0\quad&\text{for all linearly independent vectors $\zeta,\eta\in\mathbb{C}^n$}\\\nonumber
&\text{with $\langle\zeta,\zeta\rangle_{\mathbb{C}}\langle\eta,\eta\rangle_{\mathbb{C}}-\langle\zeta,\eta\rangle_{\mathbb{C}}^2=0$.}
\end{align}
Equivalently,
\begin{align}\nonumber
R&(u_1,u_3,u_3,u_1)+\lambda^2R(u_1,u_4,u_4,u_1)\\\nonumber
&+R(u_2,u_3,u_3,u_2)+\lambda^2R(u_2,u_4,u_4,u_2)\\\nonumber
&+2\lambda\hspace{1pt}R(u_1,u_2,u_3,u_4)>0
\end{align}
for all orthonormal four-frames $(u_1,u_2,u_3,u_4)\subset\mathbb{R}^n$ and all $\lambda\in[0,1]$.

The gluing result was proved by Schlichting~\cite[Thm.~1.7.4]{Schlichting2014} and Chow~\cite[Thm.~1]{Chow}.
}
\item[$\triangleright$]{We say that $R$ has $\PIC2$ if
\begin{equation}\nonumber
R^{\mathbb{C}}(\zeta,\eta,\overline{\eta},\overline{\zeta})>0\quad\text{for all linearly independent vectors $\zeta,\eta\in\mathbb{C}^n$.}
\end{equation}
Equivalently,
\begin{align}\nonumber
R&(u_1,u_3,u_3,u_1)+\lambda^2R(u_1,u_4,u_4,u_1)\\\nonumber
&+\mu^2R(u_2,u_3,u_3,u_2)+\lambda^2\mu^2R(u_2,u_4,u_4,u_2)\\\nonumber
&+2\lambda\mu\hspace{1pt}R(u_1,u_2,u_3,u_4)>0
\end{align}
for all orthonormal four-frames $(u_1,u_2,u_3,u_4)\subset\mathbb{R}^n$ and all $\lambda,\mu\in[0,1]$.

The gluing result was proved by Schlichting~\cite[Thm.~1.7.4]{Schlichting2014} and Chow~\cite[Thm.~1]{Chow}.
}

\item[$\triangleright$]{We say that $R$ has \textit{positive sectional curvature} (denoted $\sec>0$) if $R(v_1,v_2,v_2,v_1)>0$ for all orthonormal pairs $(v_1,v_2)\subset\mathbb{R}^n$.

The gluing theorem for $\sec>0$ was proved by Kosovskii~\cite[Thm.~1.1]{Kosovskii} in the setting of Alexandrov spaces and by Reiser-Wraith~\cite[Thm.~A]{RW} in a smooth version.}
\item[$\triangleright$]{Let $1\leq k\leq n-1$.
The tensor $R$ has \textit{positive $k^{th}$-intermediate Ricci curvature} (denoted $\ric_k>0$) if for all orthonormal $(k+1)$-frames $(v,u_1,\dots,u_k)\subset\mathbb{R}^n$ the sum $\sum_{i=1}^k R(u_i,v,v,u_i)$ is positive.

The gluing result for $\ric_k>0$ was proved by Reiser-Wraith~\cite[Thm.~A]{RW} in the general case. The case $k=2$ was noted by Schlichting~\cite[Thm.~7.5]{Schlichting2012} before.}
\item[$\triangleright$]{We say that $R$ has \textit{positive Ricci curvature} (denoted $\ric>0$) if $\ric(R)$ is positive definite.

Detailed proofs for Perelman's original gluing theorem have been given by Schlichting~\cite[Thm.~1.7.1]{Schlichting2014}, Botvinnik-Walsh-Wraith~\cite[Thm.~2]{BWW} and Reiser-Wraith~\cite[Thm.~A]{RW}.}
\item[$\triangleright$]{Let $1\leq k\leq n$.
The tensor $R$ has \textit{$k$-positive Ricci curvature} (denoted $\scal_k>0$) if $\ric(R)$ is $k$-positive.
This curvature notion goes back to Wolfson~\cite{Wolfson}.

The gluing result for $\scal_k>0$ is due to Reiser-Wraith~\cite[Thm.~A]{RW}.}
\item[$\triangleright$]{The scalar curvature of $R$ is $\scal(R)=\sum_{i=1}^n\ric(R)(u_i,u_i)$, where $(u_1,\dots,u_n)$ is an orthonormal basis of $\mathbb{R}^n$.
It has \textit{positive scalar curvature} (denoted $\scal>0$) if $\scal(R)>0$.

With regard to the gluing theorem, the special case of doublings was considered by Gromov-Lawson~\cite[Thm.~5.7]{GL} and Almeida~\cite[Thm.~1.1]{Almeida}.
A proof for the general case has been provided by Brendle-Marques-Neves~\cite[Thm.~5]{BMN}.
Bär-Hanke~\cite[Thm.~4.11]{BH2023} extended the gluing to a homotopical version.
The author~\cite[Thm.~3.17]{Frerichs2025} contributed the case where the boundary is non-compact.}
\item[$\triangleright$]{Let $1\leq m\leq n-1$ and let $(u_1,\dots,u_m)$ be an orthonormal $m$-frame in $\mathbb{R}^n$.
We extend the frame to an orthonormal basis $(u_1,\dots,u_n)$ of $\mathbb{R}^n$.
Then define
\begin{equation}\nonumber
\mathcal{C}_m(u_1,\dots,u_m)=\sum_{p=1}^m\sum_{q=p+1}^n R(u_p,u_q,u_q,u_p).
\end{equation}
We say that $R$ has \textit{positive $m$-intermediate curvature} (denoted $\mathcal{C}_m>0$) if $\mathcal{C}_m(u_1,\dots,u_m)>0$ for all orthonormal $m$-frames $(u_1,\dots,u_m)\subset\mathbb{R}^n$.
This concept was introduced by Brendle-Hirsch-Johne~\cite{BHJ}.

The gluing result is due to Chow-Johne-Wan~\cite[Thm.~1.2]{CJW}.}
\end{itemize}

There are some implications between the above mentioned curvature properties, which are collected in Figure~\ref{diagram}.
We write $\mathsf{C}_{\mathcal{R}>0}\longrightarrow\mathsf{C}_{\PIC2}$ to mean that the cone of positive curvature operators is contained in the cone of tensors with $\PIC2$, and analogously for the other relations.
Most of the inclusions are either obvious or are justified in~\cite[Chap.~7]{Brendle}.
The only exceptions are $\mathsf{C}_{\PIC}\subset\mathsf{C}_{\hspace{1pt}\mathcal{C}_m>0}$ for $m\geq 3$ and $\mathsf{C}_{\ric_k>0}\subset\mathsf{C}_{\hspace{1pt}\mathcal{C}_m>0}$ for $m\geq k$.
The inclusion for $\PIC$ has recently been demonstrated by Chow-Wang~\cite[Prop.~6]{CW}.
The inclusion for $\ric_k>0$ is proven in Fact~\ref{inclusion}.
\begin{figure}[h]
\begin{tikzcd}
\mathsf{C}_{\mathcal{R}>0}\arrow[rrr]\arrow[ddd]&&&\mathsf{C}_{\PIC2}\arrow[rrrr]\arrow[ddd]&&&&\mathsf{C}_{\sec>0}\arrow[ddd]\\
\\
\\
\mathsf{C}_{\mathcal{R}_2>0}\arrow{rrr}&&&\mathsf{C}_{\PIC1}\arrow[rrrr,"(k\geq 2)"]\arrow[dddd]&&&&\raisebox{-0.8ex}{$\mathsf{C}_{\ric_k>0}$}\arrow[d]\arrow[lldd,"(m\geq k)" swap]\\
&&&&&&&\mathsf{C}_{\ric>0}\arrow{dd}\\[-1.7em]
&&&&&\mathsf{C}_{\hspace{1pt}\mathcal{C}_m>0}\arrow{rrdd}&&\\[-1.7em]
&&&&&&&\mathsf{C}_{\scal_k>0}\arrow{d}\\
&&&\mathsf{C}_{\PIC}\arrow{rrrr}\arrow[rruu,"(m\geq 3)"]&&&&\mathsf{C}_{\scal>0}
\end{tikzcd}
\caption{Inclusions of curvature cones.}
\label{diagram}
\end{figure}

\begin{fact}\label{inclusion}
It holds $\mathsf{C}_{\ric_k>0}\subset\mathsf{C}_{\hspace{1pt}\mathcal{C}_m>0}$ for all $1\leq k\leq m\leq n-1$.
\end{fact}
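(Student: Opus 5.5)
Since $\ric_k>0$ implies $\ric_m>0$ for $k\le m$, the plan is to prove $\mathsf{C}_{\ric_m>0}\subset\mathsf{C}_{\hspace{1pt}\mathcal{C}_m>0}$ and combine the two inclusions. The implication $\ric_k>0\Rightarrow\ric_m>0$ follows by averaging. Fix a unit vector $v$ and an orthonormal $m$-frame $(u_1,\dots,u_m)\perp v$. Each $k$-element subset $I\subset\{1,\dots,m\}$ gives a positive sum $\sum_{i\in I}R(u_i,v,v,u_i)$. Summing over all $I$ counts every index $\binom{m-1}{k-1}$ times, so $\sum_{i=1}^m R(u_i,v,v,u_i)>0$.

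For the inclusion $\mathsf{C}_{\ric_m>0}\subset\mathsf{C}_{\hspace{1pt}\mathcal{C}_m>0}$, let $R$ have $\ric_m>0$ and fix an orthonormal $m$-frame $(u_1,\dots,u_m)$, extended to an orthonormal basis $(u_1,\dots,u_n)$. Write $K_{pq}=R(u_p,u_q,u_q,u_p)$, which is symmetric in $p,q$ and vanishes for $p=q$. The curvature splits as
\begin{equation}\nonumber
\mathcal{C}_m(u_1,\dots,u_m)=\sum_{1\le p<q\le m}K_{pq}+\sum_{q=m+1}^n\sum_{p=1}^m K_{pq}.
\end{equation}
For each $q\ge m+1$, the vector $u_q$ is orthogonal to $u_1,\dots,u_m$. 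Applying $\ric_m>0$ with $v=u_q$ therefore makes the inner sum $\sum_{p=1}^mK_{pq}$ positive. This requires $n\ge m+1$, which holds because $m\le n-1$.

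The first sum runs over pairs inside the span of $u_1,\dots,u_m$, and controlling it is the main step. I would use $\ric_m>0$ with $v=u_q$ for $q\le m$ and the $m$ vectors $\{u_p:p\le m,\,p\neq q\}\cup\{u_{m+1}\}$, giving
\begin{equation}\nonumber
\sum_{p\le m,\,p\ne q}K_{pq}+K_{q,m+1}>0.
\end{equation}
Summing over $q=1,\dots,m$ yields $2\sum_{p<q\le m}K_{pq}+\sum_{q\le m}K_{q,m+1}>0$.

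Adding this to twice the second sum, and using $2\sum_{q>m}\sum_{p\le m}K_{pq}\ge\sum_{p\le m}K_{p,m+1}+\sum_{q>m+1}\sum_{p\le m}K_{pq}$, gives
\begin{equation}\nonumber
2\,\mathcal{C}_m=\Bigl(2\sum_{p<q\le m}K_{pq}+\sum_{p\le m}K_{p,m+1}\Bigr)+\sum_{p\le m}K_{p,m+1}+2\sum_{q>m+1}\sum_{p\le m}K_{pq}.
\end{equation}
The bracketed term is positive by the previous step. The remaining terms are positive by the $\ric_m$ inequalities already established for $q\ge m+1$. Hence $\mathcal{C}_m>0$.

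The only delicate point is this bookkeeping, in particular ensuring that the auxiliary vector $u_{m+1}$ exists. That again uses $m\le n-1$.
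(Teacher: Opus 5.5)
Your proof is correct, but you group the terms differently from the paper. The paper uses a symmetric identity with no distinguished vector. It halves the sum over all pairs and discards the pairs with both indices $>m$, which gives
\[
\mathcal{C}_m(u_1,\dots,u_m)=\tfrac12\sum_{i=1}^m\ric(R)(u_i,u_i)+\tfrac12\sum_{i=m+1}^n\sum_{l=1}^m R(u_i,u_l,u_l,u_i).
\]
The first part is positive because $\ric_k>0$ implies $\ric>0$ (that is, $\ric_{n-1}>0$), and the second because $\ric_k>0$ implies $\ric_m>0$.

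You never invoke full Ricci curvature. For the pairs inside $\mathrm{span}(u_1,\dots,u_m)$, you complete $\{u_p\}_{p\le m,\,p\neq q}$ by the auxiliary vector $u_{m+1}$, apply $\ric_m>0$, and then compensate by taking one copy of $\sum_{p\le m}K_{p,m+1}$ from the mixed terms. The paper's Ricci term uses all $u_l$ with $l>m$; yours uses only $u_{m+1}$ and moves the remaining contributions into the other group.

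Neither route is stronger: $\ric_m>0$ and $\ric_{n-1}>0$ both follow from $\ric_k>0$ by your averaging argument. The paper's identity is shorter and canonical. Yours has the merit of spelling out the monotonicity $\ric_k>0\Rightarrow\ric_m>0$, which the paper leaves implicit in ``The assertion follows''.

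One small expository point: the inequality you state just before the final display is superfluous, and it does not match the display, which has coefficient $2$ on the last sum. The display itself is an exact identity, and that is all the argument needs.
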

\begin{proof}
Let $1\leq k\leq m\leq n-1$.
Let $R\in\mathsf{C}_{\ric_k>0}$ and let $(u_1,\dots,u_m)$ be an orthonormal $m$-frame in $\mathbb{R}^n$.
We extend this frame to an orthonormal basis $(u_1,\dots,u_n)$ of $\mathbb{R}^n$.
For each $1\leq p\leq m$, every summand $R(u_p,u_q,u_q,u_p)$ with $1\leq q\leq n$ appears exactly once in the sum for $\mathcal{C}_m(u_1,\dots,u_m)$.

We first consider the sum where for each $1\leq p\leq n$, every summand $R(u_p,u_q,u_q,u_p)$ with $1\leq q\leq n$ appears exactly once.
It holds
\begin{equation}\nonumber
\sum\limits_{1\leq p<q\leq n}R(u_p,u_q,u_q,u_p)=\sum\limits_{i=1}^n\left(\tfrac{1}{2}\cdot\sum\limits_{l=1}^nR(u_i,u_l,u_l,u_i)\right).
\end{equation}
In order to obtain $\mathcal{C}_m(u_1,\dots,u_m)$ from this sum, we have to delete all summands where both indices $i$ and $l$ are $>m$.
Hence,
\begin{equation}\nonumber
\mathcal{C}_m(u_1,\dots,u_m)=\sum_{i=1}^m\left(\tfrac{1}{2}\cdot\sum\limits_{l=1}^nR(u_i,u_l,u_l,u_i)\right)+\sum_{i=m+1}^n\left(\tfrac{1}{2}\cdot\sum\limits_{l=1}^{m}R(u_i,u_l,u_l,u_i)\right).
\end{equation}
The assertion follows.
\end{proof}

Figure~\ref{diagram} shows conditions for positive curvature.
In all examples, there is a standard way to define curvature $>\sigma$ for bounds $\sigma\in\mathbb{R}$ other than zero.
For instance, we say that $R$ has \textit{curvature operator $>\sigma$} (denoted $\mathcal{R}>\sigma$) if $A_{\mathcal{R}}(\omega,\omega)>\sigma$ for all $\omega\in\Lambda^2\mathbb{R}^n$ with $\Vert\omega\Vert=1$.
Then $\mathsf{C}_{\mathcal{R}}=(\mathsf{C}_{\mathcal{R}>\sigma})_{\sigma\in\mathbb{R}}$ is a continuous family of curvature conditions.
The same terminology applies to all other curvature conditions in Figure~\ref{diagram}.

The first goal is to demonstrate that all of these families are P-flexible.

\begin{proposition}
There exists $0<\tau\leq 1$ such that $\mathfrak{b}\KN\mathfrak{b}^\perp+r\cdot\mathfrak{b}\KN\mathfrak{b}+R$ has positive curvature operator for all $0<r\leq 1$ and $R\in B^\perp_{\tau\sqrt{r}}$.
\end{proposition}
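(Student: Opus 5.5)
The plan is to compute everything in the orthogonal splitting
\[
\Lambda^2\mathbb{R}^n=V_1\oplus V_2,\qquad V_1:=e_1\wedge e_1^\perp=\mathrm{span}\{e_1\wedge e_i:2\leq i\leq n\},\qquad V_2:=\Lambda^2(e_1^\perp),
\]
and to show that the quadratic form $A_{\mathcal{R}}$ of $\mathfrak{b}\KN\mathfrak{b}^\perp+r\cdot\mathfrak{b}\KN\mathfrak{b}+R$ is positive definite. This reduces to a $2\times 2$ block estimate.

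First we compute the two model tensors. From the definition of the Kulkarni-Nomizu product, $(\mathfrak{b}\KN\mathfrak{b})(v_1,v_2,v_2,v_1)=2$ for orthonormal $v_1,v_2$. Hence $\mathfrak{b}\KN\mathfrak{b}$ has constant curvature $2$, and its curvature operator is $2\cdot\mathrm{id}$ with the sign normalisation chosen so that the round sphere has positive curvature operator. Next, $\mathfrak{b}\KN\mathfrak{b}^\perp$ has as its only nonzero independent entries $(\mathfrak{b}\KN\mathfrak{b}^\perp)(e_i,e_1,e_1,e_i)=1$ for $2\leq i\leq n$. Consequently its curvature operator is diagonal in the basis $\{e_i\wedge e_j\}$, equal to $\mathrm{id}$ on $V_1$ and $0$ on $V_2$. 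Writing $\omega=\alpha+\beta$ with $\alpha\in V_1$ and $\beta\in V_2$, the model part therefore contributes
\[
(1+2r)\Vert\alpha\Vert^2+2r\Vert\beta\Vert^2 .
\]

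Now take $R\in B^\perp_{\tau\sqrt r}$. Since $R(X,Y,Z,W)=0$ for all $X,Y,Z,W\in e_1^\perp$, the form $A_{\mathcal{R}_R}$ vanishes on $V_2\times V_2$. The map $R\mapsto A_{\mathcal{R}_R}$ is linear on a finite-dimensional space, so there is a dimensional constant $C\geq 1$ with $\vert A_{\mathcal{R}_R}(\omega,\eta)\vert\leq C\Vert R\Vert\,\Vert\omega\Vert\,\Vert\eta\Vert$. Hence
\[
A_{\mathcal{R}}(\omega,\omega)\geq\bigl(1+2r-C\tau\sqrt r\bigr)\Vert\alpha\Vert^2-2C\tau\sqrt r\,\Vert\alpha\Vert\,\Vert\beta\Vert+2r\Vert\beta\Vert^2 .
\]

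This is a quadratic form in $(\Vert\alpha\Vert,\Vert\beta\Vert)$. It is positive for $(\alpha,\beta)\neq 0$ as soon as $1+2r-C\tau\sqrt r>0$ and
\[
C^2\tau^2r<2r\bigl(1+2r-C\tau\sqrt r\bigr).
\]
For $0<r\leq 1$ and $C\tau\leq\tfrac12$, the first condition holds because $1+2r-C\tau\sqrt r\geq\tfrac12$. The right-hand side of the second is then at least $r$, while the left-hand side is at most $\tfrac14 r$. So $\tau:=\min\{1,\tfrac{1}{2C}\}$ works.

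The key point, and the only step needing care, is the $\sqrt r$ scaling. The perturbation $R$ has no $V_2\times V_2$ block, so it enters only through the diagonal $V_1$ block, which is of order $1$, and the off-diagonal block. The off-diagonal term has size $\sqrt r$, and the Cauchy-Schwarz/discriminant estimate balances it against the product of the block sizes, $1\cdot r$. This is exactly where the hypothesis $R\in B^\perp$ rather than a full ball is used. Beyond that, the remaining work is the routine check of the curvature operator of $\mathfrak{b}\KN\mathfrak{b}^\perp$, in particular that it has no off-diagonal entries.
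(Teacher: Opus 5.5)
Your argument is correct and is essentially the paper's proof: the paper also splits $\Lambda^2\mathbb{R}^n$ into the $e_1\wedge e_i$ block and the $\Lambda^2(e_1^\perp)$ block, uses that $R\in B^\perp_{\tau\sqrt r}$ has no entries in the latter block, and checks positivity via the Schur complement $A-BC^{-1}B^{\mathsf{T}}$, which is equivalent to your discriminant condition. Your version has two small advantages: it gives an explicit $\tau=\min\{1,\tfrac{1}{2C}\}$ instead of the paper's continuity-of-eigenvalues argument, and it uses the correct normalisation $\mathcal{R}(\mathfrak{b}\KN\mathfrak{b})=2\,\mathrm{id}$, whereas the paper's matrix writes $r$ where $2r$ belongs, a harmless slip.
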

\begin{proof}
Let $0<\tau,r\leq 1$ and let $R\in B^\perp_{\tau\sqrt{r}}$.
We view the curvature operator of $R$ as a symmetric bilinear form.
In the standard basis $(e_i\wedge e_l)_{1\leq i<l\leq n}$ of $\Lambda^2\mathbb{R}^n$, it is represented by the entries of $R$.

In the same manner, the curvature operator associated to $\mathfrak{b}\KN\mathfrak{b}^\perp+r\cdot\mathfrak{b}\KN\mathfrak{b}+R$ is given by the matrix
\begin{equation}\nonumber
\left(\begin{array}{c|c}
A &B\\
\hline\\[-0.35cm]
B^{\mathsf{T}}& C
\end{array}\right)
=
\left(\begin{array}{ccc|ccc}
1+r+R_{1221}&\cdots&R_{1n21}&R_{2321}&\cdots&\\
\vdots&\ddots&\vdots&\vdots&&\\
R_{12n1}&\cdots&1+r+R_{1nn1}&R_{23n1}&\cdots&\\
\hline
\vphantom{\rule{0pt}{0.35cm}}
R_{1232}&\cdots& R_{1n32}&r&\cdots&0\\
\vdots&&\vdots&\vdots&\ddots&\vdots\\
&& &0&\cdots&r
\end{array}\right).
\end{equation}
Since $C$ is positive definite, the block matrix is positive definite if the Schur complement $A-BC^{-1}B^\mathsf{T}$ is positive definite.
To show this, we first consider the matrix $A$:

The eigenvalues of $A$ are given exactly by $1+r+\lambda$, where $\lambda$ is an eigenvalue of $A-(1+r)\cdot I_n$.
It holds $|\lambda|\leq\Vert R\Vert<\tau\sqrt{r}$, so $\lambda>-\tau\sqrt{r}$.
This yields
\begin{equation}\label{EVestimate}
1+r+\lambda\geq 1+r-\tau\sqrt{r}=\left(1-\frac{\tau}{2}\sqrt{r}\right)^2+\left(1-\frac{\tau^2}{4}\right)r\geq\frac{1}{4}.
\end{equation}
Now we have $A-BC^{-1}B^{\mathsf{T}}=A-\tfrac{1}{r}BB^{\mathsf{T}}$ with
\begin{equation}\nonumber
\left\Vert\tfrac{1}{r}BB^{\mathsf{T}}\right\Vert\leq\tfrac{1}{r}\Vert B\Vert^2<\tfrac{1}{r}\left(\tau\sqrt{r}\right)^2=\tau^2.
\end{equation}
Since the eigenvalues of a matrix depend continuously on its entries and since the estimate~\eqref{EVestimate} would hold on the compact domain $\{0\leq r\leq 1,|\lambda|\leq\sqrt{r}\}$ as well, one can choose $0<\tau\leq 1$ small enough such that $A-BC^{-1}B^{\mathsf{T}}$ has only positive eigenvalues, independent of $0<r\leq 1$ and $R\in B_{\tau\sqrt{r}}^\perp$.
\end{proof}

We now show that the families $\mathsf{C}_{\ric},\mathsf{C}_{\scal_k},\mathsf{C}_{\scal}$ and $\mathsf{C}_{\hspace{1pt}\mathcal{C}_m}$ are even GL-flexible.

\begin{example}
It holds
\begin{itemize}
\item[\textit{(a)}]{$\mathfrak{b}\KN\mathfrak{b}^\perp\in\mathsf{C}_{\ric>0}$,}
\item[\textit{(b)}]{$\mathfrak{b}\KN\mathfrak{b}^\perp\in\mathsf{C}_{\hspace{1pt}\mathcal{C}_m>0}$ for all $1\leq m\leq n-1$.}
\end{itemize}
\end{example}
\begin{proof}
\textit{(a)} Let $u\in\mathbb{R}^n$ be a unit vector.
We extend this to an orthonormal basis $(u_1,u_2,\dots,u_n)$ of $\mathbb{R}^n$ where $u_1:=u$.
It holds
\begin{equation}\nonumber
\ric(\mathfrak{b}\KN\mathfrak{b}^\perp)(u,u)=\sum_{i=2}^n(\mathfrak{b}\KN\mathfrak{b}^\perp)(u_1,u_i,u_i,u_1)=\sum_{i=2}^n(\mathfrak{b}^\perp(u_i,u_i)+\mathfrak{b}^\perp(u_1,u_1))>0
\end{equation}
because all summands are $\geq 0$ and one of the basis vectors must have a non-trivial $e_1$-component.

\textit{(b)} Let $1\leq m\leq n-1$ and let $(u_1,\dots,u_m)$ be an orthonormal $m$-frame in $\mathbb{R}^n$.
We extend this frame to an orthonormal basis $(u_1,\dots,u_n)$ of $\mathbb{R}^n$.
It holds
\begin{align}\nonumber
\mathcal{C}_m(\mathfrak{b}\KN\mathfrak{b}^\perp)(u_1,\dots,u_m)&=\sum_{p=1}^m\sum_{q=p+1}^n(\mathfrak{b}\KN\mathfrak{b}^\perp)(u_p,u_q,u_q,u_p)\\\nonumber
&=\sum_{p=1}^m\sum_{q=p+1}^n(\mathfrak{b}^\perp(u_q,u_q)+\mathfrak{b}^\perp(u_p,u_p))>0
\end{align}
because every basis vector appears in the sum for $\mathcal{C}_m$.
\end{proof}

\begin{example}
The following definition provides a general notion of \textit{curvature almost $\geq\sigma$}.
Let $\mathsf{C}=(\mathsf{C}(\sigma))_{\sigma\in\mathbb{R}}$ be a P-flexible family of curvature conditions.
For $\varepsilon>0$ and $\sigma\in\mathbb{R}$, we set
\begin{equation}\nonumber
\mathsf{C}_{\varepsilon}(\sigma):=\{R\in\mathscr{C}_{B}(\mathbb{R}^n):R+\varepsilon\hspace{1pt}\scal(R)\cdot\mathfrak{b}\KN\mathfrak{b}\in\mathsf{C}(\sigma)\}.
\end{equation}
Then $\mathsf{C}_{\varepsilon}=(\mathsf{C}_{\varepsilon}(\sigma))_{\sigma\in\mathbb{R}}$ is a GL-flexible family of curvature conditions.
\end{example}

\subsection{Gram-Schmidt's algorithm revisited}
We consider a quantitative version of the Gram-Schmidt algorithm.
\begin{lemma}\label{QGS}
For every $n\in\mathbb{N}$ and $c>0$, there exists a constant $C>0$ and a real number $\delta>0$ with the following property:

Let $(V,g)$ be an $n$-dimensional inner product space.
Let $0\leq  t\leq\delta$.
Suppose that $(b_1,\dots,b_m)$ is a basis of V with
\begin{equation}\nonumber
|g(b_i,b_l)-\delta_{il}|\leq c\cdot t\quad\text{for all $1\leq i,l\leq n$.}
\end{equation}

Then there exists a $g$-orthonormal basis $(u_1,\dots,u_m)$ of $V$ such that
\begin{equation}\nonumber
|g(b_i,u_l)-\delta_{il}|\leq C\cdot t\quad\text{for all $1\leq i,l\leq n$.}
\end{equation}
\end{lemma}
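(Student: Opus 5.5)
Apply the classical Gram--Schmidt procedure to $(b_1,\dots,b_n)$ (the statement mixes $m$ and $n$; necessarily $m=n=\dim V$) and prove by induction on $l$ that the resulting vectors satisfy the stated bound. For $l=1,\dots,n$ set
\begin{equation}\nonumber
w_l:=b_l-\sum_{j<l}g(b_l,u_j)\,u_j,\qquad u_l:=\frac{w_l}{\Vert w_l\Vert_g}.
\end{equation}
The induction hypothesis at stage $l$ is that there are constants $C_l$ (depending only on $n$, $c$ and $l$) with $|g(b_i,u_j)-\delta_{ij}|\leq C_l\,t$ for all $1\leq i\leq n$ and $j<l$, valid whenever $t\leq\delta_l$. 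Start with $C_0:=0$ and $\delta_0:=1$, say, and take $C:=C_n$ and $\delta:=\min_l\delta_l$ at the end.

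For the inductive step, first control $w_l$. For $j<l$ we have $|g(b_l,u_j)|\leq C_l t$, since $l\neq j$. Hence
\begin{equation}\nonumber
\Vert w_l\Vert_g^2=g(b_l,b_l)-\sum_{j<l}g(b_l,u_j)^2.
\end{equation}
This gives $\bigl|\Vert w_l\Vert_g^2-1\bigr|\leq ct+nC_l^2t^2\leq(c+nC_l^2)t$ for $t\leq1$. Choose $\delta_{l+1}\leq\delta_l$ so small that this quantity is at most $\tfrac12$. Then $\Vert w_l\Vert_g\geq 1/\sqrt2$, so $w_l\neq 0$; this nondegeneracy is the only place where $\delta$ is genuinely needed. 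It also gives $\bigl|\Vert w_l\Vert_g^{-1}-1\bigr|\leq K(c+nC_l^2)t$ for a universal constant $K$.

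Next compute $g(b_i,u_l)$. We have
\begin{equation}\nonumber
g(b_i,w_l)=g(b_i,b_l)-\sum_{j<l}g(b_l,u_j)\,g(b_i,u_j).
\end{equation}
The first term is $\delta_{il}+O(ct)$. Each product in the sum contains the factor $g(b_l,u_j)=O(C_lt)$. The other factor $g(b_i,u_j)$ is bounded by $1+C_l$, which is where the induction hypothesis is used with $i$ arbitrary. So $|g(b_i,w_l)-\delta_{il}|\leq\bigl(c+nC_l(1+C_l)\bigr)t$. Dividing by $\Vert w_l\Vert_g$ and using the estimate above, together with $|g(b_i,w_l)|\leq 1+\tfrac12$, yields $|g(b_i,u_l)-\delta_{il}|\leq C_{l+1}t$ for an explicit $C_{l+1}$ depending only on $n$, $c$ and $C_l$. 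This closes the induction, and the $u_l$ are $g$-orthonormal by construction.

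No step is a real obstacle; the computation is routine. The only points needing care are the bookkeeping, namely that every constant depends only on $(n,c)$ and not on $(V,g)$, and ensuring $t\leq\delta$ keeps every $\Vert w_l\Vert_g$ bounded below so the normalisation is harmless.
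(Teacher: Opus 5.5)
Your proof is correct and follows essentially the same route as the paper: Gram--Schmidt with induction on the index, a uniform lower bound on the norm of the unnormalised vector obtained by shrinking $\delta$, and constants depending only on $n$ and $c$. The only cosmetic difference is that the paper treats the diagonal entry separately, via $g(b_{k+1},u_{k+1})=|u'_{k+1}|_g$ and a mean-value estimate for the square root, whereas you handle all $i$ at once through $\bigl|\Vert w_l\Vert_g^{-1}-1\bigr|$; your bound $|g(b_i,w_l)|\leq\tfrac32$ needs either a slightly smaller $\delta_{l+1}$ or Cauchy--Schwarz, but any constant bound suffices.
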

\begin{proof}
Let $n\in\mathbb{N}$ and $c>0$.
Let $(V,g)$ be an $n$-dimensional inner product space.
We pick a basis $(b_1,\dots,b_m)$ of $V$ and apply the Gram-Schmidt algorithm to obtain a $g$-orthonormal basis $(u_1,\dots,u_m)$.
This is to say that
\begin{equation}\nonumber
u_k=\frac{u_k'}{|u_k'|_g}\quad\text{with}\quad u_k'=b_k-\sum\limits_{l=1}^{k-1}g(b_k,u_l)\cdot u_l
\end{equation}
for all $1\leq k\leq n$.
We construct a sequence of constants $c\leq C_1\leq\dots\leq C_n$ and a sequence of real numbers $1\geq\delta_1\geq\dots\geq\delta_n>0$ such that the following holds for all $1\leq k\leq n$:
If
\begin{equation}\nonumber
|g(b_i,b_l)-\delta_{il}|\leq c\cdot t\quad\text{for some $0\leq t\leq\delta_k$ and for all $1\leq i,l\leq n$,}
\end{equation}
then it holds
\begin{equation}\nonumber
|g(b_i,u_l)-\delta_{il}|\leq C_k\cdot t\quad\text{for all $1\leq i\leq n$ and $1\leq l\leq k$.}
\end{equation}
Suppose that $C_k$ and $\delta_k$ have been established for some $1\leq k\leq n-1$ and that both of them only depend on $c$.
The aim is to construct $C_{k+1}$ and $\delta_{k+1}$.

Let us assume that there exists $0\leq t\leq\delta_k$ such that $|g(b_i,b_l)-\delta_{il}|\leq c\cdot t$ for all $1\leq i,l\leq n$.
It is sufficient to consider $t>0$.
For $1\leq i\leq n$ with $i\neq k+1$, we compute
\begin{align}\nonumber
|g(b_i,u_{k+1}')|&\leq |g(b_i,b_{k+1})|+\sum\limits_{l=1}^k|g(b_{k+1},u_l)|\cdot|g(b_i,u_l)|\\\nonumber
&\leq ct+\sum\limits_{l=1}^kC_kt\cdot(1+C_kt)\\\nonumber
&=(c+kC_k\cdot(1+C_kt))\cdot t\leq(c+kC_k\cdot(1+C_k))\cdot t.
\end{align}
Furthermore,
\begin{align}\nonumber
g(u_{k+1}',u_{k+1}')&=g(b_{k+1},b_{k+1})-\sum\limits_{l=1}^kg(b_{k+1},u_l)^2\\\nonumber
&\geq 1-ct-k\cdot(C_kt)^2\geq 1-(c+kC_k^2)\cdot t.
\end{align}
We choose $\delta_{k+1}\leq\delta_k$ so small that $(c+kC_k^2)\cdot\delta_{k+1}\leq\tfrac{3}{4}$.
It follows that $|u_{k+1}'|_g\geq\tfrac{1}{2}$ if $t\leq\delta_{k+1}$.
Thus, for $t\leq\delta_{k+1}$ and $1\leq i\leq n$ with $i\neq k+1$,
\begin{equation}\nonumber
|g(b_i,u_{k+1})|\leq 2(c+kC_k\cdot(1+C_k))\cdot t.
\end{equation}
If $i=k+1$, it holds $g(b_{k+1},u_{k+1})=|u_{k+1}'|_g$, so
\begin{align}\nonumber
\frac{1}{t}\bigl(g(b_{k+1},u_{k+1})-1\bigr)\leq\frac{1}{t}\bigl(\sqrt{1+ct}-1\bigr)=f'(r_t)
\end{align}
for $f(r)=\sqrt{1+cr}$ and some $r_t\in(0,t)$.
Therefore, on the one hand,
\begin{equation}\nonumber
\frac{1}{t}\bigl(g(b_{k+1},u_{k+1})-1\bigr)\leq\frac{c}{2\sqrt{1+cr_t}}\leq c.
\end{equation}
On the other hand,
\begin{equation}\nonumber
\frac{1}{t}\bigl(1-g(b_{k+1},u_{k+1})\bigr)\leq\frac{1}{t}\left(1-\sqrt{1-(c+kC_k^2)\cdot t}\right)\leq c+kC_k^2
\end{equation}
with the same type of argument.
This yields
\begin{equation}\nonumber
|g(b_{k+1},u_{k+1})-1|\leq(c+kC_k^2)\cdot t.
\end{equation}
We see that $C_{k+1}:=2c+2kC_k\cdot(1+C_k)$ does the job.
It is easy to check that $C_1:=2c$ and $\delta_1:=\tfrac{3}{4c}$ work for the first construction step.
The assertion holds with $C:=C_n$ and $\delta:=\delta_n$.
\end{proof}

\begin{corollary}\label{uniforma}
Let $\mathscr{K}$ be a compact subspace of $\mathscr{C}_B(\mathbb{R}^n)$.
Let $r>0$.
Then there exists $a>0$ such that for all $R\in\mathscr{K}$ and all linear isomorphisms $j\colon\mathbb{R}^n\to\mathbb{R}^n$ with
\begin{equation}\nonumber
|\mathfrak{b}(j(e_i),j(e_l))-\delta_{il}|<a\quad\text{for all $1\leq i,l\leq n$},
\end{equation}
there exists a linear isometry $\iota\colon\mathbb{R}^n\to\mathbb{R}^n$ such that $j^\ast R\in B_r(\iota^\ast R)$.
\end{corollary}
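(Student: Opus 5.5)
The plan is to combine the quantitative Gram-Schmidt Lemma~\ref{QGS} with uniform continuity of the pullback map on a compact set.

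\emph{Reduction to a near-isometry estimate.} For a linear map $A\colon\mathbb{R}^n\to\mathbb{R}^n$ and $R\in\mathscr{C}_B(\mathbb{R}^n)$, the pullback $A^\ast R$ has components
\begin{equation}\nonumber
(A^\ast R)_{ijkl}=R(Ae_i,Ae_j,Ae_k,Ae_l).
\end{equation}
The map $(A,R)\mapsto A^\ast R$ is continuous, indeed polynomial. Write $\mathrm{O}(n)$ for the set of orthogonal matrices. The set $\{(A,R):\Vert A\Vert\leq 2,\ R\in\mathscr{K}\}$ is compact, so the map is uniformly continuous there. Hence there is $\eta>0$ with the following property: if $\Vert A\Vert,\Vert A'\Vert\leq 2$, $\Vert A-A'\Vert<\eta$ and $R\in\mathscr{K}$, then $\Vert A^\ast R-A'^\ast R\Vert<r$. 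A more explicit version is the estimate $\Vert A^\ast R-A'^\ast R\Vert\leq 4\cdot 2^3\Vert R\Vert\,\Vert A-A'\Vert$, which is uniform because $\mathscr{K}$ is bounded. It therefore suffices to find $a>0$ such that every $j$ satisfying the hypothesis is within $\eta$ in operator norm of some linear isometry $\iota$; we may also take $\eta\leq 1$, so that $\Vert j\Vert\leq 2$ holds automatically.

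\emph{Finding $\iota$.} Apply Lemma~\ref{QGS} with $V=\mathbb{R}^n$, $g=\mathfrak{b}$, the basis $b_i=j(e_i)$ and $c=1$; this gives constants $C$ and $\delta$. Any $j$ satisfying the hypothesis with $a$ small is invertible with $(b_i)$ a basis, since its Gram matrix is close to the identity. Set $t:=a$ with $a\leq\delta$. The lemma produces a $\mathfrak{b}$-orthonormal basis $(u_l)$ with $|\mathfrak{b}(b_i,u_l)-\delta_{il}|\leq Ca$. Define $\iota$ by $\iota(e_l)=u_l$; it is a linear isometry. Expanding in the orthonormal basis $(u_l)$ gives
\begin{equation}\nonumber
j(e_i)-\iota(e_i)=\sum_l\bigl(\mathfrak{b}(b_i,u_l)-\delta_{il}\bigr)u_l,
\end{equation}
so $\Vert j(e_i)-\iota(e_i)\Vert\leq\sqrt{n}\,Ca$. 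Consequently $\Vert j-\iota\Vert\leq nCa$.

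\emph{Choice of $a$ and expected difficulty.} Choose $a\leq\delta$ with $nCa<\eta$. By the first step, $\Vert j^\ast R-\iota^\ast R\Vert<r$ for every $R\in\mathscr{K}$, which is the claim. There is no serious obstacle. The only point needing care is that the constants $C$ and $\delta$ from Lemma~\ref{QGS} depend only on $n$ and $c$, and the continuity constant $\eta$ depends only on $\mathscr{K}$ and $r$. So $a$ is uniform in both $R$ and $j$, which is exactly what the corollary asserts.
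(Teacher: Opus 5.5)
Your proof is correct and takes essentially the same route as the paper: uniform continuity of the pullback over the compact set $\mathscr{K}$ (the paper phrases this as a uniform $2b$-modulus for $R$ near the standard basis vectors), combined with Lemma~\ref{QGS} to produce an isometry $\iota$ close to $j$. Estimating $\Vert j-\iota\Vert$ directly, instead of showing that $\iota^{-1}\circ j$ is near the identity as the paper does, is only a cosmetic difference, and it makes the final comparison of $j^\ast R$ with $\iota^\ast R$ slightly cleaner.
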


\begin{proof}
Firstly, there exists $b>0$ such that for all $1\leq i_1,i_2,i_3,i_4\leq n$, it holds
\begin{equation}\label{anearly1}
|R(v_1,v_2,v_3,v_4)-R(e_{i_1},e_{i_2},e_{i_3},e_{i_4})|<\frac{r}{n^2}
\end{equation}
for all $v_1,v_2,v_3,v_4\in\mathbb{R}^n$ with $\Vert(v_1,v_2,v_3,v_4)-(e_{i_1},e_{i_2},e_{i_3},e_{i_4})\Vert<2b$ and all $R\in\mathscr{K}$.
The existence of $b$ is due to the compactness of $\mathscr{K}$.
Then we find $a>0$ such that for each linear isomorphism $j\colon\mathbb{R}^n\to\mathbb{R}^n$ with $|\mathfrak{b}(j(e_i),j(e_l))-\delta_{il}|<a$, there exists a linear isometry $\iota\colon\mathbb{R}^n\to\mathbb{R}^n$ such that
\begin{equation}\nonumber
|\mathfrak{b}(j(e_i),\iota(e_l))-\delta_{il}|<\frac{b}{\sqrt{n}}\quad\text{for all $1\leq i,l\leq n$}.
\end{equation}
This follows from Lemma~\ref{QGS}, but we do not require its full content at this stage.
It holds
\begin{align}\nonumber
\Vert j^\ast R-\iota^\ast R\Vert^2&=\Vert(\iota^{-1}\circ j)^\ast R-R\Vert^2\\\nonumber
&\leq\sum\limits_{i_1,i_2,i_3,i_4}|(\iota^{-1}\circ j)^\ast R(e_{i_1},e_{i_2},e_{i_3},e_{i_4})-R(e_{i_1},e_{i_2},e_{i_3},e_{i_4})|^2\\\label{anearly2}
&=\sum\limits_{i_1,i_2,i_3,i_4}|R\bigl((\iota^{-1}\circ j)e_{i_1},\dots,(\iota^{-1}\circ j)e_{i_4}\bigr)-R(e_{i_1},e_{i_2},e_{i_3},e_{i_4})|^2.
\end{align}
For all $1\leq i\leq n$, we see that
\begin{align}\nonumber
\Vert(\iota^{-1}\circ j)e_i-e_i\Vert^2&=\Vert\sum\limits_{l=1}^n(\mathfrak{b}((\iota^{-1}\circ j)e_i,e_l)-\delta_{il})\cdot e_l\Vert^2\\\nonumber
&=\sum_{l=1}^n(\mathfrak{b}(j(e_i),\iota(e_l))-\delta_{il})^2<b^2.
\end{align}
Consequently, for all $1\leq i_1,i_2,i_3,i_4\leq n$,
\begin{equation}\nonumber
\Vert\bigl((\iota^{-1}\circ j)e_{i_1},\dots,(\iota^{-1}\circ j)e_{i_4}\bigr)-\bigl(e_{i_1},e_{i_2},e_{i_3},e_{i_4}\bigr)\Vert<2b.
\end{equation}
Equations~\eqref{anearly1} and~\eqref{anearly2} yield $\Vert j^\ast R-\iota^\ast R\Vert<r$.
\end{proof}

\section{The Perelman gluing}
Throughout this section we fix a Perelman flexible family $\mathsf{C}$ of curvature conditions along with a real number $0<\tau\leq 1$ such that $\mathfrak{b}\KN\mathfrak{b}^\perp+r\cdot\mathfrak{b}\KN\mathfrak{b}+B^\perp_{\tau\sqrt{r}}\subset\mathsf{C}(0)$ for all $0<r\leq 1$.

Furthermore, let $M$ be a smooth manifold of dimension $n\geq 2$ with non-empty (possibly non-compact) boundary and let $\sigma\colon M\to\mathbb{R}$ be a continuous function.
Both of them remain unchanged until Proposition~\ref{1jet}.

The space of Riemannian metrics on $M$ is denoted by $\mathscr{R}(M)$.
It is endowed with the weak $C^{\infty}$-topology.
We say that a metric $g$ on $M$ satisfies $\mathsf{C}(\sigma)$ if $R_g(p)$ satsifies $\mathsf{C}(\sigma(p))$ for all $p\in M$.
Set
\begin{equation}\nonumber
\mathscr{R}_{\mathsf{C}(\sigma)}(M):=\{g\in\mathscr{R}(M):\text{$g$ satisfies $\mathsf{C}(\sigma)$}\},
\end{equation}
where the topology is inherited from $\mathscr{R}(M)$.

To establish the gluing principle, it is essential that the condition of satisfying $\mathsf{C}(\sigma)$ defines a second order open partial differential relation on the total space of pointwise metrics over $M$.
We set
\begin{equation}\nonumber
E:=\{(p,g_p)\in M\times(T^\ast M\otimes T^\ast M):\text{$g_p$ is an inner product on $T_pM$}\}
\end{equation}
and denote by $J^2E$ the second jet manifold of $E$.
Let $\pi_M\colon J^2E\to M$ be the source projection and $\pi_E\colon J^2E\to E$ the target projection.
To each $2$-jet $j^2_pg\in E$ one can associate an algebraic curvature tensor $R(j^2_pg)\in\mathscr{C}_B(T_pM)$.
Put
\begin{equation}\nonumber
\mathscr{P}:=\{j^2_pg\in J^2E:R(j^2_pg)\;\text{satisfies $\mathsf{C}(\sigma(p))$}\}.
\end{equation}
We will show that $\mathscr{P}$ is an open subset of $J^2E$.

\begin{lemma}\label{openPDR}
Let $\mathscr{K}\subset\mathscr{P}$ be a compact subset.
Then there exist $r>0$ and $\varepsilon>0$ such that for every $j^2_pg\in\mathscr{K}$ there exists a neighbourhood $j^2_pg\in\mathscr{N}\subset J^2E$ with
\begin{equation}\nonumber
B_r\bigl(\iota_{h(q)}^\ast R(j^2_qh)\bigr)\in\mathsf{C}(\sigma(p)+\varepsilon)
\end{equation}
for all $j^2_qh\in\mathscr{N}$ and all linear isometries $\iota_{h(q)}\colon\mathbb{R}^n\to(T_qM,h(q))$.
\end{lemma}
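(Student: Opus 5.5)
The plan is a pointwise construction at each $2$-jet of $\mathscr{K}$, followed by a finite subcover. The interior curvature condition only enters through openness of $\mathsf{C}(\sigma)$, property (i) and the continuity of the family. The geometric input is Corollary~\ref{uniforma}, which compares pullbacks under almost-isometries with pullbacks under genuine isometries.

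\emph{Pointwise step.} Fix $x=j^2_{p_0}g\in\mathscr{K}$ and a linear isometry $\iota_0\colon\mathbb{R}^n\to(T_{p_0}M,g(p_0))$. Put $R_x:=\iota_0^\ast R(j^2_{p_0}g)\in\mathsf{C}(\sigma(p_0))$.
\begin{enumerate}
\item[(1)] By continuity of the family, $R_x\in\mathsf{C}(\sigma(p_0)+2\varepsilon_x)$ with $2\varepsilon_x:=\varepsilon(\sigma(p_0))(R_x)>0$. Since $\mathsf{C}(\cdot)$ is $\On(n)$-invariant, the whole orbit $\On(n)\cdot R_x$ lies in this set. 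This orbit is exactly the set of all $\iota^\ast R(j^2_{p_0}g)$ with $\iota$ ranging over the linear isometries.
\item[(2)] The orbit is compact and $\mathsf{C}(\sigma(p_0)+2\varepsilon_x)$ is open. Hence there is $r_x>0$ with $B_{3r_x}(\On(n)\cdot R_x)\subset\mathsf{C}(\sigma(p_0)+2\varepsilon_x)$.
\end{enumerate}

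\emph{Neighbourhood $\mathscr{N}_x$.} Choose a chart around $p_0$, which identifies $T_qM\cong\mathbb{R}^n$ for nearby $q$. We want an open $\mathscr{N}_x\ni x$ in $J^2E$ such that for every $j^2_qh\in\mathscr{N}_x$ and every $h(q)$-isometry $\iota_h$:
\begin{enumerate}
\item[(3)] $|\sigma(q)-\sigma(p_0)|<\varepsilon_x$, which uses continuity of $\sigma$;
\item[(4)] $\iota_h^\ast R(j^2_qh)\in B_{2r_x}(\On(n)\cdot R_x)$.
\end{enumerate}
For (4), regard $\iota_h$ in chart coordinates as a linear isomorphism $j$ that is almost isometric for $g(p_0)$, since $h(q)$ is close to $g(p_0)$. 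Write
\[
\iota_h^\ast R(j^2_qh)-\iota_0^\ast\bigl((\iota_0^{-1}j)^{\ast}\ldots\bigr)
\]
schematically and split into two errors.
\begin{itemize}
\item[(a)] $j^\ast R(j^2_qh)$ is close to $j^\ast R(j^2_{p_0}g)$. This holds because the curvature depends continuously on the $2$-jet in coordinates, and the maps $j$ are uniformly bounded as they are almost isometric.
\item[(b)] $(\iota_0^{-1}\circ j)$ is an almost isometry of $(\mathbb{R}^n,\mathfrak{b})$. Applying Corollary~\ref{uniforma} with the compact set $\{R_x\}$ and radius $r_x$ gives an isometry $\iota$ with $(\iota_0^{-1}j)^\ast R_x\in B_{r_x}(\iota^\ast R_x)\subset B_{r_x}(\On(n)\cdot R_x)$.
\end{itemize}
Shrinking $\mathscr{N}_x$ so that the error in (a) is below $r_x$ gives (4). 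Then $B_{r_x}(\iota_h^\ast R(j^2_qh))\subset B_{3r_x}(\On(n)\cdot R_x)\subset\mathsf{C}(\sigma(p_0)+2\varepsilon_x)$.

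\emph{Compactness step.} Cover $\mathscr{K}$ by finitely many $\mathscr{N}_{x_1},\dots,\mathscr{N}_{x_N}$ and set $r:=\min_i r_{x_i}$ and $\varepsilon:=\min_i\varepsilon_{x_i}$. Given $j^2_pg\in\mathscr{K}$, pick $i$ with $j^2_pg\in\mathscr{N}_{x_i}$ and take $\mathscr{N}:=\mathscr{N}_{x_i}$. By (3) we have $\sigma(p)+\varepsilon<\sigma(p_{0,i})+2\varepsilon_{x_i}$. Property (i), together with $B_r\subset B_{r_{x_i}}$, then gives
\[
B_r\bigl(\iota_{h(q)}^\ast R(j^2_qh)\bigr)\subset\mathsf{C}(\sigma(p_{0,i})+2\varepsilon_{x_i})\subset\mathsf{C}(\sigma(p)+\varepsilon)
\]
for all $j^2_qh\in\mathscr{N}$.

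The main obstacle is step (4): controlling uniformly over all isometries $\iota_h$ for the varying metric $h(q)$. Since there is no canonical identification of isometries, this is exactly where Corollary~\ref{uniforma} has to be used rather than a naive continuity argument. Everything else is routine openness and continuity.
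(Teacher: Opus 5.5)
Your proof is correct. It follows a genuinely different route from the paper in how it handles the arbitrary isometries $\iota_{h(q)}$.

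The paper first reduces to a compact neighbourhood $\mathscr{A}$ of $\mathscr{K}$ lying over a single chart. It then uses Gram--Schmidt to build a continuous map $\vartheta$ that assigns to each $2$-jet $j^2_qh\in\mathscr{A}$ a linear isometry $\mathbb{R}^n\to(T_qM,h(q))$. This makes $R^\vartheta=\vartheta^\ast R$ a single continuous map $\mathscr{A}\to\mathscr{C}_B(\mathbb{R}^n)$, and all openness and covering arguments are done for that map. An arbitrary isometry is absorbed only at the end: write $\iota_{h(q)}=\vartheta(j^2_qh)\circ\iota$ with $\iota\in\On(n)$, and use that $\On(n)$ maps $r$-balls to $r$-balls and preserves $\mathsf{C}(\sigma(p)+\varepsilon)$.

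You instead fix one isometry at the base jet and work with the compact orbit $\On(n)\cdot R_x$. You control all isometries of nearby metrics at once by viewing them in chart coordinates as almost-isometries for $g(p_0)$ and invoking Corollary~\ref{uniforma}. This is the same device the paper only uses later, in Corollary~\ref{uniformcor2}.

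What each route buys:
\begin{itemize}
\item The paper's frame needs no quantitative almost-isometry estimates. As a by-product it shows that the set of all pulled-back tensors over $\mathscr{K}$ is compact, which Corollary~\ref{uniformcor2} needs.
\item Your jet-by-jet covering avoids constructing the frame field and avoids the paper's fibrewise minimum of $\varepsilon(\sigma(p_0))$. You only need $\varepsilon(\sigma(p_0))$ to be positive at one tensor, not continuous.
\item The cost of your route is reliance on Corollary~\ref{uniforma} plus the bookkeeping in step (4): uniform boundedness of the maps $j$, and continuity of the coordinate curvature in the $2$-jet.
\end{itemize}

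The schematic display in step (4) is garbled, but the split into errors (a) and (b) is exactly what the argument needs.
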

\begin{proof}
Let $\mathscr{A}\subset J^2E$ be a compact neighbourhood of $\mathscr{K}$.
Put $A:=\pi_M(\mathscr{A})$ and $K:=\pi_M(\mathscr{K})$.
Both are compact subsets of $M$.
Without loss of generality, we can assume that $A$ is covered by a single chart.
Otherwise, $A$ can be covered by a finite number of compact neighbourhoods which are covered by a single chart.

Using a Gram-Schmidt process, we construct a continuous family of local frames $(e_1^g,\dots,e_n^g)$ for $TM$ over $A$ so that $(e_2^{g_p}(p),\dots,e_n^{g_p}(p))$ is an orthonormal basis of $(T_pM,g_p)$ for all $p\in A$ and $g_p\in\pi_E(\mathscr{A})$.
Then define
\begin{equation}\nonumber
\vartheta(j^2_pg)(x_1,\dots,x_n):=x_1e_1^{g(p)}(p)+\dots+x_ne_n^{g(p)}(p)
\end{equation}
as a linear isometry $\mathbb{R}^n\to(T_pM,g(p))$.
This yields a continuous map
\begin{equation}\nonumber
\vartheta\colon\mathscr{A}\to C^\infty(\mathbb{R}^n;TM).
\end{equation}
We obtain a continuous map
\begin{equation}\nonumber
R^{\vartheta}:=\vartheta^\ast R\colon\mathscr{A}\to\mathscr{C}_B(\mathbb{R}^n)\,,\,R^\vartheta(j^2_pg)=\vartheta(j^2_pg)^\ast R(j^2_pg).
\end{equation}
The image of $R^\vartheta$ is a compact subset of $\mathscr{C}_B(\mathbb{R}^n)$.

Now let $p_0\in K$.
It holds $R^\vartheta(j^2_{p_0}g)\in\mathsf{C}(\sigma(p_0))$ for all $j^2_{p_0}g\in\mathscr{K}_{p_0}:=\mathscr{K}\cap(J^2E)_{p_0}$, where $(J_2E)_{p_0}:=\pi_M^{-1}(p_0)$.
The function $\varepsilon(\sigma(p_0))$ attains a minimum $\varepsilon(p_0)$ on the compact set of curvature tensors $\{R^\vartheta(j^2_{p_0}g):j^2_{p_0}g\in\mathscr{K}_{p_0}\}$.
Therefore, by continuity of the family $\mathsf{C}$,
\begin{equation}\nonumber
R^\vartheta(j^2_{p_0}g)\in\mathsf{C}(p_0):=\mathsf{C}\bigl(\sigma(p_0)+\varepsilon(p_0)\bigr)
\end{equation}
for all $j^2_{p_0}g\in\mathscr{K}_{p_0}$.
Since $\mathsf{C}(p_0)$ is open and $\mathscr{K}_{p_0}$ is compact, there exists $r(p_0)>0$ with $B_{r(p_0)}(R^\vartheta(j^2_{p_0}g))\subset\mathsf{C}(p_0)$ for all $j^2_{p_0}g\in\mathscr{K}_{p_0}$.
For each $j^2_{p_0}g\in\mathscr{K}_{p_0}$ we choose a neighbourhood $j^2_{p_0}g\in\mathscr{N}(j^2_{p_0}g)\subset\mathring{\mathscr{A}}$ such that
\begin{align}\nonumber
&|\sigma(p)-\sigma(p_0)|<\tfrac{1}{2}\varepsilon(p_0)\quad\text{and}\\\nonumber
&\Vert R^\vartheta(j^2_ph)-R^\vartheta(j^2_{p_0}g)\Vert<\tfrac{1}{2}r(p_0)
\end{align} 
for all $p\in\pi_M(\mathscr{N}(j^2_{p_0}g))$ and $j^2_ph\in\mathscr{N}(j^2_{p_0}g)\cap(J^2E)_{p}$.
Then
\begin{equation}\nonumber
B_{r(p_0)/2}\bigl(R^\vartheta(j^2_ph)\bigr)\subset\mathsf{C}\bigl(\sigma(p)+\tfrac{1}{2}\varepsilon(p_0)\bigr)
\end{equation}
holds for all such $p$ and $j^2_ph$.
The space $\mathscr{K}$ is covered by the union of neighbourhoods $\mathscr{N}(j^2_{p_0}g)$ around all $2$-jets $j^2_{p_0}g,p_0\in K$.
We pass to a finite subcover of neighbourhoods around a finite number of $2$-jets $j^2_{p_1}g_1,\dots,j^2_{p_k}g_k\in\mathscr{K}$.
Set
\begin{equation}\nonumber
r:=\min_{1\leq i\leq k}\tfrac{1}{4}r(p_i)\quad\text{and}\quad\varepsilon:=\min_{1\leq i\leq k}\tfrac{1}{2}\varepsilon(p_i).
\end{equation}
This leads us to conclude that
\begin{equation}\nonumber
B_{2r}\bigl(R^\vartheta(j^2_pg)\bigr)\subset\mathsf{C}\bigl(\sigma(p)+\varepsilon)
\end{equation}
for all $j^2_pg\in\mathscr{K}$.
Given a $2$-jet $j^2_pg\in\mathscr{K}$ there exists a neighbourhood $j^2_pg\in\mathscr{N}\subset\mathring{\mathscr{A}}$ such that $\Vert R^\vartheta(j^2_qh)-R^\vartheta(j^2_pg)\Vert<r$ for all $j^2_qh\in\mathscr{N}$.
Thus,
\begin{equation}\nonumber
B_r\bigl(R^\vartheta(j^2_qh)\bigr)\subset\mathsf{C}\bigl(\sigma(p)+\varepsilon\bigr)
\end{equation}
for all $j^2_qh\in\mathscr{N}$.

Finally we pick $j^2_pg\in\mathscr{K}$, a neighbourhood $j^2_pg\in\mathscr{N}\subset J^2E$ as above, an element $j^2_qh\in\mathscr{N}$ and an arbitrary linear isometry $\iota_{h(q)}\colon\mathbb{R}^n\to(T_qM,h(q))$.
Put
\begin{equation}\nonumber
\iota:=\vartheta(j^2_qh)^{-1}\circ\iota_{h(q)}\in\On(n).
\end{equation}
It holds
\begin{align}\nonumber
B_r\bigl(\iota_{h(q)}^\ast R(j^2_qh)\bigr)&=B_r\bigl(\iota^\ast R^\vartheta(j^2_qh)\bigr)\\\nonumber
&=\iota^\ast B_r\bigl(R^\vartheta(j^2_qh)\bigr)\subset\mathsf{C}\bigl(\sigma(p)+\varepsilon\bigr)
\end{align}
by the $\On(n)$-invariance of $\mathsf{C}(\sigma(p)+\varepsilon)$.
We also recognize that the space
\begin{equation}\nonumber
\{\iota_{g(p)}^\ast R(j^2_pg):j^2_pg\in\mathscr{K},\iota_{g(p)}\colon\mathbb{R}^n\to (T_pM,g(p))\;\text{a linear isometry}\}\subset\mathscr{C}_B(\mathbb{R}^n)
\end{equation}
is the image of the canonical action $\On(n)\times R^\vartheta(\mathscr{K})\to\mathscr{C}_B(\mathbb{R}^n)$ and therefore compact.
This will be used in Corollary~\ref{uniformcor2}.
\end{proof}
\begin{corollary}\label{Popen}
$\mathscr{P}$ is an open subset of $J^2E$.
\end{corollary}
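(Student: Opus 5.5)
The plan is to apply Lemma~\ref{openPDR} to the compact set $\mathscr{K}=\{j^2_pg\}$ for an arbitrary point $j^2_pg\in\mathscr{P}$. This produces a neighbourhood of $j^2_pg$ in $J^2E$ that lies inside $\mathscr{P}$; since the point is arbitrary, $\mathscr{P}$ is open.

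First I make sure the lemma applies. A singleton is compact and lies in $\mathscr{P}$ by assumption, so Lemma~\ref{openPDR} gives $r>0$, $\varepsilon>0$ and a neighbourhood $\mathscr{N}\ni j^2_pg$ with
\[
B_r\bigl(\iota_{h(q)}^\ast R(j^2_qh)\bigr)\subset\mathsf{C}(\sigma(p)+\varepsilon)
\]
for all $j^2_qh\in\mathscr{N}$ and all linear isometries $\iota_{h(q)}$. In particular the centre of each ball lies in $\mathsf{C}(\sigma(p)+\varepsilon)$.

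The remaining issue is that membership of $j^2_qh$ in $\mathscr{P}$ requires the curvature to lie in $\mathsf{C}(\sigma(q))$, not $\mathsf{C}(\sigma(p)+\varepsilon)$. I handle this with continuity of $\sigma$. Shrink $\mathscr{N}$ to $\mathscr{N}\cap\pi_M^{-1}(U)$, where $U=\{q:|\sigma(q)-\sigma(p)|<\varepsilon\}$; this set is open because $\pi_M$ and $\sigma$ are continuous. For $q\in U$ we have $\sigma(p)+\varepsilon>\sigma(q)$, so property (i) of the family gives
\[
\mathsf{C}(\sigma(p)+\varepsilon)\subset\overline{\mathsf{C}(\sigma(p)+\varepsilon)}\subset\mathsf{C}(\sigma(q)).
\]
Hence $\iota_{h(q)}^\ast R(j^2_qh)\in\mathsf{C}(\sigma(q))$ for every isometry, that is, $R(j^2_qh)$ satisfies $\mathsf{C}(\sigma(q))$, and so $j^2_qh\in\mathscr{P}$.

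There is no real obstacle: once the lemma is available this is bookkeeping. The only point requiring care is the shift from $\sigma(p)$ to $\sigma(q)$, which is covered by the margin $\varepsilon$ together with property (i).
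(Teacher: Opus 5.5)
Your proposal is correct and follows essentially the same route as the paper's proof: apply Lemma~\ref{openPDR} to the singleton $\{j^2_pg\}$, then shrink the neighbourhood so that $|\sigma(q)-\sigma(p)|<\varepsilon$ and conclude $\iota_{h(q)}^\ast R(j^2_qh)\in\mathsf{C}(\sigma(q))$. Your explicit appeal to property (i) of the family, via the strict inequality $\sigma(p)+\varepsilon>\sigma(q)$, justifies the final inclusion, which the paper leaves implicit.
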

\begin{proof}
Let $j^2_pg\in\mathscr{P}$.
We apply Lemma~\ref{openPDR} for the compact set $\mathscr{K}=\{j^2_pg\}$ and obtain $\varepsilon>0$ and a neighbourhood $j^2_pg\in\mathscr{N}\subset J^2E$ such that
\begin{equation}\nonumber
\iota_{h(q)}^\ast R(j^2_qh)\in\mathsf{C}(\sigma(p)+\varepsilon)
\end{equation}
for all $j^2_qh\in\mathscr{N}$ and all linear isometries $\iota_{h(q)}\colon\mathbb{R}^n\to(T_qM,h(q))$.
Choosing $\mathscr{N}$ smaller if necessary, we can assume that $|\sigma(q)-\sigma(p)|<\varepsilon$ for all $q\in\pi_M(\mathscr{N})$.
This yields
\begin{equation}\nonumber
\iota_{h(q)}^\ast R(j^2_qh)\in\mathsf{C}(\sigma(q))
\end{equation}
or $j^2_qh\in\mathscr{P}$ for all such $j^2_qh$.
\end{proof}
Lemma~\ref{openPDR} has favorable consequences for families of Riemannian metrics on $M$.

\begin{lemma}\label{uniform}
Let $K$ be a compact Hausdorff space. Let $g\colon K\to\mathscr{R}(M)$ be a continuous family of Riemannian metrics.
Put
\begin{equation}\nonumber
P:=\{p\in M:R_{g(\xi)}(p)\;\text{satisfies $\mathsf{C}(\sigma(p))$ for all $\xi\in K$}\}
\end{equation}
and let $A\subset P$ be a compact subset.
Then there exist $r>0$ and $\varepsilon>0$ such that for every $p\in A$ there exists a neighbourhood $p\in U\subset M$ with
\begin{equation}\nonumber
B_r\bigl(\iota_{g(\xi)(q)}^\ast R_{g(\xi)}(q)\bigr)\in\mathsf{C}(\sigma(p)+\varepsilon)
\end{equation}
for all $\xi\in K,q\in U$ and all linear isometries $\iota_{g(\xi)(q)}\colon\mathbb{R}^n\to(T_qM,g(\xi)(q))$.
\end{lemma}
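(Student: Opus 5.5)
The plan is to reduce the statement to Lemma~\ref{openPDR} by passing from the family of metrics to a compact set of $2$-jets. Consider the map
\begin{equation}\nonumber
\Phi\colon K\times M\to J^2E\,,\,(\xi,p)\mapsto j^2_p\bigl(g(\xi)\bigr).
\end{equation}
Since $g\colon K\to\mathscr{R}(M)$ is continuous for the weak $C^\infty$-topology, the $2$-jets of $g(\xi)$ depend continuously on $(\xi,p)$, uniformly on compact subsets of $M$. Hence $\Phi$ is continuous. Put $\mathscr{K}:=\Phi(K\times A)$. This is compact as the continuous image of the compact space $K\times A$. It lies in $\mathscr{P}$, because $A\subset P$ means precisely that $R(j^2_pg(\xi))=R_{g(\xi)}(p)$ satisfies $\mathsf{C}(\sigma(p))$ for every $\xi\in K$ and every $p\in A$.

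Applying Lemma~\ref{openPDR} to $\mathscr{K}$ yields $r>0$ and $\varepsilon>0$, together with, for every $j^2_pg(\xi)\in\mathscr{K}$, a neighbourhood $\mathscr{N}$ on which the ball inclusion $B_r(\iota^\ast R(j^2_qh))\subset\mathsf{C}(\sigma(p)+\varepsilon)$ holds. The remaining task is to convert these jet-space neighbourhoods into a single neighbourhood $U$ of $p$ that works for all $\xi\in K$ simultaneously. This uniformity over $\xi$ is the main obstacle, and the tool for it is compactness of $K$ combined with a tube-lemma argument.

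Fix $p\in A$. For each $\xi\in K$, Lemma~\ref{openPDR} gives a neighbourhood $\mathscr{N}_\xi$ of $\Phi(\xi,p)$. By continuity of $\Phi$, the preimage $\Phi^{-1}(\mathscr{N}_\xi)$ contains a product neighbourhood $W_\xi\times U_\xi$ of $(\xi,p)$. Choose finitely many $\xi_1,\dots,\xi_k$ whose sets $W_{\xi_i}$ cover $K$, and set $U:=\bigcap_i U_{\xi_i}$. Now take any $\xi\in K$ and $q\in U$. Then $\xi\in W_{\xi_i}$ for some $i$, so $\Phi(\xi,q)\in\mathscr{N}_{\xi_i}$.

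Each $\mathscr{N}_{\xi_i}$ is a neighbourhood of a jet $\Phi(\xi_i,p)$ based at the same point $p$. Therefore the conclusion of Lemma~\ref{openPDR} gives $B_r\bigl(\iota^\ast_{g(\xi)(q)}R_{g(\xi)}(q)\bigr)\subset\mathsf{C}(\sigma(p)+\varepsilon)$ for all linear isometries $\iota_{g(\xi)(q)}$. The constants $r$ and $\varepsilon$ are the ones supplied for the whole of $\mathscr{K}$, so they do not depend on $p$. This is exactly the claim.

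The one point needing care is the continuity of $\Phi$ in the weak $C^\infty$-topology. Locally this reduces to the continuity of evaluating derivatives up to order two on compact chart domains, which is standard.
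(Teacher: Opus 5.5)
Your proposal is correct and follows essentially the same route as the paper: apply Lemma~\ref{openPDR} to the compact set $\{j^2_pg(\xi):\xi\in K,\,p\in A\}$, then, for fixed $p$, use continuity of $(\xi,q)\mapsto j^2_qg(\xi)$ to get product neighbourhoods. Compactness of $K$ then gives a finite subcover, and intersecting the corresponding finitely many neighbourhoods $U_{\xi_i}$ of $p$ yields $U$.
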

\begin{proof}
We apply Lemma~\ref{openPDR} for the compact space
\begin{equation}\nonumber
\mathscr{K}=\{j_p^2g(\xi):\xi\in K,p\in A\}
\end{equation}
and obtain $\varepsilon>0$ and $r>0$ with its distinguished properties.
Let $p\in A$.
For every $\xi_0\in K$ there exists a neighbourhood $j^2_pg(\xi_0)\subset\mathscr{N}_{\xi_0}\subset J^2E$ with
\begin{equation}\nonumber
B_r\bigl(\iota_{h(q)}^\ast R(j^2_qh)\bigr)\subset\mathsf{C}(\sigma(p)+\varepsilon)
\end{equation}
for all $j^2_qh\in\mathscr{N}_{\xi_0}$ and all linear isometries $\iota_{h(q)}\colon\mathbb{R}^n\to(T_qM,h(q))$.

Since the assignment $K\times M\to J^2E,(\xi,q)\mapsto j^2_qg(\xi)$ is continuous, we find neighbourhoods $p\in U_{\xi_0}\subset M$ and $\xi_0\in F_{\xi_0}\subset K$ such that $j^2_qg(\xi)\in\mathscr{N}_{\xi_0}$ for all $q\in U_{\xi_0}$ and $\xi\in F_{\xi_0}$.
We pass to a finite subcover $F_{\xi_1},\dots,F_{\xi_k}$ of $K$ and set
\begin{equation}\nonumber
U:=U_{\xi_1}\cap\dots\cap U_{\xi_k}.
\end{equation}
This $U$ does the job for $p$.
\end{proof}
\begin{corollary}\label{uniformcor1}
$P$ is an open subset of $M$.
\end{corollary}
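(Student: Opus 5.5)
We deduce Corollary~\ref{uniformcor1} directly from Lemma~\ref{uniform}, applied to a singleton compact set. Fix a point $p\in P$ and put $A:=\{p\}$, which is a compact subset of $P$. Lemma~\ref{uniform} then yields $r>0$, $\varepsilon>0$ and a neighbourhood $p\in U\subset M$ such that
\begin{equation}\nonumber
B_r\bigl(\iota_{g(\xi)(q)}^\ast R_{g(\xi)}(q)\bigr)\subset\mathsf{C}(\sigma(p)+\varepsilon)
\end{equation}
for all $\xi\in K$, all $q\in U$ and all linear isometries $\iota_{g(\xi)(q)}\colon\mathbb{R}^n\to(T_qM,g(\xi)(q))$. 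In particular, each pulled-back tensor $\iota_{g(\xi)(q)}^\ast R_{g(\xi)}(q)$, being the centre of its ball, lies in $\mathsf{C}(\sigma(p)+\varepsilon)$.

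The one remaining step is to pass from the fixed level $\sigma(p)+\varepsilon$ to the varying level $\sigma(q)$. This mirrors the proof of Corollary~\ref{Popen}. Since $\sigma$ is continuous, we shrink $U$ so that $|\sigma(q)-\sigma(p)|<\varepsilon$ for all $q\in U$. Then $\sigma(p)+\varepsilon>\sigma(q)$, and property (i) of the family gives
\begin{equation}\nonumber
\mathsf{C}(\sigma(p)+\varepsilon)\subset\overline{\mathsf{C}(\sigma(p)+\varepsilon)}\subset\mathsf{C}(\sigma(q)).
\end{equation}
Consequently, $R_{g(\xi)}(q)$ satisfies $\mathsf{C}(\sigma(q))$ for every $\xi\in K$ and every $q\in U$, which means $U\subset P$.

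Since $p\in P$ was arbitrary, $P$ is open. The uniformity in $\xi$, which is the only subtle point, comes entirely from Lemma~\ref{uniform}, so no step here is a genuine obstacle.
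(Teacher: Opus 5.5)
Your proof is correct and is essentially the paper's argument. The paper only says the proof is similar to that of Corollary~\ref{Popen}. That means applying Lemma~\ref{uniform} to the singleton $A=\{p\}$, shrinking $U$ so that $|\sigma(q)-\sigma(p)|<\varepsilon$, and using property (i) to pass to $\mathsf{C}(\sigma(q))$, which is exactly what you do.
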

\begin{proof}
The proof is similar to that in Corollary~\ref{Popen}.
\end{proof}
\begin{definition}
Let $(V,g)$ be an $n$-dimensional inner product space.
Let $a>0$.
A linear isomorphism $j\colon\mathbb{R}^n\to V$ is called \textit{$a$-nearly $g$-isometric} if
\begin{equation}\nonumber
|g(j(e_i),j(e_l))-\delta_{il}|<a\quad\text{for all $1\leq i,l\leq n$}.
\end{equation}
\end{definition}

\begin{corollary}\label{uniformcor2}
Let $K$ be a compact Hausdorff space.
Let $g\colon K\to\mathscr{R}_{\mathsf{C}(\sigma)}(M)$ be a continuous family of Riemannian metrics that satisfy $\mathsf{C}(\sigma)$ and let $A\subset M$ be a compact subset.

Then there exist $\varepsilon>0$ and $a>0$ such that
\begin{equation}\nonumber
j^\ast R_{g(\xi)}(p)\in\mathsf{C}(\sigma(p)+\varepsilon)
\end{equation}
for all $\xi\in K,p\in A$ and all $a$-nearly $g(\xi)(p)$-isometric isomorphisms $j\colon\mathbb{R}^n\to T_pM$.
\end{corollary}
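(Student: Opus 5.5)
The plan is to combine Lemma~\ref{uniform}, applied with the compact set $A$ (here every point of $M$ lies in $P$, since each $g(\xi)$ satisfies $\mathsf{C}(\sigma)$), with Corollary~\ref{uniformcor1} and Corollary~\ref{uniforma}. Lemma~\ref{uniform} gives $r>0$ and $\varepsilon_0>0$, and for each $p\in A$ a neighbourhood $U_p$, such that
\begin{equation}\nonumber
B_r\bigl(\iota^\ast R_{g(\xi)}(q)\bigr)\subset\mathsf{C}(\sigma(p)+\varepsilon_0)
\end{equation}
for all $\xi\in K$, $q\in U_p$ and all $g(\xi)(q)$-isometries $\iota$. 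Applied at $q=p$, this says that every isometric pullback of $R_{g(\xi)}(p)$ lies in $\mathsf{C}(\sigma(p)+\varepsilon_0)$ together with an $r$-ball around it. I will take $\varepsilon:=\varepsilon_0$.

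Next, I reduce $a$-nearly isometric maps to isometries via Corollary~\ref{uniforma}. The end of the proof of Lemma~\ref{openPDR} shows that the set
\begin{equation}\nonumber
\mathscr{K}':=\{\iota^\ast R_{g(\xi)}(p):\xi\in K,\;p\in A,\;\iota\colon\mathbb{R}^n\to(T_pM,g(\xi)(p))\text{ a linear isometry}\}
\end{equation}
is compact in $\mathscr{C}_B(\mathbb{R}^n)$. Apply Corollary~\ref{uniforma} to $\mathscr{K}'$ and the radius $r$ to obtain $a>0$.

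Now let $j\colon\mathbb{R}^n\to T_pM$ be $a$-nearly $g(\xi)(p)$-isometric, and fix any isometry $\iota_0\colon\mathbb{R}^n\to(T_pM,g(\xi)(p))$. Then $\tilde{j}:=\iota_0^{-1}\circ j\colon\mathbb{R}^n\to\mathbb{R}^n$ satisfies $|\mathfrak{b}(\tilde{j}e_i,\tilde{j}e_l)-\delta_{il}|<a$, because $\iota_0$ is isometric. Also $j^\ast R=\tilde{j}^\ast(\iota_0^\ast R)$ with $\iota_0^\ast R_{g(\xi)}(p)\in\mathscr{K}'$. Corollary~\ref{uniforma} then yields $\iota\in\On(n)$ with
\begin{equation}\nonumber
j^\ast R_{g(\xi)}(p)\in B_r\bigl(\iota^\ast\iota_0^\ast R_{g(\xi)}(p)\bigr)=B_r\bigl((\iota_0\circ\iota)^\ast R_{g(\xi)}(p)\bigr).
\end{equation}
Since $\iota_0\circ\iota$ is a $g(\xi)(p)$-isometry, the first step places this ball in $\mathsf{C}(\sigma(p)+\varepsilon)$, which proves the claim.

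The main subtlety is uniformity. The constant $a$ must not depend on $p$ or $\xi$, which is exactly why the compactness of $\mathscr{K}'$ is needed. I also have to check that the ball radius $r$ from Lemma~\ref{uniform} is uniform over $A$ and $K$, which it is by that lemma. Everything else is bookkeeping with the $\On(n)$-invariance of $\mathsf{C}(\sigma(p)+\varepsilon)$.
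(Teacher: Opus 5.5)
Your proposal is correct and follows the paper's proof essentially step for step: Lemma~\ref{uniform} with $P=M$ gives the uniform radius $r$ and $\varepsilon$, and Corollary~\ref{uniforma} applied to the compact set of isometric pullbacks gives $a$. The argument then concludes by factoring $j$ through an arbitrary isometry $\iota_0$ and using the $\On(n)$-invariance of $\mathsf{C}(\sigma(p)+\varepsilon)$; the appeal to Corollary~\ref{uniformcor1} in your opening sentence is unnecessary but harmless.
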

\begin{proof}
It holds $P=M$.
Lemma~\ref{uniform} gives $r>0$ and $\varepsilon>0$ such that
\begin{equation}\nonumber
B_r\bigl(\iota_{g(\xi)(p)}^\ast R_{g(\xi)}(p)\bigr)\subset\mathsf{C}(\sigma(p)+\varepsilon)
\end{equation}
for all $\xi\in K,p\in A$ and all linear isometries $\iota_{g(\xi)(p)}\colon\mathbb{R}^n\to(T_pM,g(\xi)(p))$.
We apply Corollary~\ref{uniforma} for $r$ and the compact space
\begin{equation}\nonumber
\mathscr{K}:=\{\iota_{g(\xi)(p)}^\ast R_{g(\xi)}(p):\xi\in K,p\in A,\iota_{g(\xi)(p)}\colon\mathbb{R}^n\to (T_pM,g(\xi)(p))\;\text{a linear isometry}\}\subset\mathscr{C}_B(\mathbb{R}^n).
\end{equation}
The Lemma provides $a>0$ which does the job:

Let $\xi\in K,p\in A$ and let $j\colon\mathbb{R}^n\to T_pM$ be an $a$-nearly $g(\xi)(p)$-isometric isomorphism.
Let $\iota_{g(\xi)(p)}\colon\mathbb{R}^n\to(T_pM,g(\xi)(p))$ be an arbitrary linear isometry.
Then
\begin{align}\nonumber
|\mathfrak{b}((\iota_{g(\xi)(p)}^{-1}\circ j)(e_i&),(\iota_{g(\xi)(p)}^{-1}\circ j)(e_l))-\delta_{il}|\\\nonumber
&=|g(\xi)(p)(j(e_i),j(e_l))-\delta_{il}|<a\quad\text{for all $1\leq i,l\leq n$}.
\end{align}
Put $R:=\iota_{g(\xi)(p)}^\ast R_{g(\xi)}(p)$ and let $\iota\colon\mathbb{R}^{n}\to\mathbb{R}^{n}$ be a linear isometry as in Corollary~\ref{uniforma}, which means that
\begin{equation}\nonumber
j^\ast R_{g(\xi)}(p)=(\iota_{g(\xi)(p)}^{-1}\circ j)^\ast R\in B_r(\iota^\ast R).
\end{equation}
Since $\iota_{g(\xi)(p)}\circ\iota\colon\mathbb{R}^n\to(T_pM,g(\xi)(p))$ is a linear isometry, we conclude that $j^\ast R_{g(\xi)}(p)\in\mathsf{C}(\sigma(p)+\varepsilon)$.
\end{proof}

\subsection{Uniformisation for geodesic collar neighbourhoods}
For each Riemannian metric $g$ on $M$ we denote by
\begin{enumerate}
\item[$\triangleright$]{$g_0\in C^{\infty}(\partial M; T^\ast\partial M\otimes T^\ast\partial M)$ the metric induced on $\partial M$;}
\item[$\triangleright$]{$\nu_g$ the inward pointing unit normal vector field along $\partial M$;}
\item[$\triangleright$]{$\mathrm{II}_g$ the second fundamental form of $\partial M\subset M$ with respect to $\nu$.}
\end{enumerate}
Given a Riemannian metric $g$ on $M$ its normal exponential map provides a collar neighbourhood
\begin{equation}\nonumber
\varrho_g\colon V\to U^g\subset M\,,\,\varrho_g(t,p)=\exp_p(t\nu_g),
\end{equation}
where $V\subset[0,\infty)\times\partial M$ is an open neighbourhood of $\{0\}\times\partial M$.
We call $\varrho_g$ a \textit{geodesic collar neighbourhood}.
More specifically, there exists a continuous positive function $\eta\colon\partial M\to\mathbb{R}$ such that $U^g=U^g_\eta$ is the diffeomorphic image of
\begin{equation}\nonumber
V_\eta:=\{(t,p)\in[0,\infty)\times\partial M:t<\eta(p)\}
\end{equation}
under the normal exponential map, i.e. $\varrho_g\colon V_\eta\to U_\eta^g$ is a geodesic collar neighbourhood.

When $\tilde{M}$ is a second smooth manifold with boundary $\partial\tilde{M}=\partial M$ and $\tilde{g}$ is a Riemannian metric on $\tilde{M}$, the geodesic collar neighbourhoods $\varrho_g$ and $\varrho_{\tilde{g}}$ induce a smooth structure on the topological attachment space $M\cup_{\partial M}\tilde{M}$.
The resulting smooth manifold will be referred to as $M\cup_{g\sqcup\tilde{g}}\tilde{M}$ when we want to emphasise the smooth structure.

The main gluing principle in Theorem~\ref{theorem1} will work in such a way that it deforms an initial metric $g\sqcup\tilde{g}$ on $M\sqcup\tilde{M}$ with $g_0=\tilde{g}_0$ into a metric $f\sqcup\tilde{f}\in\mathscr{R}(M\sqcup\tilde{M})$ with the property that $f\cup\tilde{f}$ is a smooth Riemannian metric on $M\cup_{f\sqcup\tilde{f}}\tilde{M}$.
The property of $f\cup\tilde{f}$ beeing smooth on $M\cup_{f\sqcup\tilde{f}}\tilde{M}$ can be expressed in specific formulae:
According to the generalised Gauss lemma, pulling back a metric $g\in\mathscr{R}(M)$ along $\varrho_g$ yields a generalised cylinder metric
\begin{equation}\nonumber
g=\mathrm{d}t^2+g_t\in\mathscr{R}(V_\eta)
\end{equation}
where $g_t$ is given by $g_\bullet\colon V_\eta\to T^\ast\partial M\otimes T^\ast\partial M, g_t(p)(X,Y)=(\varrho_g^\ast g)(t,p)(X,Y)$.
The family $g_\bullet$ gives rise to new families $\dot{g}_\bullet,\ddot{g}_\bullet,g_\bullet^{(\ell)}\colon V_\eta\to T^\ast\partial M\otimes T^\ast\partial M$ of $(0,2)$-tensor fields on $\partial M$ defined by
\begin{align}\nonumber
\dot{g}_t(p)(X,Y)&:=\frac{\mathrm{d}}{\mathrm{d}t}(g_t(p)(X,Y)),\\\nonumber
\ddot{g}_t(p)(X,Y)&:=\frac{\mathrm{d}^2}{\mathrm{d}t^2}(g_t(p)(X,Y)),\\\nonumber
g_t^{(\ell)}(p)(X,Y)&:=\frac{\mathrm{d}^{(\ell)}}{\mathrm{d}t^{(\ell)}}(g_t(p)(X,Y))
\end{align}
for all $(t,p)\in V$ and $X,Y\in T_p\dM$.
In the case of gluing, $f\cup\tilde{f}$ is smooth on $M\cup_{f\sqcup\tilde{f}}\tilde{M}$ iff $f_0^{(\ell)}=(-1)^\ell\cdot\tilde{f}_0^{(\ell)}$ for all $\ell\geq 0$.

We can now explain the reason for the present subsection.
Theorem~\ref{theorem1} will be formulated for \textit{continuous families} of Riemannian metrics $g\sqcup\tilde{g}\colon K\to\mathscr{R}(M\sqcup\tilde{M})$, where $K$ is a compact Hausdorff space.
Then the respective geodesic collar neighbourhoods on $M$ and $\tilde{M}$ may differ between members of such a family.
Note that different choices of collar neighbourhoods lead to the same diffeomorphism type on $M\cup_{\partial M}\tilde{M}$, see e.g.~\cite[Chap.~8, Thm.~2.1]{Hirsch}.
However, the smooth structures induced by different metrics in the same family are not the same in general.
This issue arises when investigating spaces of metrics on glued manifolds, as we will see in the proof of Theorem~\ref{whe}.
Therefore, it is necessary to uniformise the normal exponential maps.
We perform this as a preparatory step.
The process concludes with Corollary~\ref{uniformgcn}.

\begin{lemma}\label{cover1}
Let $r>0$.
There exists a complete Riemannian metric $m\in\mathscr{R}(M)$ and an open cover $(U_\alpha)_{\alpha\in I}$ of $M$ such that each $U_\alpha$ is relatively compact and that
\begin{equation}\nonumber
I_\alpha:=\{\beta\in I:\overline{N}_m(U_\alpha,r)\cap\overline{N}_m(U_\beta,r)\neq\varnothing\}
\end{equation}
is finite for every $\alpha\in I$.
Here $\overline{N}_m(U_\alpha,r):=\{p\in M:d_m(p,U_\alpha)\leq r\}$ denotes the closed $r$-neighbourhood around $U_\alpha$ with respect to $m$.
\end{lemma}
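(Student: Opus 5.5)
The idea is to take any complete metric and cover $M$ by balls of one fixed radius whose centres are uniformly separated. Then bounded geometry makes each enlarged set meet only finitely many others.

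\emph{Choice of metric.} Let $m$ be any complete Riemannian metric on $M$. Such a metric exists: take any metric $h$ and a proper smooth exhaustion function $\phi\colon M\to[0,\infty)$, and set $m:=h+\mathrm{d}\phi\otimes\mathrm{d}\phi$. Then $|\nabla^m\phi|_m\leq 1$, so $\phi$ is $1$-Lipschitz for $d_m$. Hence closed bounded sets lie in sublevel sets of $\phi$ and are compact. This is Hopf--Rinow in its metric-space form, which works equally for manifolds with boundary. Completeness, i.e.\ compactness of closed $d_m$-balls, is the only property of $m$ used below.

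\emph{Choice of cover.} By Zorn's lemma, or inductively since $M$ is second countable, choose a maximal subset $\{p_\alpha\}_{\alpha\in I}\subset M$ with $d_m(p_\alpha,p_\beta)\geq r$ for $\alpha\neq\beta$. By maximality, the open balls $U_\alpha:=B_m(p_\alpha,r)$ cover $M$. Each $U_\alpha$ is relatively compact, since its closure lies in the closed ball $\overline{B}_m(p_\alpha,r)$, which is compact.

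\emph{Local finiteness of $I_\alpha$.} Suppose $\overline{N}_m(U_\alpha,r)\cap\overline{N}_m(U_\beta,r)\neq\varnothing$. Then by the triangle inequality
\[
d_m(p_\alpha,p_\beta)\leq r+r+r+r=4r,
\]
so $p_\beta$ lies in the compact set $\overline{B}_m(p_\alpha,4r)$. The points $p_\beta$ are pairwise at distance at least $r$. An infinite such family in a compact set would have an accumulation point, and two of its members would then be at distance less than $r$, a contradiction. So only finitely many $p_\beta$ lie in this ball, and $I_\alpha$ is finite.

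The only slightly delicate step is the existence of a complete metric on a manifold with (possibly non-compact) boundary. The exhaustion-function construction above handles it, since it never uses geodesics. If one prefers to avoid the boundary entirely, one can instead take the restriction to $M$ of a complete metric on the double of $M$, or on $M$ with an open collar $(-1,0]\times\partial M$ attached; distances only grow under restriction, so closed bounded sets in $M$ remain compact.
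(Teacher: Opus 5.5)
Your proof is correct and complete. The paper does not prove this lemma itself; it only refers to \cite[Lem.~3.1]{Frerichs2025}, so your argument works as a self-contained replacement, and it has two small advantages.

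\begin{itemize}
\item Your metric $m=h+\mathrm{d}\phi\otimes\mathrm{d}\phi$ makes $\phi$ $1$-Lipschitz for $d_m$. Hence closed $d_m$-bounded sets are compact directly, without the Hopf--Rinow theorem for length spaces that the paper cites right after the lemma.
\item Since $\overline{N}_m(U_\alpha,r)\subset\overline{B}_m(p_\alpha,2r)$, the compactness of the sets $\overline{N}_m(U_\alpha,1)$ used in Corollary~\ref{uniformgcn} follows immediately from your construction.
\end{itemize}

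One caveat concerns your parenthetical alternative to Zorn's lemma. Greedy selection along a countable dense sequence only guarantees that every point lies at distance $\leq r$ from some chosen centre, so the open $r$-balls need not cover $M$. For example, take $M=\mathbb{R}$, $r=1$, and the dense set $\mathbb{Q}\setminus\{1\}$ enumerated starting with $0,2$. Greedy selection picks $0$ and $2$ and no other centre in $(-1,3)$, so the point $1$ lies in no open unit ball around a centre.

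The inductive route does work if you exhaust $M$ by compact sets $K_1\subset K_2\subset\cdots$ instead. For each $K_j$ in turn, keep adding points of $K_j$ at distance $\geq r$ from all previously chosen centres. Each stage stops after finitely many steps by your own compactness argument, and afterwards every point of $K_j$ is at distance $<r$ from some centre. Your Zorn's lemma version is fine as stated.
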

\begin{proof}
See~\cite[Lem.~3.1]{Frerichs2025}.
\end{proof}
We choose $r=1$ and fix a background metric $m\in\mathscr{R}(M)$ together with an open cover $(U_\alpha)_{\alpha\in I}$ of $M$ as in the lemma.
We would like to mention that the balls $\overline{B}_m(U_\alpha,1)$ are compact due to the Hopf-Rinow theorem, see \cite[Prop.~2.5.22 and Thm.~2.5.28]{BBI}.
This is applicable since Riemannian manifolds with boundary are length spaces, see e.g. \cite[Prop.~3.18]{BrH}.

Suppose that there is a distinguished metric $g(\xi_0),\xi_0\in K$ in a given family $g\colon K\to\mathscr{R}(M)$.
Let $I\subset\mathbb{R}$ be an open interval with $[0,1]\subset I$ and let $\rho\colon\mathbb{R}\to[0,1]$ be a smooth function with
\begin{enumerate}
\item[$\triangleright$]{$\rho(s)=0$ for all $s\leq 0$;}
\item[$\triangleright$]{$\rho(s)=1$ for all $s\geq 1$;}
\item[$\triangleright$]{$\supp\rho'\subset(0,1)$.}
\end{enumerate}

We consider the family of time-dependent Riemannian metrics given by $(1-\rho(s))g(\xi_0)+\rho(s)g(\xi)$ for $\xi\in K$ and $s\in I$.
There exists an associated continuous family of geodesic collar neighbourhoods $\varrho_{\xi,s}\colon V\to U^{\xi,s}$ which is defined on a uniform neighbourhood $V\subset[0,\infty)\times\partial M$, see~\cite[Sec.~3]{Frerichs2025}.
We drop the subscript $\eta$ at this point.
In particular, it holds $\varrho_{\xi,0}=\varrho_{g(\xi_0)}$ and $\varrho_{\xi,1}=\varrho_{g(\xi)}$ for all $\xi\in K$.

\begin{proposition}\label{diff}
Let $(K,\xi_0)$ be a compact pointed Hausdorff space and let $g\colon K\to\mathscr{R}(M)$ be a continuous family of Riemannian metrics.
Let $\delta\colon M\to\mathbb{R}$ be a continuous positive function.

For each neighbourhood $\mathscr{U}$ of $\dM$, there exists a smaller neighbourhood $\dM\subset U\subset\mathscr{U}$ and a continuous map
\begin{equation}\nonumber
\Omega\colon K\times[0,1]\to\Diff(M)
\end{equation}
such that the following holds for all $\xi\in K$ and $s\in I$:
\begin{enumerate}
\item[(a)]{$\Omega(\xi,0)=\id_M$;}
\item[(b)]{$\Omega(\xi_0,s)=\id_M$;}
\item[(c)]{$\Omega(\xi,s)=\varrho_{\xi,s}\circ\varrho_{g(\xi_0)}^{-1}$ on U, in particular $\Omega(\xi,1)=\varrho_{g(\xi)}\circ\varrho_{g(\xi_0)}^{-1}$ on $U$;}
\item[(d)]{$\Omega(\xi,s)=\id$ on $M\setminus\mathscr{U}$;}
\item[(e)]{$d_m(\Omega(\xi,s)(p),p)<\delta(p)$ for all $p\in M$;}
\end{enumerate}
\end{proposition}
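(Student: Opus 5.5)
The plan is to define $\Omega$ as the flow of a time-dependent vector field, cut off outside a thin collar, which near $\partial M$ generates exactly the maps $\varrho_{\xi,s}\circ\varrho_{g(\xi_0)}^{-1}$.

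\emph{Step 1: a local isotopy.} Let $\varrho_{\xi,s}\colon V\to U^{\xi,s}$ be the uniform continuous family of geodesic collars associated with the interpolated metrics $(1-\rho(s))g(\xi_0)+\rho(s)g(\xi)$. For $(\xi,s)\in K\times I$ set $\phi_{\xi,s}:=\varrho_{\xi,s}\circ\varrho_{g(\xi_0)}^{-1}$, defined on $U^{g(\xi_0)}$. These maps are embeddings into $M$ and restrict to the identity on $\partial M$, since $\varrho_{\xi,s}(0,p)=p$. They satisfy $\phi_{\xi,0}=\id$ and $\phi_{\xi_0,s}=\id$, because $\rho(0)=0$ and for $\xi=\xi_0$ all interpolated metrics equal $g(\xi_0)$. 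They depend continuously on $(\xi,s)$ in the $C^\infty$-topology and smoothly on $s$, because $\rho$ is smooth and geodesics depend smoothly on the metric.

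\emph{Step 2: generating vector field and cut-off.} Let $X_{\xi,s}:=(\partial_s\phi_{\xi,s})\circ\phi_{\xi,s}^{-1}$ be the time-dependent vector field generating $\phi$. It is defined on $\phi_{\xi,s}(U^{g(\xi_0)})$ and is tangent to $\partial M$ there; in fact it vanishes on $\partial M$. Choose a neighbourhood $W$ of $\partial M$ with $W\subset\mathscr{U}$ such that for all $(\xi,s)\in K\times[0,1]$ the image $\phi_{\xi,s}(W)$ stays inside a fixed neighbourhood $W'\subset\mathscr{U}\cap U^{g(\xi_0)}$.

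Existence of such $W$ is the main obstacle, since $\partial M$ may be non-compact and $K\times[0,1]$ is compact but the collar width must be chosen locally. I would handle it with the cover $(U_\alpha)$ from Lemma~\ref{cover1}. On each relatively compact $\overline{U_\alpha}$, continuity of $(\xi,s,p)\mapsto\phi_{\xi,s}(p)$ together with $\phi_{\xi,s}|_{\partial M}=\id$ gives a width $\eta_\alpha>0$ such that points within $m$-distance $\eta_\alpha$ of $\partial M$ move by less than $\min(\delta,1)$ and stay inside $\mathscr{U}$. Local finiteness of the sets $I_\alpha$ then lets one glue these widths into a continuous positive function $\eta$, and we take $W:=U_\eta$.

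Next pick a cut-off $\chi\colon M\to[0,1]$ with $\chi=1$ on $W'$ and $\supp\chi\subset\mathscr{U}\cap U^{g(\xi_0)}$, and set $Y_{\xi,s}:=\chi X_{\xi,s}$. The field $Y$ is tangent to $\partial M$. Shrinking $W'$ and $\eta$ further makes $|Y|_m$ small, which ensures its flow exists for $s\in[0,1]$ even though $M$ is non-compact: the vector field is supported in a region where trajectories are controlled by the local compactness furnished by $m$ and Lemma~\ref{cover1}.

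\emph{Step 3: define $\Omega$ and verify (a)--(e).} Let $\Omega(\xi,s)$ be the time-$s$ flow of $Y_{\xi,\cdot}$ starting at time $0$. Since $Y$ is tangent to $\partial M$, each $\Omega(\xi,s)$ is a diffeomorphism of $M$, and it depends continuously on $\xi$ because $Y$ does. Property (a) holds by construction. Property (b) holds because $X_{\xi_0,s}=0$. Property (d) holds because $\supp Y\subset\mathscr{U}$. On $U:=W$, trajectories stay in $W'$, where $Y=X$, so by uniqueness of integral curves $\Omega(\xi,s)=\phi_{\xi,s}$; this is (c). Finally, (e) follows from the displacement bound chosen in Step 2. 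Outside $W'$, displacement is controlled by bounding $|Y|_m$ uniformly below $\delta$ on the compact sets $\overline{N}_m(U_\alpha,1)$ after shrinking the support of $\chi$.
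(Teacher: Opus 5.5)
Your proposal is correct and follows the same route as the argument the paper defers to, \cite[Prop.~3.3]{Frerichs2025}. That argument is a parametrised isotopy extension: you integrate a cut-off of the time-dependent field generating $\varrho_{\xi,s}\circ\varrho_{g(\xi_0)}^{-1}$, and you fix collar widths and speed bounds locally via the cover of Lemma~\ref{cover1} to get completeness of the flow and the displacement estimate (e).

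One small repair is needed. $X_{\xi,s}$ lives on the image $U^{\xi,s}=\varrho_{\xi,s}(V)$, not on $U^{g(\xi_0)}$, so you need $\supp\chi\subset\bigcap_{(\xi,s)\in K\times[0,1]}U^{\xi,s}$. This intersection does contain a neighbourhood of $\partial M$, by the same compactness argument on the pieces $K\times[0,1]\times\overline{U_\alpha}$ together with a uniform inverse function theorem near $\partial M$. Otherwise nothing changes.
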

\begin{proof}
See~\cite[Prop.~3.3]{Frerichs2025}.
\end{proof}
\begin{corollary}\label{uniformgcn}
Let $K$ be a compact Hausdorff space and let $g\colon K\to\mathscr{R}_{\mathsf{C}(\sigma)}(M)$ be a continuous family of Riemannian metrics that satisfy $\mathsf{C}(\sigma)$.
Let $\zeta\in\mathscr{R}(M)$ be another distinguished Riemannian metric.

For each neighbourhood $\mathscr{U}$ of $\dM$, there exists a smaller neighbourhood $\dM\subset U\subset\mathscr{U}$ and a continuous map
\begin{equation}\nonumber
f\colon K\times[0,1]\to\mathscr{R}_{\mathsf{C}(\sigma)}(M)
\end{equation}
so that the following holds for all $\xi\in K$ and $s\in[0,1]$:
\begin{enumerate}
\item[(a)]{$f(\xi,0)=g(\xi)$;}
\item[(b)]{$\nu_{f(\xi,1)}=\nu_{\zeta}$ and $\varrho_{f(\xi,1)}=\varrho_{\zeta}$ on $\varrho_{\zeta}^{-1}(U)$;}
\item[(c)]{$f(\xi,s)_t=g(\xi)_t$ near $\{0\}\times\dM$;}
\item[(d)]{$f(\xi,s)=g(\xi)$ on $M\setminus\mathscr{U}$.}
\end{enumerate}
\end{corollary}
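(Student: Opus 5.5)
The plan is to transport each metric $g(\xi)$ by a diffeomorphism from Proposition~\ref{diff}, so that its geodesic collar becomes the collar of $\zeta$. Pulling back by a diffeomorphism does not change the metric up to isometry. The only genuine issue is that $\sigma$ is a function on $M$, so a pullback may move curvature from a point $q$ to a point $p$ where the bound $\sigma(p)$ differs.

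\emph{Step 1: making $\zeta$ a member of the family.} Proposition~\ref{diff} needs a distinguished metric inside the family. I would enlarge the parameter space to $K':=K\sqcup\{\xi_0\}$, a compact pointed Hausdorff space, and set $g(\xi_0):=\zeta$. This is still continuous, since $\xi_0$ is an isolated point. I apply Proposition~\ref{diff} to $g\colon K'\to\mathscr{R}(M)$, the given neighbourhood $\mathscr{U}$, and a positive function $\delta$ to be fixed in Step 3. This gives $U\subset\mathscr{U}$ and $\Omega\colon K'\times[0,1]\to\Diff(M)$. For $\xi\in K$ and $s\in[0,1]$ I define
\begin{equation}\nonumber
f(\xi,s):=\Omega(\xi,s)^\ast g(\xi).
\end{equation}

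\emph{Step 2: checking (a)--(d).} Continuity of $f$ follows from continuity of $\Omega$ and of $g$, since pullback $\Diff(M)\times\mathscr{R}(M)\to\mathscr{R}(M)$ is continuous in the weak $C^\infty$-topology. Property (a) follows from $\Omega(\xi,0)=\id_M$, and (d) from $\Omega(\xi,s)=\id$ on $M\setminus\mathscr{U}$.

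For (b), pullback commutes with the normal exponential map: the geodesic collar of $\Phi^\ast h$ is $\Phi^{-1}\circ\varrho_h$, whenever the diffeomorphism $\Phi$ preserves $\partial M$. With $\Omega(\xi,1)=\varrho_{g(\xi)}\circ\varrho_\zeta^{-1}$ on $U$, we get $\varrho_{f(\xi,1)}=\varrho_\zeta$ on $\varrho_\zeta^{-1}(U)$, and hence also $\nu_{f(\xi,1)}=\nu_\zeta$.

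For (c), write $f(\xi,s)$ in its own geodesic collar coordinates $\Omega(\xi,s)^{-1}\circ\varrho_{g(\xi)}$. There the pulled-back metric is literally $\varrho_{g(\xi)}^\ast g(\xi)=\mathrm{d}t^2+g(\xi)_t$, so $f(\xi,s)_t=g(\xi)_t$ near $\{0\}\times\partial M$.

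\emph{Step 3: choosing $\delta$ so the curvature condition survives.} This is the main obstacle. At $p$, the curvature tensor of $f(\xi,s)$ is isometric to $R_{g(\xi)}(q)$ with $q=\Omega(\xi,s)(p)$. That tensor satisfies $\mathsf{C}(\sigma(q))$, but we need $\mathsf{C}(\sigma(p))$.

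I would use the cover $(U_\alpha)$ and the metric $m$ from Lemma~\ref{cover1}. For each $\alpha$, apply Lemma~\ref{uniform} (or Corollary~\ref{uniformcor2}) to the compact set $\overline{N}_m(U_\alpha,1)$ and the family $g|_K$. This gives $\varepsilon_\alpha>0$ such that $R_{g(\xi)}(q)$ satisfies $\mathsf{C}(\sigma(q)+\varepsilon_\alpha)$ for all $\xi\in K$ and all $q$ in that set. By uniform continuity of $\sigma$ there, choose $0<\delta_\alpha\leq 1$ such that $|\sigma(q)-\sigma(p)|<\varepsilon_\alpha$ whenever $d_m(p,q)<\delta_\alpha$.

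Since only finitely many indices in $I_\alpha$ meet each $\overline{N}_m(U_\alpha,1)$, a partition-of-unity-type construction yields a continuous positive function $\delta$ with $\delta\leq\min_{\beta:\,p\in\overline{U_\beta}}\delta_\beta$. Then (e) gives $d_m(q,p)<\delta(p)$, so $q$ lies in $\overline{N}_m(U_\alpha,1)$ for any $\alpha$ with $p\in U_\alpha$. Properties (i) and (ii) of the family then give $\mathsf{C}(\sigma(q)+\varepsilon_\alpha)\subset\mathsf{C}(\sigma(p))$, so $f(\xi,s)$ satisfies $\mathsf{C}(\sigma)$.
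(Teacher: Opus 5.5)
Your proposal is correct and follows the paper's proof essentially step for step: adjoin $\zeta$ as an isolated parameter $\xi_0$, set $f(\xi,s)=\Omega(\xi,s)^\ast g(\xi)$ with $\Omega$ from Proposition~\ref{diff}, and build $\delta$ from the cover of Lemma~\ref{cover1} with a partition of unity. The one difference is that you make the paper's brief ``by compactness and Lemma~\ref{uniform}'' explicit, by extracting $\varepsilon_\alpha$ and then using uniform continuity of $\sigma$ on $\overline{N}_m(U_\alpha,1)$ together with property (i); note also that the identity $\varrho_{\Phi^\ast h}=\Phi^{-1}\circ\varrho_h$ needs $\Phi$ to fix $\partial M$ pointwise, not merely preserve it, which holds here since $\Omega(\xi,s)(p)=\varrho_{\xi,s}(0,p)=p$ for $p\in\partial M$.
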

\begin{proof}
Let $\mathscr{U}$ be a neighbourhood of $\partial M$.
Let $\xi_0$ be an arbitrary object.
Then $K\cup\{\xi_0\}$ with the topology of disjoint union is again a compact Hausdorff space.
We put $g(\xi_0):=\zeta$.

Each metric $f(\xi,s)$ will be a pullback of $g(\xi)$ along some diffeomorphism of $M$.
For the construction, we consider the open cover $(U_\alpha)_{\alpha\in I}$ from Lemma~\ref{cover1}.
Let $\alpha\in I$.
By compactness and Lemma~\ref{uniform}, there exists $\delta_\alpha>0$ such that
\begin{equation}\label{littlemovement}
\iota_{g(\xi)(q)}^\ast R_{g(\xi)}(q)\in\mathsf{C}(\sigma(p))
\end{equation}
for all $\xi\in K$, as well as $p,q\in\overline{N}_m(U_\alpha,1)$ with $d_m(p,q)<\delta_\alpha$ and all linear isometries $\iota_{g(\xi)(q)}\colon\mathbb{R}^n\to(T_qM,g(\xi)(q))$.

Let $(\psi_\alpha)_{\alpha\in I}$ be a partition of unity subordinate to $(U_\alpha)_{\alpha\in I}$.
Set
\begin{equation}\nonumber
\delta\colon M\to\mathbb{R}\,,\,\delta(p)=\frac{1}{2}\sum\limits_{\alpha\in I}\psi_\alpha(p)\cdot\min\bigl\{\min_{\beta\in I_\alpha}\delta_\beta,1\bigr\}.
\end{equation}
This is a well-defined continuous function with $0\leq\delta(p)\leq\tfrac{1}{2}$ for all $p\in M$ and $\delta(p)<\delta_\alpha$ for all $\alpha\in I$ and $p\in U_\alpha$.

Proposition~\ref{diff}, applied for the compact Hausdorff space $K\cup\{\xi_0\}$ and the function $\delta$, yields a neighbourhood $\partial M\subset U\subset\mathscr{U}$ and a family $\Omega\colon(K\cup\{\xi_0\})\times[0,1]\to\Diff(M)$ with its distinguished properties~\ref{diff}$(a)-(e)$.
We put
\begin{equation}\nonumber
f\colon K\times[0,1]\to\mathscr{R}(M)\,,\,f(\xi,s):=\Omega(\xi,s)^\ast g(\xi).
\end{equation}
This operation preserves the pointwise curvature constraint, which is shown as follows:
Let $\xi\in K,s\in[0,1],p\in M$ and let $\iota_{f(\xi,s)(p)}\colon\mathbb{R}^n\to(T_pM,f(\xi,s)(p))$ be a linear isometry.
It holds
\begin{align}\nonumber
\iota_{f(\xi,s)(p)}^\ast R_{f(\xi,s)}(p)&=\iota_{f(\xi,s)(p)}^\ast\bigl(\mathrm{d}_p\Omega(\xi,s)^\ast R_{g(\xi)}\bigl(\Omega(\xi,s)(p)\bigr)\bigr)\\\nonumber
&=\bigl(\mathrm{d}_p\Omega(\xi,s)\circ\iota_{f(\xi,s)(p)}\bigr)^\ast R_{g(\xi)}\bigl(\Omega(\xi,s)(p)\bigr).
\end{align}
Observe that $\mathrm{d}_p\Omega(\xi,s)\circ\iota_{f(\xi,s)(p)}$ is a linear isometry $\mathbb{R}^n\to(T_{\Omega(\xi,s)(p)}M,g(\xi)(\Omega(\xi,s)(p)))$.

Now let $\alpha\in I$ and $p\in U_\alpha$.
By Proposition~\ref{diff}$(e)$, we have $d_m(\Omega(\xi,s)(p),p)<\delta(p)<1$, i.e. $\Omega(\xi,s)(p)\in\overline{N}_m(U_\alpha,1)$.
The same inequality gives $d_m(\Omega(\xi,s)(p),p)<\delta(p)<\delta_\alpha$ so that
\begin{equation}\nonumber
\iota_{f(\xi,s)}^\ast R_{f(\xi,s)}(p)\in\mathsf{C}(\sigma(p))
\end{equation}
due to equation~\eqref{littlemovement}.
Hence, $f(\xi,s)$ satisfies $\mathsf{C}(\sigma)$.

It remains to check properties $(a)-(d)$.
This has been done in~\cite[Cor.~3.5]{Frerichs2025}, but we replicate it for the sake of completeness.
\begin{enumerate}
\item[$(a)$]{follows from Proposition~\ref{diff}$(a)$;}
\item[$(b)$]{Let $\xi\in K$ and $(t,p)\in\varrho_{\zeta}^{-1}(U)$.
The map $\Omega(\xi,1)\colon(M,f(\xi,1))\to(M,g(\xi))$, as an isometry, maps normal geodesics to normal geodesics.
By Proposition~\ref{diff}$(c)$ we have
\begin{align}\nonumber
\Omega(\xi,1)(\varrho_{\zeta}(t,p))=\varrho_{g(\xi)}(t,p)&=\exp_p^{g(\xi)}(t\nu_{g(\xi)}(p))\\\nonumber
&=\Omega(\xi,1)(\exp_p^{f(\xi,1)}(t\nu_{f(\xi,1)}(p)))\\\nonumber
&=\Omega(\xi,1)(\varrho_{f(\xi,1)}(t,p))
\end{align}
since $\Omega$ preserves boundary points.
Thus, $\varrho_{f(\xi,1)}(t,p)=\varrho_{\zeta}(t,p)$.
In particular,
\begin{equation}\nonumber
\nu_{f(\xi,1)}(p)=\left.\frac{\partial}{\partial t}\right|_{t=0}\varrho_{f(\xi,1)}(t,p)=\left.\frac{\partial}{\partial t}\right|_{t=0}\varrho_{\zeta}(t,p)=\nu_{\zeta}(p).
\end{equation}}
\item[$(c)$]{Let $\xi\in K$.
We choose a neighbourhood $V\subset[0,\infty)\times\partial M$ of $\dM$ such that $\varrho_{f(\xi,s)}\colon V\to U^{f(\xi,s)}$ is a geodesic collar neighbourhood with $U^{f(\xi,s)}\subset U$ for all $s\in[0,1]$.
Consider the diagram
\begin{equation}\nonumber
\begin{tikzcd}
(V,\mathrm{d}t^2+g(\xi)_t)\arrow{rr}{\varrho_{g(\xi)}}&&(U^{g(\xi)},g(\xi))\\
\\
(V,\mathrm{d}t^2+f(\xi,s)_t)\arrow{rr}{\varrho_{f(\xi,s)}}\arrow{uu}{\id}&&(U^{f(\xi,s)},f(\xi,s))\arrow{uu}{\Omega(\xi,s)}
\end{tikzcd}
\end{equation}
It commutes since $\Omega(\xi,s)$ maps normal geodesics to normal geodesics.
As the horizontal maps and the vertical map on the right are isometries, so is the identity on the left.}
\item[$(d)$]{follows from Proposition~\ref{diff}$(d)$.\qedhere}
\end{enumerate}
\end{proof}

\subsection{A two-stage gluing scheme}

The aim is to generalise Perelman's  original gluing principle for metrics of positive Ricci curvature to Perelman flexible families.
To this end, we establish a standard type of Riemannian metrics within the gluing principle.
These are called \textit{$\Lambda$-normal metrics}.
They give better control over the curvature.

For the definition, we consider a Riemannian metric $g$ on $M$.
In a geodesic collar neighbourhood $\varrho_g\colon V_\eta\to U^g$, we identify it as a generalised cylinder metric $g=\mathrm{d}t^2+g_t$.
Then it holds $\mathrm{II}_g=-\tfrac{1}{2}\dot{g}_0$ as a $(0,2)$-tensor field on $\partial M$, see~\cite[Prop.~4.1]{BGM}.
The associated Weingarten map of $\partial M$ is denoted by $W_g$.

More generally, for each $(t,p)\in V_\eta$, we find a neighbourhood $p\in V_p\subset\partial M$ such that $\{t\}\times V_p\subset V_\eta$.
The second fundamental form for this slice is $\mathrm{II}_t=-\tfrac{1}{2}\dot{g}_t$ (with respect to the normal $\nu=\tfrac{\partial}{\partial t}$).
They build a map $\mathrm{II}_\bullet\colon V_\eta\to T^\ast\partial M\otimes T^\ast\partial M$.
So do the corresponding Weingarten maps $W_\bullet\colon V_\eta\to T^\ast\partial M\otimes T(\partial M)$, uniquely defined by
\begin{equation}\nonumber
\langle W_t(p)(X),Y\rangle_{g_t}=\mathrm{II}_t(p)(X,Y)=-\frac{1}{2}\dot{g}_t(p)(X,Y).
\end{equation}

There are three kinds of entries of $R_{g}=R_{\hspace{1pt}\mathrm{d}t^2+g_t}$ within $U^g$, depending on how often the unit normal $\nu$ appears.
Let $(t,p)\in V_\eta$ and $X,Y,Z,W\in T_p(\partial M)$.
By \cite[Prop.~4.1]{BGM}, it holds
\begin{align}\label{0nu}
&R_{g}(t,p)(X,Y,Z,W)=R_{g_t}(X,Y,Z,W)-\frac{1}{2}(\mathrm{II}_t\KN\mathrm{II}_t)(X,Y,Z,W),\\\label{1nu}
&R_{g}(t,p)(X,Y,Z,\nu)=(\nabla_X^{g_t}\mathrm{II}_t)(Y,Z)-(\nabla_Y^{g_t}\mathrm{II}_t)(X,Z),\\\label{2nu}
&R_{g}(t,p)(X,\nu,\nu,Y)=-\frac{1}{2}\ddot{g}_t(X,Y)+\mathrm{II}_t(W_t(X),Y).
\end{align}
Note that equation~\eqref{0nu} is nothing but the Gauss equation and that~\eqref{1nu} is the Codazzi-Mainardi equation.

\begin{definition}
Let $\Lambda\in C^\infty(\partial M;\mathbb{R})$.
A Riemannian metric $g\in\mathscr{R}(M)$ is called \textit{$\Lambda$-normal} if the pullback metrics $g_\bullet$ with respect to some small geodesic collar neighbourhood $\varrho_g\colon V_\eta\to U^g$ are given by
\begin{align}\nonumber
g_t(p)&=g_0(p)+t\cdot\dot{g}_0(p)-\Lambda(p)t^2\cdot g_0(p)\\\nonumber
&=g_0(p)-2t\cdot\mathrm{II}_g(p)-\Lambda(p)t^2\cdot g_0(p),\;(t,p)\in V_\eta.
\end{align}
\end{definition}

Every Riemannian metric on $M$ can be deformed into a $\Lambda$-normal metric while preserving a pointwise curvature constraint, with the original $1$-jet along the boundary.
To prove this, we first introduce a preparatory lemma.

\begin{lemma}\label{cover2}
There exist countable covers $\left(U_\alpha^1\right)_{\alpha\in\mathbb{N}}\subset\left(U_\alpha^2\right)_{\alpha\in\mathbb{N}}\subset\left(U_\alpha^3\right)_{\alpha\in\mathbb{N}}$ of $\dM$ such that the following holds for all $\alpha\in\mathbb{N}$:
\begin{enumerate}
\item[\myicon]{Each subset $U_\alpha^1,U_\alpha^2,U_\alpha^3\subset\dM$ is open and relatively compact;}
\item[\myicon]{$I_\alpha:=\{\beta\in\mathbb{N}:U_\alpha^3\cap U_\beta^3\neq\varnothing\}$ is finite;}
\item[\myicon]{$\overline{U_\alpha^1}\subset U_\alpha^2$ and $\overline{U_\alpha^2}\subset U_\alpha^3$.}
\end{enumerate}
\end{lemma}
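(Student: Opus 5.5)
The construction is purely topological: it uses only that $\dM$ is a smooth (hence second countable, locally compact Hausdorff, paracompact) manifold without boundary, possibly non-compact. First I fix a compact exhaustion $K_1\subset\mathring{K}_2\subset K_2\subset\mathring{K}_3\subset\dots$ of $\dM$ with $\bigcup_j K_j=\dM$, and set $K_0=K_{-1}=\varnothing$. The idea is to cover each compact "annulus" $A_j:=K_j\setminus\mathring{K}_{j-1}$ by finitely many small sets, and to keep all three layers of sets inside the slightly thicker region $\mathring{K}_{j+1}\setminus K_{j-2}$.

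For each $j\geq 1$ and each point $p\in A_j$, I choose a coordinate ball $U^3(p)$ around $p$ with compact closure contained in the open set $W_j:=\mathring{K}_{j+1}\setminus K_{j-2}$. This is possible because $A_j\subset W_j$ and $W_j$ is open. Inside it I choose concentric smaller balls $U^1(p)\subset U^2(p)$ with $\overline{U^1(p)}\subset U^2(p)$ and $\overline{U^2(p)}\subset U^3(p)$. By compactness, finitely many of the sets $U^1(p)$ cover $A_j$. Collecting these finite families over all $j$ and enumerating them by $\mathbb{N}$ gives triples $(U^1_\alpha,U^2_\alpha,U^3_\alpha)$. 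The $U^1_\alpha$ cover $\bigcup_j A_j=\dM$, so all three families are covers. The nesting $\overline{U^1_\alpha}\subset U^2_\alpha$, $\overline{U^2_\alpha}\subset U^3_\alpha$ and the relative compactness hold by construction. If only finitely many sets arise (for example when $\dM$ is compact), I repeat sets to index by $\mathbb{N}$; this does not harm finiteness of $I_\alpha$ beyond a finite repetition.

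The only step needing care is the local finiteness of $I_\alpha$. If $U^3_\alpha$ comes from stage $j$, then $U^3_\alpha\subset W_j$. If $U^3_\beta$ comes from stage $k$ with $|j-k|\geq 3$, then $W_j\cap W_k=\varnothing$. Indeed, for $k\geq j+3$ we have $W_k\cap\mathring{K}_{j+1}\subset(\dM\setminus K_{k-2})\cap K_{j+1}=\varnothing$, since $K_{j+1}\subset K_{k-2}$. Hence $U^3_\alpha\cap U^3_\beta\neq\varnothing$ forces $|j-k|\leq 2$. Only finitely many stages qualify, each contributing finitely many sets, so $I_\alpha$ is finite. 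The main obstacle is choosing the thickened regions $W_j$ so that this separation works. With the exhaustion indexed as above, it is a routine check.
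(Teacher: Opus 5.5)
For non-compact $\partial M$ your construction is correct and complete. The $A_j$ are compact and cover $\partial M$, and $A_j\subset W_j$. Infinitely many $A_j$ are nonempty, since otherwise $\partial M$ would be compact. Each $A_j$ contributes finitely many triples, and $W_j\cap W_k=\varnothing$ for $|j-k|\geq 3$ gives finiteness of $I_\alpha$. The paper gives no argument of its own here and only refers to \cite[Lem.~3.14]{Frerichs2025}, so this is a good self-contained proof of that case.

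The step that fails is your treatment of the case where only finitely many triples arise, e.g.\ compact $\partial M$. Indexing finitely many sets by $\mathbb{N}$ through repetition forces some nonempty $U^3_\alpha$ to occur for infinitely many indices $\beta$. Each such $\beta$ lies in $I_\alpha$, because $U^3_\alpha\cap U^3_\beta=U^3_\alpha\neq\varnothing$. So $I_\alpha$ becomes infinite, and the repetition can never be ``finite'' as you claim.

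Pad instead with $U^1_\alpha=U^2_\alpha=U^3_\alpha=\varnothing$ for the surplus indices. These sets are open and relatively compact, satisfy the closure inclusions, have $I_\alpha=\varnothing$, and never enter any other $I_\beta$. Padding by empty sets is in fact unavoidable. If $\partial M$ is compact, finitely many $U^3_{\alpha_1},\dots,U^3_{\alpha_N}$ already cover it. Then every index $\beta$ with $U^3_\beta\neq\varnothing$ belongs to the finite set $I_{\alpha_1}\cup\dots\cup I_{\alpha_N}$. Hence all but finitely many sets in any family satisfying the lemma must be empty.
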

\begin{proof}
See~\cite[Lem.~3.14]{Frerichs2025}.
\end{proof}

\begin{proposition}\label{2jet}
Let $K$ be a compact Hausdorff space and let $g\colon K\to \mathscr{R}_{\mathsf{C}(\sigma)}(M)$ be a continuous family of Riemannian metrics that satsify $\mathsf{C}(\sigma)$.

Then there exists a smooth function $\Lambda_0\in C^\infty(\partial M;\mathbb{R})$ such that for each function $\Lambda\in C^\infty(\partial M;\mathbb{R})$ with $\Lambda\geq\Lambda_0$ and each neighbourhood $\mathscr{U}$ of $\partial M$ there exists a continuous map
\begin{equation}\nonumber
f\colon K\times[0,1]\to\mathscr{R}_{\mathsf{C}(\sigma)}(M)
\end{equation}
such that the following holds for all $\xi\in K$ and $s\in[0,1]$:
\begin{enumerate}
\item[(a)]{$f(\xi,0)=g(\xi)$;}
\item[(b)]{$f(\xi,1)$ is $\Lambda$-normal;}
\item[(c)]{if $g(\xi)$ is $\tilde{\Lambda}$-normal, then $f(\xi,s)$ is $((1-s)\tilde{\Lambda}+s\Lambda)$-normal;}
\item[(d)]{$f(\xi,s)_0=g(\xi)_0$ and $\mathrm{II}_{f(\xi,s)}=\mathrm{II}_{g(\xi)}$;}
\item[(e)]{$\ddot{f}(\xi,s)_0=(1-s)\ddot{g}(\xi)_0-2s\Lambda g(\xi)_0$;}
\item[(f)]{$f(\xi,s)^{(\ell)}_0=(1-s)g(\xi)^{(\ell)}_0$ for all $\ell\geq 3$;}
\item[(g)]{$f(\xi,s)=g(\xi)$ on $M\setminus\mathscr{U}$.}
\end{enumerate}
\end{proposition}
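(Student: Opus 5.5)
The plan is to interpolate, inside a thin collar, between the original cylinder family $g(\xi)_t$ and its $\Lambda$-normal model. The curvature is controlled by making the collar thin and $\Lambda$ large.

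First, fix geodesic collars $\varrho_{g(\xi)}$ on a uniform domain $V_\eta$, possible by compactness of $K$, and write $g(\xi)=\mathrm{d}t^2+g(\xi)_t$ there. Let
\begin{equation}\nonumber
h(\xi)_t:=g(\xi)_0-2t\,\mathrm{II}_{g(\xi)}-\Lambda t^2 g(\xi)_0 .
\end{equation}
Then $g(\xi)_t-h(\xi)_t=\tfrac12 t^2(\ddot g(\xi)_0+2\Lambda g(\xi)_0)+O(t^3)$. Choose a cutoff $\chi_\lambda(t)=\chi(t/\lambda)$ with $\chi=1$ near $0$ and $\chi=0$ on $[1,\infty)$, with $\lambda=\lambda(p)>0$ a small function on $\partial M$, built from the covers of Lemma~\ref{cover2} so that all estimates are local over the compact pieces $\overline{U^2_\alpha}$. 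Define
\begin{equation}\nonumber
f(\xi,s)_t:=g(\xi)_t+s\,\chi_\lambda(t)\,\bigl(h(\xi)_t-g(\xi)_t\bigr),\qquad f(\xi,s)=\mathrm{d}t^2+f(\xi,s)_t ,
\end{equation}
pushed forward by $\varrho_{g(\xi)}$ and equal to $g(\xi)$ outside the collar, with $\lambda$ small enough that the support lies in $\mathscr{U}$. Since $\mathrm{d}t^2+f_t$ is still a generalised cylinder metric, $\varrho_{g(\xi)}$ remains its geodesic collar. Properties (a), (b), (d), (g) are immediate. Expanding at $t=0$ gives (e), since $\ddot h_0=-2\Lambda g_0$, and (f), since $h^{(\ell)}=0$ for $\ell\ge3$. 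For (c), if $g(\xi)$ is $\tilde\Lambda$-normal, then near $t=0$ we have $g_t-h_t=(\Lambda-\tilde\Lambda)t^2g_0$, so for $t$ small (where $\chi_\lambda=1$) $f_t$ is $((1-s)\tilde\Lambda+s\Lambda)$-normal.

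The main step is preserving $\mathsf{C}(\sigma)$. Compute $R_{f}$ via \eqref{0nu}--\eqref{2nu}. On $t\le\lambda$, write $\delta_t:=h_t-g_t$. We have $\|\delta_t\|=O(t^2(1+\Lambda))$, $\|\dot\delta_t\|=O(t(1+\Lambda))$, and the cutoff derivatives are $|\chi'_\lambda|\lesssim\lambda^{-1}$, $|\chi''_\lambda|\lesssim\lambda^{-2}$. The tangential terms $R_{f_t}$, the terms $\mathrm{II}\KN\mathrm{II}$, and the Codazzi terms of $f$ are therefore $O(\lambda(1+\Lambda))$-close to those of $g$. This is small once $\lambda\ll 1/\Lambda$.

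The $\nu\nu$-component is different. It equals
\begin{equation}\nonumber
-\tfrac12\ddot f_t+\mathrm{II}_t(W_t\cdot,\cdot)=-\tfrac12\ddot g_t-\tfrac12 s\bigl(\chi_\lambda\ddot\delta_t+2\chi'_\lambda\dot\delta_t+\chi''_\lambda\delta_t\bigr)+O(\lambda(1+\Lambda)).
\end{equation}
The cross terms $\chi'_\lambda\dot\delta$ and $\chi''_\lambda\delta$ are only $O(1+\Lambda)$, not small. Here one must use the sign structure: $\ddot\delta_t\approx-(\ddot g_0+2\Lambda g_0)$, so the leading term contributes $+s\chi_\lambda\Lambda g_0$ to $R(X,\nu,\nu,X)$, i.e.\ a positive multiple of the $\mathfrak{b}\KN\mathfrak{b}^\perp$-direction.

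My first attempt would avoid the bad cutoff terms altogether. Instead of cutting off $\delta$, use a profile $\delta_t=\phi(t)\,(\ddot g_0+2\Lambda g_0)$ up to $O(t^3)$, with $\phi$ equal to $-t^2/2$ near $0$, $\phi'' \le 0$ wherever it matters, and $\phi$ vanishing to all orders at $t=\lambda$. This forces $\phi$ to be concave near $0$ and convex near $\lambda$, so a region where $\phi''>0$ is unavoidable. I expect the decisive point to be bounding $\phi''$ there by a small constant times $1$ while keeping the $\nu\nu$-contribution in the directions of $\mathfrak{b}\KN\mathfrak{b}^\perp$, or else compensating with the $\varepsilon$-room from Corollary~\ref{uniformcor2} and P-flexibility.

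Concretely, I would choose $\Lambda_0$ large enough that $\ddot g(\xi)_0+2\Lambda_0 g(\xi)_0$ is positive definite, uniformly in $\xi$ on each $\overline{U^2_\alpha}$. Then every new $\nu\nu$-contribution has a definite sign and lies along $\mathfrak{b}\KN\mathfrak{b}^\perp$ and its perturbations. All remaining errors must fit into the room given by Corollary~\ref{uniformcor2}: they should be $a$-small perturbations in the $B^\perp$ directions, with $\varepsilon$-room in $\mathsf{C}(\sigma+\varepsilon)$. This sign-and-profile bookkeeping, uniform in $\xi\in K$ and $s\in[0,1]$, is the hard step.
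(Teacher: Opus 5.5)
Your construction of $f(\xi,s)$ and your checks of (a)--(g) are fine. The gap is that the proposal stops exactly at the step that carries all the content, namely that $f(\xi,s)$ satisfies $\mathsf{C}(\sigma)$, and leaves it as ``the hard step''. As written, there is no proof. Two ideas are missing.

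The first missing idea is that you never need to estimate curvature by hand across a cutoff region. Take the \emph{uncut} interpolation $F(\xi,s)=g(\xi)-s\bigl((\tfrac12\ddot g(\xi)_0+\Lambda g(\xi)_0)t^2+R(\xi)_t\bigr)$, with $R(\xi)_t$ the Taylor remainder of $g(\xi)_t$ at $t=0$. This is exactly $g+s(h-g)$ in your notation, and you check $\mathsf{C}(\sigma)$ for it only at $t=0$. There, by \eqref{0nu}--\eqref{2nu}, the tangential and Codazzi components of $T=R_F-R_g$ vanish, and $T(X,\nu,\nu,Y)=s(\tfrac12\ddot g_0+\Lambda g_0)(X,Y)$.
\begin{itemize}
\item Corollary~\ref{uniformcor2} gives room: $\iota^\ast R_{g(\xi)}-s\kappa_\alpha\,\mathfrak{b}\KN\mathfrak{b}\in\mathsf{C}(\sigma)$ over $U_\alpha$.
\item P-flexibility with $r=\kappa_\alpha\Lambda^{-1}$, scaled by $s\Lambda$, then gives $T+s\kappa_\alpha\,\mathfrak{b}\KN\mathfrak{b}\in\overline{\mathsf{C}(0)}$, provided the $\tfrac12\ddot g_0$-part lies in $B^\perp_{s\tau\sqrt{\kappa_\alpha\Lambda}}$. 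This holds once $\Lambda\geq\kappa_\alpha^{-1}\bigl((n-1)/\tau\bigr)^2\sup|\tfrac12\ddot g(\xi)_0|^2$.
\item Corollary~\ref{uniformcor1} then propagates $\mathsf{C}(\sigma)$ for $F(\xi,\theta)$ to a thin collar, uniformly in $\xi\in K$ and $\theta\in[0,1]$.
\end{itemize}
This quantitative bound is the correct criterion for $\Lambda_0$, not positive definiteness of $\ddot g_0+2\Lambda_0 g_0$. The P-flexibility axiom absorbs a perpendicular perturbation of size $P$ next to $\lambda\,\mathfrak{b}\KN\mathfrak{b}^\perp$ only at the price $\lambda r\,\mathfrak{b}\KN\mathfrak{b}$ with $\lambda\tau\sqrt{r}\geq P$. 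Here $P\sim\Vert\ddot g_0\Vert$ does not shrink as $\Lambda$ grows, and the price must be paid out of $\kappa_\alpha$. Your phrase about ``$a$-small perturbations in the $B^\perp$ directions'' does not account for this.

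The second missing idea concerns the passage from $F(\xi,s)$ near $\partial M$ to $g(\xi)$ away from it. The relation $\mathsf{C}(\sigma)$ is open and of second order (Corollary~\ref{Popen}). Moreover, $g(\xi)$ solves it on $M$, $F(\xi,s)$ solves it near $\partial M$, and the two have the same $1$-jet along $\partial M$. Hence the family version of B\"ar--Hanke's local flexibility lemma yields $f$ with $f=F$ near $\partial M$ and $f=g$ outside $\mathscr{U}$. This is the paper's route.

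You can also see directly why your cross-term worry dissolves. Since $F$ is affine in $s$, your $f(\xi,s)$ is just $F(\xi,s\chi(t))$, and $F(\xi,\theta)$ already satisfies $\mathsf{C}(\sigma)$ near $\partial M$ for every $\theta\in[0,1]$. What remains are the terms $s\chi'\dot\delta_t$ and $s\chi''\delta_t$ (and $s\chi'\delta_t$ in $\mathrm{II}$). Your claim that they are unavoidably $O(1+\Lambda)$ holds only for a linearly rescaled cutoff $\chi(t/\lambda)$. If you stretch the cutoff logarithmically, e.g.\ $\chi(t)=\eta\bigl(\log(\lambda/t)/L\bigr)$ with $L$ large, then $|t\chi'|,|t^2\chi''|\leq\epsilon$. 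These terms then become $O(\epsilon(1+\Lambda))$, which is negligible once $\epsilon$ is chosen after $\Lambda$. This also removes your concern about the region where your profile has $\phi''>0$: for $\phi=-\tfrac12 t^2\chi$ one gets $\phi''\leq 3\epsilon$ there. So no sign-and-profile bookkeeping is needed.
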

\begin{proof}
Based on Corollary~\ref{uniformgcn}, it can be assumed that all metrics $g(\xi),\xi\in K$ have the same normal exponential map.

Let $\mathscr{U}$ be a neighbourhood of $\partial M$ and let $\varrho\colon V_\eta\to U$ be a common geodesic collar neighbourhood for all metrics in $g$ satisfying $U\subset\mathscr{U}$.
One can identify the metrics $g(\xi),\xi\in K$ on $U$ with its associated generalised cylinder metrics on $V_\eta$.
We will drop the subscript $\eta$ from now on.

Let $\left(U_\alpha\right)_{\alpha\in\mathbb{N}}=\left(U_\alpha^3\right)_{\alpha\in\mathbb{N}}$ be a cover of $\partial M$ as in Lemma~\ref{cover2}, and let $(\psi_\alpha)_{\alpha\in\mathbb{N}}$ be a partition of unity subordinate to $\left(U_\alpha\right)_{\alpha\in\mathbb{N}}$.

For each $\alpha\in\mathbb{N}$, we choose $0<\kappa_\alpha\leq 1$ sufficiently small so that $\iota_{g(\xi)(p)}^\ast R_{g(\xi)}(p)-s\kappa_\alpha\cdot\mathfrak{b}\KN\mathfrak{b}\in\mathsf{C}(\sigma(p))$ for all $\xi\in K,s\in[0,1],p\in U_\alpha$ and all linear isometries $\iota_{g(\xi)(p)}\colon\mathbb{R}^{n}\to(T_pM,g(\xi))$.
We can do this by virtue of Corollary~\ref{uniformcor2}.
Then set
\begin{equation}\nonumber
\Lambda_\alpha:=\frac{1}{\kappa_\alpha}\cdot\left(\frac{n-1}{\tau}\right)^2\cdot\sup\left\{\left|\tfrac{1}{2}\ddot{g}(\xi)_0(X,Y)\right|^2:\xi\in K, p\in U_\alpha\;\;\text{and}\;\;|X|_{g(\xi)_0(p)}=|Y|_{g_0(\xi)(p)}=1\right\}
\end{equation}
and 
\begin{equation}\nonumber
\Lambda_0\colon\partial M\to\mathbb{R}\,,\,\Lambda_0:=1+\sum\limits_{\alpha\in\mathbb{N}}\psi_\alpha\cdot\max_{\beta\in I_\alpha}\Lambda_\beta.
\end{equation}
Observe that $\Lambda_0\geq 1$ and $\Lambda_0(p)>\Lambda_\alpha$ when $p\in U_\alpha$.

Now let $\Lambda\colon\partial M\to\mathbb{R}$ be a smooth function with $\Lambda\geq\Lambda_0$.
We consider the Taylor expansion of $g_\bullet$ at $t=0$:
\begin{equation}\nonumber
g(\xi)_t=g(\xi)_0+\dot{g}(\xi)_0\cdot t+\frac{1}{2}\ddot{g}(\xi)_0\cdot t^2+R(\xi)_t.
\end{equation}
Here $R(\xi)_\bullet$ is a smooth map $R(\xi)_\bullet\colon V\to T^\ast\partial M\otimes T^\ast\partial M$ made out of symmetric $(0,2)$-tensors on $\partial M$ that depends continuously on $\xi$.
It holds
\begin{equation}\nonumber
R(\xi)_0=\dot{R}(\xi)_0=\ddot{R}(\xi)_0=0.
\end{equation}
Put
\begin{equation}\nonumber
F(\xi,s):=g(\xi)-s\bigl(\bigl(\tfrac{1}{2}\ddot{g}(\xi)_0+\Lambda\cdot g(\xi)_0\bigr)\cdot t^2+R(\xi)_t\bigr)
\end{equation}
for $\xi\in K$ and $s\in[0,1]$.
This defines a continuous family $F\colon K\to C^{\infty}(V;T^\ast V\otimes T^\ast V)$ of symmetric $(0,2)$-tensor fields.
It can also be seen as a family of $(0,2)$-tensor fields on $U$.

Shrinking $U$ if necessary, we can assume that $F(\xi,s)$ is a Riemannian metric on $U$ for each $\xi\in K$ and $s\in[0,1]$.
For $\xi\in K$ and $s\in[0,1]$, we consider the difference curvature tensor
\begin{equation}\nonumber
T:=R_{F(\xi,s)}-R_{g(\xi)}
\end{equation}
on $V$.
Let $\alpha\in\mathbb{N},p\in U_\alpha$ and $X,Y,Z,W\in T_p(\partial M)$ with unit $g(\xi)_0(p)$-length.
At the point $(0,p)\in V$, it holds
\begin{align}\nonumber
R_{F(\xi,s)}(X,Y,Z,W)&=R_{F(\xi,s)_0}(X,Y,Y,X)-\frac{1}{2}(\mathrm{II}_{F(\xi,s)}\KN\mathrm{II}_{F(\xi,s)})(X,Y,Z,W)\\\nonumber
&=R_{g(\xi)_0}(X,Y,Y,X)-\frac{1}{2}(\mathrm{II}_{g(\xi)}\KN\mathrm{II}_{g(\xi)})(X,Y,Z,W)=R_{g(\xi)}(X,Y,Z,W)
\end{align}
or
\begin{equation}\nonumber
T(X,Y,Z,W)=0.
\end{equation}
Similarly, we observe
\begin{align}\nonumber
R_{F(\xi,s)}(X,Y,Z,\nu)&=(\nabla^{F(\xi,s)_0}_X\mathrm{II}_{F(\xi,s)})(Y,Z)-(\nabla^{F(\xi,s)_0}_Y\mathrm{II}_{F(\xi,s)})(X,Z)\\\nonumber
&=(\nabla^{g(\xi)_0}_X\mathrm{II}_{g(\xi)})(Y,Z)-(\nabla^{g(\xi)_0}_Y\mathrm{II}_{g(\xi)})(X,Z)=R_{g(\xi)}(X,Y,Z,\nu),
\end{align}
respectively
\begin{equation}\nonumber
T(X,Y,Z,\nu)=0.
\end{equation}
Finally,
\begin{align}\nonumber
R_{F(\xi,s)}(X,\nu,\nu,Y)&=-\tfrac{1}{2}\ddot{F}(\xi,s)_0(X,Y)+\mathrm{II}_{F(\xi,s)}(W_{F(\xi,s)}(X),Y)\\\nonumber
&=-\tfrac{1}{2}\ddot{g}(\xi)_0(X,Y)+\mathrm{II}_{g(\xi)}(W_{g(\xi)}(X),Y)+s\bigl(\tfrac{1}{2}\ddot{g}(\xi)_0(X,Y)+\Lambda\cdot g(\xi)_0(X,Y)\bigr)\\\nonumber
&=R_{g(\xi)}(X,\nu,\nu,Y)+s\bigl(\tfrac{1}{2}\ddot{g}(\xi)_0(X,Y)+\Lambda\cdot g(\xi)_0(X,Y)\bigr),\end{align}
meaning that
\begin{equation}\nonumber
T(X,\nu,\nu,Y)=s\bigl(\tfrac{1}{2}\ddot{g}(\xi)_0(X,Y)+\Lambda\cdot g(\xi)_0(X,Y)\bigr).
\end{equation}
By Perelman flexibility,
\begin{equation}\nonumber
\mathfrak{b}\KN\mathfrak{b}^\perp+\kappa_\alpha\Lambda(p)^{-1}\cdot\mathfrak{b}\KN\mathfrak{b}+B^\perp_{\tau\sqrt{\kappa_\alpha\Lambda(p)^{-1}}}\subset\mathsf{C}(0).
\end{equation}
Thus,
\begin{equation}\nonumber
s\Lambda(p)\cdot\mathfrak{b}\KN\mathfrak{b}^\perp+s\kappa_\alpha\cdot\mathfrak{b}\KN\mathfrak{b}+B^\perp_{s\tau\sqrt{\kappa_\alpha\Lambda(p)}}\subset\overline{\mathsf{C}(0)}
\end{equation}
for all $s\in[0,1]$.
Let $\iota_{F(\xi,s)_0(p)}\colon\mathbb{R}^{n-1}\to(T_p(\partial M),F(\xi,s)_0(p)=g(\xi)_0(p))$ be a linear isometry.
It holds
\begin{align}\nonumber
\Vert(\id_t\oplus\iota_{F(\xi,s)_0(p)})^\ast T-s\Lambda(p)\cdot\mathfrak{b}\KN\mathfrak{b}^\perp\Vert^2=\sum\limits_{i,l=2}^n&|s\cdot\tfrac{1}{2}\ddot{g}(\xi)_0(p)(\iota_{F(\xi,s)_0(p)}(e_i),\iota_{F(\xi,s)_0(p)}(e_l))|^2\\\nonumber
&<s^2\tau^2\cdot\kappa_\alpha\Lambda_0(p).
\end{align}
Therefore, $(\id_t\oplus\iota_{F(\xi,s)_0(p)})^\ast T+s\kappa_\alpha\cdot\mathfrak{b}\KN\mathfrak{b}\in\overline{\mathsf{C}(0)}$.
We conclude that
\begin{align}\nonumber
(\id_t\oplus\iota_{F(\xi,s)_0(p)})^\ast R_{F(\xi,s)}&=(\id_t\oplus\iota_{F(\xi,s)_0(p)})^\ast R_{g(\xi)}-s\kappa_\alpha\cdot\mathfrak{b}\KN\mathfrak{b}+\bigl((\id_t\oplus\iota_{F(\xi,s)_0(p)})^\ast T+s\kappa_\alpha\cdot\mathfrak{b}\KN\mathfrak{b}\bigr)\\\nonumber
&\phantom{=}\in\mathsf{C}(\sigma(p))+\overline{\mathsf{C}(0)}\subset\mathsf{C}(\sigma(p)).
\end{align}
Thus, $F(\xi,s)$ satisfies $\mathsf{C}(\sigma)$ along $\partial M$.
Corollary~\ref{uniformcor1} allows to assume that $F(\xi,s)\in\mathscr{R}_{\mathsf{C}(\sigma)}(U)$ for all $\xi\in K$ and $s\in[0,1]$.

Along the boundary, we find
\begin{enumerate}
\item[$\triangleright$]{$F(\xi,s)|_{\partial M}=g(\xi)|_{\partial M}$;}
\item[$\triangleright$]{$\dot{F}(\xi,s)_0=\dot{g}(\xi)_0$;}
\item[$\triangleright$]{$\ddot{F}(\xi,s)_0=(1-s)\ddot{g}(\xi)_0-2s\Lambda g(\xi)_0$;}
\item[$\triangleright$]{$F(\xi,s)_0^{(\ell)}=(1-s)g(\xi)_0^{(\ell)}$ for all $\ell\geq 3$.}
\end{enumerate}
By Corollary~\ref{Popen}, the condition of satisfying $\mathsf{C}(\sigma)$ defines a second order open partial differential relation on the total space of pointwise metrics over $M$.
The metrics in $g$ solve it globally, the metrics in $F$ solve it locally over $U$.

By the family version of the local flexibility lemma \cite[Addendum~3.4]{BH2022} of B\"ar-Hanke, we obtain an open neighbourhood $\partial M\subset U_0\subset U$ and a family
\begin{equation}\nonumber
f\colon K\times[0,1]\to C^{\infty}(M;T^\ast M\otimes T^\ast M)
\end{equation}
such that the following holds for all $\xi\in K$ and $s\in[0,1]$:
\begin{enumerate}
\item[$\triangleright$]{$f(\xi,s)\in\mathscr{R}_{\mathsf{C}(\sigma)}(M)$;}
\item[$\triangleright$]{$f(\xi,0)=g(\xi)$;}
\item[$\triangleright$]{$f(\xi,s)|_{U_0}=F(\xi,s)|_{U_0}$;}
\item[$\triangleright$]{$f(\xi,s)|_{M\setminus U}=F(\xi,s)|_{M\setminus U}$.}
\end{enumerate}
In particular, $f(\xi,s)|_{M\setminus\mathscr{U}}=g(\xi)_{M\setminus\mathscr{U}}$.
We conclude properties~$(a)-(g)$ for this $f$.
\end{proof}

\begin{remark}\label{samegeodesics}
The identification $g=\mathrm{d}t^2+g_t$ refers to a geodesic collar neighbourhood $\varrho_g\colon V_\eta\to U^g$ of $g$.
The deformations $F(\xi,s)$ are defined in terms of the collars $\varrho_{g(\xi)}$.
As soon as $F(\xi,s)$ is a Riemannian metric on $U=U^{g(\xi)}$ with its own geodesic collars, one may ask for the meaning of $F(\xi,s)_t$.
In fact, there is no subtlety because the normal geodesics of $F(\xi,s)$ and $g(\xi)$ coincide on $U$.
\end{remark}

\begin{remark}
Strictly speaking, Bär-Hanke's local flexibility lemma applies to smooth manifolds without boundary.
To obtain an admissible setting, we attach a small cylinder to $\dM$ and extend the metrics $g(\xi)$ to the new manifold -- called $M^+$ -- in continuous dependence on $\xi$.

Specifically, there is a continuous map $g^+\colon K\to\mathscr{R}(M^+)$ such that the canonical inclusion $(M,g(\xi))\to(M^+,g^+(\xi))$ is an isometric embedding for every $\xi\in K$.
We can guarantee continuity for $g^+$ by means of the Seeley extension theorem \cite{Seeley}.
See \cite[Prop.~2.8]{Frerichs2022} for details.
\end{remark}

The second deformation step is for adjusting the $1$-jet along the boundary, again respecting pointwise curvature constraints.
Several lemmas are required.

Thanks to Corollary~\ref{uniformgcn}, we can assume that all metrics involved in the next lemma have the same normal exponential map $\varrho$.
We will write $\sigma(t,p)$ for $(\sigma\circ\varrho)(t,p)$.
\begin{lemma}\label{uniform epsilon 2}
Let $K$ be a compact Hausdorff space.
Let $g_0\colon K\to C^{\infty}(\partial M;T^\ast\partial M\otimes T^\ast\partial M)$ be a continuous family of Riemannian metrics on $\partial M$ and let $h\colon K\to C^{\infty}(\partial M;T^\ast\partial M\otimes T^\ast\partial M)$ be a continuous familiy of symmetric $(0,2)$-tensor fields.
Let $U\subset\partial M$ be an open subset and let $A\subset U$ be compact.

Then there exists a smooth function $\Lambda_0=\Lambda_0(g_0,h)\in C^\infty(U;\mathbb{R}),\Lambda_0>0$, a constant $0<\rho\leq 1$, a real number $\delta_0>0$ and $\varepsilon>0$ such that
\begin{enumerate}
\item[$\triangleright$]{for every smooth function $\Lambda\in C^\infty(U;\mathbb{R})$ with $\Lambda\geq\Lambda_0$ and}
\item[$\triangleright$]{for every continuous family $g\colon K\to\mathscr{R}_{\mathsf{C}(\sigma)}(M)$ with
\begin{equation}\nonumber
g(\xi)_t=(1-\Lambda t^2)\cdot g_0(\xi)-2th(\xi)\quad\text{on $[0,\sqrt{\delta}]\times U$}
\end{equation}
for some $0<\delta<\min\Bigl\{\delta_0,\Vert\Lambda\Vert_{C^2(A)}^{-4}\Bigr\}$ and all $\xi\in K$,}
\end{enumerate}
it holds
\begin{equation}\nonumber
(\id_t\oplus j)^\ast R_{g(\xi)}(t,p)+B^\perp_{\rho\sqrt{\Lambda(p)}}\subset\mathsf{C}(\sigma(t,p)+\varepsilon)
\end{equation}
for all $\xi\in K,(t,p)\in[0,\sqrt{\delta}]\times A$ and all $\Lambda(p)^{-1}$-nearly $g(\xi)_t(p)$-isometric isomorphisms $j\colon\mathbb{R}^{n-1}\to T_p(\partial M)$.
\end{lemma}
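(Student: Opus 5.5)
Since $g(\xi)$ is explicitly given on $[0,\sqrt{\delta}]\times U$ as $g(\xi)_t=(1-\Lambda t^2)g_0(\xi)-2th(\xi)$, I will compute its curvature there directly from the three formulas \eqref{0nu}, \eqref{1nu}, \eqref{2nu}. I then show that, in a frame $\id_t\oplus j$, the result is $\Lambda(p)\cdot\mathfrak{b}\KN\mathfrak{b}^\perp$ plus a bounded error, and finally invoke Perelman flexibility. Note that the conclusion does not use that $g(\xi)$ satisfies $\mathsf{C}(\sigma)$ globally. It only uses the explicit form near $A$, together with the fact that $\sigma$ is bounded on the compact set $[0,\sqrt{\delta_0}]\times A$.

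\emph{Step 1: the curvature entries.} From $\dot g_t=-2\Lambda t g_0-2h$ and $\ddot g_t=-2\Lambda g_0$ (the $t$-derivatives do not touch $\Lambda(p)$), formula \eqref{2nu} gives
\[
R(X,\nu,\nu,Y)=\Lambda\, g_0(X,Y)+\mathrm{II}_t(W_tX,Y), \qquad \mathrm{II}_t=\Lambda t g_0+h .
\]
Because $t\le\sqrt\delta\le\Vert\Lambda\Vert_{C^2(A)}^{-2}$, we have $\Lambda t\le 1$, so $\mathrm{II}_t$ and $W_t$ are bounded uniformly in $\xi$, $\Lambda$ and $t$ by a constant depending only on $(g_0,h)$. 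The mixed terms \eqref{1nu} are $\nabla^{g_t}$-derivatives of $\Lambda t g_0+h$. They involve $t\,d\Lambda$ and Christoffel differences of $g_t$ versus $g_0$, which carry factors $t\Lambda$ and $t^2 d\Lambda$. The tangential entries \eqref{0nu} involve $R_{g_t}$, with up to second derivatives of $\Lambda t^2$. The condition $\delta<\Vert\Lambda\Vert_{C^2(A)}^{-4}$ is chosen exactly so that $t\Vert\Lambda\Vert_{C^1}$ and $t^2\Vert\Lambda\Vert_{C^2}$ are $\le 1$. I then aim for the following bounds, with $C_0$ depending only on $(g_0,h,A)$:
\begin{itemize}
\item the tangential block is bounded by $C_0$;
\item the mixed entries are bounded by $C_0$;
\item the $\nu\nu$-block equals $\Lambda\cdot(\text{Gram matrix of }j\text{ in }g_0)$ up to $C_0$.
\end{itemize}

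\emph{Step 2: pass to the frame.} Take $j$ to be $\Lambda^{-1}$-nearly $g_t$-isometric. Since $g_t=g_0+O(t\Lambda+t\Vert h\Vert)$, the Gram matrix of $j$ in $g_0$ differs from the identity by $O(\Lambda^{-1})+O(\Lambda t)$. The dangerous product $\Lambda\cdot\Lambda t$ must be absorbed. I would use $\Lambda t^2\le\Lambda\delta$ and shrink $\delta_0$, which turns the relevant error in $\Lambda g_0$ versus $\Lambda g_t$ into the term $\Lambda^2 t^2$. This is the main obstacle, namely making every error $O(1)$ rather than $O(\sqrt\Lambda)$. Here I would rewrite the $\nu\nu$-block via $g_t$ directly: it is $\Lambda g_0=\Lambda(g_t+\Lambda t^2 g_0+2th)/(1-\ldots)$. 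I would use the near-isometry with respect to $g_t$, which contributes only $\Lambda\cdot\Lambda^{-1}=O(1)$, and bound the remaining $\Lambda^2t^2$ by choosing $\delta_0$ and using $\Lambda t\le1$. If this fails, I would enlarge $\Lambda_0$ and accept an error $\le\tfrac{\rho}{2}\sqrt\Lambda$, which is equally sufficient. Either way I obtain
\[
(\id_t\oplus j)^\ast R_{g(\xi)}=\Lambda(p)\,\mathfrak{b}\KN\mathfrak{b}^\perp+E,\qquad \Vert E\Vert\le C_0 \text{ (or } \le\tfrac{\rho}{2}\sqrt{\Lambda}).
\]

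\emph{Step 3: conclude via P-flexibility.} Decompose $E=E^\perp+E^\top$, where $E^\top$ is the tangential block. Rescaling Definition~\ref{PGL} by $\Lambda$ with $r=\kappa/\Lambda$ gives
\[
\Lambda\,\mathfrak{b}\KN\mathfrak{b}^\perp+\kappa\,\mathfrak{b}\KN\mathfrak{b}+B^\perp_{\tau\sqrt{\kappa\Lambda}}\subset\mathsf{C}(0).
\]
Choose $\kappa=1$ and $\rho=\tau/2$, and take $\Lambda_0$ so large that $C_0\le\tfrac{\tau}{2}\sqrt{\Lambda}$. Then $E^\perp+B^\perp_{\rho\sqrt\Lambda}\subset B^\perp_{\tau\sqrt\Lambda}$, so
\[
\Lambda\,\mathfrak{b}\KN\mathfrak{b}^\perp+E^\perp+B^\perp_{\rho\sqrt\Lambda}\subset\mathsf{C}(0)-\mathfrak{b}\KN\mathfrak{b}.
\]
The remaining term $E^\top+\mathfrak{b}\KN\mathfrak{b}$ lies in a fixed compact set. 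By Remark~\ref{Basicremark} it lies in $\mathsf{C}(-s_0)$ for some $s_0$, so the sum lies in $\mathsf{C}(-s_0)$. Finally, scale using property (iii): replace $\kappa=1$ by $\kappa=\lambda$ large, so the left side lies in $\lambda\mathsf{C}(-s_0/\lambda)$. Alternatively, use $\Lambda\ge\Lambda_0$ large and write the sum as $\mu\cdot(\text{element of }\mathsf{C}(0))+\text{bounded}$, then apply (ii) with $\mathsf{C}(\mu c)$ for some $c>0$ obtained by continuity of the family. Choosing $\Lambda_0$ so that this exceeds $\sup\sigma+1$ on $[0,\sqrt{\delta_0}]\times A$ gives the claim with $\varepsilon=1$. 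Smoothness of $\Lambda_0$ is arranged by taking a smooth function above the resulting continuous bound.
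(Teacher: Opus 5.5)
There is a genuine gap in Step~3. It comes from your opening claim that the hypothesis $g(\xi)\in\mathscr{R}_{\mathsf{C}(\sigma)}(M)$ is not needed. Steps~1--2 are essentially sound. In fact $g_t-g_0=-\Lambda t^2g_0-2th$ and $t<\Vert\Lambda\Vert_{C^2(A)}^{-2}$, so the $\nu\nu$-error is $O(1)$ and the feared $\Lambda\cdot\Lambda t$ term never appears.

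The problem is what large $\Lambda$ cannot do. Both $\Lambda\,\mathfrak{b}\KN\mathfrak{b}^\perp$ and every element of $B^\perp$ have vanishing tangential block. So the tangential block of $(\id_t\oplus j)^\ast R_{g(\xi)}+B^\perp_{\rho\sqrt{\Lambda}}$ is just $E^\top$. By the Gauss equation, $E^\top$ equals $R_{g_0}-\tfrac12 h\KN h$ up to $O(\Lambda^{-1}+t)$, so it is independent of $\Lambda$.

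Without a curvature hypothesis the conclusion is false. Take $\mathsf{C}$ to be sectional curvature $>\sigma$ (which is P-flexible), $\sigma\equiv 0$, $g_0$ flat, $h=0$ and $\Lambda$ constant. At $t=0$ the pulled-back tensor is exactly $\Lambda\,\mathfrak{b}\KN\mathfrak{b}^\perp$, so all tangential sectional curvatures vanish. No element of $B^\perp$ changes that, hence $R_g+B^\perp\not\subset\mathsf{C}(\varepsilon)$ for every $\varepsilon>0$ and every $\Lambda$. Such $g$ do not satisfy $\mathsf{C}(0)$, so the lemma is vacuous for them, and that is precisely why the hypothesis must enter.

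Neither of your boosting arguments can work:
\begin{itemize}
\item By (iii), $\lambda\,\mathsf{C}(-s_0/\lambda)=\mathsf{C}(-s_0)$, and taking $\kappa=\lambda$ large costs $-\lambda\,\mathfrak{b}\KN\mathfrak{b}$ in the tangential block.
\item For a family that is P- but not GL-flexible, $\mathfrak{b}\KN\mathfrak{b}^\perp\notin\mathsf{C}(0)$. By (i), any admissible margin of $\mathfrak{b}\KN\mathfrak{b}^\perp+\tfrac{\kappa}{\Lambda}\mathfrak{b}\KN\mathfrak{b}+\dots$ must tend to $0$ as $\Lambda\to\infty$. So there is no uniform $c>0$ with $\mu c\to\infty$.
\end{itemize}
Your second route does work for GL-flexible families, but this lemma must cover all P-flexible ones. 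For the same reason, $\kappa=1$ and $\rho=\tau/2$ are not available to you. The term $\kappa\,\mathfrak{b}\KN\mathfrak{b}$ required by P-flexibility has to be paid for out of a genuine margin in the tangential directions.

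The missing idea is how to bring in the curvature hypothesis while keeping $\rho,\delta_0,\varepsilon$ independent of $\Lambda$ and of $g$. The paper does this as follows.
\begin{itemize}
\item It fixes one reference family $\varphi$ of the same form, with its own $\Lambda_\varphi$ and $\delta_\varphi$, that satisfies $\mathsf{C}(\sigma)$. If none exists, the lemma is vacuous.
\item Compactness (Corollary~\ref{uniformcor2}) gives $\varepsilon_\varphi$, $a_\varphi$ and $0<\kappa_\varphi\le 1$ with $(\id_t\oplus j)^\ast R_{\varphi(\xi)}-\kappa_\varphi\,\mathfrak{b}\KN\mathfrak{b}\in\mathsf{C}(\sigma+\varepsilon_\varphi)$.
\item It splits $R_{g(\xi)}-R_{\varphi(\xi)}$ into three pieces. $S^\varepsilon$ collects the tangential and mixed differences, which are small for large $\Lambda$ and land in $\mathsf{C}(-\varepsilon_\varphi/2)$. $S^{\lesssim 1}$ collects the bounded $\mathrm{II}_t(W_t\,\cdot,\cdot)$-differences. $S^\Lambda$ has $\nu\nu$-entries $(\Lambda-\Lambda_\varphi)g_0$.
\item P-flexibility with $r=\kappa_\varphi/(\Lambda-\Lambda_\varphi)$ absorbs $S^\Lambda$, $S^{\lesssim1}$, the term $+\kappa_\varphi\,\mathfrak{b}\KN\mathfrak{b}$ and a ball $B^\perp_{\frac{\tau}{2}\sqrt{\kappa_\varphi\Lambda}}$ into $\mathsf{C}(0)$.
\item Altogether $\mathsf{C}(\sigma+\varepsilon_\varphi)+\mathsf{C}(-\varepsilon_\varphi/2)+\mathsf{C}(0)\subset\mathsf{C}(\sigma+\varepsilon_\varphi/2)$.
\end{itemize}
Thus $\varepsilon=\varepsilon_\varphi/2$ and $\rho=\tfrac{\tau}{2}\sqrt{\kappa_\varphi}$ depend on the reference metric. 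Your entry bounds from Steps~1--2 can be reused for $S^\varepsilon$ and $S^{\lesssim 1}$, but the comparison with a fixed metric satisfying $\mathsf{C}(\sigma)$ is indispensable.
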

There are two key observations in the lemma: Firstly, we see that $\varepsilon$ is independent of $\Lambda$, provided that $\Lambda$ is large enough.
Secondly, as $\Lambda$ increases, the pulled back curvature tensors are surrounded by arbitrarily large perpendicular discs which still fit into $\mathsf{C}(\sigma+\varepsilon)$.

In the lemma, we make use of the $C^2$-norm of functions and tensor fields on $A$, which is supposed to be the maximum of $C^2$-norms induced by the metrics $g_0(\xi),\xi\in K$.

\begin{proof}
We can assume that there exists a family $\varphi\colon K\to\mathscr{R}_{\mathsf{C}(\sigma)}(M)$ such that
\begin{equation}\nonumber
\varphi(\xi)_t=(1-\Lambda_\varphi t^2)\cdot g_0(\xi)-2th(\xi)\quad\text{on $[0,\sqrt{\delta_\varphi}]\times U$}
\end{equation}
for some positive smooth function $\Lambda_\varphi\colon U\to\mathbb{R}$ and some $\delta_\varphi>0$.
Otherwise, the statement is trivial.

By Corollary~\ref{uniformcor2}, applied for the manifold $[0,\sqrt{\delta_\varphi}]\times U$ and the compact subset $[0,\sqrt{\delta_\varphi}]\times A$, there exist $\varepsilon_\varphi>0$ and $a_\varphi>0$ such that
\begin{equation}\nonumber
(\id_t\oplus j)^\ast R_{\varphi(\xi)}(t,p)\in\mathsf{C}(\sigma(t,p)+\varepsilon_\varphi)
\end{equation}
for all $\xi\in K,(t,p)\in[0,\sqrt{\delta_\varphi}]\times A$ and all $a_\varphi$-nearly $\varphi(\xi)_t(p)$-isometric isomorphisms $j\colon\mathbb{R}^{n-1}\to T_p(\partial M)$.
Even more, we choose $0<\kappa_\varphi\leq 1$ so small that $(\id_t\oplus j)^\ast R_{\varphi(\xi)}-\kappa_\varphi\cdot\mathfrak{b}\KN\mathfrak{b}\in\mathsf{C}(\sigma+\varepsilon_\varphi)$ for all such data.

Let $\Lambda\colon U\to\mathbb{R}$ be a smooth function with $\Lambda>\tfrac{3}{2}+\Lambda_\varphi$ and let $g\colon K\to\mathscr{R}_{\mathsf{C}(\sigma)}(M)$ be a continuous family with
\begin{equation}\nonumber
g(\xi)_t=(1-\Lambda t^2)\cdot g_0(\xi)-2th(\xi)\quad\text{on $[0,\sqrt{\delta}]\times U$}
\end{equation}
for some $0<\delta<\min\{\delta_\varphi,\Vert\Lambda_\varphi\Vert_{C^2(A)}^{-4},\Vert\Lambda\Vert_{C^2(A)}^{-4}\}$.
The second fundamental form of $g(\xi)$ at distance $t$ from the boundary is given by
\begin{equation}\label{sff}
\mathrm{II}_t=\Lambda t\cdot g_0(\xi)+h(\xi).
\end{equation}

We will analyse the difference between $R_{g(\xi)}$ and $R_{\varphi(\xi)}$.
For our purpose, it is convenient not just to separate the different contributions as in equations~\eqref{0nu}--\eqref{2nu}, but to repartition the difference curvature tensor $R_{g(\xi)}-R_{\varphi(\xi)}$ in a slightly different way.

To this end, we define three curvature tensors $S^\varepsilon, S^{\lesssim 1}$ and $S^\Lambda$ on $[0,\sqrt{\delta}]\times A$ as follows:

\begin{alignat}{3}\nonumber
&\mathrm{(I)}&&\text{$S^\varepsilon$ is defined via}&&\\\nonumber
&&&&&S^\varepsilon(X,Y,Z,W)=R_{g(\xi)}(X,Y,Z,W)-R_{\varphi(\xi)}(X,Y,Z,W),\\\nonumber
&&&&&S^\varepsilon(X,Y,Z,\nu)=R_{g(\xi)}(X,Y,Z,\nu)-R_{\varphi(\xi)}(X,Y,Z,\nu),\\\nonumber
&&&&&S^\varepsilon(X,\nu,\nu,Y)=0.\\\nonumber
&\mathrm{(II)}&&\text{$S^{\lesssim 1}$ is defined via}&&\\\nonumber
&&&&&S^{\lesssim 1}(X,Y,Z,W)=0,\\\nonumber
&&&&&S^{\lesssim 1}(X,Y,Z,\nu)=0,\\\nonumber
&&&&&S^{\lesssim 1}(X,\nu,\nu,Y)=\mathrm{II}_t^{g(\xi)}(W_t^{g(\xi)}(X),Y)-\mathrm{II}_t^{\varphi(\xi)}(W_t^{\varphi(\xi)}(X),Y).\\\nonumber
&\mathrm{(III)}\;\;&&\text{$S^{\Lambda}$ is defined via}&&\\\nonumber
&&&&&S^{\Lambda}(X,Y,Z,W)=0,\\\nonumber
&&&&&S^{\Lambda}(X,Y,Z,\nu)=0,\\\nonumber
&&&&&S^{\Lambda}(X,\nu,\nu,Y)=-\frac{1}{2}(\ddot{g}(\xi)_t(X,Y)-\ddot{\varphi}(\xi)_t(X,Y)).
\end{alignat}
Here $X,Y,Z,W$ are tangent vectors in $T(\partial M)$.
To discuss $S^\varepsilon$, we first compute
\begin{align}\nonumber
R_{g(\xi)}(X,Y,Z,W)=R_{g(\xi)_t}(X,Y,&\,Z,W)-\frac{1}{2}\Lambda^2t^2\cdot(g_0(\xi)\KN g_0(\xi))(X,Y,Z,W)\\\nonumber
&-\Lambda t\cdot(g_0(\xi)\KN h(\xi))(X,Y,Z,W)-\frac{1}{2}(h(\xi)\KN h(\xi))(X,Y,Z,W),
\end{align}
using~\eqref{0nu} and~\eqref{sff}.
A similar formula holds true for $R_{\varphi(\xi)}$.
This yields
\begin{align}\nonumber
|S^\varepsilon(X,Y,Z,W)|&\leq|(R_{g(\xi)_t}-R_{\varphi(\xi)_t})(X,Y,Z,W)|+\frac{1}{2}(\Lambda^2-\Lambda_\varphi^2)t^2\cdot|(g_0(\xi)\KN g_0(\xi))(X,Y,Z,W)|\\\nonumber
&\phantom{\leq}+(\Lambda-\Lambda_\varphi)t\cdot|(g_0(\xi)\KN h(\xi))(X,Y,Z,W)|\\\nonumber
&\leq|(R_{g_1(\xi)_t}-R_{\varphi_1(\xi)_t})(X,Y,Z,W)|+\frac{1}{2}\Lambda^{-2}\cdot|(g_0(\xi)\KN g_0(\xi))(X,Y,Z,W)|\\\label{Stangential}
&\phantom{\leq}+\Lambda^{-1}\cdot|(g_0(\xi)\KN h(\xi))(X,Y,Z,W)|.
\end{align}
Next we know from equation~\eqref{1nu} that
\begin{equation}\nonumber
R_{g(\xi)}(X,Y,Z,\nu)=\bigl(\nabla^{g(\xi)_t}_X\mathrm{II}_t\bigr)(Y,Z)-\bigl(\nabla_Y^{g(\xi)_t}\mathrm{II}_t\bigr)(X,Z).
\end{equation}
The second fundamental form can be written as
\begin{align}\nonumber
\mathrm{II}_t=\Lambda t\cdot g_0(\xi)+h(\xi)&=\Lambda t\cdot(1-\Lambda t^2)^{-1}\cdot(g(\xi)_t+2t\cdot h(\xi))+h(\xi)\\\label{sffforSnu}
&=\frac{\Lambda t}{1-\Lambda t^2}\cdot g(\xi)_t+\frac{1+\Lambda t^2}{1-\Lambda t^2}\cdot h(\xi).
\end{align}
Since $g(\xi)_t$ is parallel for $\nabla^{g(\xi)_t}$, this yields
\begin{align}\nonumber
(\nabla_X^{g(\xi)_t}\mathrm{II}_t)(Y,Z)&=\frac{\mathrm{d}_X\Lambda\cdot t}{(1-\Lambda t^2)^2}\cdot g(\xi)_t(Y,Z)+\frac{2\,\mathrm{d}_X\Lambda\cdot t^2}{(1-\Lambda t^2)^2}\cdot h(\xi)(Y,Z)+\frac{1+\Lambda t^2}{1-\Lambda t^2}\cdot\bigl(\nabla_X^{g(\xi)_t}h(\xi)\bigr)(Y,Z)\\\nonumber
&=\frac{\mathrm{d}_X\Lambda\cdot t}{1-\Lambda t^2}\cdot g_0(\xi)(Y,Z)+\frac{1+\Lambda t^2}{1-\Lambda t^2}\cdot\bigl(\nabla_X^{g(\xi)_t}h(\xi)\bigr)(Y,Z)
\end{align}
or
\begin{align}\nonumber
R_{g(\xi)}(X,Y,Z,\nu)&=\frac{1}{1-\Lambda t^2}\cdot\bigl(\mathrm{d}_X\Lambda\cdot t\cdot g_0(\xi)(Y,Z)-\mathrm{d}_Y\Lambda\cdot t\cdot g_0(\xi)(X,Z)\bigr)\\\nonumber
&\phantom{=}+\frac{1+\Lambda t^2}{1-\Lambda t^2}\cdot\bigl(\bigl(\nabla^{g(\xi)_t}_Xh(\xi)\bigr)(Y,Z)-\bigl(\nabla^{g(\xi)_t}_Yh(\xi)\bigr)(X,Z)\bigr).
\end{align}
A similar formula holds true for $R_{\varphi(\xi)}$, so that
\begin{align}\nonumber
S^\varepsilon(X,Y,Z,\nu)&=\frac{1}{1-\Lambda t^2}\cdot\bigl(\mathrm{d}_X\Lambda\cdot t\cdot g_0(\xi)(Y,Z)-\mathrm{d}_Y\Lambda\cdot t\cdot g_0(\xi)(X,Z)\bigr)\\\nonumber
&\phantom{=}-\frac{1}{1-\Lambda_\varphi t^2}\cdot\bigl(\mathrm{d}_X\Lambda_\varphi\cdot t\cdot g_0(\xi)(Y,Z)-\mathrm{d}_Y\Lambda_\varphi\cdot t\cdot g_0(\xi)(X,Z)\bigr)\\\nonumber
&\phantom{=}+\frac{1+\Lambda t^2}{1-\Lambda t^2}\cdot\bigl(\bigl(\nabla^{g(\xi)_t}_Xh(\xi)\bigr)(Y,Z)-\bigl(\nabla^{g(\xi)_t}_Yh(\xi)\bigr)(X,Z)\bigr)\\\nonumber
&\phantom{=}-\frac{1+\Lambda_\varphi t^2}{1-\Lambda_\varphi t^2}\cdot\bigl(\bigl(\nabla^{\varphi(\xi)_t}_Xh(\xi)\bigr)(Y,Z)-\bigl(\nabla^{\varphi(\xi)_t}_Yh(\xi)\bigr)(X,Z)\bigr).
\end{align}

According to Remark~\ref{Basicremark}, there exists $r>0$ such that $B_r(\mathscr{C}_B(\mathbb{R}^n))\subset\mathsf{C}(-\varepsilon_\varphi/2)$.
By choosing $\Lambda$ large enough, we want to ensure that $(\id_t\oplus j)^\ast S^\varepsilon\in B_r(\mathscr{C}_B(\mathbb{R}^n))$ for all $\xi\in K,(t,p)\in[0,\sqrt{\delta}]\times A$ and all $a_1$-nearly $g(\xi)_t(p)$-isometric isomorphisms $j\colon\mathbb{R}^{n-1}\to T_p(\partial M)$, where $a_1$ is to be determined.

For this purpose, we fix a number $d>1$.
By compactness, it holds
\begin{align}\nonumber
\frac{1}{2}\Lambda^{-2}\cdot|(g_0(\xi)\KN g_0(\xi))(X,Y,Z,W)|&<\frac{1}{3}r/(n-1)^2\quad\text{and}\\\nonumber
\frac{1}{2}\Lambda^{-1}\cdot|(g_0(\xi)\KN h(\xi))(X,Y,Z,W)|&<\frac{1}{3}r/(n-1)^2
\end{align}
for all $\xi\in K,(t,p)\in[0,\sqrt{\delta}]\times A$ and $X,Y,Z,W\in T_p(\partial M)$ with $g_0(\xi)(p)$-norm $\leq 2d$, if $\Lambda$ is chosen sufficiently large.
Furthermore, we can arrange that the first summand in~\eqref{Stangential} is smaller than $\tfrac{1}{3}r/(n-1)^2$ for all these data, too, since
\begin{equation}\nonumber
\Vert g(\xi)_t-\varphi(\xi)_t\Vert_{C^2(A)}=\Vert \Lambda-\Lambda_\varphi\Vert_{C^2(A)}\cdot t^2\cdot\Vert g_0(\xi)\Vert_{C^2(A)}\leq2\Vert\Lambda\Vert_{C^0(A)}^{-3}\cdot\Vert g_0(\xi)\Vert_{C^2(A)}
\end{equation}
becomes arbitrarily small for sufficiently large $\Lambda$.
Note that choosing $\Lambda$ sufficiently large should imply that $\Lambda$ can be chosen independently of $\xi,t,p$ and of specific tangential vectors in $T(\partial M)$ whose $g_0(\xi)$-norm is $\leq 2d$ for some $\xi\in K$.
This convention is used throughout the remainder of the proof.

For $S^\varepsilon(X,Y,Z,\nu)$ we observe
\begin{align}\nonumber
|S^\varepsilon(X,Y,Z,\nu)|\lesssim\Lambda^{-1}+\left|\bigl(\nabla^{g(\xi)_t}_Xh(\xi)-\nabla^{\varphi(\xi)_t}_Xh(\xi)\bigr)(Y,Z)\right|+\left|\bigl(\nabla^{g(\xi)_t}_Yh(\xi)-\nabla^{\varphi(\xi)_t}_Yh(\xi)\bigr)(X,Z)\right|.
\end{align}
The notation $\lesssim$ means that the left hand side is bounded by the right hand side multiplied with a positive constant that only depends on $\varphi,g_0,h$ and $d$.
In particular, it is independent of $\xi,t,p$ and $\Lambda$ and of specific tangential vectors in $T(\partial M)$ whose $g_0(\xi)$-norm is $\leq 2d$ for some $\xi\in K$.

The same argument as for $S^\varepsilon(X,Y,Z,W)$ shows that $|S^\varepsilon(X,Y,Z,\nu)|<r/(n-1)^2$ for sufficiently large $\Lambda$.
Overall, for sufficiently large $\Lambda$, it holds
\begin{equation}\nonumber
|S^\varepsilon(X,Y,Z,W)|<r/(n-1)^2\quad\text{and}\quad|S^\varepsilon(X,Y,Z,\nu)|<r/(n-1)^2
\end{equation}
for all $\xi\in K,(t,p)\in[0,\sqrt{\delta}]\times A$ and $X,Y,Z,W\in T_p(\partial M)$ with $g_0(\xi)(p)$-norm $\leq 2d$.

Now we put $a_1:=d^2-1$ and let $j\colon\mathbb{R}^{n-1}\to T_p(\partial M)$ be an $a_1$-nearly $g(\xi)_t(p)$-isometric isomorphism for some $\xi\in K$ and $(t,p)\in[0,\sqrt{\delta}]\times A$.
By definition, $g(\xi)_t(p)(j(e_i),j(e_i))\leq d^2$ for all $2\leq i\leq n$.
For $X\in T_p(\partial M)$ with $g_t(\xi)(p)$-norm $\leq d$, we obtain
\begin{align}\nonumber
d^2\geq g_1(\xi)_t(X,X)&=(1-\Lambda t^2)\cdot g_0(\xi)(X,X)-2t\cdot h(\xi)(X,X)\\\label{g_1g_0}
&=(1-\Lambda t^2)\cdot g_0(\xi)(X,X)-2t\cdot h(\xi)\left(\frac{X}{|X|_{g_0(\xi)}},\frac{X}{|X|_{g_0(\xi)}}\right)\cdot g_0(\xi)(X,X)
\end{align}
or
\begin{equation}\nonumber
g_0(\xi)(X,X)\leq d^2\cdot\left(1-\Lambda t^2-2t\cdot h(\xi)\left(\frac{X}{|X|_{g_0(\xi)}},\frac{X}{|X|_{g_0(\xi)}}\right)\right)^{-1}
\end{equation}
for sufficiently large $\Lambda$.
Hence, by choosing $\Lambda$ large enough, we can assume that $g_0(\xi)(X,X)\leq 4d^2$.
In particular, $g_0(\xi)(p)(j(e_i),j(e_i))\leq 4d^2$ for all $2\leq i\leq n$.
This leads us to conclude that
\begin{align}\nonumber
\Vert(\id_t\oplus j)^\ast S^\varepsilon\Vert^2&\leq\sum\limits_{i_1,i_2,i_3,i_4=1}^n|(\id_t\oplus j)^\ast S^\varepsilon(e_{i_1},e_{i_2},e_{i_3},e_{i_4})|^2\\\nonumber
&<\sum\limits_{i_1,i_2,i_3,i_4=2}^nr^2/(n-1)^4=r^2.
\end{align}
Thus, $(\id_t\oplus j)^\ast S^\varepsilon\in\mathsf{C}(-\varepsilon_\varphi/2)$.

In the next step, we consider $S^{\lesssim 1}$:
Let $\xi\in K,(t,p)\in[0,\sqrt{\delta}]\times A$ and let $(v_2,\dots,v_n)$ be a $g(\xi)_t(p)$-orthonormal basis of $T_p(\partial M)$.
Then we have
\begin{align}\nonumber
W_t^{g(\xi)}(X)=\sum_{i=2}^{n}g(\xi)_t(W_t^{g(\xi)}(X),v_i)\cdot v_i&=\sum_{i=2}^n\mathrm{II}_t^{g(\xi)}(X,v_i)\cdot v_i\\\label{Weingartennorm1}
&=\sum_{i=2}^n(\Lambda t\cdot g_0(\xi)(X,v_i)+h(\xi)(X,v_i))\cdot v_i.
\end{align}
This gives
\begin{align}\nonumber
|\mathrm{II}_t^{g(\xi)}(W_t^{g(\xi)}(X),Y)|^{1/2}&=|g(\xi)_t(W_t^{g(\xi)}(X),W_t^{g(\xi)}(Y))|^{1/2}\leq|W_t^{g(\xi)}(X)|_{g(\xi)_t}\cdot|W_t^{g(\xi)}(Y)|_{g(\xi)_t}\\\nonumber
&\leq\sum_{i,l=2}^n|\Lambda t\cdot g_0(\xi)(X,v_i)+h(\xi)(X,v_i)|\cdot|\Lambda t\cdot g_0(\xi)(Y,v_l)+h(\xi)(Y,v_l)|\\\label{Weingartennorm2}
&\lesssim\sum_{i,l=2}^n(\Lambda^{-1}+1)\cdot(\Lambda^{-1}+1)\lesssim 1
\end{align}
for $X,Y\in T_p(\partial M)$ with $g_0(\xi)(p)$-norm $\leq 2d$.
A similar estimate is valid for $\varphi$.
Therefore, it holds
\begin{equation}\nonumber
\Vert(\id_t\oplus j)^\ast S^{\lesssim 1}\Vert\lesssim 1
\end{equation}
for all $a_1$-nearly $g(\xi)_t(p)$-isometric isomorphisms $j\colon\mathbb{R}^{n-1}\to T_p(\partial M)$.

Lastly, we note that the second derivative in the definition of $S^\Lambda$ is given by
\begin{equation}\nonumber
-\frac{1}{2}(\ddot{g}(\xi)_t(X,Y)-\ddot{\varphi}(\xi)_t(X,Y))=(\Lambda-\Lambda_\varphi)\cdot g_0(\xi)(X,Y).
\end{equation}
This is enough to draw the conclusion of the lemma:
Let $p\in A$.
By Perelman flexibility,
\begin{equation}\label{PFE}
(\Lambda-\Lambda_\varphi)(p)\cdot\mathfrak{b}\KN\mathfrak{b}^\perp+\kappa_\varphi\cdot\mathfrak{b}\KN\mathfrak{b}+B^\perp_{\tau\sqrt{\kappa_\varphi(\Lambda-\Lambda_\varphi)(p)}}\subset\mathsf{C}(0).
\end{equation}
We choose $\Lambda$ large enough and $0<a_2\leq a_1$ small enough so that each $a_2\sqrt{\Lambda(p)^{-1}}$-nearly $g(\xi)_t(p)$-isometric isomorphism $j\colon\mathbb{R}^{n-1}\to T_p(\partial M)$ is $\tfrac{1}{10}\tau\sqrt{\kappa_\varphi\Lambda(p)^{-1}}\cdot(n-1)^{-1}$-nearly $g_0(\xi)(p)$-isometric.
This is possible since
\begin{align}\nonumber
|g_0(\xi)(j(e_i),j(e_l))-\delta_{il}|&\leq|(1-\Lambda t^2)\cdot g_0(\xi)(j(e_i),j(e_l))-\delta_{il}|+\Lambda t^2\cdot|g_0(\xi)(j(e_i),j(e_l))|\\\nonumber
&\leq|g(\xi)_t(j(e_i),j(e_l))-\delta_{il}|+2t\cdot|h(\xi)(j(e_i),j(e_l))|+\Lambda t^2\cdot|g_0(\xi)(j(e_i),j(e_l))|\\\nonumber
&\lesssim a_2\sqrt{\Lambda^{-1}}+\Lambda^{-2}+\Lambda^{-3}\lesssim(a_2+\Lambda^{-3/2})\cdot\sqrt{\Lambda^{-1}}.
\end{align}
For such $j$, we obtain
\begin{align}\nonumber
|(\id_t\oplus j)^\ast S^\Lambda(e_i,e_1,e_1,e_l)-\delta_{il}\cdot(\Lambda-\Lambda_\varphi)(p)|&=(\Lambda-\Lambda_\varphi)(p)\cdot|g_0(\xi)(j(e_i),j(e_l))-\delta_{il}|\\\nonumber
&<\tfrac{1}{10}\tau\sqrt{\kappa_\varphi\Lambda(p)}\cdot(n-1)^{-1}
\end{align}
for all $2\leq i,l\leq n$.
Thus,
\begin{align}\nonumber
\Vert(\id_t\oplus j)^\ast S^\Lambda-(\Lambda-\Lambda_\varphi)(p)\cdot\mathfrak{b}\KN\mathfrak{b}^\perp\Vert^2&\leq\sum_{i,l=2}^{n}|(\id_t\oplus j)^\ast S^\Lambda(e_{i},e_1,e_1,e_{l})-\delta_{il}\cdot(\Lambda-\Lambda_\varphi)(p)|^2\\\nonumber
&<\sum_{i,l=2}^{n}\left(\tfrac{1}{10}\tau\sqrt{\kappa_\varphi\Lambda(p)}\right)^2\cdot(n-1)^{-2}=\left(\tfrac{1}{10}\tau\sqrt{\kappa_\varphi\Lambda(p)}\right)^2.
\end{align}
When choosing $\Lambda$ large enough, it holds $\tfrac{1}{10}\sqrt{\Lambda}\leq\tfrac{2}{10}\sqrt{\Lambda-\Lambda_\varphi}$.
This means
\begin{equation}\nonumber
(\id_t\oplus j)^\ast S^\Lambda+\kappa_\varphi\cdot\mathfrak{b}\KN\mathfrak{b}+B^\perp_{\tfrac{8}{10}\tau\sqrt{\kappa_\varphi(\Lambda-\Lambda_\varphi)(p)}}\subset\mathsf{C}(0).
\end{equation}
We also choose $\Lambda$ large enough so that $\Vert(\id_t\oplus j)^\ast S^{\lesssim 1}\Vert\leq\frac{1}{10}\tau\sqrt{\kappa_\varphi(\Lambda-\Lambda_\varphi)}$ and $\tfrac{1}{2}\sqrt{\Lambda}\leq\frac{7}{10}\sqrt{\Lambda-\Lambda_\varphi}$.
Then
\begin{equation}\nonumber
(\id_t\oplus j)^\ast (S^{\lesssim 1}+S^{\Lambda})+\kappa_\varphi\cdot\mathfrak{b}\KN\mathfrak{b}+B^\perp_{\tfrac{\tau}{2}\sqrt{\kappa_\varphi\Lambda(p)}}\subset\mathsf{C}(0).
\end{equation}
Finally, there exists $0<a_3\leq a_2$ such that for sufficiently large $\Lambda$, each $a_3$-nearly $g(\xi)_t(p)$-isometric isomorphism $j\colon\mathbb{R}^{n-1}\to T_p(\partial M)$ is $a_\varphi$-nearly $\varphi(\xi)_t(p)$-isometric.
This follows from the formula
\begin{equation}\nonumber
\varphi(\xi)_t=g(\xi)_t+(\Lambda-\Lambda_\varphi)t^2\cdot g_0(\xi).
\end{equation}
Given a $j$ which is $a_3\sqrt{\Lambda(p)^{-1}}$-nearly $g(\xi)_t(p)$-isometric, the overall conclusion is
\begin{align}\nonumber
(\id_t\oplus j)^\ast R_{g(\xi)}+B^\perp_{\tfrac{\tau}{2}\sqrt{\kappa_\varphi\Lambda(p)}}&=\bigl((\id_t\oplus j)^\ast R_{\varphi(\xi)}-\kappa_\varphi\cdot\mathfrak{b}\KN\mathfrak{b}\bigr)+(\id_t\oplus j)^\ast S^\varepsilon\\\nonumber
&+(\id_t\oplus j)^\ast(S^{\lesssim 1}+S^\Lambda)+\kappa_\varphi\cdot\mathfrak{b}\KN\mathfrak{b}+B^\perp_{\tfrac{\tau}{2}\sqrt{\kappa_\varphi\Lambda(p)}}\\\nonumber
&\phantom{=}\subset\mathsf{C}(\sigma+\varepsilon_\varphi)+\mathsf{C}(-\varepsilon_\varphi/2)+\mathsf{C}(0)\subset\mathsf{C}(\sigma+\varepsilon_\varphi/2).
\end{align}
We choose $\Lambda$ large enough so that $\sqrt{\Lambda^{-1}}\leq a_3$.

The assertion holds with $\rho:=\tfrac{\tau}{2}\sqrt{\kappa_\varphi},\delta_0:=\min\{\delta_\varphi,\Vert\Lambda_\varphi\Vert_{C^2(A)}^{-4}\}$ and $\varepsilon:=\varepsilon_\varphi/2$.
\end{proof}

\begin{lemma}\label{bumpfunction}
There exists a constant $c_0>0$ such that for each $0<\delta\leq\frac{1}{2}$ there exists a smooth function $\chi_\delta\colon[0,\infty)\to\mathbb{R}$ with
\begin{enumerate}
\item[\myicon]{$\chi_\delta(t)=t$ for $t$ near $0$, $\chi_\delta(t)=0$ for $t\geq\sqrt{\delta}$ and $0\leq\chi_\delta(t)\leq\frac{\delta}{2}$ for all $t$,}
\item[\myicon]{$0\leq\dot{\chi}_\delta(t)\leq 1$ for all $t\in[0,\delta]$ and $|\dot{\chi}_\delta(t)|\leq\sqrt{\delta}c_0$ for all $t\in[\delta,\sqrt{\delta}]$,}
\item[\myicon]{$-\frac{2}{\delta}\leq\ddot{\chi}_\delta(t)\leq 0$ for all $t\in[0,\delta]$ and $|\ddot{\chi}_\delta(t)|\leq c_0$ for all $t\in[\delta,\sqrt{\delta}]$.}
\end{enumerate}
\end{lemma}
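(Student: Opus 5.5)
We will obtain $\chi_\delta$ as the product of two fixed profiles, each rescaled in the appropriate way. One profile is rescaled by $\delta$ and produces the linear behaviour near $0$. The other is rescaled by $\sqrt{\delta}$ and cuts the function off. All constants will then come from two fixed functions, independent of $\delta$.

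\emph{Step 1 (the inner profile).} We fix a smooth function $\psi\colon[0,\infty)\to\mathbb{R}$ with the following properties:
\begin{enumerate}
\item[\myicon]{$\psi(s)=s$ near $0$;}
\item[\myicon]{$\psi'$ is non-increasing with $0\leq\psi'\leq 1$ and $\psi'(s)=0$ for $s\geq 1$;}
\item[\myicon]{$-2\leq\psi''\leq 0$;}
\item[\myicon]{$\psi(1)\leq\tfrac12$.}
\end{enumerate}
To construct it, let $\psi'$ be a smooth non-increasing function that equals $1$ on $[0,\tfrac1{10}]$, equals $0$ on $[\tfrac{9}{10},\infty)$, and has slope bounded by $2$ in absolute value. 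Such a function exists because the drop of $1$ happens over an interval of length $\tfrac45>\tfrac12$. To guarantee $\psi(1)=\int_0^1\psi'\leq\tfrac12$, we additionally let $\psi'$ pass through $\tfrac12$ symmetrically around $s=\tfrac12$ (for instance, choose $\psi'-\tfrac12$ odd about $\tfrac12$). This gives $\int_0^1\psi'=\tfrac12$.

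We set $\chi^{(1)}_\delta(t):=\delta\psi(t/\delta)$. Then on $[0,\delta]$:
\begin{enumerate}
\item[\myicon]{$\dot\chi^{(1)}_\delta=\psi'(t/\delta)\in[0,1]$;}
\item[\myicon]{$\ddot\chi^{(1)}_\delta=\delta^{-1}\psi''(t/\delta)\in[-\tfrac2\delta,0]$;}
\item[\myicon]{$0\leq\chi^{(1)}_\delta\leq\delta\psi(1)\leq\tfrac\delta2$.}
\end{enumerate}
For $t\geq\delta$, the function $\chi^{(1)}_\delta$ is the constant $\delta\psi(1)$.

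\emph{Step 2 (the cutoff).} We fix a smooth non-increasing $\phi\colon[0,\infty)\to[0,1]$ with $\phi=1$ on $[0,\tfrac1{\sqrt2}]$ and $\phi=0$ on $[1,\infty)$. We define
\begin{equation}\nonumber
\chi_\delta(t):=\chi^{(1)}_\delta(t)\cdot\phi(t/\sqrt\delta).
\end{equation}
Since $\delta\leq\tfrac12$, every $t\leq\delta$ satisfies $t/\sqrt\delta\leq\sqrt\delta\leq\tfrac1{\sqrt2}$. Hence $\chi_\delta=\chi^{(1)}_\delta$ on $[0,\delta]$, and all estimates on $[0,\delta]$ are inherited from Step 1. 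Moreover, $\chi_\delta(t)=0$ for $t\geq\sqrt\delta$, and $0\leq\chi_\delta\leq\tfrac\delta2$ everywhere because $0\leq\phi\leq1$.

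\emph{Step 3 (estimates on $[\delta,\sqrt\delta]$).} On this interval we have $\chi_\delta(t)=\delta\psi(1)\phi(t/\sqrt\delta)$. Therefore
\begin{align}\nonumber
|\dot\chi_\delta|&=\delta\psi(1)\,\delta^{-1/2}|\phi'(t/\sqrt{\delta})|\leq\sqrt\delta\,\Vert\phi'\Vert_\infty,\\\nonumber
|\ddot\chi_\delta|&=\delta\psi(1)\,\delta^{-1}|\phi''(t/\sqrt{\delta})|\leq\Vert\phi''\Vert_\infty.
\end{align}
So $c_0:=\max\{\Vert\phi'\Vert_\infty,\Vert\phi''\Vert_\infty,1\}$ works, and it is independent of $\delta$.

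The only delicate point is Step 1. There, the curvature bound $\psi''\geq-2$ must be compatible with bringing $\psi'$ from $1$ down to $0$ within $[0,1]$, while also keeping $\psi(1)\leq\tfrac12$. The symmetric choice of $\psi'$ around $s=\tfrac12$ with a transition interval of length greater than $\tfrac12$ resolves both requirements simultaneously. Everything else is routine scaling.
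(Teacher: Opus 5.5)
Your proof is correct. The rescaled profile $\delta\psi(t/\delta)$ gives every bound on $[0,\delta]$, and the symmetric choice $\int_0^1\psi'=\tfrac12$ secures $\chi_\delta\le\delta/2$. Because $\delta\le\tfrac12$ implies $\delta\le\sqrt{\delta/2}$, the cutoff $\phi(t/\sqrt\delta)$ equals $1$ on a set containing $[0,\delta]$. On $[\delta,\sqrt\delta]$ the function is $\tfrac{\delta}{2}\phi(t/\sqrt\delta)$, which yields the $\sqrt\delta\,c_0$ and $c_0$ bounds with $c_0$ depending only on $\phi$.

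The paper gives no argument of its own here; it simply cites \cite[Lem.~3.5]{BH2023}. Your two-scale construction is a complete, self-contained substitute for that reference.
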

\begin{proof}
See \cite[Lem.~3.5]{BH2023}.
\end{proof}
Given $0<\delta\leq\tfrac{1}{2}$ and a function $\chi_\delta$ as in Lemma~\ref{bumpfunction}, it holds $\chi_\delta(t)\leq t$ for all $t\in[0,\infty)$, see \cite[Rmk.~3.13]{Frerichs2025}.

We fix three covers $\left(U_\alpha^1\right)_{\alpha\in\mathbb{N}}\subset\left(U_\alpha^2\right)_{\alpha\in\mathbb{N}}\subset\left(U_\alpha^3\right)_{\alpha\in\mathbb{N}}$ of $\partial M$ as in Lemma~\ref{cover2} together with a partition of unity $\psi=\left(\psi_\alpha\right)_{\alpha\in\mathbb{N}}$ subordinate to $\left(U_\alpha^1\right)_{\alpha\in\mathbb{N}}$.
\begin{lemma}\label{closednbh}
Let $g$ be a Riemannian metric on $M$ and let $\varrho_{g}\colon V_{\eta}\to U^{g}_\eta$ be a geodesic collar neighbourhood.
For each $\alpha\in\mathbb{N}$, let $0<w_\alpha<\min\{\alpha^{-1},\inf_{p\in U_\alpha^3}\eta(p)\}$ and $\blacksquare_{w_\alpha}^{1}:=\varrho_g([0,w_\alpha]\times\overline{U_\alpha^1})$.
Then $\cup_{\alpha\in\mathbb{N}}\,\blacksquare_{w_\alpha}^{1}\subset M$ is a closed neighbourhoood of $\partial M$.
\end{lemma}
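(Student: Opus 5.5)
The plan is to show two things: that the union $B:=\bigcup_{\alpha\in\mathbb{N}}\blacksquare^1_{w_\alpha}$ is closed in $M$, and that it contains an open set containing $\partial M$.

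For closedness, each $\blacksquare^1_{w_\alpha}$ is compact. Indeed, $w_\alpha<\inf_{U^3_\alpha}\eta$ and $\overline{U^1_\alpha}\subset U^3_\alpha$ give $[0,w_\alpha]\times\overline{U^1_\alpha}\subset V_\eta$, and this set is compact since $U^1_\alpha$ is relatively compact; $\varrho_g$ is continuous. It therefore suffices to show that the family $(\blacksquare^1_{w_\alpha})_\alpha$ is locally finite in $M$, since a locally finite union of closed sets is closed.

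\emph{Near boundary points or inside the collar.} Take $q=\varrho_g(t,p)\in U^g_\eta$. Choose $\beta$ with $p\in U^1_\beta$. Then $W:=\varrho_g(V_\eta\cap([0,\infty)\times U^3_\beta))$ is an open neighbourhood of $q$. It meets $\blacksquare^1_{w_\alpha}$ only if $U^1_\alpha\cap U^3_\beta\neq\varnothing$, which forces $\alpha\in I_\beta$. This uses that $\varrho_g$ is a diffeomorphism onto $U^g_\eta$, so the second coordinate is well defined. By Lemma~\ref{cover2}, $I_\beta$ is finite.

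\emph{Points outside the collar.} Take $q\notin U^g_\eta$. This is the main obstacle, because infinitely many cubes might accumulate at $q$. Here the choice $w_\alpha<\alpha^{-1}$ is used. Suppose $q_k\in\blacksquare^1_{w_{\alpha_k}}$ with $\alpha_k\to\infty$ and $q_k\to q$. Write $q_k=\varrho_g(t_k,p_k)$ with $t_k<\alpha_k^{-1}\to0$. Then the distance from $q_k$ to $\partial M$, measured along the normal geodesic of length $t_k$, tends to $0$. Hence $q$ lies at $g$-distance $0$ from $\partial M$, so $q\in\partial M$ because $\partial M$ is closed.

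To make this precise, one can argue directly. The points $p_k=\varrho_g(0,p_k)$ satisfy $d_g(p_k,q_k)\le t_k\to0$, so $p_k\to q$ in $M$, and therefore $q\in\partial M$. Thus $q\in U^g_\eta$, a contradiction. It follows that only finitely many indices are involved near $q$, possibly after shrinking to a small neighbourhood. Local finiteness follows in both cases, so $B$ is closed.

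For the neighbourhood property, set $O:=\bigcup_\alpha\varrho_g([0,w_\alpha)\times U^1_\alpha)$. Each piece is open in $M$: it is the image of a set open in $[0,\infty)\times\partial M$ and contained in $V_\eta$, under a diffeomorphism onto the open set $U^g_\eta$. Since $(U^1_\alpha)$ covers $\partial M$, we get $\partial M\subset O\subset B$. Hence $B$ is a closed neighbourhood of $\partial M$.
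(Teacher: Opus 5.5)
Your proof is correct and complete. Each ingredient checks out:
\begin{itemize}
\item compactness of each box, from relative compactness of $U_\alpha^1$ together with $w_\alpha<\inf_{U_\alpha^3}\eta$;
\item local finiteness inside the collar, via finiteness of $I_\beta$;
\item the estimate $d_g(p_k,q_k)\le t_k<\alpha_k^{-1}$, which excludes accumulation of boxes away from $\partial M$;
\item the open set $O$, which gives the neighbourhood property.
\end{itemize}

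The paper gives no argument of its own here; it only cites \cite[Lem.~3.15]{Frerichs2025}, so there is nothing in the text to compare against.

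One simplification: closedness can be obtained without local finiteness or the finiteness of $I_\beta$. Suppose $q_k\to q$ with $q_k\in\blacksquare^1_{w_{\alpha_k}}$. Either some index occurs infinitely often, in which case $q$ lies in that compact box. Or $\alpha_k\to\infty$, in which case your distance estimate puts $q$ in $\partial M\subset\bigcup_\alpha\blacksquare^1_{w_\alpha}$.
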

\begin{proof}
See~\cite[Lem.~3.15]{Frerichs2025}.
\end{proof}

For the remainder of this section, we fix two smooth manifolds $M_1,M_2$ of dimension $n\geq 2$ with non-empty boundary $\partial M_1=\partial M_2=:\partial M$.
We also fix a continuous function $\sigma\colon M_1\sqcup M_2\to\mathbb{R}$ with $\sigma|_{\partial M_1}=\sigma|_{\partial M_2}$.

\begin{proposition}\label{1jet}
Let $K$ be a compact Hausdorff space.
Let $g_0\colon K\to C^{\infty}(\partial M;T^\ast\partial M\otimes T^\ast\partial M)$ be a continuous family of Riemannian metrics on $\partial M$ and let $h,k\colon C^{\infty}(\partial M;T^\ast\partial M\otimes T^\ast\partial M)$ be continuous families of symmetric $(0,2)$-tensor fields with
\begin{align}\nonumber
&\bigl(\iota_{g_0}^\ast(h+k)\bigr)\KN\bigl(\iota_{g_0}^\ast(h+k)\bigr)\in\overline{\mathsf{C}(0)}\quad\text{and}\\\nonumber
&\bigl(\iota_{g_0}^\ast(h+k)\bigr)\KN\mathfrak{b}^\perp\in\overline{\mathsf{C}(0)},
\end{align}
where $\iota_{g_0(\xi)(p)}\colon\mathbb{R}^{n-1}\to(T_p(\partial M),g_0(\xi)(p))$ is a linear isometry for $\xi\in K$ and $p\in\partial M$.

Then there exists a smooth function $\Lambda_0=\Lambda_0(g_0,h,k)\in C^\infty(\partial M;\mathbb{R}),\Lambda_0>0$ such that
\begin{enumerate}
\item[$\triangleright$]{for every pair of continuous families
\begin{align}\nonumber
&g_1\colon K\to\mathscr{R}_{\mathsf{C}(\sigma)}(M_1),\\\nonumber
&g_2\colon K\to\mathscr{R}_{\mathsf{C}(\sigma)}(M_2)
\end{align}
of $\Lambda$-normal metrics with $\Lambda\in C^\infty(\partial M;\mathbb{R}),\Lambda\geq\Lambda_0,g_1(\xi)_0=g_2(\xi)_0=g_0(\xi)$ and $\mathrm{II}_{g_1(\xi)}=h(\xi),\mathrm{II}_{g_2(\xi)}=k(\xi)$ for all $\xi\in K$ and}
\item[$\triangleright$]{for each neighbourhood $\mathscr{U}\subset M_1\sqcup M_2$ of $\partial M_1\sqcup \partial M_2$}
\end{enumerate}
there exists a continuous map
\begin{equation}\nonumber
f\colon K\times[0,1]\to\mathscr{R}_{\mathsf{C}(\sigma)}(M_1\sqcup M_2)
\end{equation}
such that the following holds for all $\xi\in K$ and $s\in[0,1]$:
\begin{enumerate}
\item[(a)]{$f(\xi,0)=g_1(\xi)\sqcup g_2(\xi)$;}
\item[(b)]{$f(\xi,s)$ is $\Lambda$-normal;}
\item[(c)]{$f(\xi,s)_0=g_0(\xi)\sqcup g_0(\xi)$;}
\item[(d)]{$\mathrm{II}_{f(\xi,s)}=((1-s)h(\xi)-sk(\xi))\sqcup k(\xi)$;}
\item[(e)]{$f(\xi,s)=g_1(\xi)\sqcup g_2(\xi)$ on $M_1\sqcup M_2\setminus\mathscr{U}$.}
\end{enumerate}
\end{proposition}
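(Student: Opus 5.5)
The plan is to keep $g_2(\xi)$ fixed and to deform only $g_1(\xi)$ inside a thin collar $[0,\sqrt{\delta}]\times\partial M$, following the $1$-jet bending of Bär--Hanke. By Corollary~\ref{uniformgcn} we may assume a common geodesic collar, so that on $[0,\sqrt{\delta}]\times\partial M$
\begin{equation}\nonumber
g_1(\xi)_t=(1-\Lambda t^2)g_0(\xi)-2t\,h(\xi),\qquad g_2(\xi)_t=(1-\Lambda t^2)g_0(\xi)-2t\,k(\xi).
\end{equation}
With $\chi_\delta$ from Lemma~\ref{bumpfunction}, we set
\begin{equation}\nonumber
F(\xi,s)_t:=(1-\Lambda t^2)g_0(\xi)-2t\,h(\xi)+2s\,\chi_\delta(t)\bigl(h(\xi)+k(\xi)\bigr).
\end{equation}
Since $\chi_\delta(t)=t$ near $0$, this metric is still $\Lambda$-normal, with $\mathrm{II}_{F(\xi,s)}=(1-s)h-sk$. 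Since $\chi_\delta=0$ for $t\geq\sqrt{\delta}$, it agrees with $g_1(\xi)$ outside the collar. Choosing $\delta$ small (locally over the cover $U^1_\alpha\subset U^2_\alpha\subset U^3_\alpha$ of Lemma~\ref{cover2}, glued via Lemma~\ref{closednbh} and the partition $\psi$) puts the support inside $\mathscr{U}$. Properties (a)--(e) are then immediate, and the remaining task is to show that $F(\xi,s)$ satisfies $\mathsf{C}(\sigma)$ in the collar.

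For the curvature, write $c:=s\dot\chi_\delta(t)\in[0,1]$ on $[0,\delta]$. Then
\begin{equation}\nonumber
\mathrm{II}_t=\Lambda t\,g_0+(1-c)h+c(-k),\qquad -\tfrac12\ddot F_t=\Lambda g_0-s\ddot\chi_\delta\,(h+k).
\end{equation}
I will compare $R_{F(\xi,s)}$ with the convex combination $(1-c)R_{g_1(\xi)}+c\,R_{g_2(\xi)}^{\mathrm{refl}}$, where $R^{\mathrm{refl}}_{g_2}$ is the curvature of the metric $(1-\Lambda t^2)g_0-2t(-k)$ on the $M_1$ side, i.e.\ the reflected $g_2$. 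Lemma~\ref{uniform epsilon 2} should apply to it and give a margin $\varepsilon$. By convexity of $\mathsf{C}(\sigma+\varepsilon)$ (Remark~\ref{Basicremark}) this convex combination lies in $\mathsf{C}(\sigma+\varepsilon)$, together with perpendicular balls of radius $\rho\sqrt{\Lambda}$. One must check that nearly-isometric frames for $F_t$ are nearly isometric for both comparison metrics, as in the proof of Lemma~\ref{uniform epsilon 2}. The difference tensor is then split as in that proof.

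The key step, and the main obstacle, is the tangential Gauss term. The identity
\begin{equation}\nonumber
(1-c)\,h\KN h+c\,k\KN k-\bigl((1-c)h-ck\bigr)\KN\bigl((1-c)h-ck\bigr)=c(1-c)\,(h+k)\KN(h+k)
\end{equation}
shows that $-\tfrac12\mathrm{II}\KN\mathrm{II}$ equals the convex combination of the two Gauss terms plus $\tfrac12 c(1-c)(h+k)\KN(h+k)\in\overline{\mathsf{C}(0)}$, by the first hypothesis. The $\Lambda t$-contributions are $O(\Lambda^{-1})$ since $\Lambda t\le\Lambda\sqrt{\delta}$. The intrinsic terms $R_{F_t}$ and the Codazzi terms differ from the combination by $o(1)$ as $\delta\to0$, because $|\chi_\delta|\le\delta/2$ and $|\dot\chi_\delta|\lesssim\sqrt{\delta}$ off $[0,\delta]$. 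These small errors are absorbed into $\mathsf{C}(-\varepsilon/2)$ by Remark~\ref{Basicremark}.

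In the $\nu\nu$-direction, the large term $-s\ddot\chi_\delta(h+k)$, of size up to $2/\delta$ and of the right sign ($\ddot\chi_\delta\le0$ on $[0,\delta]$), contributes $s|\ddot\chi_\delta|\,\iota^\ast(h+k)\KN\mathfrak{b}^\perp\in\overline{\mathsf{C}(0)}$ by the second hypothesis, up to frame errors. Those errors, together with the bounded terms $\mathrm{II}(W\cdot,\cdot)$ and the region $[\delta,\sqrt\delta]$ where $|\ddot\chi_\delta|\le c_0$, are perpendicular. They must fit into the ball $B^\perp_{\rho\sqrt{\Lambda}}$, which forces $\Lambda_0$ large and then $\delta\le\Lambda^{-4}$. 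The frame error of the $1/\delta$ term is the delicate point; I would control it with Lemma~\ref{QGS}, taking frames $O(\Lambda^{-1})$-nearly isometric so that the error is $O(\delta^{-1}\Lambda^{-1})$ times a cone element. If that does not close, I would apply Lemma~\ref{QGS} with respect to the metric $F_t$ itself, so that the large term is exactly a $\KN\mathfrak{b}^\perp$ element with no error. Finally, Lemma~\ref{C+C} and property (ii) combine everything into $\mathsf{C}(\sigma)$. Continuity in $(\xi,s)$ holds because all choices are uniform over compact $K$.
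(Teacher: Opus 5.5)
You have the paper's architecture: bend only the $M_1$ side by $2s\chi_{\delta}(t)(h+k)$, compare $R_F$ with the convex combination $(1-s\dot\chi_\delta)R_{g_1}+s\dot\chi_\delta R_{g_2}$ controlled by Lemma~\ref{uniform epsilon 2}, and use the identity producing $\tfrac12 c(1-c)(h+k)\KN(h+k)$ in the Gauss term. You also use $-s\ddot\chi_\delta\ge 0$ together with the second hypothesis in the $\nu\nu$-slot.

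\textbf{The gap.} The step you flag as delicate is not resolved, and both proposed resolutions fail.
\begin{enumerate}
\item With frames only $O(\Lambda^{-1})$-nearly $g_0$-isometric, the error in the $\ddot\chi_\delta$-term is $O(\delta^{-1}\Lambda^{-1})$. Since $\delta$ is chosen after $\Lambda$ (indeed $\delta<\Lambda^{-4}$), this exceeds $\Lambda^3$, far outside $B^\perp_{\rho\sqrt\Lambda}$.
\item The fallback claim, that for an $F_t$-orthonormal frame the large term is ``exactly a $\KN\mathfrak{b}^\perp$ element with no error'', is wrong as stated. The term is exactly $s|\ddot\chi_\delta|\,\iota_{F_t}^\ast(h+k)\KN\mathfrak{b}^\perp$, but the hypothesis only covers $g_0$-isometries. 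Conditions of this form are not invariant under changing the inner product: for $\scal$ the hypothesis reads $\tr_{g_0}(h+k)\ge 0$, which does not imply $\tr_{F_t}(h+k)\ge0$.
\end{enumerate}

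\textbf{The missing idea} is the paper's quantitative pairing of frame error with the support of $\ddot\chi_\delta$.
\begin{enumerate}
\item One has $F_t-g_0=-\Lambda t^2g_0-2th+2s\chi_\delta(t)(h+k)=O(t)$, using $\Lambda t\le 1$ and $\chi_\delta(t)\le t$. Hence every $F_t$-isometry $\iota_{F_t}$ is $O(t)$-nearly $g_0$-isometric.
\item Lemma~\ref{QGS} then gives a $g_0$-isometry $\iota_{g_0}$ with $|(\iota_{F_t}-\iota_{g_0})(e_i)|_{g_0}\lesssim t$.
\item The bound $|\ddot\chi_\delta|\le 2/\delta$ is only needed on $[0,\delta]$, where $t\le\delta$. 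So $s|\ddot\chi_\delta(t)|\cdot|(\iota_{F_t}-\iota_{g_0})(e_i)|\lesssim 1$. With several $\delta_\beta$, sort them so that $t\le\delta^{(m)}\le\delta^{(\beta)}$ for the active terms.
\item The difference of the two pullbacks of the $\ddot\chi_\delta$-term is therefore bounded and has only $(e_i,e_1,e_1,e_l)$-entries. It is perpendicular and is absorbed into $B^\perp_{\rho\sqrt\Lambda}$ once $\Lambda_0$ is large.
\item The $\iota_{g_0}$-pullback lies in $\overline{\mathsf{C}(0)}$ by the second hypothesis.
\end{enumerate}

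\textbf{A smaller point.} Lemma~\ref{uniform epsilon 2} is stated for families in $\mathscr{R}_{\mathsf{C}(\sigma)}(M)$. Your reflected metric $(1-\Lambda t^2)g_0+2tk$ on the $M_1$ side is not known to satisfy $\mathsf{C}(\sigma)$, so ``should apply'' needs justification. The paper instead compares with the actual $R_{g_2}$ on $M_2$. This costs an extra tangential term $2\Lambda t\,c\,g_0\KN k$, which is harmless because it only occurs where $t\le\delta$, so $\Lambda t\le\Lambda\delta$. Alternatively, you would have to show that the reflected curvature differs from $R_{g_2}$ by $O(t+\Lambda t)$ tangentially and $O(1)$ perpendicularly, and then absorb the difference as above.
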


\begin{proof}
Let $\mathscr{U}\subset M_1\sqcup M_2$ be a neighbourhood of $\partial M_1\sqcup\partial M_2$.
Let $\Lambda\colon\partial M\to\mathbb{R}$ be a positive smooth function and let $g_1\colon K\to\mathscr{R}_{\mathsf{C}(\sigma)}(M_1)$ and $g_2\colon K\to\mathscr{R}_{\mathsf{C}(\sigma)}(M_2)$ be two continuous families of $\Lambda$-normal metrics with $g_1(\xi)_0=g_2(\xi)_0=g_0(\xi)$ and $\mathrm{II}_{g_1(\xi)}=h(\xi),\mathrm{II}_{g_2(\xi)}=k(\xi)$.

Given that this proposition follows Corollary~\ref{uniformgcn} and Proposition~\ref{2jet} within the deformation process, we can assume that all metrics in $g_1$ have the same normal exponential map and that they satisfy the condition of $\Lambda$-normality on a common neighbourhood $U_0\subset M_1$ of $\partial M_1$.
The corresponding statement applies to $(M_2,g_2)$.

We choose a continuous positive function $\eta\colon\partial M\to\mathbb{R}$ such that $\varrho_1\colon V_\eta\to U^{g_1}$ is a common geodesic collar neighbourhood for $g_1$ with $U^{g_1}\subset U_0\cap\mathscr{U}$.
For each $\alpha\in\mathbb{N}$, let $0<\eta_\alpha<\inf_{p\in U_\alpha^3}\eta(p)$ be some fixed distance from the boundary.
Without loss of generality, we also obtain a geodesic collar neighbourhood $\varrho_2\colon V_\eta\to U^{g_2}$ for $g_2$ such that $U^{g_2}\subset\mathscr{U}$ and that all metrics in $g_2$ satisfy the condition of $\Lambda$-normality on $V_\eta$.

For every $\alpha\in\mathbb{N}$, we apply Lemma~\ref{uniform epsilon 2} for both manifolds $M_1$ and $M_2$, for $U=U_\alpha^3$ and $A=\overline{U_\alpha^2}$.
The lemma provides a smooth function $\Lambda_{0,\alpha}\in C^{\infty}(U_\alpha^3;\mathbb{R}),\Lambda_{0,\alpha}>0$, a constant $0<\rho_\alpha\leq 1$, a real number $0<\delta_{0,\alpha}<\eta_\alpha^2$ and $\varepsilon_\alpha>0$ with its distinguished properties (working for both $M_1$ and $M_2$).
This means
\begin{align}\nonumber
&(\id_t\oplus j)^\ast R_{g_1(\xi)}(t,p)+B^\perp_{\rho_\alpha\sqrt{\Lambda(p)}}\subset\mathsf{C}\bigl((\sigma\circ\varrho_1)(t,p)+\varepsilon_\alpha\bigr),\\\label{Lambdadisc}
&(\id_t\oplus j)^\ast R_{g_2(\xi)}(t,p)+B^\perp_{\rho_\alpha\sqrt{\Lambda(p)}}\subset\mathsf{C}\bigl((\sigma\circ\varrho_2)(t,p)+\varepsilon_\alpha\bigr)
\end{align}
for all $\alpha\in\mathbb{N}$ and $\xi\in K$, as well as for
\begin{enumerate}
\item[$\triangleright$]{all $(t,p)\in[0,\sqrt{\delta}]\times\overline{U_\alpha^2}$, where $0<\delta<\min\{\delta_{0,\alpha},\Vert\Lambda\Vert_{C^2(U_\alpha^3)}^{-4}\}$, and}
\item[$\triangleright$]{all linear isomorphisms $j\colon\mathbb{R}^{n-1}\to T_p(\partial M)$ which are both $\Lambda(p)^{-1}$-nearly $g_1(\xi)_t(p)$-isometric and $\Lambda(p)^{-1}$-nearly $g_2(\xi)_t(p)$-isometric,}
\end{enumerate}
provided that $\Lambda|_{U_\alpha^3}\geq\Lambda_{0,\alpha}$.

In the proof, we write $\sigma(t,p):=(\sigma\circ\varrho_1)(t,p)$ for $(t,p)\in V_\eta$.
Since $\sigma\circ\varrho_1=\sigma\circ\varrho_2$ on $\partial M$,
one can assume that $(\id_t\oplus j)^\ast R_{g_2(\xi)}+B^\perp_{\rho_\alpha\sqrt{\Lambda}}\subset\mathsf{C}(\sigma(t,p)+\varepsilon_\alpha)$ for all data as above, choosing $\delta_{0,\alpha}$ and $\varepsilon_\alpha$ eventually smaller.
Note that the potential decrease in $\varepsilon_\alpha$ does not depend on $\delta_{0,\alpha}$, but only on $\sigma$.

The deformation will be performed over each piece $U_\alpha^2\subset\partial M$, gluing everything together with the partition of unity $\psi$.
To this end, let $\delta=\left(\delta_\alpha\right)_{\alpha\in\mathbb{N}}$ be a family of real numbers satisfying
\begin{equation}\nonumber
0<\delta_\alpha<\min\left\{\frac{1}{2},\alpha^{-2},\min_{\beta\in I_\alpha}\delta_{0,\beta},\min_{\beta\in I_\alpha}\bigl(\Lambda_\beta^{-4}\bigr)\right\}\quad\text{with}\quad \Lambda_\alpha:=\Vert\Lambda\Vert_{C^2(U_\alpha^3)}
\end{equation}
for all $\alpha\in\mathbb{N}$.
For each $\alpha\in\mathbb{N}$, let $\chi_{\delta_\alpha}$ be a function as in Lemma~\ref{bumpfunction}.
We define
\begin{align}\nonumber
f^{\delta_\alpha}\colon&K\times[0,1]\to C^{\infty}(\square^2_{\eta_\alpha};T^\ast M_1\otimes T^\ast M_1)\\\nonumber
&f^{\delta_\alpha}(\xi,s)=\mathrm{d}t^2+(1-\Lambda t^2)\cdot g_0(\xi)-2t\cdot h(\xi)+2s\chi_{\delta_\alpha}(t)\cdot(h(\xi)+k(\xi))
\end{align}
where $\square_{\eta_\alpha}^2:=\varrho_1([0,\eta_\alpha)\times U_\alpha^2)$.
Given $(t,p)\in[0,\eta_\alpha)\times U_\alpha^2$, it holds
\begin{align}\nonumber
\Vert f^{\delta_\alpha}(\xi,s)(t,p)-g_1(\xi)(t,p)\Vert_{g_1(\xi)(t,p)}&=2s\chi_{\delta_\alpha}(t)\cdot\Vert h(\xi)(p)+k(\xi)(p)\Vert_{g_1(\xi)_t(p)}\\\nonumber
&\leq\delta_\alpha\cdot\sup_{\substack{\xi\in K,t\in[0,\eta_\alpha]\\p\in U_\alpha^2}}\Vert h(\xi)(p)+k(\xi)(p)\Vert_{g_1(\xi)_t(p)}.
\end{align}
Since positive definiteness is an open condition, $f^{\delta_\alpha}$ becomes a family of Riemannian metrics on $\square_{\eta_{\alpha}}^2$ for sufficiently small $\delta_\alpha$.
Then put
\begin{align}\nonumber
f^\delta\colon&K\times[0,1]\to C^{\infty}(M;T^\ast M\otimes T^\ast M),\quad f^\delta(\xi,s)=g_2(\xi)\;\;\text{on $M_2$,}\\\nonumber
&f^\delta(\xi,s)=\left\{\begin{array}{l}\mathrm{d}t^2+(1-\Lambda t^2)\cdot g_0(\xi)-2t\cdot h(\xi)+\sum\limits_{\alpha\in\mathbb{N}}\psi_\alpha\cdot 2s\chi_{\delta_\alpha}(t)\cdot(h(\xi)+k(\xi))\\
\phantom{g_1(\xi)}\hspace{0.6cm}\text{on $\bigcup\limits_{\alpha\in\mathbb{N}}\square_{\eta_\alpha}^2$,}\\
         g_1(\xi)\hspace{0.6cm}\text{on $M_1-\bigcup\limits_{i\in\mathbb{N}}\square_{\eta_\alpha}^2$}\end{array}\right.
\end{align}
for $\delta=\left(\delta_\alpha\right)_{\alpha\in\mathbb{N}}$ as small as above.
This expression is positive definite at each point $(t,p)\in\bigcup_{\alpha\in\mathbb{N}}\square_{\eta_\alpha}^2$, as can be seen from the formula
\begin{equation}\label{fullformula}
f^\delta(\xi,s)=\sum\limits_{\beta\in\mathbb{N}}\psi_\beta\cdot(\mathrm{d}t^2+(1-\Lambda t^2)\cdot g_0(\xi)-2t\cdot h(\xi)+2s\chi_{\delta_\beta}(t)\cdot(h(\xi)+k(\xi))).
\end{equation}
If $p\in U_\beta^2$ for some $\beta\in\mathbb{N}$ and $t<\eta_\beta$, then the respective summand is positive definite by the above.
If $t\geq\eta_\beta$, then $\chi_{\delta_\beta}(t)=0$ and the other three summands are equal to $g_1(\xi)(t,p)$.
There are no more summands to take into account because $\psi_\beta(p)=0$ for all $\beta\in\mathbb{N}$ with $p\notin U_\beta^2$.

We also verify the smoothness of $f^\delta(\xi,s)$ and the continuity of $f^\delta$ with respect to $(\xi,s)$.
This is obvious once we know
\begin{equation}\nonumber
f^\delta(\xi,s)=g_1(\xi)\quad\text{on $M_1-\bigcup\limits_{\alpha\in\mathbb{N}}\blacksquare_{\sqrt{\delta_\alpha}}^1$,}
\end{equation}
the latter being open in $M_1$ by Lemma~\ref{closednbh}.
To justify, let $(t,p)\in\bigcup_{\alpha\in\mathbb{N}}\square_{\eta_\alpha}^2-\bigcup_{\alpha\in\mathbb{N}}\blacksquare_{\sqrt{\delta_\alpha}}^1$.
That is $(t,p)\in\square_{\eta_\alpha}^2$ for some $\alpha\in\mathbb{N}$.
Every summand in~\eqref{fullformula} is either $\psi_\beta(p)\cdot g_1(\xi)(t,p)$ (if $p\in U_\beta^1$) or zero (if $p\notin U_\beta^1$).
Therefore, $f^\delta(\xi,s)(t,p)=\sum_{\beta\in\mathbb{N}}\psi_\beta(p)\cdot g_1(\xi)(t,p)=g_1(\xi)(t,p)$.

In the following, we need to ensure that $f^\delta(\xi,s)$ satisfies $\mathsf{C}(\sigma)$ for all $\xi\in K$ and $s\in[0,1]$.
Let $\alpha\in\mathbb{N}$ and $d>1$.
We will use the notation $\lesssim$ to mean that the left hand side is bounded by the right hand side multiplied with a positive constant that only depends on $g_0,h$ and $k$ on $U_\alpha^3$ and on $d$.
In particular, it is independent of $\delta,\xi,s,t,p$ and $\Lambda$ and of specific tangential vectors in $T(\partial M)$ whose $g_0(\xi)$-norm is $\leq 2d$ for some $\xi\in K$.

Furthermore, a statement is said to be satisfied for sufficiently small $\delta_{I_\alpha}=\left(\delta_\beta\right)_{\beta\in I_\alpha}$ if it is satisfied for all tuples $\left(\delta_\beta\right)_{\beta\in I_\alpha}$ whose maximum is smaller than a certain constant.
This constant depends only on $g_0,h$ and $k$ on $U_\alpha^3$, on $\Lambda$ on $\bigcup_{\beta\in I_\alpha}U_\beta^3$ and on $d$.

On $U_\alpha^3$ we use the abbreviation
\begin{align}\nonumber
&\gamma:=f^\delta(\xi,s)|_{M_1}\quad\text{and}\\\nonumber
&\gamma_t:=f^\delta(\xi,s)_t|_{\partial M_1}=(1-\Lambda t^2)\cdot g_0(\xi)-2t\cdot h(\xi)+\sum\limits_{\beta\in I_\alpha}\psi_\beta\cdot 2s\chi_{\delta_\beta}(t)\cdot(h(\xi)+k(\xi))
\end{align}
for $t\in[0,\eta_\alpha)$.
It holds
\begin{align}\label{sff2}
&\mathrm{II}_t^\gamma=-\frac{1}{2}\dot{\gamma}_t=h(\xi)+\Lambda t\cdot g_0(\xi)-\sum_{\beta\in I_\alpha}\psi_\beta\cdot s\dot{\chi}_\delta(t)\cdot(h(\xi)+k(\xi)),\\\nonumber
&\ddot{\gamma}_t=-2\Lambda\cdot g_0(\xi)+\sum_{\beta\in I_\alpha}\psi_\beta\cdot 2s\ddot{\chi}_\delta(t)\cdot(h(\xi)+k(\xi)).
\end{align}
We investigate $\gamma$ on the set $[0,\eta_\alpha)\times U_\alpha^2$.
One can even restrict to $[0,\max_{\beta\in I_\alpha}\sqrt{\delta_\beta})\times U_\alpha^2$, since $\gamma$ is equal to $g_1(\xi)$ on $[\max_{\beta\in I_\alpha}\sqrt{\delta_\beta},\eta_\alpha)\times U_\alpha$.

As a preparation step, we compare the $\gamma_0$- and the $\gamma_t$-norm for tangential vectors.
Let $\xi\in K,s\in[0,1]$, let $(t,p)\in[0,\max_{\beta\in I_\alpha}\sqrt{\delta_\beta})\times U_\alpha^2$ and let $X\in T_p(\partial M)$ with $|X|_{\gamma_t}\leq d$.
Then, it holds
\begin{align}\nonumber
d^2\geq\gamma_t(X,X)&=(1-\Lambda t^2)\cdot\gamma_0(X,X)-2t\cdot h(\xi)(X,X)+\sum\limits_{\beta\in I_\alpha}\psi_\beta\cdot 2s\chi_\delta(t)\cdot(h(\xi)+k(\xi))(X,X)\\\nonumber
&=(1-\Lambda t^2)\cdot\gamma_0(X,X)-2t\cdot h(\xi)\left(\frac{X}{|X|_{\gamma_0}},\frac{X}{|X|_{\gamma_0}}\right)\cdot\gamma_0(X,X)\\\nonumber
&\phantom{=(}+\sum\limits_{\beta\in I_\alpha}\psi_\beta\cdot 2s\chi_\delta(t)\cdot(h(\xi)+k(\xi))\left(\frac{X}{|X|_{\gamma_0}},\frac{X}{|X|_{\gamma_0}}\right)\cdot\gamma_0(X,X),
\end{align}
respectively
\begin{align}\nonumber
\gamma_0(X,X)\leq d^2\cdot&\left(1-\Lambda t^2-2t\cdot h(\xi)\left(\frac{X}{|X|_{\gamma_0}},\frac{X}{|X|_{\gamma_0}}\right)\right.\\\nonumber
&\phantom{1-}+\sum\limits_{\beta\in I_\alpha}\left.\psi_\beta\cdot 2s\chi_\delta(t)\cdot(h(\xi)+k(\xi))\left(\frac{X}{|X|_{\gamma_0}},\frac{X}{|X|_{\gamma_0}}\right)\right)^{-1}
\end{align}
for sufficiently small $\delta_{I_\alpha}$.
Since the last three summands in the parentheses are $\gtrsim-\max_{\beta\in I_\alpha}\sqrt{\delta_\beta}$, we obtain $|X|_{\gamma_0}\leq 2d$ for sufficiently small $\delta_{I_\alpha}$.

Now set $N_\alpha:=|I_\alpha|$.
For the curvature computations, we relabel and sort the numbers $\delta_\beta,\beta\in I_\alpha$ by size, that is
\begin{equation}\nonumber
\delta^{(1)}\leq\delta^{(2)}\leq\dots\leq\delta^{(N_\alpha)}.
\end{equation}
Furthermore, $\delta^{(0)}:=0$ and $\delta^{(N_\alpha+1)}:=\max_{\beta\in I_\alpha}\sqrt{\delta_\beta}$.
Let $1\leq m\leq N_\alpha+1$.
If $\delta^{(m)}=\delta_\beta$ for some $\beta\in I_\alpha$, it is convenient to write $\psi^{(m)}=\psi_\beta$.

On the subset $[\delta^{(m-1)},\delta^{(m)}]\times U_\alpha^2$, we analyse the difference curvature tensor
\begin{align}\nonumber
T:=R_\gamma-\left(\sum\limits_{\beta=1}^{m-1}\psi^{(\beta)}\cdot R_{g_1(\xi)}+\sum\limits_{\beta=m}^{N_\alpha}\psi^{(\beta)}\cdot\bigl((1-s\dot{\chi}_{\delta^{(\beta)}})\cdot R_{g_1(\xi)}+s\dot{\chi}_{\delta^{(\beta)}}\cdot R_{g_2(\xi)}\bigr)\right).
\end{align}
Note that $0\leq\dot{\chi}_{\delta^{(\beta)}}\leq 1$ on $[0,\delta^{(\beta)}]$.
The discussion is similar to that in Lemma~\ref{uniform epsilon 2}, but the entry $R_\gamma(X,\nu,\nu,Y)$ for $(t,p)\in[\delta^{(m-1)},\delta^{(m)}]\times U_\alpha^2$ and $X,Y\in T_p(\partial M)$ is now given as
\begin{align}\nonumber
R_\gamma(X,\nu,\nu,Y)&=-\frac{1}{2}\ddot{\gamma}_t(X,Y)+\mathrm{II}_t^\gamma(W_t^\gamma(X),Y)\\\nonumber
&=\Lambda\cdot g_0(\xi)(X,Y)-\sum\limits_{\beta\in I_\alpha}\psi_\beta\cdot s\ddot{\chi}_{\delta_\beta}(t)\cdot(h(\xi)+k(\xi))(X,Y)+\mathrm{II}_t^\gamma(W_t^\gamma(X),Y)
\end{align}
with an additional term involving $\ddot{\chi}_\delta$.
We partition $T$ into three curvature tensors $T^{\intercal},T^{\lesssim 1}$ and $T^{\ddot{\chi}_\delta}$ on $[\delta^{(m-1)},\delta^{(m)}]\times U_\alpha^2$:
\begin{alignat}{2}\nonumber
&\mathrm{(I)}&&\text{$T^\intercal$ is defined via}\\\nonumber
&&&\hspace{1.5cm}T^\intercal(X,Y,Z,W)=T(X,Y,Z,W)\\\nonumber
&&&\hspace{1.5cm}T^\intercal(X,Y,Z,\nu)=0,\\\nonumber
&&&\hspace{1.5cm}T^\intercal(X,\nu,\nu,Y)=0.\\\nonumber
&\mathrm{(II)}&&\text{$T^{\lesssim 1}$ is defined via}\\\nonumber
&&&\hspace{1.5cm}T^{\lesssim 1}(X,Y,Z,W)=0,\\\nonumber
&&&\hspace{1.5cm}T^{\lesssim 1}(X,Y,Z,\nu)=T(X,Y,Z,\nu)\\\nonumber
&&&\hspace{1.5cm}T^{\lesssim 1}(X,\nu,\nu,Y)=T(X,\nu,\nu,Y)+\sum\limits_{\beta=m}^{N_\alpha}\psi^{(\beta)}\cdot s\ddot{\chi}_{\delta^{(\beta)}}(t)\cdot(h(\xi)+k(\xi))(X,Y).\\\nonumber
&\mathrm{(III)}\;\;&&\text{$T^{\ddot{\chi}_\delta}$ is defined via}\\\nonumber
&&&\hspace{1.5cm}T^{\ddot{\chi}_\delta}(X,Y,Z,W)=0,\\\nonumber
&&&\hspace{1.5cm}T^{\ddot{\chi}_\delta}(X,Y,Z,\nu)=0,\\\nonumber
&&&\hspace{1.5cm}T^{\ddot{\chi}_\delta}(X,\nu,\nu,Y)=-\sum\limits_{\beta=m}^{N_\alpha}\psi^{(\beta)}\cdot s\ddot{\chi}_{\delta^{(\beta)}}(t)\cdot(h(\xi)+k(\xi))(X,Y).
\end{alignat}
First, consider $T^\intercal$:
Using equations~\eqref{0nu},~\eqref{sff} and~\eqref{sff2}, a straightforward computation shows that
\begin{align}\nonumber
T^\intercal&=\sum_{\beta=1}^{m-1}\psi^{(\beta)}\cdot(R_{\gamma_t}-R_{g_1(\xi)_t})+\sum_{\beta=m}^{N_\alpha}\psi^{(\beta)}\cdot(1-s\dot{\chi}_{\delta^{(\beta)}}(t))\cdot(R_{\gamma_t}-R_{g_1(\xi)_t})\\\nonumber
&\phantom{=}+\sum_{\beta=m}^{N_\alpha}\psi^{(\beta)}\cdot s\dot{\chi}_{\delta^{(\beta)}}(t)\cdot(R_{\gamma_t}-R_{g_2(\xi)_t})\\\nonumber
&\phantom{=}+\Lambda t\cdot\sum_{\beta=1}^{m-1}\psi^{(\beta)}\cdot s\dot{\chi}_{\delta^{(\beta)}}(t)\cdot g_0(\xi)\KN(h(\xi)+k(\xi))+2\Lambda t\cdot\sum_{\beta=m}^{N_\alpha}\psi^{(\beta)}\cdot s\dot{\chi}_{\delta^{(\beta)}}(t)\cdot g_0(\xi)\KN k(\xi)\\\nonumber
&\phantom{=}+\frac{1}{2}\sum_{\beta=1}^{m-1}\psi^{(\beta)}\cdot s\dot{\chi}_{\delta^{(\beta)}}(t)\cdot\bigl(\mathrm{II}^{g_1(\xi)}_t-\mathrm{II}^{g_2(\xi)}_t\bigr)\KN(h(\xi)+k(\xi))\\\label{Ttangential}
&\phantom{=}+\frac{1}{2}\sum_{\beta,\mu=1}^{N_\alpha}\psi^{(\beta)}\psi^{(\mu)}\cdot s\dot{\chi}_{\delta^{(\beta)}}(t)\cdot(1-s\dot{\chi}_{\delta^{(\mu)}}(t))\cdot(h(\xi)+k(\xi))\KN(h(\xi)+k(\xi))
\end{align}
where all curvature tensors are evaluated on vectors $X,Y,Z,W\in T(\partial M)$.
We want to apply Corollary~\ref{uniforma} to the compact space
\begin{align}\nonumber
\mathscr{K}:=\{(\id_t\oplus\iota_{g_0(\xi)})^\ast((h(\xi)&+k(\xi))\KN(h(\xi)+k(\xi))):\xi\in K,(t,p)\in[\delta^{(m-1)},\delta^{(m)}]\times\overline{U_\alpha^2},\\\nonumber
&\iota_{g_0(\xi)(p)}\colon\mathbb{R}^{n-1}\to(T_p(\partial M),g_0(\xi)(p))\;\text{a linear isometry}\}\subset\overline{\mathsf{C}(0)}\subset\mathsf{C}(-\varepsilon_\alpha/2)
\end{align}
and a small number $r>0$ with $B_r(R)\subset\mathsf{C}(-\varepsilon_\alpha/2)$ for all $R\in\mathscr{K}$.
By the lemma and by the $\On(n)$-invariance of $\mathsf{C}(-\varepsilon_\alpha/2)$, there exists $a_\alpha>0$ such that
\begin{equation}\nonumber
(\id_t\oplus j)^\ast((h(\xi)+k(\xi))\KN(h(\xi)+k(\xi)))\in\mathsf{C}(-\varepsilon_\alpha/2)
\end{equation}
for all $\xi\in K,(t,p)\in[\delta^{(m-1)},\delta^{(m)}]\times U_\alpha^2$ and all $a_\alpha$-nearly $\gamma_0(p)$-isometric isomorphisms $j\colon\mathbb{R}^{n-1}\to T_p(\partial M)$.

We choose $\delta_{I_\alpha}$ so small that each linear isometry $\iota_{\gamma_t(p)}\colon\mathbb{R}^{n-1}\to(T_p(\partial M),\gamma_t(p))$ is $a_\alpha$-nearly $\gamma_0(p)$-isometric.
This is possible since
\begin{equation}\nonumber
\gamma_0(X,Y)=(1-\Lambda t^2)^{-1}\cdot\bigl(\gamma_t(X,Y)+2t\cdot h(\xi)(X,Y)-\sum_{\beta\in I_\alpha}\psi_\beta\cdot 2s\chi_{\delta_\beta}(t)\cdot(h(\xi)+k(\xi))(X,Y)\bigr).
\end{equation}
Next, we show that for sufficiently small $\delta_{I_\alpha}$, it holds
\begin{equation}\nonumber
s\dot{\chi}_{\delta^{(\beta)}}(t)\cdot(1-s\dot{\chi}_{\delta^{(\mu)}}(t))\cdot(\id_t\oplus\iota_{\gamma_t(p)})^\ast\bigl((h(\xi)+k(\xi))\KN(h(\xi)+k(\xi))\bigr)\in\mathsf{C}(-\varepsilon_\alpha)
\end{equation}
for all $1\leq\beta,\mu\leq N_{\alpha}$.
Let $r>0$ such that $B_r(\mathscr{C}_B(\mathbb{R}^n))\subset\mathsf{C}(-\varepsilon_\alpha/2)$.
There are several cases to consider:
\begin{itemize}
\item[$\triangleright$]{$1\leq\beta\leq m-1$:
It holds $|s\dot{\chi}_{\delta^{(\beta)}}(t)\cdot(1-s\dot{\chi}_{\delta^{(\mu)}}(t))|\lesssim\max_{\beta\in I_\alpha}\sqrt{\delta_\beta}$, so the norm of the above curvature tensors is $<r$ for sufficiently small $\left(\delta_\beta\right)_{\beta\in I_\alpha}$.}
\item[$\triangleright$]{$m\leq\beta\leq N_{\alpha},1\leq\mu\leq m-1$:
It holds $0\leq s\dot{\chi}_{\delta^{(\beta)}}(t)\cdot(1-s\dot{\chi}_{\delta^{(\mu)}}(t))\leq 2$ for sufficiently small $\delta_{I_\alpha}$, which yields the claim.}
\item[$\triangleright$]{$m\leq\beta,\mu\leq N_{\alpha}$: It holds $0\leq s\dot{\chi}_{\delta^{(\beta)}}(t)\cdot(1-s\dot{\chi}_{\delta^{(\mu)}}(t))\leq 1$, so the claim is obvious.}
\end{itemize}
We deduce that
\begin{equation}\nonumber
\frac{1}{2}\sum_{\beta,\mu=1}^{N_\alpha}\psi^{(\beta)}\psi^{(\mu)}\cdot s\dot{\chi}_{\delta^{(\beta)}}(t)\cdot(1-s\dot{\chi}_{\delta^{(\mu)}}(t))\cdot(\id_t\oplus\iota_{\gamma_t(p)})^\ast\bigl((h(\xi)+k(\xi))\KN(h(\xi)+k(\xi))\bigr)\in\mathsf{C}(-\varepsilon_\alpha/2)
\end{equation}
for all $\xi\in K,(t,p)\in[\delta^{(m-1)},\delta^{(m)}]\times U_\alpha^2$ and all linear isometries $\iota_{\gamma_t(p)}\colon\mathbb{R}^{n-1}\to(T_p(\partial M),\gamma_t(p))$.

It remains to investigate the other summands in $T^\intercal$.
Since
\begin{equation}\nonumber
\Vert\gamma_t-g_1(\xi)_t\Vert_{C^2(U_\alpha^3)}\leq\sum_{\beta\in I_\alpha}2s\chi_{\delta_\beta}(t)\cdot\Vert\psi_\beta\Vert_{C^2(U_\alpha^3)}\cdot\Vert h(\xi)+k(\xi)\Vert_{C^2(U_\alpha^3)}\lesssim\max\sqrt{\delta_{I_\alpha}},
\end{equation}
one can choose $\delta_{I_\alpha}$ so small that
\begin{equation}\nonumber
\Vert\iota_{\gamma_t(p)}^\ast(R_{\gamma_t}-R_{g_1(\xi)_t})\Vert<\frac{r}{4}.
\end{equation}
The same works for $g_2(\xi)_t$.
Hence,
\begin{align}\nonumber
\Bigl\Vert\sum_{\beta=1}^{m-1}&\psi^{(\beta)}\cdot\iota_{\gamma_t(p)}^\ast(R_{\gamma_t}-R_{g_1(\xi)_t})\Bigr\Vert+\Bigl\Vert\sum_{\beta=m}^{N_\alpha}\psi^{(\beta)}\cdot(1-s\dot{\chi}_{\delta^{(\beta)}}(t))\cdot\iota_{\gamma_t(p)}^\ast(R_{\gamma_t}-R_{g_1(\xi)_t})\Bigr\Vert\\\nonumber
&+\Bigl\Vert\sum_{\beta=m}^{N_\alpha}\psi^{(\beta)}\cdot s\dot{\chi}_{\delta^{(\beta)}}(t)\cdot\iota_{\gamma_t(p)}^\ast(R_{\gamma_t}-R_{g_2(\xi)_t})\Bigr\Vert<\frac{r}{4}.
\end{align}
It also holds
\begin{equation}\nonumber
\Bigl\Vert\Lambda t\cdot\sum_{\beta=1}^{m-1}\psi^{(\beta)}\cdot s\dot{\chi}_{\delta^{(\beta)}}(t)\cdot \iota_{\gamma_t(p)}^\ast(g_0(\xi)\KN(h(\xi)+k(\xi)))\Bigr\Vert<\frac{r}{4}
\end{equation}
for sufficiently small $\delta_{I_\alpha}$ because $\Lambda t\leq 1$ and $\dot{\chi}_{\delta^{(\beta)}}\lesssim\max\sqrt{\delta_{I_\alpha}}$.
The same argument yields
\begin{equation}\nonumber
\Bigl\Vert\frac{1}{2}\sum_{\beta=1}^{m-1}\psi^{(\beta)}\cdot s\dot{\chi}_{\delta^{(\beta)}}(t)\cdot\iota_{\gamma_t(p)}^\ast\bigl(\bigl(\mathrm{II}_t^{g_1(\xi)}-\mathrm{II}_t^{g_2(\xi)}\bigr)\KN(h(\xi)+k(\xi))\bigr)\Bigr\Vert<\frac{r}{4}
\end{equation}
for sufficiently small $\delta_{I_\alpha}$.
Finally, we observe that
\begin{equation}\nonumber
\Bigl\Vert 2\Lambda t\cdot\sum_{\beta=m}^{N_\alpha}\psi^{(\beta)}\cdot s\dot{\chi}_{\delta^{(\beta)}}(t)\cdot\iota_{\gamma_t(p)}^\ast(g_0(\xi)\KN k(\xi))\Bigr\Vert<\frac{r}{4}
\end{equation}
for sufficiently small $\delta_{I_\alpha}$.
This is clear if $m=N_\alpha+1$.
Otherwise it holds $\Lambda t\leq\max\sqrt{\delta_{I_\alpha}}$.

The curvature tensor defined by the first four rows in~\eqref{Ttangential} is hence pulled back to $B_r(\mathscr{C}_B(\mathbb{R}^n))$.
Therefore, $(\id_t\oplus\iota_{\gamma_t(p)})^\ast T^\intercal\in\mathsf{C}(-\varepsilon_\alpha)$.

In the next stage, we will consider $T^{\lesssim 1}$.
It holds
\begin{equation}\nonumber
R_\gamma(X,Y,Z,\nu)=(\nabla^{\gamma_t}_X\mathrm{II}_t^\gamma)(Y,Z)-(\nabla^{\gamma_t}_Y\mathrm{II}_t^\gamma)(X,Z).
\end{equation}

From
\begin{align}\nonumber
&\Vert\mathrm{II}_t^\gamma\Vert_{C^1(U^3_\alpha)}\leq\Vert h(\xi)\Vert_{C^1(U_\alpha^3)}+\Vert g_0(\xi)\Vert_{C^1(U_\alpha^3)}+\sum_{\beta\in I_\alpha}\Vert\psi_\beta\Vert_{C^1(U_\alpha^3)}\cdot s\dot{\chi}_{\delta_\beta}(t)\cdot\Vert h(\xi)+k(\xi)\Vert_{C^1(U_\alpha^3)}\lesssim 1,\\\nonumber
&\Vert\gamma_t\Vert_{C^1(U_\alpha^3)}\leq\Vert g_0(\xi)\Vert_{C^1(U_\alpha^3)}+2\Vert h(\xi)\Vert_{C^1(U_\alpha^3)}+\sum_{\beta\in I_\alpha}\Vert\psi_\beta\Vert_{C^1(U_\alpha^3)}\cdot\Vert h(\xi)+k(\xi)\Vert_{C^1(U_\alpha^3)}\lesssim 1
\end{align}
follows $|R_\gamma(X,Y,Z,\nu)|\lesssim 1$.
A similar estimate is valid for $R_{g_1(\xi)_t}(X,Y,Z,\nu)$ and $R_{g_2(\xi)_t}(X,Y,Z,\nu)$.
This yields
\begin{equation}\nonumber
|T^{\lesssim 1}(X,Y,Z,\nu)|\lesssim 1.
\end{equation}
The entry $T^{\lesssim 1}(X,\nu,\nu,Y)$ is given as
\begin{align}\nonumber
T^{\lesssim 1}(&X,\nu,\nu,Y)=\mathrm{II}_t^\gamma(W^\gamma_t(X),Y)-\sum_{\beta=1}^{m-1}\psi^{(\beta)}\cdot s\ddot{\chi}_{\delta^{(\beta)}}(t)\cdot (h(\xi)+k(\xi))(X,Y)\\\nonumber
&-\left(\sum_{\beta=1}^{m-1}\psi^{(\beta)}\cdot\mathrm{II}_t^{g_1(\xi)}(W_t^{g_1(\xi)}(X),Y)\right.\\\nonumber
&\phantom{-\Bigl(\beta}\left.+\sum_{\beta=m}^{N_\alpha}\psi^{(\beta)}\cdot\bigl((1-s\dot{\chi}_{\delta^{(\beta)}}(t))\cdot\mathrm{II}_t^{g_1(\xi)}(W_t^{g_1(\xi)}(X),Y)+s\dot{\chi}_{\delta^{(\beta)}}(t)\cdot\mathrm{II}_t^{g_2(\xi)}(W_t^{g_2(\xi)}(X),Y)\bigr)\right).
\end{align}
As $|\ddot{\chi}_{\delta^{(\beta)}}|\lesssim 1$ for $1\leq\beta\leq m$, it holds
\begin{equation}\nonumber
\Bigl\Vert\sum_{\beta=1}^{m-1}\psi^{(\beta)}\cdot s\ddot{\chi}_{\delta^{(\beta)}}(t)\cdot (h(\xi)+k(\xi))(X,Y)\Bigr\Vert\lesssim 1.
\end{equation}
In the samer manner as in~\eqref{Weingartennorm1} and~\eqref{Weingartennorm2}, one shows $|\mathrm{II}_t^\gamma(W^\gamma_t(X),Y)|\lesssim 1$ (similarly for $g_1(\xi),g_2(\xi)$).
Therefore,
\begin{equation}\nonumber
|T^{\lesssim 1}(X,\nu,\nu,Y)|\lesssim 1.
\end{equation}
This means $\Vert(\id_t\oplus\iota_{\gamma_t(p)})^\ast T^{\lesssim 1}\Vert\lesssim 1$.

Finally, we analyse $T^{\ddot{\chi}_\delta}$.
For all linear isometries $\iota_{\gamma_t(p)},\iota_{\gamma_0(p)}$ as above and for all $2\leq i,l\leq n$, it holds
\begin{align}\nonumber
(\id_t\oplus\iota_{\gamma_t})^\ast T^{\ddot{\chi}_\delta}(e_i,e_1,e_1,e_l)&=T^{\ddot{\chi}_\delta}(\iota_{\gamma_t}(e_i),\nu,\nu,\iota_{\gamma_t}(e_l))\\\nonumber
&=T^{\ddot{\chi}_\delta}((\iota_{\gamma_t}-\iota_{\gamma_0})(e_i)+\iota_{\gamma_0}(e_i),\nu,\nu,(\iota_{\gamma_t}-\iota_{\gamma_0})(e_l)+\iota_{\gamma_0}(e_l))\\\nonumber
&=T^{\ddot{\chi}_\delta}((\iota_{\gamma_t}-\iota_{\gamma_0})(e_i),\nu,\nu,(\iota_{\gamma_t}-\iota_{\gamma_0})(e_l))\\\nonumber
&\phantom{=}+T^{\ddot{\chi}_\delta}((\iota_{\gamma_t}-\iota_{\gamma_0})(e_i),\nu,\nu,\iota_{\gamma_0}(e_l))+T^{\ddot{\chi}_\delta}(\iota_{\gamma_0}(e_i),\nu,\nu,(\iota_{\gamma_t}-\iota_{\gamma_0})(e_l))\\\nonumber
&\phantom{=}+(\id_t\oplus\iota_{\gamma_0})^\ast T^{\ddot{\chi}_\delta}(e_i,e_1,e_1,e_l).
\end{align}
The last summand is
\begin{equation}\nonumber
(\id_t\oplus\iota_{\gamma_0(p)})^\ast T^{\ddot{\chi}_\delta}=-\sum_{\beta=m}^{N_\alpha}\psi^{(\beta)}\cdot s\ddot{\chi}_{\delta^{(\beta)}}(t)\cdot\iota_{g_0(\xi)(p)}^\ast((h(\xi)+k(\xi))\KN\mathfrak{b}^\perp\in\overline{\mathsf{C}(0)}
\end{equation}
since $-s\ddot{\chi}_{\delta^{(\beta)}}\geq 0$ for $m\leq\beta\leq N_\alpha$.

It remains to investigate $(\id_t\oplus\iota_{\gamma_t(p)})^\ast T^{\ddot{\chi}_\delta}-(\id_t\oplus\iota_{\gamma_0(p)})^\ast T^{\ddot{\chi}_\delta}$.
For an arbitrary linear isometry\break$\iota_{\gamma_t(p)}\colon\mathbb{R}^{n-1}\to(T_p(\partial M),\gamma_t(p))$ and $2\leq i,l\leq n$, it holds
\begin{align}\nonumber
|\gamma_0(p)(\iota_{\gamma_t(p)}(e_i),\iota_{\gamma_t(p)}(e_l))-\delta_{il}|&\lesssim|(1-\Lambda t^2)^{-1}\cdot\gamma_t(p)(\iota_{\gamma_t(p)}(e_i),\iota_{\gamma_t(p)}(e_l))-\delta_{il}|\\\nonumber
&\phantom{\lesssim}+2t\cdot(1-\Lambda t^2)^{-1}+\sum_{\beta\in I_\alpha}\psi_\beta\cdot 2s\chi_{\delta_\beta}(t)\cdot(1-\Lambda t^2)^{-1}\\\nonumber
&\lesssim\frac{\Lambda t^2}{1-\Lambda t^2}+t+t\lesssim t.
\end{align}
The essential step is to apply Lemma~\ref{QGS}:
For sufficiently small $\delta_{I_\alpha}$ there exists a linear isometry $\iota_{\gamma_0(p)}\colon\mathbb{R}^{n-1}\to(T_p(\partial M),\gamma_0(p))$ such that
\begin{equation}\label{specialisometry}
|\gamma_0(p)(\iota_{\gamma_t(p)}(e_i),\iota_{\gamma_0(p)}(e_l))-\delta_{il}|\lesssim t.
\end{equation}
For this $\iota_{\gamma_0(p)}$, we obtain
\begin{equation}\nonumber
|(\iota_{\gamma_t(p)}-\iota_{\gamma_0(p)})(e_i)|_{\gamma_0(p)}\leq\sum\limits_{l=2}^n|\gamma_0(p)(\iota_{\gamma_t(p)}(e_i),\iota_{\gamma_0(p)}(e_l))-\delta_{il}|\lesssim t,
\end{equation}
so
\begin{equation}\nonumber
-s\ddot{\chi}_{\delta^{(\beta)}}(t)\cdot|(\iota_{\gamma_t(p)}-\iota_{\gamma_0(p)})(e_i)|_{\gamma_0(p)}\lesssim 1\quad\text{and}\quad\sqrt{-s\ddot{\chi}_{\delta^{(\beta)}}(t)}\cdot|(\iota_{\gamma_t(p)}-\iota_{\gamma_0(p)})(e_i)|_{\gamma_0(p)}\lesssim 1
\end{equation}
for all $m\leq\beta\leq N_{\alpha}$.
Hence,
\begin{align}\nonumber
|\bigl((\id_t\oplus\iota_{\gamma_t(p)})^\ast&T^{\ddot{\chi}_\delta}-(\id_t\oplus\iota_{\gamma_0(p)})^\ast T^{\ddot{\chi}_\delta}\bigr)(e_i,e_1,e_1,e_l)|\\\nonumber
&\leq\sum_{\beta=m}^{N_\alpha}\psi^{(\beta)}\cdot\Bigl(\bigl|(h(\xi)+k(\xi))\bigl(\sqrt{-s\ddot{\chi}_\delta(t)}\cdot(\iota_{\gamma_t}-\iota_{\gamma_0})(e_i),\sqrt{-s\ddot{\chi}_\delta(t)}\cdot(\iota_{\gamma_t}-\iota_{\gamma_0})(e_l)\bigr)\bigr|\\\nonumber
&\hphantom{\sum_{\beta=m}^{N_\alpha}\psi^{(\beta)}\cdot\Bigl(h(\xi)}+\left|(h(\xi)+k(\xi))\bigl(-s\ddot{\chi}_\delta(t)\cdot(\iota_{\gamma_t}-\iota_{\gamma_0})(e_i),\iota_{\gamma_0}(e_l)\bigr)\right|\\\nonumber
&\hphantom{\sum_{\beta=m}^{N_\alpha}\psi^{(\beta)}\cdot\Bigl(h(\xi)}+\left|(h(\xi)+k(\xi))\bigl(\iota_{\gamma_0}(e_i),-s\ddot{\chi}_\delta(t)\cdot(\iota_{\gamma_t}-\iota_{\gamma_0})(e_l)\bigr)\right|\Bigr)\\\nonumber
&\lesssim 1,
\end{align}
which yields $\Vert(\id_t\oplus\iota_{\gamma_t(p)})^\ast T^{\ddot{\chi}_\delta}-(\id_t\oplus\iota_{\gamma_0(p)})^\ast T^{\ddot{\chi}_\delta}\Vert\lesssim 1$.

Now, we put all the parts together:
For sufficiently small $\delta_{I_\alpha}$ each linear isometry $\iota_{\gamma_t(p)}\colon\mathbb{R}^{n-1}\to(T_p(\partial M),\gamma_t(p))$ is $\Lambda(p)^{-1}$-nearly $g_1(\xi)_t(p)$-isometric and $g_2(\xi)_t(p)$-isometric. This is because
\begin{align}\nonumber
|g_2(\xi)_t(\iota_{\gamma_t}(e_i),\iota_{\gamma_t}(e_l))-\delta_{il}|&\leq 2t\cdot|h(\xi)(\iota_{\gamma_t}(e_i),\iota_{\gamma_t}(e_l))-k(\xi)(\iota_{\gamma_t}(e_i),\iota_{\gamma_t}(e_l))|\\\nonumber
&\phantom{\leq}+\sum_{\beta\in I_\alpha}\psi_\beta\cdot2s\chi_{\delta_\beta}(t)\cdot|h(\xi)(\iota_{\gamma_t}(e_i),\iota_{\gamma_t}(e_l))+k(\xi)(\iota_{\gamma_t}(e_i),\iota_{\gamma_t}(e_l))|\lesssim\Lambda^{-1}\sqrt{t}
\end{align}
and similarly for $g_1(\xi)_t$.
By~\eqref{Lambdadisc} we have
\begin{equation}\nonumber
(1-s\dot{\chi}_{\delta^{(\beta)}}(t))\cdot(\id_t\oplus\iota_{\gamma_t(p)})^\ast R_{g_1(\xi)}(t,p)+s\dot{\chi}_{\delta^{(\beta)}}(t)\cdot(\id_t\oplus\iota_{\gamma_t(p)})^\ast R_{g_2(\xi)}(t,p)+B^\perp_{\rho_\alpha\sqrt{\Lambda(p)}}\subset\mathsf{C}(\sigma(t,p)+\varepsilon_\alpha)
\end{equation}
for all $m\leq\beta\leq N_\alpha$, all $\xi\in K,s\in[0,1]$ and $(t,p)\in[\delta^{(m-1)},\delta^{(m)}]\times U_\alpha^2$.
Thus,
\begin{equation}\nonumber
(\id_t\oplus\iota_{\gamma_t(p)})^\ast(R_\gamma(t,p)-T(t,p))+B^\perp_{\rho_\alpha\sqrt{\Lambda(p)}}\subset\mathsf{C}(\sigma(t,p)+\varepsilon_\alpha).
\end{equation}
When choosing $\Lambda_{0,\alpha}$ bigger if necessary, it holds
\begin{equation}\nonumber
(\id_t\oplus\iota_{\gamma_t(p)})^\ast T^{\lesssim 1}(t,p)+(\id_t\oplus\iota_{\gamma_t(p)})^\ast T^{\ddot{\chi}_\delta}(t,p)-(\id_t\oplus\iota_{\gamma_0(p)})^\ast T^{\ddot{\chi}_\delta}(t,p)\in B^\perp_{\rho_\alpha\sqrt{\Lambda(p)}}
\end{equation}
for $\iota_{\gamma_{0}(p)}$ as in~\eqref{specialisometry}.
This leads us to conclude that
\begin{align}\nonumber
&(\id_t\oplus\iota_{\gamma_t(p)})^\ast R_\gamma(t,p)\\\nonumber
=\;&(\id_t\oplus\iota_{\gamma_t(p)})^\ast(R_\gamma(t,p)-T(t,p))+(\id_t\oplus\iota_{\gamma_t(p)})^\ast T^{\lesssim 1}(t,p)\\\nonumber
&\quad+(\id_t\oplus\iota_{\gamma_t(p)})^\ast T^{\ddot{\chi}_\delta}(t,p)-(\id_t\oplus\iota_{\gamma_0(p)})^\ast T^{\ddot{\chi}_\delta}(t,p)\\\nonumber
&\quad+(\id_t\oplus\iota_{\gamma_t(p)})^\ast T^\intercal(t,p)+(\id_t\oplus\iota_{\gamma_0(p)})^\ast T^{\ddot{\chi}_\delta}(t,p)\\\nonumber
&\quad\phantom{+(\id_t}\in\mathsf{C}(\sigma(t,p)+\varepsilon_\alpha)+\mathsf{C}(-\varepsilon_\alpha)+\overline{\mathsf{C}(0)}\subset\mathsf{C}(\sigma(t,p)).
\end{align}
Since $1\leq m\leq N_\alpha+1$ and $(t,p)\in[\delta^{(m-1)},\delta^{(m)}]\times U_\alpha^2$ have been chosen arbitrarily, $R_\gamma$ satisfies $\mathsf{C}(\sigma)$ on $\square_{\eta_\alpha}^2$ for sufficiently small $\delta_{I_\alpha}$.
Set
\begin{equation}\nonumber
\Lambda_0\colon\partial M\to\mathbb{R}\,,\,\Lambda_0(p)=\sum_{\alpha\in\mathbb{N}}\psi_\alpha\cdot\max_{\beta\in I_\alpha}\Lambda_{0,\beta}(p).
\end{equation}
Observe that $\Lambda_0(p)\geq\Lambda_{0,\alpha}(p)$ when $p\in U_\alpha^3$.
Let $\Lambda\geq\Lambda_0$.

For the calculations on $\square_{\eta_\alpha}^2$ with fixed $\alpha\in\mathbb{N}$, we impose finitely many conditions on $\delta_{I_\alpha}$, and in particular on $\delta_\alpha$, in order to arrange the curvature bound.
Repeating the procedure for all boxes $\square_{\eta_\beta}^2,\beta\in\mathbb{N}$ will add only finitely many conditions on $\delta_\alpha$ because there are only finitely many $\beta\in\mathbb{N}$ with $\alpha\in I_\beta$.

This is the reason why one can choose all numbers $\delta_\alpha,\alpha\in\mathbb{N}$ so small that $R_\gamma$ satisfies $\mathsf{C}(\sigma)$ on $\bigcup_{\alpha\in\mathbb{N}}\square_{\eta_\alpha}^2$.
\end{proof}
The results from Proposition~\ref{2jet} and~\ref{1jet} are combined in the following theorem, where we use two auxiliary functions $S_1,S_2\colon[0,1]\to[0,1]$ defined by
\begin{equation}\nonumber
S_1(t)=\left\{\begin{array}{ll} 1, & 0\leq t\leq\tfrac{1}{2}\\
         2(1-t), & \tfrac{1}{2}\leq t\leq 1\end{array}\right.\quad\text{and}\quad
S_2(t)=\left\{\begin{array}{ll} 1-2t, & 0\leq t\leq\tfrac{1}{2}\\
         0, & \tfrac{1}{2}\leq t\leq 1\end{array}\right..
\end{equation}
We also use the notation
\begin{equation}\nonumber
\mathscr{R}_{\mathsf{C}(\sigma)}^{0}(M_1\sqcup M_2):=\{g_1\sqcup g_2\in\mathscr{R}_{\mathsf{C}(\sigma)}(M_1\sqcup M_2):(g_1)_0=(g_2)_0\}.
\end{equation}
\begin{theorem}\label{theorem1}
Let $K$ be a compact Hausdorff space and let
\begin{equation}\nonumber
g_1\sqcup g_2\colon K\to\mathscr{R}_{\mathsf{C}(\sigma)}^{0}(M_1\sqcup M_2)
\end{equation}
be continuous with
\begin{align}\nonumber
&\iota_{g(\xi)_0}^\ast(\mathrm{II}_{g_1(\xi)}+\mathrm{II}_{g_2(\xi)})\KN\iota_{g(\xi)_0}^\ast(\mathrm{II}_{g_1(\xi)}+\mathrm{II}_{g_2(\xi)})\in\overline{\mathsf{C}(0)}\quad\text{and}\\\nonumber
&\iota_{g(\xi)_0}^\ast(\mathrm{II}_{g_1(\xi)}+\mathrm{II}_{g_2(\xi)})\KN\mathfrak{b}^\perp\in\overline{\mathsf{C}(0)}
\end{align}
for all $\xi\in K$ and $p\in\partial M$, where $\iota_{g(\xi)_0(p)}\colon\mathbb{R}^{n-1}\to (T_p(\partial M),g_1(\xi)_0(p)=g_2(\xi)(p))$ is a linear isometry for $\xi\in K$ and $p\in\partial M$.

Then there exists a smooth positive function $\Lambda_0\in C^\infty(\partial M)$ such that for each $\Lambda\in C^\infty(\partial M)$ with $\Lambda\geq\Lambda_0$ and for each neighbourhood $\mathscr{U}\subset M_1\sqcup M_2$ of $\partial M_1\sqcup\partial M_2$, there is a continuous map
\begin{equation}\nonumber
f\colon K\times[0,1]\to\mathscr{R}_{\mathsf{C}(\sigma)}^{0}(M_1\sqcup M_2)
\end{equation}
so that the following holds for all $\xi\in K$ and $s\in[0,1]$:
\begin{enumerate}
\item[(a)]{$f(\xi,0)=g_1(\xi)\sqcup g_2(\xi)$;}
\item[(b)]{$f(\xi,1)$ is $\Lambda$-normal;}
\item[(c)]{$f(\xi,s)_0=g_1(\xi)_0\sqcup g_2(\xi)_0$;}
\item[(d)]{$\mathrm{II}_{f(\xi,s)}=(S_1(s)\mathrm{II}_{g_1(\xi)}-(1-S_1(s))\mathrm{II}_{g_2(\xi)})\sqcup\mathrm{II}_{g_2(\xi)}$, in particular\\
$\mathrm{II}_{f(\xi,1)}=(-\mathrm{II}_{g_2(\xi)})\sqcup\mathrm{II}_{g_2(\xi)}$;}
\item[(e)]{if $g_1(\xi)\sqcup g_2(\xi)$ is $\tilde{\Lambda}$-normal, then $f(\xi,s)$ is $\Lambda_s$-normal for $\Lambda_s=S_2(s)\tilde{\Lambda}+(1-S_2(s))\Lambda$;}
\item[(f)]{$\ddot{f}(\xi,s)_0=S_2(s)\cdot\bigl(\ddot{g}_1(\xi)_0\sqcup\ddot{g}_2(\xi)_0\bigr)-2(1-S_2(s))\cdot\Lambda\bigl(g_1(\xi)_0\sqcup g_2(\xi)_0\bigr)$;}
\item[(g)]{for $\ell\geq 3$ we have $f(\xi,s)_0^{(\ell)}=S_2(s)\cdot\bigl(g_1(\xi)_0^{(\ell)}\sqcup g_2(\xi)_0^{(\ell)}\bigr)$;}
\item[(h)]{$f(\xi,s)=g_1(\xi)\sqcup g_2(\xi)$ on $M_1\sqcup M_2\setminus\mathscr{U}$.}
\end{enumerate}
\end{theorem}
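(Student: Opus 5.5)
The plan is to concatenate the two deformations already constructed: first Proposition~\ref{2jet} (on $[0,\tfrac12]$), then Proposition~\ref{1jet} (on $[\tfrac12,1]$). The functions $S_1,S_2$ are built for this, since $S_2$ runs from $1$ to $0$ on the first half and $S_1$ from $1$ to $0$ on the second.

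\textbf{Choice of $\Lambda_0$.} Apply Proposition~\ref{2jet} separately to the families $g_1$ on $M_1$ and $g_2$ on $M_2$, with the restrictions of $\sigma$. This yields functions $\Lambda_0^{(1)},\Lambda_0^{(2)}$. Next apply Proposition~\ref{1jet} with the data
\begin{equation}\nonumber
g_0(\xi):=g_1(\xi)_0=g_2(\xi)_0,\qquad h(\xi):=\mathrm{II}_{g_1(\xi)},\qquad k(\xi):=\mathrm{II}_{g_2(\xi)}.
\end{equation}
Its hypotheses are exactly the two $\overline{\mathsf{C}(0)}$ conditions assumed in the theorem, and it yields $\Lambda_0^{(3)}$. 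Set $\Lambda_0:=\max\{\Lambda_0^{(1)},\Lambda_0^{(2)},\Lambda_0^{(3)}\}$, smoothed, for instance by adding $1$ and using a partition of unity, so that it is smooth, positive and dominates all three. The key point is that $\Lambda_0^{(3)}$ depends only on $(g_0,h,k)$, which are fixed in advance, and not on the $\Lambda$-normal families to which Proposition~\ref{1jet} is later applied. This is what makes the order of quantifiers legitimate.

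\textbf{Stage 1, $s\in[0,\tfrac12]$.} Given $\Lambda\ge\Lambda_0$ and $\mathscr{U}$, Proposition~\ref{2jet} gives families $f^{(i)}(\xi,\cdot)$ with $f^{(i)}(\xi,1)$ $\Lambda$-normal, unchanged $0$- and $1$-jets, and the $t$-derivatives from \ref{2jet}(e),(f). Define $f(\xi,s):=f^{(1)}(\xi,2s)\sqcup f^{(2)}(\xi,2s)$. Since $S_1=1$ and $S_2(s)=1-2s$ on this interval, properties (c), (d), (e), (f), (g) follow directly from \ref{2jet}(c)--(f), and (h) from \ref{2jet}(g). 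Because $0$-jets are untouched, the values stay in $\mathscr{R}^0_{\mathsf{C}(\sigma)}$.

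\textbf{Stage 2, $s\in[\tfrac12,1]$.} The pair $f(\xi,\tfrac12)$ consists of $\Lambda$-normal families satisfying $\mathsf{C}(\sigma)$, with common boundary metric $g_0(\xi)$ and second fundamental forms $h(\xi),k(\xi)$. Proposition~\ref{1jet}, with the same $\mathscr{U}$, therefore gives a family $F(\xi,u)$. Set $f(\xi,s):=F(\xi,2s-1)$; the two pieces agree at $s=\tfrac12$ by \ref{1jet}(a). Then
\begin{equation}\nonumber
\mathrm{II}_{f(\xi,s)}=\bigl((2-2s)h-(2s-1)k\bigr)\sqcup k=\bigl(S_1(s)h-(1-S_1(s))k\bigr)\sqcup k,
\end{equation}
which is (d). Since $\Lambda$-normality holds throughout and $S_2=0$ here, the second and higher $t$-derivatives are those of a $\Lambda$-normal metric, $-2\Lambda g_0$ and $0$; this gives (b), (e), (f), (g). Property (h) follows from \ref{1jet}(e), using that both stages were run with the same $\mathscr{U}$ and fix the metrics outside it.

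\textbf{Main obstacle.} Continuity of the concatenation is immediate. The delicate points are, first, the uniform-collar convention: Proposition~\ref{1jet} presupposes common normal exponential maps and $\Lambda$-normality on a common collar. This is supplied by Corollary~\ref{uniformgcn}, which is inserted before Stage 1 and is itself a $\mathsf{C}(\sigma)$-preserving homotopy fixing the $t$-jets by \ref{uniformgcn}(c), so it can be absorbed into Stage 1 by reparametrisation. Second, one must check that the $\Lambda_0$ required by Proposition~\ref{1jet} is independent of the stage-1 output, which holds by the dependence $\Lambda_0(g_0,h,k)$ noted above. I expect this bookkeeping, ensuring that (e)--(g) remain exact after the collar uniformisation, to be the main point needing care.
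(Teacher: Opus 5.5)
Your proposal is correct and is the paper's proof: you apply Proposition~\ref{2jet} (to $g_1$ and $g_2$) and then Proposition~\ref{1jet} with $h=\mathrm{II}_{g_1}$, $k=\mathrm{II}_{g_2}$, for $\Lambda$ above all three thresholds, and concatenate on $[0,\tfrac12]$ and $[\tfrac12,1]$. Your observation that the threshold of Proposition~\ref{1jet} depends only on $(g_0,h,k)$, which Proposition~\ref{2jet} leaves unchanged, is exactly what justifies the order of quantifiers.

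One caution about your last paragraph: do not insert Corollary~\ref{uniformgcn} as an extra time segment. Since $S_2(s)=1-2s$ is strictly decreasing on $[0,\tfrac12]$, a segment with frozen $t$-jets would violate (e)--(g). It is also unnecessary, because Proposition~\ref{2jet} is stated for arbitrary continuous families and handles the collar uniformisation inside its proof (for instance by conjugating with the diffeomorphisms $\Omega(\xi,1)$ rather than running them as a homotopy).
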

\begin{proof}
The proof is the same as in~\cite[Thm.~3.7]{BH2023}.
We apply Proposition~\ref{2jet} and Proposition~\ref{1jet} for sufficiently large $\Lambda$, one after the other, and concatenate the arising homotopies.
\end{proof}

\subsection{Spaces of metrics with singularities}
We set
\begin{align}\nonumber
\mathscr{R}_{\mathsf{C}(\sigma)}^{\mathrm{P}}(M):=\{g\in\mathscr{R}_{\mathsf{C}(\sigma)}(M): &\,\iota_{g_0}^\ast\mathrm{II}_g\KN\iota_{g_0}^\ast\mathrm{II}_g\in\overline{\mathsf{C}(0)},\\\nonumber
&\,\iota_{g_0}^\ast\mathrm{II}_g\KN\mathfrak{b}^\perp\in\overline{\mathsf{C}(0)}\}
\end{align}
for a single manifold $M$ and
\begin{align}\nonumber
\mathscr{R}_{\mathsf{C}(\sigma)}^{\mathrm{P}}(M_1\sqcup M_2):=\{g_1\sqcup g_2&\in\mathscr{R}_{\mathsf{C}(\sigma)}^0(M_1\sqcup M_2):\\\nonumber
&\iota_{g_0}^\ast(\mathrm{II}_{g_1}+\mathrm{II}_{g_2})\KN\iota_{g_0}^\ast(\mathrm{II}_{g_1}+\mathrm{II}_{g_2})\in\overline{\mathsf{C}(0)},\\\nonumber
&\iota_{g_0}^\ast(\mathrm{II}_{g_1}+\mathrm{II}_{g_2})\KN\mathfrak{b}^\perp\in\overline{\mathsf{C}(0)}\}.
\end{align}
As usual, it is understood that the boundary conditions are satisfied for all $p\in\partial M$ and all linear isometries $\iota_{g_0}\colon\mathbb{R}^{n-1}\to(T_p(\partial M),g_0(p))$ or $\iota_{g_0}\colon\mathbb{R}^{n-1}\to(T_p(\partial M),(g_1)_0(p)=(g_2)_0(p))$, respectively.

A Riemannian metric $g$ on $M$ is called a \textit{doubling metric} if $g\cup g$ is a smooth Riemannian metric on $M\cup_{g\sqcup g}M$.
This is equivalent to the condition that $g_0^{(\ell)}\equiv 0$ for $\ell$ odd.
We denote the space of doubling metrics by $\mathscr{R}_{\mathsf{C}(\sigma)}^{\mathsf{D}}(M)$, and the spaces of metrics with totally geodesic or convex boundary by $\mathscr{R}_{\mathsf{C}(\sigma)}^{\mathrm{II}=0}(M)$ and $\mathscr{R}_{\mathsf{C}(\sigma)}^{\mathrm{II}\geq 0}(M)$, respectively.

\begin{corollary}\label{bc}
Each of the inclusions
\begin{equation}\nonumber
\mathscr{R}_{\mathsf{C}(\sigma)}^{\mathsf{D}}(M)\hookrightarrow\mathscr{R}_{\mathsf{C}(\sigma)}^{\mathrm{II}=0}(M)\hookrightarrow\mathscr{R}_{\mathsf{C}(\sigma)}^{\mathrm{II}\geq 0}(M)\hookrightarrow\mathscr{R}_{\mathsf{C}(\sigma)}^{\mathrm{P}}(M)
\end{equation}
is a weak homotopy equivalence.
\end{corollary}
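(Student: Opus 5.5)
The plan is to build, for every compact Hausdorff space $K$ and every continuous family $g\colon K\to\mathscr{R}^{\mathrm{P}}_{\mathsf{C}(\sigma)}(M)$, one deformation
\[
F\colon K\times[0,1]\to\mathscr{R}^{\mathrm{P}}_{\mathsf{C}(\sigma)}(M)
\]
with three properties. First, $F(\xi,0)=g(\xi)$. Second, $F(\xi,1)\in\mathscr{R}^{\mathsf{D}}_{\mathsf{C}(\sigma)}(M)$ for all $\xi$. Third, each of the subspaces $\mathscr{R}^{\mathsf{D}}\subset\mathscr{R}^{\mathrm{II}=0}\subset\mathscr{R}^{\mathrm{II}\geq 0}$ is preserved: if $g(\xi)$ lies in one of them, so does $F(\xi,s)$ for all $s$.

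Given such an $F$, the corollary follows from the standard compression criterion. Take $K=D^k$ and a map $(D^k,S^{k-1})\to(X,A)$, where $A\subset X$ are any two consecutive spaces of the chain (or, more generally, any two spaces of the chain). Then $F$ is a homotopy of pairs into $(X,A)$ which ends in $A$. This shows that all relative homotopy sets $\pi_k(X,A)$ vanish, including the $\pi_0$- and $\pi_1$-surjectivity statements. Hence every inclusion is a weak homotopy equivalence.

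To construct $F$, I would first apply Corollary~\ref{uniformgcn} so that all $g(\xi)$ share one normal exponential map. Next I apply Proposition~\ref{2jet} to obtain $\Lambda$-normal metrics for a sufficiently large $\Lambda$. By~\ref{2jet}(d),(f) this step keeps $g_0$ and $\mathrm{II}$ fixed and multiplies the higher derivatives $g_0^{(\ell)}$, $\ell\geq3$, by $(1-s)$. So it preserves the conditions $\mathrm{II}=0$ and $\mathrm{II}\geq0$, and it preserves the doubling condition, because all odd derivatives that vanish stay zero. It also trivially preserves the P-conditions, since these only involve $\mathrm{II}$.

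Then I would apply Proposition~\ref{1jet} with $M_1=M_2=M$, $g_1=g_2=g(\xi)$ (both $\Lambda$-normal), $h=\mathrm{II}_{g(\xi)}$ and $k=0$. I only keep the $M_1$-component of the resulting family. The hypotheses of~\ref{1jet} are exactly the P-conditions for $h+k=\mathrm{II}_g$. The output is a path of $\Lambda$-normal metrics satisfying $\mathsf{C}(\sigma)$, with the induced metric $g_0(\xi)$ fixed and
\[
\mathrm{II}=(1-s)\,\mathrm{II}_{g(\xi)}.
\]
Along this path both P-conditions persist, because $\overline{\mathsf{C}(0)}$ is a closed cone. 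Indeed, the two tensors in the P-conditions get multiplied by $(1-s)^2\geq0$ and $(1-s)\geq0$, respectively. The conditions $\mathrm{II}\geq0$ and $\mathrm{II}=0$ are clearly preserved under multiplication by $(1-s)$.

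At $s=1$ the metric satisfies $g_t=(1-\Lambda t^2)g_0$ near $\partial M$. All odd $t$-derivatives vanish at $t=0$, so the metric is a doubling metric. If $g(\xi)$ was already a doubling metric, then $\mathrm{II}_{g(\xi)}=0$, so $h=0$. In that case the whole path consists of $\Lambda$-normal metrics with vanishing second fundamental form, hence of doubling metrics. Concatenating the three homotopies (Corollary~\ref{uniformgcn}, Proposition~\ref{2jet}, Proposition~\ref{1jet}) gives $F$.

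The main obstacle I expect is not a new curvature estimate but the bookkeeping in each step. First, Corollary~\ref{uniformgcn} acts by pullback along diffeomorphisms that fix $\partial M$ and act as $\varrho_{g(\xi)}\circ\varrho_{g(\xi_0)}^{-1}$ near it. I have to check that this preserves $g_0$, $\mathrm{II}$ and all jets $g_t^{(\ell)}$; this follows from~\ref{uniformgcn}(c). Second, I have to check that Proposition~\ref{1jet} may legitimately be used with one and the same manifold on both sides, discarding the second component. This works because its conclusion on $M_1$ only uses the data $h$, $k$ and $g_2$, and $g_2$ is left unchanged. I would verify these two points carefully, and otherwise rely directly on the cited results.
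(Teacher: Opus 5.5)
Your overall structure matches the paper's: uniformise the collars (Corollary~\ref{uniformgcn}), pass to $\Lambda$-normal metrics (Proposition~\ref{2jet}), move the second fundamental form with Proposition~\ref{1jet}, then compress. Your bookkeeping for the first two stages and for the compression argument is fine.

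\textbf{The gap is in the third stage.} You cannot apply Proposition~\ref{1jet} with $g_1=g_2=g(\xi)$, $h=\mathrm{II}_{g(\xi)}$ and $k=0$. That proposition requires $\mathrm{II}_{g_2(\xi)}=k(\xi)$, and with $g_2=g$ this forces $k=\mathrm{II}_{g(\xi)}$.

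This hypothesis is essential to the proof. Where $s\dot{\chi}_\delta\approx 1$, the deformed metric has $\mathrm{II}^\gamma_t\approx\Lambda t\,g_0-k$. The comparison curvature there is $R_{g_2}$, controlled through Lemma~\ref{uniform epsilon 2} for a $\Lambda$-normal family whose second fundamental form is $k$. The two Gauss terms then differ only by the small cross term $2\Lambda t\,g_0\KN k$ in~\eqref{Ttangential}.

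With your choice, $T^\intercal$ picks up an extra term which, for $s=1$ and $t\to 0$, equals $\tfrac12\,\mathrm{II}_g\KN\mathrm{II}_g$. This term is not small, so the estimate $(\id_t\oplus\iota_{\gamma_t})^\ast T^\intercal\in\mathsf{C}(-\varepsilon_\alpha)$ proved there does not apply. Put differently, with $k=0$ you would need a comparison family with totally geodesic boundary satisfying $\mathsf{C}(\sigma)$ near $\partial M$, which is exactly what you are trying to produce. Your remark that the conclusion on $M_1$ ``only uses $h$, $k$ and $g_2$'' overlooks precisely the link between $g_2$ and $k$. The path $\mathrm{II}=(1-s)\mathrm{II}_g$ you want does exist, but not by citing Proposition~\ref{1jet} this way.

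\textbf{The missing idea is the reflection trick the paper uses.} Apply Theorem~\ref{theorem1} (that is, Propositions~\ref{2jet} and~\ref{1jet}) to $g\sqcup g$ on $M\sqcup M$, so that $h=k=\mathrm{II}_{g(\xi)}$. The hypotheses become $4\,\iota^\ast\mathrm{II}_g\KN\iota^\ast\mathrm{II}_g\in\overline{\mathsf{C}(0)}$ and $2\,\iota^\ast\mathrm{II}_g\KN\mathfrak{b}^\perp\in\overline{\mathsf{C}(0)}$, which are the P-conditions up to positive scaling. On the first copy the second fundamental form then runs as $(1-2s)\mathrm{II}_{g(\xi)}$, from $\mathrm{II}_g$ to $-\mathrm{II}_g$.

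You must stop halfway, at $s=\tfrac12$. This is $s=\tfrac34$ in the parametrisation of Theorem~\ref{theorem1}, where $2S_1-1=0$ and $S_2=0$. Running to the end would leave $\mathscr{R}^{\mathrm{II}\geq 0}_{\mathsf{C}(\sigma)}(M)$ and $\mathscr{R}^{\mathrm{P}}_{\mathsf{C}(\sigma)}(M)$.

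On $[0,\tfrac12]$ the factor $1-2s$ is non-negative. So your preservation arguments for $\mathrm{II}=0$, $\mathrm{II}\geq 0$, the P-conditions and the doubling condition go through verbatim. At $s=\tfrac12$ the metric is $\Lambda$-normal with $\mathrm{II}=0$, hence a doubling metric.

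With this correction your proof coincides with the paper's. The only difference is cosmetic: the paper compresses each pair $(\mathscr{R}^\ast_{\mathsf{C}(\sigma)}(M),\mathscr{R}^{\mathsf{D}}_{\mathsf{C}(\sigma)}(M))$ and obtains the consecutive inclusions by two-out-of-three. You instead use that the deformation preserves every intermediate subspace.
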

\begin{proof}
Let $\ast$ be any of the conditions in $\{\mathrm{II}=0,\mathrm{II}\geq 0,\mathrm{P}\}$.
We show that the inclusion $\mathscr{R}_{\mathsf{C}(\sigma)}^{\mathsf{D}}(M)\hookrightarrow\mathscr{R}_{\mathsf{C}(\sigma)}^{\ast}(M)$ is a weak homotopy equivalence.

Let $m\geq 0$ and $g\colon D^m\to\mathscr{R}_{\mathsf{C}(\sigma)}^\ast(M)$ be continuous with $g(\partial D^m)\subset\mathscr{R}_{\mathsf{C}(\sigma)}^{\mathsf{D}}(M)$.
Here $D^m\subset\mathbb{R}^m$ is the standard closed $m$-ball.
We apply Theorem~\ref{theorem1} with $K=D^m$ and the manifold $M\sqcup M$.
Restricting the resulting homotopy of metrics to one copy of $M$ yields a continuous map $f\colon D^m\times[0,1]\to\mathscr{R}(M)$ so that the following holds for all $\xi\in K$ and $s\in[0,\tfrac{3}{4}]$:
\begin{enumerate}
\item[$\triangleright$]{$f(\xi,s)\in\mathscr{R}_{\mathsf{C}(\sigma)}^\ast(M)$;}
\item[$\triangleright$]{$f(\xi,0)=g(\xi);$}
\item[$\triangleright$]{$\mathrm{II}_{f(\xi,s)}=(2S_1(s)-1)\mathrm{II}_{g(\xi)}$, in particular $\mathrm{II}_{f(\xi,\nicefrac{3}{4})}=0$;}
\item[$\triangleright$]{for $\ell\geq 3$ we have $f(\xi,s)_0^{(\ell)}=S_2(s)\cdot g(\xi)_0^{(\ell)}$, in particular $f(\xi,\tfrac{3}{4})_0^{(\ell)}=0$.}
\end{enumerate}
Thus, we obtain a homotopy $f\colon D^m\times[0,\tfrac{3}{4}]\to\mathscr{R}_{\mathsf{C}(\sigma)}^\ast(M)$ with $f(\xi,s)\in\mathscr{R}_{\mathsf{C}(\sigma)}^{\mathsf{D}}(M)$ for all $(\xi,s)\in(\partial D^m\times[0,\tfrac{3}{4}])\cup(D^m\times\{\tfrac{3}{4}\})$.
This means that the pair $(\mathscr{R}_{\mathsf{C}(\sigma)}^\ast(M),\mathscr{R}_{\mathsf{C}(\sigma)}^{\mathsf{D}}(M))$ is $m$-connected for all $m\geq 0$.
By the long exact sequence for homotopy groups, the inclusion $\mathscr{R}_{\mathsf{C}(\sigma)}^{\mathsf{D}}(M)\hookrightarrow\mathscr{R}_{\mathsf{C}(\sigma)}^\ast(M)$ is a weak homotopy equivalence.
\end{proof}

In the next theorem, we consider the glued manifold $M_1\cup_{\partial M}M_2$ which is equipped with an arbitrary, but fixed smooth structure arising from collar neighbourhoods.
\begin{theorem}\label{whe}
Each of the inclusions
\begin{equation}\nonumber
\mathscr{R}_{\mathsf{C}(\sigma)}(M_1\cup_{\partial M}M_2)\hookrightarrow\mathscr{R}_{\mathsf{C}(\sigma)}^{\mathrm{II}_1+\mathrm{II}_2\geq 0}(M_1\sqcup M_2)\hookrightarrow\mathscr{R}_{\mathsf{C}(\sigma)}^{\mathrm{P}}(M_1\sqcup M_2)
\end{equation}
is a weak homotopy equivalence.
\end{theorem}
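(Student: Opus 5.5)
We follow the proof of Corollary~\ref{bc} and of \cite[Thm.~4.11]{BH2023}. Concretely, we show that the pair consisting of each larger space and the smallest one, $\mathscr{R}_{\mathsf{C}(\sigma)}(M_1\cup_{\partial M}M_2)$, is $m$-connected for every $m\geq 0$. Since the middle space sits between the outer two, the two-out-of-three property then gives all the claimed weak equivalences.

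First we make the inclusion precise. The smooth structure on $M_1\cup_{\partial M}M_2$ is fixed by collars, which we take to be geodesic collars $\varrho_{\zeta_1},\varrho_{\zeta_2}$ of some fixed metric $\zeta_1\sqcup\zeta_2$. A smooth metric $f$ on the glued manifold restricts to $f_1\sqcup f_2$ with $(f_1)_0=(f_2)_0$ and $\mathrm{II}_{f_1}+\mathrm{II}_{f_2}=0$. Both boundary conditions of $\mathrm{P}$-type then hold trivially, because $0\in\overline{\mathsf{C}(0)}$. Also $\mathrm{II}_1+\mathrm{II}_2=0\geq0$, so both inclusions are well defined.

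Now let $g=g_1\sqcup g_2\colon D^m\to\mathscr{R}^{\mathrm{P}}_{\mathsf{C}(\sigma)}(M_1\sqcup M_2)$ (or into the $\mathrm{II}_1+\mathrm{II}_2\geq0$ space) be continuous, with $g(\partial D^m)$ consisting of metrics that are smooth on the glued manifold. The first step is to apply Corollary~\ref{uniformgcn}, with $\zeta=\zeta_1\sqcup\zeta_2$, to $K=D^m$. This gives a homotopy, preserving $\mathsf{C}(\sigma)$ and fixing the $t$-families $g_t$ near the boundary, so all boundary data including $\mathrm{II}$ and the $\mathrm{P}$-condition are preserved. At its end all metrics have normal exponential map $\varrho_\zeta$. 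On $\partial D^m$ the glued metrics are pulled back by diffeomorphisms, which are the identity away from $\mathscr U$ and glue smoothly because they match normal geodesics on both sides. So they remain smooth on $M_1\cup_{\partial M}M_2$ for the fixed smooth structure, after the small identification argument of \cite[Sec.~4]{BH2023}.

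The second step is to apply Theorem~\ref{theorem1} with $K=D^m$. Its hypotheses are exactly the $\mathrm{P}$-conditions, and the inclusion of the $\mathrm{II}_1+\mathrm{II}_2\geq0$ space into the $\mathrm{P}$-space is part of what must be verified. This is fine for the cone $\mathsf{C}_{\mathcal R>0}\subset\mathsf C$ setting, since $A\KN A$ and $A\KN\mathfrak b^\perp$ have nonnegative curvature operator for $A\geq0$; in general we would argue via Proposition~\ref{1jet}. The resulting homotopy $f(\xi,s)$ satisfies:
\begin{enumerate}
\item[$\triangleright$]{the boundary metrics agree throughout;}
\item[$\triangleright$]{$\mathrm{II}_{f(\xi,1)}=-\mathrm{II}_{g_2}\sqcup\mathrm{II}_{g_2}$;}
\item[$\triangleright$]{$\ddot f(\xi,1)_0=-2\Lambda g_0\sqcup-2\Lambda g_0$;}
\item[$\triangleright$]{all higher derivatives vanish at $s=1$.}
\end{enumerate}
Hence $f(\xi,1)^{(\ell)}_0=(-1)^\ell\tilde f(\xi,1)^{(\ell)}_0$ for all $\ell$. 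Because the collars are the common $\varrho_\zeta$, this means $f(\xi,1)$ is smooth on $M_1\cup_{\partial M}M_2$.

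Intermediate metrics need only lie in the larger space. The $\mathrm{P}$-condition along the path holds because $\mathrm{II}_1+\mathrm{II}_2=S_1(s)(\mathrm{II}_{g_1}+\mathrm{II}_{g_2})$ with $S_1\geq0$, and the cone condition is preserved under positive scaling. This yields a homotopy into the larger space ending in the glued space.

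The main obstacle is the boundary of the disc: for $\xi\in\partial D^m$ the homotopy must stay inside the glued space. Here $\mathrm{II}_{g_1}=-\mathrm{II}_{g_2}$, so $\mathrm{II}$ stays constant by (d). Derivatives of even order are symmetric on both sides and those of odd order vanish, so (f) and (g) keep the matching condition $f^{(\ell)}_0=(-1)^\ell\tilde f^{(\ell)}_0$ for every $s$. The subtle point is smoothness in the interior of the collar, not just at $t=0$. Proposition~\ref{2jet} alters $M_1$ and $M_2$ by the symmetric formula $F=g-s(\dots)$ in $t$ on both sides, and the local flexibility lemma is applied relative to $\partial M$. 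We would instead run the flexibility step on the glued manifold for boundary parameters, or note as in \cite[Thm.~4.11]{BH2023} that the construction is symmetric in $t\mapsto-t$. Proposition~\ref{1jet} adds $2s\chi(t)(h+k)$ with $h+k=0$ on these parameters, so it is trivial there. If exact preservation fails, we relax to a homotopy of pairs. A final reparametrisation then gives the $m$-connectivity, and the long exact sequence concludes.
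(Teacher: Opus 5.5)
Your overall architecture matches the paper's: $m$-connectivity of both pairs, then Corollary~\ref{uniformgcn} towards a reference metric $\zeta$, then Theorem~\ref{theorem1}, with (c), (d), (f), (g) giving smoothness on the glued manifold. There is, however, a gap in Step~1 at the boundary parameters. It sits exactly where the paper writes ``this feature is the reason why we consider a specific smooth structure''.

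\textbf{The gap in Step~1.} For $\xi\in\partial D^m$ you need \emph{every} $\Omega(\xi,s)$ to be a diffeomorphism of the glued manifold, not only $\Omega(\xi,1)$.
\begin{itemize}
\item By Proposition~\ref{diff}(c), near $\partial M$ one has $\Omega(\xi,s)=\varrho_{\xi,s}\circ\varrho_\zeta^{-1}$, where $\varrho_{\xi,s}$ is the geodesic collar of the interpolated metric $(1-\rho(s))\zeta+\rho(s)g(\xi)$.
\item The two one-sided normal exponential maps of this metric combine to a smooth map into $M_1\cup_\zeta M_2$ when the metric is smooth there. Since $g(\xi)$ is smooth there, for $0<\rho(s)<1$ this requires $\zeta$ itself to be smooth on $M_1\cup_\zeta M_2$, i.e.\ $(\zeta_1)_0^{(\ell)}=(-1)^\ell(\zeta_2)_0^{(\ell)}$ for all $\ell$.
\item You only take ``some fixed metric $\zeta_1\sqcup\zeta_2$''. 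If, say, $\mathrm{II}_{\zeta_1}\neq-\mathrm{II}_{\zeta_2}$, the interpolated metrics are not even $C^1$ across $\partial M$ and $\Omega(\xi,s)$ need not be smooth there. Then $\Omega(\xi,s)^\ast g(\xi)$ leaves $\mathscr{R}_{\mathsf{C}(\sigma)}(M_1\cup_{\partial M}M_2)$, so your homotopy is not a homotopy of pairs.
\end{itemize}
``Matching normal geodesics'' and the cited identification argument do not supply this.

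The paper handles it as follows. It first produces, via Theorem~\ref{theorem1} with the trivial curvature condition, a $\zeta$ that is smooth on $M_1\cup_\zeta M_2$. It proves the statement for that smooth structure, then transfers to an arbitrary collar structure by a diffeomorphism preserving $M_1$ and $M_2$. In your set-up the cheapest repair is to take $\zeta_i=\mathrm{d}t^2+\gamma$ in the given collar coordinates on both sides, with the same $\gamma$. Its geodesic collars are then the given ones, and $\zeta$ is smooth on the glued manifold. But this has to be built in.

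\textbf{Smaller point on Step~2.} Your worry about smoothness ``in the interior of the collar'' is unfounded, and no relaxation or symmetric flexibility argument is needed.
\begin{itemize}
\item Near $\partial M$ the metrics stay of the form $\mathrm{d}t^2+f_t$ in the common collar $\varrho_\zeta$ (Remark~\ref{samegeodesics}), and each half is smooth up to $\partial M$.
\item So smoothness on the glued manifold is equivalent to $f^{(\ell)}_0=(-1)^\ell\tilde f^{(\ell)}_0$ for all $\ell$. For $\xi\in\partial D^m$, (c), (d), (f), (g) preserve this for every $s$.
\item Odd-order derivatives do not vanish there; they are opposite, and (d) and (g) preserve that.
\end{itemize}

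\textbf{Smaller point on the middle inclusion.} Your fallback ``in general we would argue via Proposition~\ref{1jet}'' for $\mathscr{R}^{\mathrm{II}_1+\mathrm{II}_2\geq 0}\subset\mathscr{R}^{\mathrm{P}}$ is circular, since that proposition assumes the P-conditions. The paper's proof uses this inclusion tacitly too, when it feeds $\mathrm{II}_1+\mathrm{II}_2\geq0$ families into Theorem~\ref{theorem1}. It holds when $\mathsf{C}_{\mathcal{R}>0}\subset\mathsf{C}(0)$, by the proposition right after Theorem~\ref{whe}; this is the setting of Theorem~A. It does not hold for every P-flexible family. For example, take $n=3$ and the cone $\{\mu_1+\mu_2>\tfrac12\mu_3\}$, where $\mu_1\leq\mu_2\leq\mu_3$ are the eigenvalues of the curvature operator. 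This cone is P-flexible with $\tau=1$. Yet $h(e_i,e_l)=\delta_{i2}\delta_{l2}$ is positive semidefinite and gives $h\KN\mathfrak{b}^\perp$ with eigenvalues $(0,0,1)$, which lies outside the closure.
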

\begin{proof}
We first prove the theorem for a particular smooth structure on $M_1\cup_{\partial M}M_2$.
In order to find this smooth structure, we apply Theorem~\ref{theorem1} for an arbitrary (single) metric in $\mathscr{R}^0(M_1\sqcup M_2)$ and the trivial family of curvature conditions where every member is $\mathscr{C}_B(\mathbb{R}^n)$.
This yields a Riemannian metric $\zeta\in\mathscr{R}^0(M_1\sqcup M_2)$ which is smooth on $M_1\cup_{\zeta}M_2$.
Let us fix the smooth manifold $M_1\cup_\zeta M_2$.

Now let $m\geq 0$ and $g=g_1\sqcup g_2\colon D^m\to\mathscr{R}_{\mathsf{C}(\sigma)}^{\ast}(M_1\sqcup M_2)$ be continuous with $\ast$ in $\{\mathrm{II}_1+\mathrm{II}_2\geq 0, \mathrm{P}\}$ and $g(\partial D^m)\subset\mathscr{R}_{\mathsf{C}(\sigma)}(M_1\cup_{\zeta}M_2)$.
We apply Proposition~\ref{diff} and Corollary~\ref{uniformgcn} for $g$ and the distinguished metric $\zeta$.
This yields a family of diffeotopies $\Omega\colon D^m\times[0,1]\to\Diff(M_1\sqcup M_2)$ such that the homotopy of metrics
\begin{equation}\nonumber
\tilde{g}\colon D^m\times[0,1]\to\mathscr{R}(M_1\sqcup M_2)\,,\,\tilde{g}(\xi,s)=\Omega(\xi,s)^\ast g(\xi)
\end{equation}
meets the following conditions for all $\xi\in K$ and $s\in[0,1]$:
\begin{enumerate}
\item[$\triangleright$]{$\tilde{g}(\xi,s)\in\mathscr{R}^\ast_{\mathsf{C}(\sigma)}(M_1\sqcup M_2)$;}
\item[$\triangleright$]{$\tilde{g}(\xi,0)=g(\xi)$;}
\item[$\triangleright$]{$\varrho_{\tilde{g}(\xi,1)}=\varrho_{\zeta}$ on some uniform neighbourhood of $\partial M_1\sqcup\partial M_2$;}
\item[$\triangleright$]{$\tilde{g}(\xi,s)_t=g(\xi)_t$ near $\partial M_1\sqcup\partial M_2$;}
\item[$\triangleright$]{$\tilde{g}(\xi,s)$ is smooth on $M_1\cup_{\zeta}M_2$ if $\xi\in\partial D^m$.}
\end{enumerate}
The last aspect follows from the fact that $\Omega(\xi,s)$ is smooth on $M_1\cup_\zeta M_2$ if $\xi\in\partial D^m$, because, for every $\xi\in\partial D^m$ and $s\in[0,1]$, the metric $(1-s)\cdot\zeta+s\cdot g(\xi)$ is smooth on $M_1\cup_{\zeta}M_2$.
This feature is the reason why we consider a specific smooth structure.

In summary, the preparatory deformation results in a continuous map $\tilde{g}\colon D^m\times[0,1]\to\mathscr{R}_{\mathsf{C}(\sigma)}^{\ast}(M_1\sqcup M_2)$ with $\tilde{g}(\xi,s)\in\mathscr{R}_{\mathsf{C}(\sigma)}(M_1\cup_\zeta M_2)$ for all $(\xi,s)\in\partial D^m\times[0,1]$ and $\varrho_{\tilde{g}(\xi,1)}=\varrho_\zeta$ for all $\xi\in D^m$.

Next we apply Theorem~\ref{theorem1} to the family $\tilde{g}(\xi,1),\xi\in D^m$ and obtain a continuous map $f\colon D^m\times[0,1]\to\mathscr{R}_{\mathsf{C}(\sigma)}^{\ast}(M_1\sqcup M_2)$ such that $f(\xi,s)\in\mathscr{R}_{\mathsf{C}(\sigma)}(M_1\cup_\zeta M_2)$ for all $(\partial D^m\times[0,1])\cup(D^m\times\{1\})$, using parts $\textit{(d)},\textit{(f)}$ and $\textit{(g)}$ of Theorem~\ref{theorem1}.
This yields the assertion for $M_1\cup_\zeta M_2$.

For a general smooth structure on $M_1\cup_{\partial M}M_2$ arising from arbitrary collar neighbourhoods, we use the existence of a diffeomorphism $M_1\cup_{\partial M}M_2\to M_1\cup_{\zeta}M_2$ which maps $M_1$ onto $M_1$ and $M_2$ onto $M_2$, see e.g.~\cite[Chap.~8, Thm.~2.1]{Hirsch}.
\end{proof}
\subsection{Examples}
We will show that Theorem~\ref{whe} implies all gluing results in Table~\ref{overview} where the boundary condition is 'convex'.
Furthermore, it will imply the result for $\PIC$.
\begin{proposition}
Let $h$ be a symmetric bilinear form on $\mathbb{R}^{n-1}$.
\begin{enumerate}
\item[(a)]{If $h$ is positive semi-definite, then $h\KN h$ and $h\KN\mathfrak{b}^\perp$ have $\mathcal{R}\geq 0$.}
\item[(b)]{If $h\KN\mathfrak{b}^\perp$ has $\ric\geq 0$, then $h$ is positive semidefinite.}
\end{enumerate}
\end{proposition}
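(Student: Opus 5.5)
Both parts reduce to diagonalising $h$ and computing the curvature operator of the Kulkarni--Nomizu products in an adapted basis. Regard $h$ as a form on $\mathbb{R}^n$ vanishing on $e_1$, as in the paper. Choose an orthonormal basis $u_2,\dots,u_n$ of $e_1^\perp=\{0\}\times\mathbb{R}^{n-1}$ with $h(u_i,u_j)=\lambda_i\delta_{ij}$, and set $u_1:=e_1$. The space of algebraic curvature tensors is $\On(n)$-equivariant, and the sign conditions $\mathcal{R}\ge 0$, $\ric\ge0$ are invariant. So we may work in the basis $(u_1,\dots,u_n)$, where $h=\mathrm{diag}(0,\lambda_2,\dots,\lambda_n)$ and $\mathfrak{b}^\perp=\mathrm{diag}(1,0,\dots,0)$.

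For two diagonal forms $A=\mathrm{diag}(a_i)$ and $B=\mathrm{diag}(b_i)$, the defining formula of $\KN$ gives
\begin{equation}\nonumber
(A\KN B)(u_i,u_j,u_k,u_l)=0
\end{equation}
unless $\{k,l\}=\{i,j\}$. Moreover, for $i\neq j$,
\begin{equation}\nonumber
(A\KN B)(u_i,u_j,u_j,u_i)=a_ib_j+a_jb_i .
\end{equation}
With the paper's normalisation $\langle\mathcal{R}(v_1\wedge v_2),v_3\wedge v_4\rangle=-R(v_1,v_2,v_3,v_4)$, the curvature operator is therefore diagonal in the basis $(u_i\wedge u_j)_{i<j}$, with eigenvalues $a_ib_j+a_jb_i$. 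One should double-check the sign against the paper's observation that $(\mathfrak{b}\KN\mathfrak{b}^\perp)(e_i,e_1,e_1,e_i)=1$, which the formula reproduces.

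For (a), with $A=B=h$ the eigenvalues of the curvature operator of $h\KN h$ are $2\lambda_i\lambda_j$ for $i<j$; those involving index $1$ are $0$. With $A=h$, $B=\mathfrak{b}^\perp$, the eigenvalue on $u_1\wedge u_j$ is $\lambda_j$, and all other eigenvalues are $0$. If $h\ge0$, then all $\lambda_i\ge0$, so all eigenvalues are $\ge0$ and $\mathcal{R}\ge0$ in both cases.

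For (b), compute the Ricci tensor of $h\KN\mathfrak{b}^\perp$ in the same basis. It is diagonal: the diagonal entries are the sums of the sectional curvatures of the planes through $u_j$. Hence
\begin{equation}\nonumber
\ric(u_j,u_j)=\lambda_j\quad (j\ge2),\qquad \ric(u_1,u_1)=\sum_{j\ge2}\lambda_j .
\end{equation}
Thus $\ric\ge0$ forces $\lambda_j\ge0$ for every $j\ge2$, i.e.\ $h\ge0$.

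There is no real obstacle. The only care needed is tracking the sign conventions for $\KN$ and $\mathcal{R}$, and checking that the off-diagonal entries of $A\KN B$ vanish for diagonal $A$ and $B$; both are direct from the definitions.
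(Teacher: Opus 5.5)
Your proof is correct and follows the paper's argument. You diagonalise $h$ in an orthonormal basis $(u_2,\dots,u_n)$ with $u_1=e_1$, observe that the curvature operators of $h\KN h$ and $h\KN\mathfrak{b}^\perp$ are diagonal in $(u_i\wedge u_j)_{i<j}$ with eigenvalues $2\lambda_i\lambda_j$ and $\lambda_j$ (the latter on $u_1\wedge u_j$), and use $\ric(h\KN\mathfrak{b}^\perp)(u_j,u_j)=\lambda_j$ for (b); your general formula $a_ib_j+a_jb_i$ for diagonal $A,B$ simply packages these two cases into one computation.
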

\begin{proof}
We choose an orthonormal basis $(u_2,\dots,u_n)$ of $\mathbb{R}^{n-1}$ which is diagonalising for $h$, i.e. there exist $\lambda_2,\dots,\lambda_n\in\mathbb{R}$ such that
\begin{equation}\nonumber
h(u_i,u_l)=\lambda_i\delta_{il}
\end{equation}
for all $2\leq i,l\leq n$.
Setting $u_1:=e_1$, the family $(u_1,u_2,\dots,u_n)$ is an orthonormal basis of $\mathbb{R}^n$.

\textit{(a)} It holds
\begin{align}\nonumber
\langle\mathcal{R}(h\KN h)(u_{i_1}\wedge u_{i_2}),u_{i_3}\wedge u_{i_4}\rangle&=-(h\KN h)(u_{i_1},u_{i_2},u_{i_3},u_{i_4})\\\nonumber
&=-2h(u_{i_1},u_{i_4})h(u_{i_2},u_{i_3})+2h(u_{i_1},u_{i_3})h(u_{i_2},u_{i_4})\\\nonumber
&=-2\lambda_{i_1}\lambda_{i_2}\cdot(\delta_{i_1i_4}\delta_{i_2i_3}-\delta_{i_1i_3}\delta_{i_2i_4})
\end{align}
for all $2\leq i_1<i_2<n$ and $2\leq i_3<i_4\leq n$.
From this we conclude that
\begin{equation}\nonumber
\mathcal{R}(h\KN h)(u_i\wedge u_l)=2\lambda_i\lambda_l\cdot u_i\wedge u_l
\end{equation}
for all $2\leq i<l\leq n$.
Furthermore, $\mathcal{R}(h\KN h)(u_1\wedge u_i)=0$ for all $2\leq i\leq n$.

Next it holds
\begin{align}\nonumber
\langle\mathcal{R}(h\KN\mathfrak{b}^\perp)(u_{i_1}\wedge u_{i_2}),u_{i_3}\wedge u_{i_4}\rangle&=-(h\KN\mathfrak{b}^\perp)(u_{i_1},u_{i_2},u_{i_3},u_{i_4})\\\nonumber
&=-h(u_{i_1},u_{i_4})\cdot\delta_{i_2i_3}\delta_{1i_2}-h(u_{i_2},u_{i_3})\cdot\delta_{i_1i_4}\delta_{1i_1}\\\nonumber
&\phantom{=-}+h(u_{i_1},u_{i_3})\cdot\delta_{i_2i_4}\delta_{1i_2}+h(u_{i_2},u_{i_4})\cdot\delta_{i_1i_3}\delta_{1i_1}\\\nonumber
&=\lambda_{i_2}\cdot\delta_{i_2i_4}\delta_{i_1i_3}\delta_{1i_1}
\end{align}
for all $1\leq i_1<i_2\leq n$ and $1\leq i_3<i_4\leq n$.
This yields
\begin{equation}\nonumber
\mathcal{R}(h\KN\mathfrak{b}^\perp)(u_1\wedge u_i)=\lambda_i\cdot u_1\wedge u_i
\end{equation}
for all $2<i\leq n$, and $\mathcal{R}(h\KN\mathfrak{b}^\perp)(u_i\wedge u_j)=0$ for all other $u_i\wedge u_j$.
The assertion follows.

\textit{(b)} Suppose that $h\KN\mathfrak{b}^\perp$ has $\ric\geq 0$.
For all $2\leq i\leq n$ holds
\begin{equation}\nonumber
\lambda_i=h(u_i,u_i)=\sum_{l=1}^n(h\KN\mathfrak{b}^\perp)(u_i,u_l,u_l,u_i)=\ric(h\KN\mathfrak{b}^\perp)(u_i,u_i)\geq 0.\tag*{\qedhere}
\end{equation}
\end{proof}
\begin{corollary}
Let $\mathsf{C}\subset\mathscr{C}_B(\mathbb{R}^n)$ be an open, convex and $\On(n)$-invariant cone with
\begin{equation}\nonumber
\mathsf{C}_{\mathcal{R}>0}\subset\mathsf{C}\subset\mathsf{C}_{\ric>0}.
\end{equation}
Let $h$ be a symmetric bilinear form on $\mathbb{R}^{n-1}$.
Then $h\KN h\in\overline{\mathsf{C}}$ and $h\KN\mathfrak{b}^\perp\in\overline{\mathsf{C}}$ if and only if $h$ is positive semidefinite.
\end{corollary}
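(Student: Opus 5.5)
The corollary follows by combining parts (a) and (b) of the preceding proposition with the two inclusions $\mathsf{C}_{\mathcal{R}>0}\subset\mathsf{C}\subset\mathsf{C}_{\ric>0}$, after passing to closures.

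\emph{Forward direction.} Suppose $h\KN\mathfrak{b}^\perp\in\overline{\mathsf{C}}$. Since $\mathsf{C}\subset\mathsf{C}_{\ric>0}$, we have $\overline{\mathsf{C}}\subset\overline{\mathsf{C}_{\ric>0}}$. The set of tensors with $\ric\geq 0$ is closed, because $R\mapsto\ric(R)(v,v)$ is continuous for each unit vector $v$. It contains $\mathsf{C}_{\ric>0}$, hence contains its closure. So $h\KN\mathfrak{b}^\perp$ has $\ric\geq 0$, and part (b) of the proposition shows that $h$ is positive semidefinite. Note that only the condition on $h\KN\mathfrak{b}^\perp$ is needed here.

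\emph{Backward direction.} Suppose $h$ is positive semidefinite. Part (a) gives that $h\KN h$ and $h\KN\mathfrak{b}^\perp$ have $\mathcal{R}\geq 0$. It remains to show that every $R$ with $\mathcal{R}\geq 0$ lies in $\overline{\mathsf{C}}$. The identity tensor $\mathfrak{b}\KN\mathfrak{b}$ has curvature operator a positive multiple of the identity, by the sign normalisation under which the round sphere has $\mathcal{R}>0$. Hence for every $\varepsilon>0$, the tensor $R+\varepsilon\,\mathfrak{b}\KN\mathfrak{b}$ has positive definite curvature operator and so lies in $\mathsf{C}_{\mathcal{R}>0}\subset\mathsf{C}$. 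Letting $\varepsilon\to 0$ gives $R\in\overline{\mathsf{C}}$. Thus $\overline{\mathsf{C}_{\mathcal{R}>0}}=\{\mathcal{R}\geq 0\}\subset\overline{\mathsf{C}}$, and both $h\KN h$ and $h\KN\mathfrak{b}^\perp$ lie in $\overline{\mathsf{C}}$.

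\emph{Main point to check.} The only real obstacle is this identification of the closures, $\overline{\mathsf{C}_{\mathcal{R}>0}}=\{\mathcal{R}\geq 0\}$ and $\overline{\mathsf{C}_{\ric>0}}\subset\{\ric\geq 0\}$. Both follow from the perturbation by $\varepsilon\,\mathfrak{b}\KN\mathfrak{b}$ and continuity as above. The remaining steps are direct applications of the proposition.
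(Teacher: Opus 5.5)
Your proof is correct and is the argument the paper intends: the paper gives no separate proof of this corollary and treats it as an immediate consequence of parts (a) and (b) of the preceding proposition together with $\mathsf{C}_{\mathcal{R}>0}\subset\mathsf{C}\subset\mathsf{C}_{\ric>0}$. You additionally spell out the two closure facts the paper leaves implicit, namely $\{\mathcal{R}\geq 0\}\subset\overline{\mathsf{C}_{\mathcal{R}>0}}$ (by perturbing with $\varepsilon\,\mathfrak{b}\KN\mathfrak{b}$) and $\overline{\mathsf{C}_{\ric>0}}\subset\{\ric\geq 0\}$ (by continuity).
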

\begin{example}
Let $h$ be a symmetric bilinear form on $\mathbb{R}^{n-1}$.
Then both $h\KN h$ and $h\KN\mathfrak{b}^\perp$ have non-negative isotropic curvature if and only if $h$ is $2$-non-negative.
\end{example}
\begin{proof}
It was shown by Chow~\cite[Lem.~3]{Chow} that $h\KN h$ and $h\KN \mathfrak{b}^{\perp}$ have non-negative isotropic curvature if $h$ is $2$-non-negative.

Now suppose that $h\KN \mathfrak{b}^{\perp}$ has non-negative isotropic curvature.
Let $v_1,v_2\in\mathbb{R}^{n-1}$ be orthonormal.
Choose another unit vector $v_3\in\mathbb{R}^{n-1}$ which is orthogonal to $v_1$ and $v_2$.
Then $(e_1,v_1,v_2,v_3)$ is an orthonormal $4$-frame in $\mathbb{R}^n$, and
\begin{align}\nonumber
h(v_1,v_1)+h(v_2,v_2)&=(h\KN\mathfrak{b}^\perp)(e_1,v_1,v_1,e_1)+(h\KN\mathfrak{b}^\perp)(e_1,v_2,v_2,e_1)\\\nonumber
&\phantom{=}+(h\KN \mathfrak{b}^\perp)(v_1,v_3,v_3,v_1)+(h\KN\mathfrak{b}^\perp)(v_2,v_3,v_3,v_2)\\\nonumber
&\phantom{=}+2(h\KN\mathfrak{b}^\perp)(e_1,v_3,v_1,v_2)\geq 0.\qedhere
\end{align}
\end{proof}
\begin{example}
Let $h$ be a symmetric bilinear form on $\mathbb{R}^{n-1}$.
Consider an $\mathrm{Ad}_{\mathrm{SO}(n,\mathbb{C})}$-invariant subset $S\subset\mathfrak{so}(n,\mathbb{C})$.
It was shown by Gururaja-Maity-Seshadri~\cite[Thm.~1.4]{GMS} that $\mathsf{C}(S)\subset\mathsf{C}_{\PIC}$.
Therefore, we can draw the following conclusion:
\begin{itemize}
\item[$\triangleright$]{If $h$ is positive semidefinite, then $h\KN h\in\overline{\mathsf{C}(S)}$ and $h\KN\mathfrak{b}^\perp\in\overline{\mathsf{C}(S)}$.}
\item[$\triangleright$]{If $h\KN\mathfrak{b}^\perp\in\overline{\mathsf{C}(S)}$, then $h$ is $2$-non-negative.}
\end{itemize}
\end{example}


\section{Deformations in Gromov-Lawson flexible families}
In this section, we fix a Gromov-Lawson flexible family $\mathsf{C}$ of curvature conditions and a real number $0<\tau\leq 1$ such that $\mathfrak{b}\KN\mathfrak{b}^\perp+B_\tau\subset\mathsf{C}(0)$.
Furthermore, let $M$ be a smooth manifold of dimension $n\geq 2$ with non-empty boundary and let $\sigma\colon M\to\mathbb{R}$ be a continuous function.
Both of them remain unchanged throughout this section.

We will reproduce the proof of Theorem~\ref{theorem1}, beginning with Lemma~\ref{uniform epsilon 2}.
\begin{lemma}\label{bflemma}
Let $K$ be a compact Hausdorff space.
Let $g_0\colon K\to C^{\infty}(\partial M;T^\ast\partial M\otimes T^\ast\partial M)$ be a continuous family of Riemannian metrics on $\partial M$ and let $h\colon K\to C^{\infty}(\partial M;T^\ast\partial M\otimes T^\ast\partial M)$ be a continuous familiy of symmetric $(0,2)$-tensor fields.
Let $U\subset\partial M$ be an open subset and let $A\subset U$ be compact.

Then there exists a smooth function $\Lambda_0=\Lambda_0(g_0,h)\in C^\infty(U;\mathbb{R}),\Lambda_0>0$, a constant $0<\rho\leq 1$ and a real number $\delta_0>0$ such that
\begin{enumerate}
\item[$\triangleright$]{for every smooth function $\Lambda\in C^\infty(U;\mathbb{R})$ with $\Lambda\geq\Lambda_0$ and}
\item[$\triangleright$]{for every continuous family $g\colon K\to\mathscr{R}_{\mathsf{C}(\sigma)}(M)$ with
\begin{equation}\nonumber
g(\xi)_t=(1-\Lambda t^2)\cdot g_0(\xi)-2th(\xi)\quad\text{on $[0,\sqrt{\delta}]\times U$}
\end{equation}
for some $0<\delta<\min\Bigl\{\delta_0,\Vert\Lambda\Vert_{C^2(A)}^{-4}\Bigr\}$ and all $\xi\in K$,}
\end{enumerate}
it holds
\begin{equation}\nonumber
(\id_t\oplus j)^\ast R_{g(\xi)}(t,p)+B_{\rho\cdot\Lambda(p)}\subset\mathsf{C}(\sigma(t,p))
\end{equation}
for all $\xi\in K,(t,p)\in[0,\sqrt{\delta}]\times A$ and all $\Lambda(p)^{-1}$-nearly $g(\xi)_t(p)$-isometric isomorphisms $j\colon\mathbb{R}^{n-1}\to T_p(\partial M)$.
\end{lemma}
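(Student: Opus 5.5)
I will follow the proof of Lemma~\ref{uniform epsilon 2} almost line by line, changing only the final step, where Perelman flexibility is replaced by Gromov-Lawson flexibility. First I assume, as there, that a reference family $\varphi\colon K\to\mathscr{R}_{\mathsf{C}(\sigma)}(M)$ exists with $\varphi(\xi)_t=(1-\Lambda_\varphi t^2)g_0(\xi)-2th(\xi)$ on $[0,\sqrt{\delta_\varphi}]\times U$; otherwise the statement is vacuous. Corollary~\ref{uniformcor2} then gives $\varepsilon_\varphi>0$ and $a_\varphi>0$ with $(\id_t\oplus j)^\ast R_{\varphi(\xi)}\in\mathsf{C}(\sigma+\varepsilon_\varphi)$ for all $a_\varphi$-nearly $\varphi(\xi)_t$-isometric $j$. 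For a competitor $g$ with parameter $\Lambda$, I split $R_{g(\xi)}-R_{\varphi(\xi)}=S^\varepsilon+S^{\lesssim1}+S^\Lambda$ exactly as before. The estimates carry over verbatim. Both $S^\varepsilon$ and $S^{\lesssim 1}$ are bounded by a constant depending only on $\varphi,g_0,h,d$, and $S^\varepsilon$ is even small once $\Lambda$ is large, so that $(\id_t\oplus j)^\ast S^\varepsilon\in B_r\subset\mathsf{C}(-\varepsilon_\varphi/2)$. The remaining term is $S^\Lambda(X,\nu,\nu,Y)=(\Lambda-\Lambda_\varphi)g_0(\xi)(X,Y)$.

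The new point is how $S^\Lambda$ is absorbed. For $\Lambda(p)^{-1}$-nearly $g(\xi)_t$-isometric $j$, I will show, as in the earlier proof, that $j$ is $c\Lambda^{-1}$-nearly $g_0(\xi)$-isometric. This gives
\[
\Vert(\id_t\oplus j)^\ast S^\Lambda-(\Lambda-\Lambda_\varphi)(p)\,\mathfrak{b}\KN\mathfrak{b}^\perp\Vert\lesssim 1 .
\]
Here only an $O(1)$ error is needed, which is better than the $\sqrt{\Lambda}$ scale required before. By Gromov-Lawson flexibility and the cone property, $\lambda\,\mathfrak{b}\KN\mathfrak{b}^\perp+B_{\tau\lambda}\subset\mathsf{C}(0)$ for every $\lambda>0$, now with a \emph{full} ball rather than a perpendicular one. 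I take $\lambda=(\Lambda-\Lambda_\varphi)(p)$. For $\Lambda$ large, $\tfrac{\tau}{2}\lambda\ge\tfrac{\tau}{4}\Lambda$, and the $O(1)$ contributions of $S^{\lesssim1}$ and of the $S^\Lambda$ error both fit inside $B_{\tau\lambda/2}$. Hence
\[
(\id_t\oplus j)^\ast(S^{\lesssim1}+S^\Lambda)+B_{\frac{\tau}{4}\Lambda(p)}\subset\mathsf{C}(0).
\]
Adding the pieces and using property (ii) of the family yields
\[
(\id_t\oplus j)^\ast R_{g(\xi)}+B_{\rho\Lambda(p)}\in\mathsf{C}(\sigma+\varepsilon_\varphi)+\mathsf{C}(-\varepsilon_\varphi/2)+\mathsf{C}(0)\subset\mathsf{C}(\sigma+\varepsilon_\varphi/2)\subset\mathsf{C}(\sigma),
\]
with $\rho:=\tau/4$. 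The $\kappa_\varphi\,\mathfrak{b}\KN\mathfrak{b}$ trick from before is no longer needed.

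The constants are $\delta_0=\min\{\delta_\varphi,\Vert\Lambda_\varphi\Vert^{-4}_{C^2(A)}\}$, and $\Lambda_0$ is chosen large enough to meet the finitely many largeness conditions, including $\Lambda^{-1}\le a_\varphi$-type compatibility. I expect the main obstacle to be checking that near-isometry of $j$ with respect to $g(\xi)_t$ transfers to near-isometry with respect to $g_0(\xi)$ and $\varphi(\xi)_t$ with error $O(\Lambda^{-1})$, uniformly for $t\le\sqrt\delta$. This reuses the bound $\Lambda t^2\le\Lambda\delta\le\Lambda^{-3}$, and the point of the check is that the resulting error times $\Lambda$ stays bounded.
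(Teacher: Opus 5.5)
Your proposal is correct and is essentially the paper's own argument: the paper also reruns the proof of Lemma~\ref{uniform epsilon 2} unchanged except for the last step. There it replaces Perelman flexibility by $(\Lambda-\Lambda_\varphi)(p)\,\mathfrak{b}\KN\mathfrak{b}^\perp+B_{\tau(\Lambda-\Lambda_\varphi)(p)}\subset\mathsf{C}(0)$ and drops the $\kappa_\varphi$ term. The only difference is cosmetic: the paper takes $\rho=\tau/2$ where you take $\rho=\tau/4$, and in fact any $\rho<\tau$ works for $\Lambda$ large.
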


\begin{proof}
The proof is similar to that of Lemma~\ref{uniform epsilon 2}.
Instead of equation~\eqref{PFE}, we use the fact that
\begin{equation}\nonumber
(\Lambda-\Lambda_\varphi)(p)\cdot\mathfrak{b}\KN\mathfrak{b}^\perp+B_{\tau\cdot(\Lambda-\Lambda_\varphi)(p)}\subset\mathsf{C}(0).
\end{equation}
There is no need to consider $\kappa_\varphi$ in GL-flexible families.
The assertion holds with $\rho:=\tfrac{\tau}{2}$.
\end{proof}

We fix three covers $\left(U_\alpha^1\right)_{\alpha\in\mathbb{N}}\subset\left(U_\alpha^2\right)_{\alpha\in\mathbb{N}}\subset\left(U_\alpha^3\right)_{\alpha\in\mathbb{N}}$ of $\partial M$ as in Lemma~\ref{cover2} together with a partition of unity $\psi=\left(\psi_\alpha\right)_{\alpha\in\mathbb{N}}$ subordinate to $\left(U_\alpha^1\right)_{\alpha\in\mathbb{N}}$.

\begin{proposition}\label{1jet2}
Let $K$ be a compact Hausdorff space.
Let $g_0\colon K\to C^{\infty}(\partial M;T^\ast\partial M\otimes T^\ast\partial M)$ be a continuous family of Riemannian metrics on $\partial M$ and let $h,k\colon C^{\infty}(\partial M;T^\ast\partial M\otimes T^\ast\partial M)$ be continuous families of symmetric $(0,2)$-tensor fields with
\begin{align}\nonumber
\bigl(\iota_{g_0}^\ast(h-k)\bigr)\KN\mathfrak{b}^\perp\in\overline{\mathsf{C}(0)},
\end{align}
where $\iota_{g_0(\xi)(p)}\colon\mathbb{R}^{n-1}\to(T_p(\partial M),g_0(\xi)(p))$ is a linear isometry for $\xi\in K$ and $p\in\partial M$.

Then there exists a smooth function $\Lambda_0=\Lambda_0(g_0,h,k)\in C^\infty(\partial M;\mathbb{R}),\Lambda_0>0$ such that
\begin{enumerate}
\item[$\triangleright$]{for every continuous family
\begin{align}\nonumber
g\colon K\to\mathscr{R}_{\mathsf{C}(\sigma)}(M)
\end{align}
of $\Lambda$-normal metrics with $\Lambda\in C^\infty(\partial M;\mathbb{R}),\Lambda\geq\Lambda_0,g(\xi)_0=g_0(\xi)$ and $\mathrm{II}_{g(\xi)}=h(\xi)$ for all $\xi\in K$ and}
\item[$\triangleright$]{for each neighbourhood $\mathscr{U}\subset M$ of $\partial M$}
\end{enumerate}
there exists a continuous map
\begin{equation}\nonumber
f\colon K\times[0,1]\to\mathscr{R}_{\mathsf{C}(\sigma)}(M)
\end{equation}
such that the following holds for all $\xi\in K$ and $s\in[0,1]$:
\begin{enumerate}
\item[(a)]{$f(\xi,0)=g(\xi)$;}
\item[(b)]{$f(\xi,s)$ is $\Lambda$-normal;}
\item[(c)]{$f(\xi,s)_0=g_0(\xi)$;}
\item[(d)]{$\mathrm{II}_{f(\xi,s)}=(1-s)\mathrm{II}_{g(\xi)}+sk(\xi)$;}
\item[(e)]{$f(\xi,s)=g(\xi)$ on $M\setminus\mathscr{U}$.}
\end{enumerate}
\end{proposition}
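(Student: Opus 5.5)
The plan is to run the proof of Proposition~\ref{1jet} again on a single manifold, with Lemma~\ref{bflemma} replacing Lemma~\ref{uniform epsilon 2}; this switch is what makes the Gauss-equation term harmless.

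\textbf{Construction.} First I apply Corollary~\ref{uniformgcn}, so that all metrics $g(\xi)$ share one geodesic collar $\varrho\colon V_\eta\to U\subset\mathscr{U}$ on which they are $\Lambda$-normal. On each box $\square^2_{\eta_\alpha}=\varrho([0,\eta_\alpha)\times U_\alpha^2)$ I define
\begin{equation}\nonumber
f^\delta(\xi,s)=\mathrm{d}t^2+(1-\Lambda t^2)\cdot g_0(\xi)-2t\cdot h(\xi)+\sum_{\alpha\in\mathbb{N}}\psi_\alpha\cdot 2s\chi_{\delta_\alpha}(t)\cdot(h(\xi)-k(\xi)),
\end{equation}
and I set $f^\delta(\xi,s)=g(\xi)$ elsewhere. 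Here $\chi_{\delta_\alpha}$ are the cut-offs of Lemma~\ref{bumpfunction}.

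Since $\chi_\delta(t)=t$ near $0$, the second fundamental form at $t=0$ equals $h-s(h-k)=(1-s)h+sk$, which gives (d). Near $t=0$ the new $t^2$-coefficient is still $-\Lambda g_0$, which gives (b). Properties (a), (c) and (e) are immediate. Positive definiteness, smoothness and continuity follow verbatim from the argument in Proposition~\ref{1jet}, using Lemma~\ref{closednbh}.

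\textbf{Curvature estimate.} For each $\alpha$ I apply Lemma~\ref{bflemma} with $U=U_\alpha^3$ and $A=\overline{U_\alpha^2}$. This yields $\Lambda_{0,\alpha}$, $\rho_\alpha$ and $\delta_{0,\alpha}$ such that
\begin{equation}\nonumber
(\id_t\oplus j)^\ast R_{g(\xi)}+B_{\rho_\alpha\Lambda}\subset\mathsf{C}(\sigma)
\end{equation}
holds for all $\Lambda^{-1}$-nearly isometric $j$. I then compare $\gamma=f^\delta(\xi,s)$ with $g(\xi)$ itself; no interpolation between two metrics is needed here. I sort the $\delta_\beta$ and split the difference tensor into three parts, $T=T^\intercal+T^{\lesssim1}+T^{\ddot\chi}$.

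\emph{Tangential part $T^\intercal$.} By the Gauss equation it consists of $R_{\gamma_t}-R_{g_t}$, which is $O(\sqrt{\delta})$ in $C^2$, and of cross terms. The cross terms have the form $\Lambda t\,\dot\chi\, g_0\KN(h-k)$, $\dot\chi\, h\KN(h-k)$ and $\dot\chi^2(h-k)\KN(h-k)$, all with $0\le\dot\chi\le1$, so
\begin{equation}\nonumber
\Vert T^\intercal\Vert\lesssim 1+\Lambda t\lesssim 1
\end{equation}
on $[0,\delta^{(m)}]$, since $\Lambda t\le 1$ there. Unlike the Perelman case, I do not need any sign for $(h-k)\KN(h-k)$. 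The bounded part is absorbed into the ball $B_{\rho_\alpha\Lambda}$, which is full and not merely perpendicular, once $\Lambda_0$ is large.

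\emph{Mixed part $T^{\lesssim 1}$.} The Codazzi and Weingarten terms are $\lesssim 1$ exactly as before, so they are absorbed into the ball in the same way.

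\emph{Normal part $T^{\ddot\chi}$.} Its entry is
\begin{equation}\nonumber
T^{\ddot\chi}(X,\nu,\nu,Y)=-\textstyle\sum\psi\, s\ddot\chi(t)\,(h-k)(X,Y),
\end{equation}
where $-s\ddot\chi\ge0$ and can be as large as $2/\delta$. Pulled back by $\iota_{\gamma_0}$, this is a nonnegative multiple of $\iota_{g_0}^\ast(h-k)\KN\mathfrak{b}^\perp$, which lies in $\overline{\mathsf{C}(0)}$ by hypothesis. The error from replacing $\iota_{\gamma_t}$ by a suitable $\iota_{\gamma_0}$ is $\lesssim 1$. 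To see this I use Lemma~\ref{QGS} to get $|\iota_{\gamma_t}-\iota_{\gamma_0}|\lesssim t$ and combine it with $|\ddot\chi|\le 2/\delta$ on $[0,\delta]$ and $|\ddot\chi|\le c_0$ beyond. The resulting $O(1)$ error is again absorbed into $B_{\rho_\alpha\Lambda}$.

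Summing up, I obtain $\mathsf{C}(\sigma)+\overline{\mathsf{C}(0)}\subset\mathsf{C}(\sigma)$ by property (ii). Finally I set
\begin{equation}\nonumber
\Lambda_0=\sum\psi_\alpha\max_{\beta\in I_\alpha}\Lambda_{0,\beta},
\end{equation}
and choose the $\delta_\alpha$ inductively. Since the cover is locally finite, only finitely many conditions are imposed on each $\delta_\alpha$.

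\textbf{Main obstacle.} The delicate step is to keep all error terms $O(1)$ uniformly in $\delta$, in particular the cross term $\Lambda t\,\dot\chi\, g_0\KN(h-k)$ and the $\ddot\chi$ frame-correction. Lemma~\ref{bflemma} supplies a ball of radius $\rho\Lambda$, which beats any $O(1)$ error, and that is precisely what removes the need for the $h\KN h$ condition used in the Perelman case. I will check carefully that the implicit constants are independent of $\Lambda$ and $\delta$, so that the order of choices works: first $\Lambda_0$, then $\delta$.
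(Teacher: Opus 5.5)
Your proposal is correct and follows essentially the same route as the paper. Both use the same $f^\delta$ built from $h-k$ and compare directly with $R_{g(\xi)}$. Both absorb every $O(1)$ error term into the full ball $B_{\rho_\alpha\Lambda}$ from Lemma~\ref{bflemma}, and both handle the $\ddot\chi$-part via $-s\ddot\chi\geq 0$ together with the hypothesis $\iota_{g_0}^\ast(h-k)\KN\mathfrak{b}^\perp\in\overline{\mathsf{C}(0)}$. The only differences are cosmetic: you split off the tangential part $T^\intercal$, which the paper folds into $T^{\lesssim 1}$, and you leave implicit the checks that $\iota_{\gamma_t}$ is $\Lambda^{-1}$-nearly $g(\xi)_t$-isometric and that $|\dot\chi|\leq c_0\sqrt{\delta}$ (rather than $0\le\dot\chi\le1$) on $[\delta,\sqrt{\delta}]$; both follow for sufficiently small $\delta$.
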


\begin{proof}
Let $\mathscr{U}\subset M$ be a neighbourhood of $\partial M$.
Let $\Lambda\colon\partial M\to\mathbb{R}$ be a positive smooth function and let $g\colon K\to\mathscr{R}_{\mathsf{C}(\sigma)}(M)$ be a continuous family of $\Lambda$-normal metrics with $g(\xi)_0=g_0(\xi)$ and $\mathrm{II}_{g(\xi)}=h(\xi)$.

Given that this proposition follows Corollary~\ref{uniformgcn} and Proposition~\ref{2jet} within the deformation process, we can assume that all metrics in $g$ have the same normal exponential map and that they satisfy the condition of $\Lambda$-normality on a common neighbourhood $U_0\subset M$ of $\partial M$.

We choose a continuous positive function $\eta\colon\partial M\to\mathbb{R}$ such that $\varrho\colon V_\eta\to U^{g}$ is a common geodesic collar neighbourhood for $g$ with $U^{g}\subset U_0\cap\mathscr{U}$.
For each $\alpha\in\mathbb{N}$, let $0<\eta_\alpha<\inf_{p\in U_\alpha^3}\eta(p)$ be some fixed distance from the boundary.

For every $\alpha\in\mathbb{N}$, we apply Lemma~\ref{bflemma} to $U=U_\alpha^3$ and $A=\overline{U_\alpha^2}$.
The lemma provides a smooth function $\Lambda_{0,\alpha}\in C^{\infty}(U_\alpha^3;\mathbb{R}),\Lambda_{0,\alpha}>0$, a constant $0<\rho_\alpha\leq 1$ and a real number $0<\delta_{0,\alpha}<\eta_\alpha^2$ such that
\begin{align}\nonumber
(\id_t\oplus j)^\ast R_{g(\xi)}(t,p)+B_{\rho_\alpha\cdot\Lambda(p)}\subset\mathsf{C}\bigl(\sigma(t,p)\bigr)
\end{align}
for all $\alpha\in\mathbb{N}$ and $\xi\in K$, as well as for
\begin{enumerate}
\item[$\triangleright$]{all $(t,p)\in[0,\sqrt{\delta}]\times\overline{U_\alpha^2}$, where $0<\delta<\min\{\delta_{0,\alpha},\Vert\Lambda\Vert_{C^2(U_\alpha^3)}^{-4}\}$, and}
\item[$\triangleright$]{all $\Lambda(p)^{-1}$-nearly $g(\xi)_t(p)$-isometric isomorphisms $j\colon\mathbb{R}^{n-1}\to T_p(\partial M)$,}
\end{enumerate}
provided that $\Lambda|_{U_\alpha^3}\geq\Lambda_{0,\alpha}$.

The deformation will be performed over each piece $U_\alpha^2\subset\partial M$, gluing everything together with the partition of unity $\psi$.
To this end, let $\delta=\left(\delta_\alpha\right)_{\alpha\in\mathbb{N}}$ be a family of real numbers satisfying
\begin{equation}\nonumber
0<\delta_\alpha<\min\left\{\frac{1}{2},\alpha^{-2},\min_{\beta\in I_\alpha}\delta_{0,\beta},\min_{\beta\in I_\alpha}\bigl(\Lambda_\beta^{-4}\bigr)\right\}\quad\text{with}\quad \Lambda_\alpha:=\Vert\Lambda\Vert_{C^2(U_\alpha^3)}
\end{equation}
for all $\alpha\in\mathbb{N}$.
For each $\alpha\in\mathbb{N}$, let $\chi_{\delta_\alpha}$ be a function as in Lemma~\ref{bumpfunction}.
Put $\square_{\eta_\alpha}^2:=\varrho([0,\eta_\alpha)\times U_\alpha^2)$.
We define
\begin{align}\nonumber
f^\delta\colon&K\times[0,1]\to C^{\infty}(M;T^\ast M\otimes T^\ast M),\\\nonumber
&f^\delta(\xi,s)=\left\{\begin{array}{l}\mathrm{d}t^2+(1-\Lambda t^2)\cdot g_0(\xi)-2t\cdot h(\xi)+\sum\limits_{\alpha\in\mathbb{N}}\psi_\alpha\cdot 2s\chi_{\delta_\alpha}(t)\cdot(h(\xi)-k(\xi))\\
\phantom{g_1(\xi)}\hspace{0.6cm}\text{on $\bigcup\limits_{\alpha\in\mathbb{N}}\square_{\eta_\alpha}^2$,}\\
         g_1(\xi)\hspace{0.6cm}\text{on $M_1-\bigcup\limits_{i\in\mathbb{N}}\square_{\eta_\alpha}^2$}\end{array}\right.
\end{align}
for $\delta=\left(\delta_\alpha\right)_{\alpha\in\mathbb{N}}$ as small as above.
As in Proposition~\ref{1jet}, one can verify that $f^\delta$ is a continuous family of smooth Riemannian metrics if $\delta=(\delta_\alpha)_{\alpha\in\mathbb{N}}$ is sufficiently small.

Let $\alpha\in\mathbb{N}$ and $d>1$.
On $U_\alpha^3$ we use the abbreviation
\begin{align}\nonumber
&\gamma:=f^\delta(\xi,s)\quad\text{and}\\\nonumber
&\gamma_t:=f^\delta(\xi,s)_t=(1-\Lambda t^2)\cdot g_0(\xi)-2t\cdot h(\xi)+\sum\limits_{\beta\in I_\alpha}\psi_\beta\cdot 2s\chi_{\delta_\beta}(t)\cdot(h(\xi)-k(\xi))
\end{align}
for $t\in[0,\eta_\alpha)$.
It holds
\begin{align}\nonumber
&\mathrm{II}_t^\gamma=-\frac{1}{2}\dot{\gamma}_t=h(\xi)+\Lambda t\cdot g_0(\xi)-\sum_{\beta\in I_\alpha}\psi_\beta\cdot s\dot{\chi}_\delta(t)\cdot(h(\xi)-k(\xi)),\\\nonumber
&\ddot{\gamma}_t=-2\Lambda\cdot g_0(\xi)+\sum_{\beta\in I_\alpha}\psi_\beta\cdot 2s\ddot{\chi}_\delta(t)\cdot(h(\xi)-k(\xi)).
\end{align}
We investigate $\gamma$ on the set $[0,\eta_\alpha)\times U_\alpha^2$.
One can even restrict to $[0,\max_{\beta\in I_\alpha}\sqrt{\delta_\beta})\times U_\alpha^2$, since $\gamma$ is equal to $g(\xi)$ on $[\max_{\beta\in I_\alpha}\sqrt{\delta_\beta},\eta_\alpha)\times U_\alpha$.

For sufficiently small $\delta_{I_\alpha}$, it holds $|X|_{\gamma_0}\leq 2d$ for all $\xi\in K,s\in[0,1],(t,p)\in[0,\max_{\beta\in I_\alpha}\sqrt{\delta_\beta})\times U_\alpha^2$ and all $X\in T_p(\partial M)$ with $|X|_{\gamma_t}\leq d$.
This is similar to Proposition~\ref{1jet}.

In the following, we analyse the difference curvature tensor
\begin{equation}\nonumber
T:=R_\gamma-R_{g(\xi)}.
\end{equation}
Set $N_\alpha:=|I_\alpha|$.
Then relabel and sort the numbers $\delta_\beta,\beta\in I_\alpha$ by size, that is
\begin{equation}\nonumber
\delta^{(1)}\leq\delta^{(2)}\leq\dots\leq\delta^{(N_\alpha)}.
\end{equation}
Furthermore, $\delta^{(0)}:=0$ and $\delta^{(N_\alpha+1)}:=\max_{\beta\in I_\alpha}\sqrt{\delta_\beta}$.
Let $1\leq m\leq N_\alpha+1$.
If $\delta^{(m)}=\delta_\beta$ for some $\beta\in I_\alpha$, it is convenient to write $\psi^{(m)}=\psi_\beta$.

We partition $T$ into two curvature tensors $T^{\lesssim 1}$ and $T^{\ddot{\chi}_\delta}$ on $[\delta^{(m-1)},\delta^{(m)}]\times U_\alpha^2$:
\begin{alignat}{2}\nonumber
&\mathrm{(I)}&&\text{$T^{\lesssim 1}$ is defined via}\\\nonumber
&&&\hspace{1.5cm}T^{\lesssim 1}(X,Y,Z,W)=T(X,Y,Z,W),\\\nonumber
&&&\hspace{1.5cm}T^{\lesssim 1}(X,Y,Z,\nu)=T(X,Y,Z,\nu),\\\nonumber
&&&\hspace{1.5cm}T^{\lesssim 1}(X,\nu,\nu,Y)=T(X,\nu,\nu,Y)+\sum\limits_{\beta=m}^{N_\alpha}\psi^{(\beta)}\cdot s\ddot{\chi}_{\delta^{(\beta)}}(t)\cdot(h(\xi)-k(\xi))(X,Y).\\\nonumber
&\mathrm{(II)}\;\;&&\text{$T^{\ddot{\chi}_\delta}$ is defined via}\\\nonumber
&&&\hspace{1.5cm}T^{\ddot{\chi}_\delta}(X,Y,Z,W)=0,\\\nonumber
&&&\hspace{1.5cm}T^{\ddot{\chi}_\delta}(X,Y,Z,\nu)=0,\\\nonumber
&&&\hspace{1.5cm}T^{\ddot{\chi}_\delta}(X,\nu,\nu,Y)=-\sum\limits_{\beta=m}^{N_\alpha}\psi^{(\beta)}\cdot s\ddot{\chi}_{\delta^{(\beta)}}(t)\cdot(h(\xi)-k(\xi))(X,Y).
\end{alignat}
Let $(t,p)\in[\delta^{(m-1)},\delta^{(m)}]\times U_\alpha^2$.
Analogously to Proposition~\ref{1jet}, it holds $\Vert\gamma_t\Vert_{C^2(U_\alpha^3)}\lesssim 1$ and $\Vert\mathrm{II}_t^\gamma\Vert_{C^0(U_\alpha^3)}\lesssim 1$, as well as $\Vert g(\xi)_t\Vert_{C^2(U_\alpha^3)}\lesssim 1$ and $\Vert\mathrm{II}_t^{g(\xi)}\Vert_{C^0(U_\alpha^3)}\lesssim 1$.
This yields $|T^{\lesssim 1}(X,Y,Z,W)|\lesssim 1$ for all $X,Y,Z,W\in T(\partial M)$ with $\gamma_0$-norm $\leq 2d$.

The computations for $T^{\lesssim 1}(X,Y,Z,\nu)$ and $T^{\lesssim 1}(X,\nu,\nu,Y)$ are similar to those in Proposition~\ref{1jet}.
We conclude that $\Vert(\id_t\oplus\iota_{\gamma_t(p)})^\ast T^{\lesssim 1}\Vert\lesssim 1$ for all linear isometries $\iota_{\gamma_t(p)}\colon\mathbb{R}^{n-1}\to(T_p(\partial M),\gamma_t(p))$.

One can show that for each linear isometry $\iota_{\gamma_t(p)}\colon\mathbb{R}^{n-1}\to(T_p(\partial M),\gamma_t(p))$, there exists a linear isometry $\iota_{\gamma_0(p)}\colon\mathbb{R}^{n-1}\to(T_p(\partial M),\gamma_0(p))$ such that $\Vert(\id_t\oplus\iota_{\gamma_t(p)})^\ast T^{\ddot{\chi}_\delta}-(\id_t\oplus\iota_{\gamma_0(p)})^\ast T^{\ddot{\chi}_\delta}\Vert\lesssim 1$, provided that $\delta_{I_\alpha}$ is sufficiently small.
It also holds $(\id_t\oplus\iota_{\gamma_0(p)})^\ast T^{\ddot{\chi}_\delta}\in\overline{\mathsf{C}(0)}$ due to the boundary condition $\iota_{\gamma_0}^\ast(h-k)\KN\mathfrak{b}^\perp\in\overline{\mathsf{C}(0)}$.
This is again similar to Proposition~\ref{1jet}.

For sufficiently small $\delta_{I_\alpha}$ each linear isometry $\iota_{\gamma_t(p)}\colon\mathbb{R}^{n-1}\to(T_p(\partial M),\gamma_t(p))$ is $\Lambda(p)^{-1}$-nearly $g(\xi)_t(p)$-isometric.
Hence, when choosing $\Lambda_{0,\alpha}$ bigger if necessary, it holds
\begin{equation}\nonumber
(\id_t\oplus\iota_{\gamma_t(p)})^\ast T^{\lesssim 1}(t,p)+(\id_t\oplus\iota_{\gamma_t(p)})^\ast T^{\ddot{\chi}_\delta}(t,p)-(\id_t\oplus\iota_{\gamma_0(p)})^\ast T^{\ddot{\chi}_\delta}(t,p)\in B_{\rho_\alpha\cdot\Lambda(p)}
\end{equation}
for $\iota_{\gamma_{0}(p)}$ as above.
This leads us to conclude that
\begin{align}\nonumber
&(\id_t\oplus\iota_{\gamma_t(p)})^\ast R_\gamma(t,p)\\\nonumber
=\;&(\id_t\oplus\iota_{\gamma_t(p)})^\ast R_{g(\xi)}(t,p)+(\id_t\oplus\iota_{\gamma_t(p)})^\ast T^{\lesssim 1}(t,p)\\\nonumber
&\quad+(\id_t\oplus\iota_{\gamma_t(p)})^\ast T^{\ddot{\chi}_\delta}(t,p)-(\id_t\oplus\iota_{\gamma_0(p)})^\ast T^{\ddot{\chi}_\delta}(t,p)+(\id_t\oplus\iota_{\gamma_0(p)})^\ast T^{\ddot{\chi}_\delta}(t,p)\\\nonumber
&\quad\phantom{+(\id_t}\in\mathsf{C}(\sigma(t,p))+\overline{\mathsf{C}(0)}\subset\mathsf{C}(\sigma(t,p)).
\end{align}
Since $1\leq m\leq N_\alpha+1$ and $(t,p)\in[\delta^{(m-1)},\delta^{(m)}]\times U_\alpha^2$ were chosen arbitrarily, $R_\gamma$ satisfies $\mathsf{C}(\sigma)$ on $\square_{\eta_\alpha}^2$ for sufficiently small $\delta_{I_\alpha}$.
Set
\begin{equation}\nonumber
\Lambda_0\colon\partial M\to\mathbb{R}\,,\,\Lambda_0(p)=\sum_{\alpha\in\mathbb{N}}\psi_\alpha\cdot\max_{\beta\in I_\alpha}\Lambda_{0,\beta}(p).
\end{equation}
This function does the job, the reasoning is as in the proof of Proposition~\ref{1jet}.
\end{proof}

\begin{theorem}\label{theorem2}
Let $K$ be a compact Hausdorff space and let
\begin{equation}\nonumber
g\colon K\to\mathscr{R}_{\mathsf{C}(\sigma)}(M)
\end{equation}
be continuous.
Let $k\colon K\to C^{\infty}(\partial M;T^\ast\partial M\otimes\partial M)$ be a continuous family of symmetric $(0,2)$-tensor fields satisfying
\begin{align}\nonumber
\iota_{g(\xi)_0}^\ast(\mathrm{II}_{g(\xi)}-k)\KN\mathfrak{b}^\perp\in\overline{\mathsf{C}(0)}
\end{align}
for all $\xi\in K$ and $p\in\partial M$, where $\iota_{g(\xi)_0(p)}\colon\mathbb{R}^{n-1}\to (T_p(\partial M),g(\xi)_0(p))$ is a linear isometry for $\xi\in K$ and $p\in\partial M$.

Then there exists a smooth positive function $\Lambda_0\in C^\infty(\partial M)$ such that for each $\Lambda\in C^\infty(\partial M)$ with $\Lambda\geq\Lambda_0$ and for each neighbourhood $\mathscr{U}\subset M$ of $\partial M$, there is a continuous map
\begin{equation}\nonumber
f\colon K\times[0,1]\to\mathscr{R}_{\mathsf{C}(\sigma)}(M)
\end{equation}
so that the following holds for all $\xi\in K$ and $s\in[0,1]$:
\begin{enumerate}
\item[(a)]{$f(\xi,0)=g(\xi)$;}
\item[(b)]{$f(\xi,1)$ is $\Lambda$-normal;}
\item[(c)]{$f(\xi,s)_0=g(\xi)_0$;}
\item[(d)]{$\mathrm{II}_{f(\xi,s)}=S_1(s)\mathrm{II}_{g_1(\xi)}+(1-S_1(s))k(\xi)$, in particular $\mathrm{II}_{f(\xi,1)}=k(\xi)$;}
\item[(e)]{if $g(\xi)$ is $\tilde{\Lambda}$-normal, then $f(\xi,s)$ is $\Lambda_s$-normal for $\Lambda_s=S_2(s)\tilde{\Lambda}+(1-S_2(s))\Lambda$;}
\item[(f)]{$\ddot{f}(\xi,s)_0=S_2(s)\ddot{g}(\xi)_0-2(1-S_2(s))\Lambda g(\xi)_0$;}
\item[(g)]{for $\ell\geq 3$ we have $f(\xi,s)_0^{(\ell)}=S_2(s)\cdot g(\xi)_0^{(\ell)}$;}
\item[(h)]{$f(\xi,s)=g(\xi)$ on $M\setminus\mathscr{U}$.}
\end{enumerate}
\end{theorem}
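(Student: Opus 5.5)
We follow the proof of Theorem~\ref{theorem1}, with Proposition~\ref{1jet} replaced by Proposition~\ref{1jet2}. The deformation is a concatenation of two homotopies, parametrised by the auxiliary functions $S_1,S_2$. On $s\in[0,\tfrac12]$ we adjust the $2$-jet, and on $s\in[\tfrac12,1]$ we adjust the $1$-jet.

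\emph{First stage.} Apply Corollary~\ref{uniformgcn} to the family $g$ so that all metrics share one normal exponential map near $\partial M$. This leaves $g(\xi)_t$ near the boundary unchanged, so the boundary data $g(\xi)_0$, $\mathrm{II}_{g(\xi)}$ and all higher $t$-derivatives are preserved. Then apply Proposition~\ref{2jet}, which gives a function $\Lambda_0^{(1)}$. For every $\Lambda\geq\Lambda_0^{(1)}$ it yields a homotopy, reparametrised to $s\in[0,\tfrac12]$ via $1-S_2(s)=2s$, ending at $\Lambda$-normal metrics $g'(\xi)$. These satisfy $g'(\xi)_0=g(\xi)_0$ and $\mathrm{II}_{g'(\xi)}=\mathrm{II}_{g(\xi)}$. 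Properties (e), (f), (g) on this interval are items (c), (e), (f) of Proposition~\ref{2jet}, and $S_1\equiv1$ there gives (d).

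\emph{Second stage.} Apply Proposition~\ref{1jet2} with the data $g_0:=g(\xi)_0$, $h:=\mathrm{II}_{g(\xi)}$ and the given $k$. Its hypothesis $\iota_{g_0}^\ast(h-k)\KN\mathfrak{b}^\perp\in\overline{\mathsf{C}(0)}$ is exactly the assumption of the theorem. The proposition produces $\Lambda_0^{(2)}=\Lambda_0^{(2)}(g_0,h,k)$, which depends only on boundary data and not on $\Lambda$ or on the family. Set $\Lambda_0:=\max\{\Lambda_0^{(1)},\Lambda_0^{(2)}\}$, smoothed to a smooth function. For $\Lambda\geq\Lambda_0$, the family $g'$ is $\Lambda$-normal with the required boundary data, so Proposition~\ref{1jet2} gives a homotopy, reparametrised by $s\mapsto 1-S_1(s)=2s-1$. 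Along it the metrics stay $\Lambda$-normal and have $\mathrm{II}=S_1(s)\mathrm{II}_{g(\xi)}+(1-S_1(s))k(\xi)$. Since $S_2\equiv0$ on $[\tfrac12,1]$, $\Lambda$-normality gives (e), (f) and (g): the second derivative is $-2\Lambda g_0$ and all higher derivatives vanish.

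\emph{Concatenation.} Both homotopies are supported in $\mathscr{U}$ and stay in $\mathscr{R}_{\mathsf{C}(\sigma)}(M)$, and they agree at $s=\tfrac12$, so concatenating them gives a continuous $f$ with (a)--(h). The main point to check is the order of quantifiers. $\Lambda_0$ must be fixed before $\Lambda$, and $\Lambda_0$ is well defined because Proposition~\ref{1jet2}'s threshold depends only on $(g_0,h,k)$, which Proposition~\ref{2jet} leaves unchanged. A further subtlety is that, after the first stage, Proposition~\ref{1jet2} needs a common collar on which every metric is $\Lambda$-normal. This follows from the construction in Proposition~\ref{2jet}, where the deformed metrics equal $F(\xi,1)$ on the uniform neighbourhood $U_0$.
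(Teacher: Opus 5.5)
Your proposal is correct and is the argument the paper intends: as in Theorem~\ref{theorem1} (following \cite[Thm.~3.7]{BH2023}), apply Proposition~\ref{2jet} and then Proposition~\ref{1jet2} with $h=\mathrm{II}_{g(\xi)}$ for $\Lambda$ above both thresholds, and concatenate the two homotopies via $S_2$ and $S_1$. The only thing to tidy is that the uniformisation by Corollary~\ref{uniformgcn} is already built into Proposition~\ref{2jet}, so it should not be run as a separate preliminary stage of $f$, because the exact formulas (e)--(g) would fail along such a stage.
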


As with the Perelman type, we can derive comparison results for spaces of metrics with boundary conditions.
Set
\begin{align}\nonumber
\mathscr{R}_{\mathsf{C}(\sigma)}^{\mathrm{GL}}(M):=\{g\in\mathscr{R}_{\mathsf{C}(\sigma)}(M): \iota_{g_0}^\ast\mathrm{II}_g\KN\mathfrak{b}^\perp\in\overline{\mathsf{C}(0)}\}
\end{align}
and recall that
\begin{align}\nonumber
\mathscr{R}_{\mathsf{C}(\sigma)}^{\mathrm{GL}}(M_1\sqcup M_2):=\{g_1\sqcup g_2\in\mathscr{R}_{\mathsf{C}(\sigma)}^0(M_1\sqcup M_2): \iota_{g_0}^\ast(\mathrm{II}_{g_1}+\mathrm{II}_{g_2})\KN\mathfrak{b}^\perp\in\overline{\mathsf{C}(0)}\}
\end{align}
for two manifolds $M_1,M_2$ with boundary $\partial M_1=\partial M_2$.

\begin{corollary}
Each of the inclusions
\begin{equation}\nonumber
\mathscr{R}_{\mathsf{C}(\sigma)}^{\mathsf{D}}(M)\hookrightarrow\mathscr{R}_{\mathsf{C}(\sigma)}^{\mathrm{II}=0}(M)\hookrightarrow\mathscr{R}_{\mathsf{C}(\sigma)}^{\mathrm{II}\geq 0}(M)\hookrightarrow\mathscr{R}_{\mathsf{C}(\sigma)}^{\mathrm{GL}}(M)
\end{equation}
is a weak homotopy equivalence.
\end{corollary}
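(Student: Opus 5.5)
The plan is to copy the proof of Corollary~\ref{bc}, replacing Theorem~\ref{theorem1} on $M\sqcup M$ by Theorem~\ref{theorem2} on the single manifold $M$. Let $\ast$ be one of $\mathrm{II}=0$, $\mathrm{II}\geq 0$, $\mathrm{GL}$. It suffices to prove that each inclusion $\mathscr{R}^{\mathsf{D}}_{\mathsf{C}(\sigma)}(M)\hookrightarrow\mathscr{R}^{\ast}_{\mathsf{C}(\sigma)}(M)$ is a weak homotopy equivalence; by two-out-of-three, every inclusion in the chain then is one. The chain is well defined: if $\mathrm{II}\geq 0$ then $\iota_{g_0}^\ast\mathrm{II}_g\KN\mathfrak{b}^\perp$ has $\mathcal{R}\geq 0$ by the Proposition in the Examples subsection of Section~3. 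This tensor lies in $\overline{\mathsf{C}(0)}$ whenever $\overline{\mathsf{C}(0)}\supset\overline{\mathsf{C}_{\mathcal{R}>0}}$. That containment is the one point I need to check. If it is not automatic for a GL-flexible family, I would instead verify the GL condition directly for the semidefinite forms occurring along the homotopy below; this works because they are nonnegative multiples of $\mathrm{II}_g$.

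Let $g\colon D^m\to\mathscr{R}^\ast_{\mathsf{C}(\sigma)}(M)$ be continuous with $g(\partial D^m)\subset\mathscr{R}^{\mathsf{D}}_{\mathsf{C}(\sigma)}(M)$. Apply Theorem~\ref{theorem2} with $K=D^m$ and target $k(\xi)=0$. The hypothesis of the theorem then reads $\iota_{g_0}^\ast\mathrm{II}_{g(\xi)}\KN\mathfrak{b}^\perp\in\overline{\mathsf{C}(0)}$, which is exactly the GL condition and so holds in all three cases. The theorem yields $f\colon D^m\times[0,1]\to\mathscr{R}_{\mathsf{C}(\sigma)}(M)$ with the following properties:
\begin{itemize}
\item $f(\xi,0)=g(\xi)$;
\item $\mathrm{II}_{f(\xi,s)}=S_1(s)\mathrm{II}_{g(\xi)}$;
\item $f(\xi,s)^{(\ell)}_0=S_2(s)g(\xi)^{(\ell)}_0$ for $\ell\geq 3$.
\end{itemize}
Restrict to $s\in[0,1]$ as given. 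Since $S_1(s)\in[0,1]$, the second fundamental form is a nonnegative multiple of $\mathrm{II}_{g(\xi)}$. The properties $\mathrm{II}=0$, $\mathrm{II}\geq0$ and the GL condition (by the cone property of $\overline{\mathsf{C}(0)}$) are all preserved under nonnegative scaling, so $f(\xi,s)\in\mathscr{R}^\ast_{\mathsf{C}(\sigma)}(M)$ for all $s$.

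At $s=1$ we have $\mathrm{II}=0$, and all odd derivatives $f_0^{(\ell)}$ with $\ell\geq3$ vanish because $S_2(1)=0$. Since $\dot f_0=-2\mathrm{II}=0$ as well, $f(\xi,1)$ is a doubling metric. On $\partial D^m$ the metrics $g(\xi)$ are already doubling, so $\mathrm{II}_{g(\xi)}=0$ and $g(\xi)^{(\ell)}_0=0$ for odd $\ell$. The formulas above then show that $f(\xi,s)$ stays doubling for all $s$: its odd derivatives are $S_1(s)\cdot 0$ or $S_2(s)\cdot 0$. Hence the pair $(\mathscr{R}^\ast,\mathscr{R}^{\mathsf{D}})$ is $m$-connected for every $m$, and the long exact sequence of homotopy groups finishes the proof.

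The main obstacle is the well-definedness of the chain. It reduces to $\overline{\mathsf{C}(0)}\ni h\KN\mathfrak{b}^\perp$ for positive semidefinite $h$, where $h$ is regarded as a form on $\mathbb{R}^n$ via extension by zero as in the definition of $\mathscr{R}^{\mathrm{GL}}_{\mathsf{C}(\sigma)}$. I would attempt this by diagonalizing $h=\sum\lambda_i u_i^\ast\otimes u_i^\ast$ with $\lambda_i\geq0$. Then $h\KN\mathfrak{b}^\perp$ is a nonnegative combination of the tensors $u_i^\ast\otimes u_i^\ast\KN\mathfrak{b}^\perp$. Symmetrizing $\mathfrak{b}\KN\mathfrak{b}^\perp\in\mathsf{C}(0)$ by the $\On(n)$-invariance and using convexity should relate these to $\mathsf{C}(0)$. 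If that is not enough, I would treat $\mathrm{II}\geq0\subset\mathrm{GL}$ as part of the stated setting rather than prove it.
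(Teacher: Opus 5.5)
Your argument is the one the paper intends. The paper only says the corollary is a direct consequence of Theorem~\ref{theorem2}, that is, rerun the proof of Corollary~\ref{bc} with Theorem~\ref{theorem2} and $k=0$. Your homotopy is correct for $\ast\in\{\mathrm{II}=0,\mathrm{GL}\}$:
\begin{itemize}
\item you may use all $s\in[0,1]$ because $S_1\geq 0$;
\item $\mathrm{II}$ is scaled by $S_1(s)$, and at $s=1$ both $\mathrm{II}$ and the odd derivatives of order $\geq 3$ vanish;
\item the image of $\partial D^m$ stays inside the doubling metrics throughout.
\end{itemize}

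The point you flag for $\ast=(\mathrm{II}\geq 0)$ cannot be proved, by symmetrisation or any other method, because it is false for a general GL-flexible family. Let $n\geq 3$, $0<\epsilon<\tfrac{1}{2(n-1)}$, and $\mathsf{C}:=\{R:\ric(R)-\epsilon\,\scal(R)\,\mathfrak{b}\ \text{is positive definite}\}$.
\begin{itemize}
\item $\mathsf{C}$ is an open, convex, $\On(n)$-invariant cone.
\item It contains $\mathfrak{b}\KN\mathfrak{b}^\perp$, whose Ricci tensor is $\mathrm{diag}(n-1,1,\dots,1)$ and whose scalar curvature is $2(n-1)$.
\item Hence, by Proposition~\ref{familyembedding}, $\mathsf{C}$ is $\mathsf{C}(0)$ of a GL-flexible family.
\end{itemize}
Now take the positive semidefinite form $h=e_2^\ast\otimes e_2^\ast$ on $e_1^\perp$. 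The tensor $h\KN\mathfrak{b}^\perp$ has Ricci tensor $\mathrm{diag}(1,1,0,\dots,0)$ and scalar curvature $2$, so $h\KN\mathfrak{b}^\perp\notin\overline{\mathsf{C}}$. Concretely, take the flat solid torus $D^2\times S^1$ with $\sigma\equiv-1$. Its boundary is convex with $\mathrm{II}$ of rank one, so it lies in $\mathscr{R}^{\mathrm{II}\geq 0}_{\mathsf{C}(\sigma)}$ but not in $\mathscr{R}^{\mathrm{GL}}_{\mathsf{C}(\sigma)}$.

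Your other fallback is circular. To apply Theorem~\ref{theorem2} with $k=0$ to a family in $\mathscr{R}^{\mathrm{II}\geq 0}_{\mathsf{C}(\sigma)}(M)$, you need $\iota_{g_0}^\ast\mathrm{II}_{g(\xi)}\KN\mathfrak{b}^\perp\in\overline{\mathsf{C}(0)}$ already at $s=0$. That is exactly what is missing, so your claim that the hypothesis ``holds in all three cases'' is unjustified in this case.

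The correct stance is your last one. By writing $\mathscr{R}^{\mathrm{II}\geq 0}_{\mathsf{C}(\sigma)}(M)\hookrightarrow\mathscr{R}^{\mathrm{GL}}_{\mathsf{C}(\sigma)}(M)$ as an inclusion, the statement presupposes that convexity implies the GL condition. The paper's one-line proof uses this tacitly. The presupposition holds, for example, whenever $\mathsf{C}_{\mathcal{R}>0}\subset\mathsf{C}(0)$, by the Proposition in the Examples subsection of Section~3. This covers $\ric$, $\scal_k$, $\scal$ and $\mathcal{C}_m$. Under this assumption your proof is complete.

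Without the assumption, your argument still proves the statement verbatim with $\mathscr{R}^{\mathrm{II}\geq 0}_{\mathsf{C}(\sigma)}(M)$ replaced by $\mathscr{R}^{\mathrm{II}\geq 0}_{\mathsf{C}(\sigma)}(M)\cap\mathscr{R}^{\mathrm{GL}}_{\mathsf{C}(\sigma)}(M)$. This works because scaling $\mathrm{II}$ by $S_1(s)\in[0,1]$ preserves both conditions.
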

This corollary is a direct consequence of Theorem~\ref{theorem2}.
Note that more comparison results apply.
In fact, one can reproduce~\cite[Chap.~4.1]{BH2023}.

\begin{theorem}\label{whe2}
Each of the inclusions
\begin{equation}\nonumber
\mathscr{R}_{\mathsf{C}(\sigma)}(M_1\cup_{\partial M}M_2)\hookrightarrow\mathscr{R}_{\mathsf{C}(\sigma)}^{\mathrm{II}_1+\mathrm{II}_2\geq 0}(M_1\sqcup M_2)\hookrightarrow\mathscr{R}_{\mathsf{C}(\sigma)}^{\mathrm{GL}}(M_1\sqcup M_2)
\end{equation}
is a weak homotopy equivalence.
\end{theorem}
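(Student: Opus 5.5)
We follow the strategy of Theorem~\ref{whe}. Theorem~\ref{theorem2} plays the role of Theorem~\ref{theorem1}, and the gluing is done on $M_1\sqcup M_2$ so that the prescribed tensor $k$ may depend on the second fundamental form of the \emph{other} piece. It suffices to show that the composite inclusion $\mathscr{R}_{\mathsf{C}(\sigma)}(M_1\cup_{\partial M}M_2)\hookrightarrow\mathscr{R}^{\mathrm{GL}}_{\mathsf{C}(\sigma)}(M_1\sqcup M_2)$ and the second inclusion $\mathscr{R}^{\mathrm{II}_1+\mathrm{II}_2\geq0}_{\mathsf{C}(\sigma)}\hookrightarrow\mathscr{R}^{\mathrm{GL}}_{\mathsf{C}(\sigma)}$ are weak homotopy equivalences; the first inclusion then follows by two-out-of-three. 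Each is shown by proving that the pair is $m$-connected for all $m$. Throughout, we first fix the smooth structure $M_1\cup_\zeta M_2$ exactly as in the proof of Theorem~\ref{whe}. We reduce an arbitrary smooth structure to this one at the end via \cite[Chap.~8, Thm.~2.1]{Hirsch}.

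Let $g=g_1\sqcup g_2\colon D^m\to\mathscr{R}^{\ast}_{\mathsf{C}(\sigma)}(M_1\sqcup M_2)$ with $\ast\in\{\mathrm{GL}\}$ and boundary values in the smaller space. Using Proposition~\ref{diff} and Corollary~\ref{uniformgcn} with the distinguished metric $\zeta$, we deform $g$ within $\mathscr{R}^{\ast}_{\mathsf{C}(\sigma)}$ so that all geodesic collars agree with those of $\zeta$, while $\partial D^m$ stays in the subspace. This preserves boundary data, since $g(\xi)_t$ is unchanged near the boundary.

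Next we apply Theorem~\ref{theorem2} on the single manifold $M_1$ to the family $g_1$ with $k(\xi):=-\mathrm{II}_{g_2(\xi)}$. The hypothesis $\iota^\ast(\mathrm{II}_{g_1}-k)\KN\mathfrak{b}^\perp=\iota^\ast(\mathrm{II}_{g_1}+\mathrm{II}_{g_2})\KN\mathfrak{b}^\perp\in\overline{\mathsf{C}(0)}$ is exactly the GL-condition. Along the resulting homotopy, $\mathrm{II}_{f(\xi,s)}=S_1\mathrm{II}_{g_1}-(1-S_1)\mathrm{II}_{g_2}$. Hence
\[
\mathrm{II}_{f_1(\xi,s)}+\mathrm{II}_{g_2(\xi)}=S_1(s)\bigl(\mathrm{II}_{g_1(\xi)}+\mathrm{II}_{g_2(\xi)}\bigr),
\]
and by the cone property the pair stays in $\mathscr{R}^{\mathrm{GL}}$, and stays in $\mathscr{R}^{\mathrm{II}_1+\mathrm{II}_2\geq0}$ if it started there. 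Simultaneously we apply Theorem~\ref{theorem2} on $M_2$ with $k=\mathrm{II}_{g_2}$, which is trivially admissible since $0\in\overline{\mathsf{C}(0)}$. This only adjusts the higher jets: it makes $g_2$ $\Lambda$-normal and kills $f_0^{(\ell)}$ for $\ell\geq3$. The same large $\Lambda$ works for both sides by taking the maximum of the two $\Lambda_0$.

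At $s=1$, both pieces are $\Lambda$-normal with equal $g_0$, with $\mathrm{II}_{f_1}=-\mathrm{II}_{f_2}$, and with vanishing jets of order $\geq3$. Thus $f_0^{(\ell)}=(-1)^\ell\tilde f_0^{(\ell)}$ for all $\ell$, and the metric is smooth on $M_1\cup_\zeta M_2$, in particular lying in $\mathscr{R}^{\mathrm{II}_1+\mathrm{II}_2\geq0}$. For $\xi\in\partial D^m$, where the metric is already smooth across (so $\mathrm{II}_{g_1}=-\mathrm{II}_{g_2}$ and the jets match), I would check via (d)--(g) that the glued metric stays smooth for all $s$. Smoothness requires the $t$-jets to match with signs. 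We have $\mathrm{II}_{f_1}=-\mathrm{II}_{g_2}$ for all $s$. The second derivatives interpolate by $S_2$ between matching data and $-2\Lambda g_0$ on both sides. The higher derivatives satisfy $S_2\,g^{(\ell)}_0$ with $g_{1,0}^{(\ell)}=(-1)^\ell g_{2,0}^{(\ell)}$. This gives the required relative null-homotopies.

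The main obstacle I expect is this boundary-compatibility check for $\xi\in\partial D^m$. One must ensure the two independent applications of Theorem~\ref{theorem2} produce matching $\Lambda$-normal forms on both sides for every $s$, including the regime $s\leq\tfrac12$ where (e) interpolates between $\tilde\Lambda$ and $\Lambda$. If smoothness at intermediate $s$ fails, I would instead reparametrise: first run the $2$-jet step (Proposition~\ref{2jet}) symmetrically on both sides, which visibly preserves the gluing condition, and only then run Proposition~\ref{1jet2}. On the boundary family the latter deformation is constant, because $\mathrm{II}_{g_1}=k$ there.
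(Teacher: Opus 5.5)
Your proposal is correct and is essentially the paper's proof. After the collar uniformisation from the proof of Theorem~\ref{whe}, the paper applies Theorem~\ref{theorem2} once on $M_1\sqcup M_2$ with $k(\xi)=-\mathrm{II}_{g_2(\xi)}\sqcup\mathrm{II}_{g_2(\xi)}$, which is exactly your two componentwise applications with a common $\Lambda$. Your check via parts (d), (f), (g) already settles compatibility for $\xi\in\partial D^m$ at every $s$, because only the $t$-jets at $t=0$ in the common collar $\varrho_\zeta$ matter and part (e) plays no role; so the anticipated obstacle and the fallback reparametrisation are unnecessary.
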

\begin{proof}
The proof is similar to that of Theorem~\ref{whe}.
In the end of the proof, we apply Theorem~\ref{theorem2} to $M_1\sqcup M_2$ with $k(\xi)=-\mathrm{II}_{g_2(\xi)}\sqcup\mathrm{II}_{g_2(\xi)}$, where $g=g_1\sqcup g_2\colon D^m\to\mathscr{R}^{\mathrm{GL}}_{\mathsf{C}(\sigma)}(M_1\sqcup M_2)$ is a family to be deformed.
\end{proof}

Finally, we will show that Theorem~\ref{whe2} implies the gluing results for $\scal_k>0,\scal>0$ and $\mathcal{C}_m>0$.

\begin{example}
Let $h$ be a symmetric bilinear form on $\mathbb{R}^{n-1}$.
\begin{itemize}
\item[\textit{(a)}]{Let $1\leq k\leq n$.
Then $h\KN\mathfrak{b}^\perp$ has $\scal_k\geq 0$ if
\begin{equation}\nonumber
\text{$h$ is\;\;}\left\{\begin{array}{ll} \text{\phantom{(}$k$-non-negative} & \text{for $1\leq k\leq n-2$,} \\[0.2cm]
         \text{$(k-1)$-non-negative} & \text{for $k=n-1,n$.}\end{array}\right.
\end{equation}
If $h\KN\mathfrak{b}^\perp$ has $\scal_k\geq 0$ then
\begin{equation}\nonumber
\text{$h$ is\;\;}\left\{\begin{array}{ll} \text{\phantom{(}$k$-non-negative} & \text{for $1\leq k\leq n-1$,} \\[0.2cm]
         \text{$(k-1)$-non-negative} & \text{for $k=n$.}\end{array}\right.
\end{equation}
}
\item[\textit{(b)}]{Let $1\leq m\leq n-1$.
Then $h\KN\mathfrak{b}^\perp$ has $\mathcal{C}_m\geq 0$ if and only if $h$ is $m$-non-negative.}
\end{itemize}
\end{example}
\begin{proof}
\textit{(a)} The proof is similar to that in~\cite[Prop.~2.10]{RW}.
We choose an orthonormal basis $(u_2,\dots,u_n)$ of $\mathbb{R}^{n-1}$ which is diagonalising for $h$, i.e. there exist $\lambda_2,\dots,\lambda_n\in\mathbb{R}$ such that
\begin{equation}\nonumber
h(u_i,u_l)=\lambda_i\delta_{il}
\end{equation}
for all $2\leq i,l\leq n$.
Without loss of generality, $\lambda_2\leq\dots\leq\lambda_n$.
Setting $u_1:=e_1$, the family $(u_1,u_2,\dots,u_n)$ is an orthonormal basis of $\mathbb{R}^n$ which is diagonalising for $\ric(h\KN\mathfrak{b}^\perp)$.
Similar to previous calculations, one shows that
\begin{equation}\nonumber
\ric(h\KN\mathfrak{b}^\perp)=
\left(\begin{array}{cccc}
\sum_{i=2}^n\lambda_i&0&\cdots&0\\
0&\lambda_2&&0\\
\vdots&&\ddots&\\
0&0&&\lambda_n
\end{array}\right).
\end{equation}
The sum of the $k$ smallest eigenvalues of $\ric(h\KN\mathfrak{b}^\perp)$ is either
\begin{equation}\nonumber
\sum_{i=2}^{k+1}\lambda_i\quad\text{or}\quad\sum_{i=2}^n\lambda_i+\sum_{i=2}^k\lambda_i,
\end{equation}
where $\lambda_{n+1}:=\sum_{i=2}^n\lambda_i$.

We first consider $k=n$.
The sum of all eigenvalues of $\ric(h\KN\mathfrak{b}^\perp)$ is $\sum_{i=2}^n2\lambda_i$.
This is non-negative if and only if $h$ is $(n-1)$-non-negative.

Next we consider $1\leq k\leq n-1$.
When $h$ is $k$-non-negative for $1\leq k\leq n$ and $(k-1)$-non-negative for $k=n-1$, the sum of the $k$ smallest eigenvalues of $\ric(h\KN\mathfrak{b}^\perp)$ is $\sum_{i=2}^{k+1}\lambda_i$.
If this was not the case, the inequality
\begin{equation}\nonumber
\sum_{i=2}^{k+1}\lambda_i>\sum_{i=2}^n\lambda_i+\sum_{i=2}^k\lambda_i
\end{equation}
would imply $\sum_{i=2,i\neq k+1}^{n}\lambda_i<0$, so $\sum_{i=2}^{n-1}\lambda_i<0$.
This contradicts $k$-non-negativity for $1\leq k\leq n-2$ and $(k-1)$-non-negativity for $k=n-1$.
We conclude that $\scal_k\geq 0$.

For the other direction, we distinguish $1\leq k\leq n-2$ and $k=n-1$.
If $k=n-1$ and $h\KN\mathfrak{b}^\perp$ has $\scal_k\geq 0$, the sum of the $n-1$ smallest eigenvalues can be either $\sum_{i=2}^{n}\lambda_i$ or $\sum_{i=2}^n\lambda_i+\sum_{i=2}^{n-1}\lambda_i$.

In the first case, we immediately see that $h$ is $(n-1)$-non-negative.
In the second case, it holds $\sum_{i=2}^{n-1}\lambda_i\leq 0$, so $\scal_k\geq 0$ yields $\sum_{i=2}^n\lambda_i\geq 0$.
Thus, $h$ is $(n-1)$-non-negative.

Now let $1\leq k\leq n-2$.
If $\ric(h\KN\mathfrak{b}^\perp)$ has $\scal_k\geq 0$, then the sum of the $k$ smallest eigenvalues is $\sum_{i=2}^{k+1}\lambda_i$.
If not, the inequality above would again imply $\sum_{i=2,i\neq k+1}^{n}\lambda_i<0$.
The condition $\scal_k\geq 0$ yields $\lambda_i\geq 0$ for all $i\geq k+1$.
Since $k+1<n$, we would obtain $\sum_{i=2}^{k+1}\lambda_i<0$ which is a contradiction to $\scal_k\geq 0$.
Hence, as the sum of the $k$ smallest eigenvalues is $\sum_{i=2}^{k+1}\lambda_i$, we conclude that $h$ is $k$-non-negative.

\textit{(b)} See~\cite[Prop.~3.1]{CJW}.
\end{proof}



\begin{bibdiv}
\begin{biblist}

\bib{Almeida}{article}{
   author={Almeida, S.},
   title={Minimal hypersurfaces of a positive scalar curvature manifold},
   journal={Math. Z.},
   number={190},
   date={1985},
   pages={73--82},
}

\bib{BGM}{article}{
   author={B\"{a}r, C.},
   author={Gauduchon, P.},
   author={Moroianu, A.}
   title={Generalized cylinders in semi-Riemannian and Spin geometry},
   journal={Math. Z.},
   number={249},
   date={2005},
   pages={545--580},
}

\bib{BH2022}{article}{
   author={B\"{a}r, C.},
   author={Hanke, B.},
   title={Local flexibility for open partial differential relations},
   journal={Comm. Pure Appl. Math.},
   number={75},
   date={2022},
   pages={1377--1415},
}

\bib{BH2023}{article}{
   author={B\"{a}r, C.},
   author={Hanke, B.},
   title={Boundary conditions for scalar curvature},
   conference={
      title={Perspectives in Scalar Curvature. Vol. 2},
   },
   book={
      publisher={World Sci. Publ., Hackensack, NJ},
   },
   isbn={978-981-124-999-0},
   isbn={978-981-124-935-8},
   isbn={978-981-124-936-5},
   date={2023},
   pages={325--377},
   review={\MR{4577919}},
}

\bib{Besse}{book}{
   author={Besse, A. L.},
   title={Einstein manifolds},
   series={Ergebnisse der Mathematik und ihrer Grenzgebiete (3)},
   volume={10},
   publisher={Springer-Verlag, Berlin},
   date={1987},
   pages={xii+510},
   isbn={978-3-540-74120-6},
}

\bib{BWW}{article}{
   author={Botvinnik, B.},
   author={Walsh, M. G.},
   author={Wraith, D. J.},
   title={Homotopy groups of the observer moduli space of Ricci positive metrics},
   journal={Geom. Topol.},
   number={23},
   date={2019},
   pages={3003--3040},
}

\bib{Brendle}{book}{
   author={Brendle, S.},
   title={Ricci Flow and the Sphere Theorem},
   series={Graduate Studies in Mathematics},
   volume={111},
   publisher={American Mathematical Society, Providence, RI},
   date={2010},
   pages={vii+176},
   isbn={978-0-8218-4938-5},
}

\bib{BHJ}{article}{
   author={Brendle, S.},
   author={Hirsch, S.},
   author={Johne, F.},
   title={A generalization of Geroch's conjecture},
   journal={Comm. Pure Appl. Math},
   number={77},
   date={2024},
   pages={441--456},
}


\bib{BMN}{article}{
   author={Brendle, S.},
   author={Marques, F. C.},
   author={Neves, A.},
   title={Deformations of the hemisphere that increase scalar curvature},
   journal={Invent. Math.},
   number={185},
   date={2011},
   pages={175--197},
}


\bib{BrH}{book}{
   author={Bridson, M. R.},
   author={Haefliger, A.},
   title={Metric Spaces of Non-Positive Curvature},
   series={Grundlehren der mathematischen Wissenschaften},
   volume={319},
   publisher={Springer Berlin, Heidelberg},
   date={1999},
   pages={xxi+643},
   isbn={978-3-540-64324-1},
}

\bib{BBI}{book}{
   author={Burago, D.},
   author={Burago, Y.},
   author={Ivanov, S.},
   title={A course in metric geometry},
   series={Graduate Studies in Mathematics},
   volume={33},
   publisher={American Mathematical Society, Providence, RI},
   date={2001},
   pages={xiv+415},
   isbn={0-8218-2129-6},
}

\bib{Chow}{article}{
   author={Chow, T.-K. A.},
   title={Positivity of Curvature on Manifolds with Boundary},
   journal={Int. Math. Res. Notices},
   number={2022},
   date={2022},
   pages={11401--11426},
}

\bib{CJW}{article}{
   author={Chow, T.-K. A.},
   author={Johne, F.},
   author={Wan, J.}
   title={Preserving Positive Intermediate Curvature},
   journal={J. Geom. Anal.},
   number={33, 366},
   date={2023},
   pages={},
}

\bib{CW}{arxiv}{ 
   author={Chow, T.-K. A.}, 
   author={Wang, Y.}
   title={Minimal Two-Spheres and Manifolds with Positive Isotropic Curvature}, 
   note={Preprint available on \url{https://arxiv.org/abs/2609.06910}}, 
   year={2026} 
}


\bib{Frerichs2022}{thesis}{ 
   author={Frerichs, H.}, 
   title={Skalarkrümmung auf Mannigfaltigkeiten mit nicht-kompaktem Rand}, 
   note={available at \url{https://nbn-resolving.de/urn:nbn:de:bvb:384-opus4-1081965}}, 
   year={2022} 
}

\bib{Frerichs2025}{arxiv}{ 
   author={Frerichs, H.}, 
   title={Scalar curvature deformations with non-compact boundaries}, 
   note={Preprint available on \url{https://arxiv.org/abs/2403.03941}}, 
   year={2025} 
}

\bib{GL}{article}{
   author={Gromov, M.},
   author={Lawson, H. B.},
   title={Spin and scalar curvature in the presence of a fundamental group. I},
   journal={Ann. of Math. (2)},
   number={111},
   date={1980},
   pages={209--230},
}

\bib{GMS}{article}{
   author={Gururaja, H. A.},
   author={Maity, S.},
   author={Seshadri, H.},
   title={On Wilking's criterion for the Ricci flow},
   journal={Math. Z.},
   number={274},
   date={2013},
   pages={471--481},
}

\bib{Hirsch}{book}{
   author={Hirsch, M. W.},
   title={Differential topology},
   series={Graduate Texts in Mathematics},
   volume={33},
   publisher={Springer New York, NY},
   date={1976},
   pages={x+222},
   isbn={978-0-387-90148-0},
}

\bib{Hoelzel}{article}{
   author={Hoelzel, S.},
   title={Surgery stable curvature conditions},
   journal={Math. Ann.},
   number={365},
   date={2016},
   pages={13--47},
}

\bib{Kosovskii}{article}{
   author={Kosovskii, N.N.},
   title={Gluing of Riemannian manifolds of curvature at least $\kappa$},
   journal={St. Petersbg. Math. J.},
   number={14},
   date={2003},
   pages={467--478},
   note={translation from \textit{Algebra Anal.}~\textbf{14} (2002), 140--157},
}

\bib{MM}{article}{
   author={Micallef, M. J.},
   author={Moore, J. D.},
   title={Minimal two-spheres and the topology of manifolds with positive curvature on totally isotropic two-planes},
   journal={Ann. of Math. (2)},
   number={127},
   date={1988},
   pages={199--227},
}

\bib{Perelman}{article}{
   author={Perelman, G.},
   title={Construction of Manifolds of Positive Ricci Curvature with Big Volume and Large Betti Numbers},
   journal={Comparison Geometry, MSRI Publications},
   number={30},
   date={1997},
   pages={157--163},
}

\bib{RW}{arxiv}{ 
   author={Reiser, P.}, 
   author={Wraith, D. J.},
   title={A generalization of the Perelman gluing theorem and applications}, 
   note={Preprint available on \url{https://arxiv.org/abs/2308.06996}}, 
   year={2024} 
}

\bib{Seeley}{article}{
   author={Seeley, R.T.},
   title={Extension of $C^{\infty}$ functions defined in a half space},
   journal={Proc. Amer. Math. Soc.},
   number={15},
   date={1964},
   pages={625--626},
}

\bib{Sha}{article}{
   author={Sha, J.-P.},
   title={$p$-convex Riemannian manifolds},
   journal={Invent. Math.},
   number={83},
   date={1986},
   pages={437--447},
}

\bib{Schlichting2012}{arxiv}{ 
   author={Schlichting, A.}, 
   title={Gluing Riemannian manifolds with curvature operators at least $\kappa$}, 
   note={Preprint available on \url{https://arxiv.org/abs/1210.2957}}, 
   year={2012} 
}


\bib{Schlichting2014}{thesis}{ 
   author={Schlichting, A.}, 
   title={Smoothing singularities of Riemannian metrics while preserving lower curvature bounds}, 
   note={Ph.D. thesis, available at \url{http://dx.doi.org/10.25673/4040}}, 
   year={2014} 
}

\bib{Wilking}{article}{
   author={Wilking, B.},
   title={A Lie algebraic approach to Ricci flow invariant curvature conditions and Harnack inequalities},
   journal={J. Reine Angew. Math.},
   number={679},
   date={2013},
   pages={223--247},
}


\bib{Wolfson}{article}{
   author={Wolfson, J.},
   title={Manifolds with $k$-positive Ricci curvature},
   conference={
      title={Variational problems in differential geometry. London Math. Soc. Lecture Note Ser., Vol. 394},
   },
   book={
      publisher={Cambridge Univ. Press, Cambridge},
   },
   date={2011},
   pages={182--201},
}












\end{biblist}
\end{bibdiv}
\end{document}